\newtheorem{thm}{Theorem}[section]
\newtheorem{prop}[thm]{Proposition}
\newtheorem{lem}[thm]{Lemma}
\newtheorem{cor}[thm]{Corollary}
\numberwithin{equation}{section}
\def\N{{\Bbb N}}
\def\Z{{\Bbb Z}}
\def\Q{{\Bbb Q}}
\def\R{{\Bbb R}}
\def\C{{\Bbb C}}
\def\A{{\Bbb A}}
\def\bT{{\Bbb T}}
\def\emp{\varnothing}
\def\fa{{\frak a}}
\def\fb{{\frak b}}
\def\fd{{\frak d}}
\def\ff{{\frak f}}
\def\fg{{\frak g}}
\def\fm{{\frak m}}
\def\fn{{\frak n}}
\def\fp{{\frak p}}
\def\fU{{\frak U}}
\def\fS{{\frak S}}
\def\fI{{\frak I}}
\def\fK{{\frak K}}
\def\fc{{\frak c}}
\def\fX{{\frak X}}
\def\cO{\frak o}
\def\cH{{\mathscr H}}
\def\cQ{{\mathcal Q}}
\def\cG{{\mathcal G}}
\def\GL{{\operatorname {GL}}}
\def\SL{{\operatorname{SL}}}
\def\SO{{\operatorname{SO}}}
\def\PSL{{\operatorname {PSL}}}
\def\PGL{{\operatorname{PGL}}}
\def\Re{{\operatorname {Re}}}
\def\Im{{\operatorname {Im}}}
\def\tr{{\operatorname{tr}}}
\def\nr{{\operatorname{N}}}
\def\sgn{{\operatorname {sgn}}}
\def\vol{{\operatorname{vol}}}
\def\ch{{\cosh\,}}
\def\sh{{\sinh\,}}
\def\leq{\leqslant}
\def\geq{\geqslant}
\def\bsl{\backslash}
\def\le{\leq}
\def\ge{\geq}
\def\d {{{d}}}
\def\JJ{{\Bbb J}}
\def\bs{{\bold s}}
\def\bK{{\bold K}}
\def\bC{{\bold C}}
\def\1{{\bold 1}}
\def\ccA{{\mathscr A}}
\def\prec{\ll}
\renewcommand{\a}{\alpha}
\renewcommand{\b}{\beta}
\newcommand{\e}{\epsilon}
\renewcommand{\l}{\lambda}
\newcommand{\s}{\sigma}
\newcommand{\ga}{{\mathfrak{a}}}
\newcommand{\gf}{{\mathfrak{f}}}
\newcommand{\gn}{{\mathfrak{n}}}
\newcommand{\go}{{\mathfrak{o}}}
\newcommand{\gp}{{\mathfrak{p}}}
\newcommand{\gS}{{\mathfrak{S}}}
\newcommand{\gU}{{\mathfrak{U}}}
\newcommand{\Bcal}{{\mathcal B}}
\newcommand{\Gcal}{{\mathcal G}}
\newcommand{\Ocal}{{\mathcal O}}
\renewcommand{\AA}{\mathbb{A}}
\newcommand{\CC}{\mathbb{C}}
\newcommand{\II}{\mathbb{I}}
\newcommand{\LL}{\mathbb{L}}
\newcommand{\NN}{\mathbb{N}}
\newcommand{\PP}{\mathbb{P}}
\newcommand{\QQ}{\mathbb{Q}}
\newcommand{\RR}{\mathbb{R}}
\newcommand{\TT}{\mathbb{T}}
\newcommand{\ZZ}{\mathbb{Z}}
\newcommand{\bfc}{{\mathbf c}}
\newcommand{\bfs}{{\mathbf s}}
\newcommand{\bfy}{{\mathbf y}}
\newcommand{\bfK}{{\mathbf K}}
\newcommand{\bfT}{{\mathbf T}}
\newcommand{\ord}{\operatorname{ord}}
\newcommand{\fin}{{\rm fin}}
\renewcommand{\Re}{\operatorname{Re}}
\newcommand{\Lie}{{\operatorname{Lie}}}
\newcommand{\Res}{\operatorname{Res}}
\def\Sd{P}
\def\ul{{\underline l}}
\title{Relative trace formulas and subconvexity estimates for $L$-functions of Hilbert modular forms}
\author{Shingo Sugiyama}
\author{Masao Tsuzuki}
\subjclass[2010]{Primary 11F67; Secondary 11F72.}
\begin{document}

\maketitle

\begin{abstract}
 We elaborate an explicit version of the relative trace formula on $\PGL(2)$ over a totally real number field for the toral periods of Hilbert cusp forms along the diagonal split torus. As an application, we prove (i) a spectral equidistribution result in the level aspect for Satake parameters of holomorphic Hilbert cusp forms weighted by central $L$-values, and (ii) a bound of quadratic base change $L$-functions for Hilbert cusp forms with a subconvex exponent in the weight aspect.\\

Keyword: relative trace formulas, central $L$-values, subconvexity estimates.
\end{abstract}

%\setcounter{tocdepth}{1}
%\tableofcontents

%%%%%%%%%%%%%%%%%%%%%%%%%%%
\section{Introduction}
%%%%%%%%%%%%%%%%%%%%%%%%%%%
In this paper, by the method developed in \cite{Tsud} and \cite{Sugiyama2}, we explicitly compute Jacquet's relative trace formula for the toral periods along the diagonal split torus in $\PGL(2)$, which encodes the central $L$-values for the quadratic base change of holomorphic Hilbert cusp forms in its spectral side. By introducing new techniques in a broader setting, we elaborate an explicit relative trace formula partly obtained by Ramakrishnan and Rogawski (\cite{Ramakrishnan-Rogawski}) for the elliptic modular case. Let $F$ be a totally real number field and $\AA$ its adele ring. Let $\Sigma_\infty$ denote the set of archimedean places of $F$ and $\Sigma_\fin$ the set of finite places of $F$. We consider a family of positive even integers $l=(l_v)_{v\in \Sigma_\infty}$, calling it a {\it weight}. Given a weight $l$ and an integral ideal $\fn$ of $F$, let $\Pi_{\rm{cus}}(l,\fn)$ be the set of all the irreducible cuspidal representations $\pi$ of the adele group $\PGL(2,\A)$ such that its $v$-th local component $\pi_v$ is isomorphic to the discrete series representation of $\PGL(2,\R)$ of weight $l_v$ if $v\in \Sigma_\infty$ and possesses a non-zero vector invariant by the local Hecke congruence subgroup 
$$
\bK_0(\fn\cO_v)=\left\{\left[\begin{smallmatrix} a & b \\ c & d \end{smallmatrix}\right]
\in \GL(2,\cO_v)|\,c\in \fn\cO_v\,\right\},
$$
if $v\in \Sigma_\fin$, where $\cO$ is the maximal order of $F$ and $\cO_v$ its completion at $v$. 
The standard $L$-function $L(s,\pi)$ of $\pi \in \Pi_{\rm{cus}}(l,\fn)$ is defined to be the Euler product of local factors $L(s,\pi_v)$ over all places $v$ if $\Re(s)$ is sufficiently large. Recall that for a finite place $v$ not dividing the ideal $\fn$,  
$$
L(s,\pi_v)=(1-q_{v}^{\nu(\pi_v)/2-s})^{-1}(1-q_{v}^{-\nu(\pi_v)/2-s})^{-1},
$$
where $q_v$ is the order of the residue field at $v$ and $q_v^{\pm \nu(\pi_{v})/2}$ is the Satake parameter of $\pi$ at $v$.
We remark that the $L$-function in our sense is the completed one by the archimedean local factors $L(s,\pi_v)=\Gamma_\C(s+(l_v-1)/2)$ for $v\in \Sigma_\infty$, and the Euler product with all the gamma factors removed from $L(s,\pi)$ is denoted by $L_\fin(s,\pi)$. It is well known that $L(s,\pi)$, originally defined on a half-plane $\Re(s)\gg 0$, has a holomorphic continuation to the whole complex plane with the self-dual functional equation $L(s,\pi)=\epsilon(s,\pi)\,L(1-s,\pi)$. The central value $L_\fin(1/2,\pi)$ and its twist $L_\fin(1/2,\pi\otimes\eta)$ by a real valued idele class character $\eta$ of $F^\times$ have been studied extensively from several different points of view. For example, when the base field $F$ is $\Q$, Iwaniec and Sarnak \cite{IwaniecSarnak} announced a number of asymptotic formulas of the 1st and the 2nd moments of the central $L$-values $L_\fin(1/2,\pi\otimes\eta)$ for $\pi \in \Pi_{\rm{cus}}(l,\gn)$ twisted by the Hecke operators and by suitably designed mollifiers. Combining such asymptotic formulas, they proved that among $L$-functions whose functional equation has even sign, 50 percent vanish at the central point in a quantitative sense as the weight $l$ (or the square-free level $\fn$) grows; moreover, they claimed that the quantitative nonvanishing of more than 50 percent of them eliminates the possibility of Landau-Siegel zero of the quadratic Dirichlet $L$-function $L(s,\eta)$. Among several twisted means of $L$-values considered in \cite{IwaniecSarnak}, one of the most basic means is
\begin{align}
\sum_{\pi \in \Pi_{\rm{cus}}(l,\fn)} \,\frac{L(1/2,\pi)\,L(1/2,\pi\otimes \eta)}{L(1,\pi,{\rm{Ad}})}\alpha(\nu_S(\pi)),
 \label{2ndMoment}
\end{align}
where $S$ is a finite set of finite places coprime to both $\fn$ and the conductor $\ff$ of $\eta$, $\nu_S(\pi)=\{{\nu(\pi_v)}\}_{v\in S}$ is the collection of the exponent in the Satake parameters of $\pi$ over $S$, and $\alpha(\{\nu_v\}_{v\in S})$ is a polynomial of the functions $q_v^{-\nu_v/2}+q_v^{\nu_v/2}$ in the variable $\nu_v$. When $F=\Q$, Ramakrishnan and Rogawski (\cite{Ramakrishnan-Rogawski}) studied the asymptotic behavior of the twisted 2nd moment \eqref{2ndMoment} for an odd Dirichlet character $\eta$ as the level $\fn$, to be kept prime and coprime to $\ff$, grows to infinity when the weight $l\geq 4$ is fixed. In the same setting, Michel and Ramakrishnan (\cite{MichelRamakrishinan}) obtained an explicit closed formula for the average \eqref{2ndMoment} and observed that the formula gets simplified significantly in a certain range of the parameters $(\fn, \ff, {\rm{deg}}(\alpha))$ called the stable range. Later, Feigon and Whitehouse (\cite{FeigonWhitehouse}) extended the result of \cite{Ramakrishnan-Rogawski} and \cite{MichelRamakrishinan} to the Hilbert modular forms in a more general setting as \cite{Jacquet-Chen} but still keeping the square-free condition on the level $\fn$ and the oddness conditions on $\eta$ at all archimedean places. In this article, we consider the twisted 2nd moment \eqref{2ndMoment} in our general Hilbert modular setting without assuming those conditions on $\fn$ and $\eta$, and obtain its formula in a computable form (Theorems~\ref{RELATIVETRACEFORMULA}, \ref{HYPERBOLIC(fine)} and \ref{UNIPOTENT(fine)}); thus, we generalize some results of \cite{FeigonWhitehouse} and \cite{MichelRamakrishinan} in several directions. 

As a first application of our formula, we prove an equidistribution theorem of the Satake parameters weighted by the central $L$-values $L(1/2,\pi)\,L(1/2,\pi\otimes \eta)$ as in \cite{FeigonWhitehouse}; we work with a more general sign condition on $\eta$ than \cite{FeigonWhitehouse} at archimedean places, allowing the level $\fn$ to be a general ideal not necessarily square-free. We remark that a similar asymptotic result for spectral averages of $L$-values of non-holomorphic modular forms was proved first by \cite{Tsud} for square-free levels and later by \cite{Sugiyama2} for arbitrary levels. In what follows, $\nr(\fn)$ denotes the absolute norm of an ideal $\fn\subset \cO$. 

\begin{thm}\label{ED-THM} Let $l=(l_v)_{v\in \Sigma_\infty}$ be a weight such that $l_v\geq 6$ for all $v\in \Sigma_\infty$. Let $\fn$ be an ideal of $\cO$ and $\eta=\otimes_v\eta_v$ a quadratic idele class character of $F^\times$ with conductor $\ff$ prime to $\gn$, and $S$ a finite  set of finite places relatively prime to $\fn\ff$. Assume that $\eta_{v}$ is non trivial for any prime divisor $v$ of $\gn$, and that $(-1)^{\e(\eta)}\tilde\eta(\gn)=1$ where $\e(\eta)$ is the number of $v\in \Sigma_\infty$ such that $\eta_v$ is non trivial and $\tilde \eta$ is the character of the group of ideals prime to $\ff$ induced by $\eta$. Then, for any even holomorphic function $\alpha(\bfs)$ on the complex manifold $\fX_S=\prod_{v\in S}(\C/\frac{4\pi i }{\log q_v}\Z)$, we have the asymptotic formula
{\allowdisplaybreaks 
\begin{align*}
&
\ (2 \pi)^{[F: \QQ]}\{ \prod_{v \in \Sigma_{\infty}}\frac{(l_{v}-2)!} {\{(l_{v}/2-1)!\}^{2}} \}\times
\frac{1}{{\rm N}(\gn)} \sum_{\pi \in \Pi_{\rm cus}^{*}(l,\gn)}
\frac{L(1/2, \pi) L(1/2, \pi \otimes \eta)}
{L^{S_{\pi}}(1, \pi; {\rm Ad})} \a(\nu_{S}(\pi)) \\
= & 
%\ \frac{1}{2^{2[F: \QQ]}}\{ \prod_{v \in \Sigma_{\infty}}\frac{\{(l_{v}/2-1)!\}^{2}} {(l_{v}-2)!} \}\times
4 D_{F}^{3/2} \nu(\gn) L_{\fin}(1, \eta) \int_{\fX_S^{0} }\a(\bfs) d\mu_{S}^{\eta}(\bfs) + \Ocal_{\e, l, \eta, \a}({\rm N}(\gn)^{-\delta}),
\end{align*}
}for some $\delta>0$; if $\gn$ is restricted to square-free ideals, then the asymptotic formula is true with a smaller error term $\Ocal_{\e, l, \eta, \a}(\nr(\gn)^{-\inf_{v\in \Sigma_\infty}l_v/2+1+\e})$ for any $\e>0$.  Here on the right-hand side of the formula,$$
\nu(\gn)=\prod_{\substack{v\in \Sigma_\fin \\ \ord_{v}(\gn) \ge 3}}(1-q_v^{-2})\prod_{\substack{v\in \Sigma_\fin \\ \ord_v(\gn)=2}}(1-(q_v^2-q_v)^{-1}),$$
$\fX_S^0$ denotes the purely imaginary locus of $\fX_S$ and $\d\mu_S^{\eta}(i\bfy)=\prod_{v\in S}\d \mu_v^{\eta_v}(iy_v)$ with
{\allowdisplaybreaks 
\begin{align*}
\d \mu_v^{\eta_v}(iy_v)
=
\begin{cases}
\dfrac{q_{v}-1}{(q_{v}^{1/2}+q_{v}^{-1/2}-x_v)^2}\,\d\mu^{\rm{ST}}(x_v), \quad &(\eta_v(\varpi_v)=+1), \\
\dfrac{q_{v}+1}{(q_{v}^{1/2}+q_{v}^{-1/2})^2-x_v^2}\,\d\mu^{\rm{ST}}(x_v), \quad &(\eta_v(\varpi_v)=-1),
\end{cases}
\end{align*}
}where $x_v=q_v^{iy_v/2}+q_v^{-iy_v/2}$, $\d\mu^{\rm{ST}}(x)=\frac{\sqrt{4-x^2}}{2\pi}\,\d x$ is the Sato-Tate measure
and $\varpi_{v}$ is a prime element of $\go_{v}$. On the left-hand side of the formula, $\Pi_{\rm{cus}}^*(l,\fn)$ denotes the set of $\pi\in \Pi_{\rm{cus}}(l,\fn)$ whose conductor $\ff_\pi$ is $\fn$, and $L^{S_\pi}(s,\pi;{\rm{Ad}})$ is the adjoint square $L$-function of $\pi$, whose local $v$-factors are removed for all $v$ belonging to $S_\pi=\{v\in \Sigma_\fin \ | \ {\rm{ord}}_v(\ff_\pi) \geq 2\,\}$.  
\end{thm}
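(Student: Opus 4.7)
The plan is to derive this asymptotic formula as an application of the explicit relative trace formula stated (and presumably proved earlier) as Theorems \ref{RELATIVETRACEFORMULA}, \ref{HYPERBOLIC(fine)}, and \ref{UNIPOTENT(fine)}, in the same spirit that \cite{FeigonWhitehouse} and \cite{Tsud} derive their equidistribution statements. First, I would select a test function $\Phi = \bigotimes_{v} \Phi_v$ on $\PGL(2,\AA)$ adapted to the data $(l, \fn, \eta, S, \alpha)$: at each archimedean $v$ I take a matrix coefficient of the discrete series of weight $l_v$ (so the spectral side detects exactly the weight-$l$ forms and produces the factorial/$\pi$-factors appearing on the left-hand side); at $v \in S$ I take the spherical function whose spherical transform is the even polynomial $\alpha_v$; at $v \mid \fn$ I choose a bi-$\bK_0(\fn\cO_v)$-invariant function whose local toral period is normalised against $\eta_v$; at $v \mid \ff$ I use the $\eta_v$-adapted local function needed for the toral period to be non-zero; and at the remaining finite places the unit of the spherical Hecke algebra. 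The hypotheses $\eta_v \neq \mathbf{1}$ for $v \mid \fn$ and $(-1)^{\e(\eta)}\tilde\eta(\fn)=1$ are precisely what makes the spectral toral period factor non-vanishing, and guarantee that the $\epsilon$-factor of the quadratic base change $L$-function equals $+1$.

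The spectral side of the relative trace formula for this $\Phi$ is, by the Jacquet--Zhang unfolding, a sum over $\pi \in \Pi_{\rm cus}(l,\fn)$ of the product of toral periods, which factorises into $L(1/2,\pi) L(1/2,\pi\otimes\eta)/L(1,\pi;\mathrm{Ad})$ times a product of local periods. At the level places, this local period depends on whether $\pi_v$ is the new vector or comes from an old form; standard new-form theory (as in \cite{Sugiyama2}) translates the sum over all $\pi \in \Pi_{\rm cus}(l,\fn)$ into a sum over the newform subset $\Pi_{\rm cus}^*(l,\fn')$ for divisors $\fn' \mid \fn$, and a Möbius-type inversion isolates the principal new-form contribution at level exactly $\fn$. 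The factor $\nu(\fn)$ in the main term arises as the combined weight of these local new-form normalisations (the two cases $\mathrm{ord}_v(\fn) = 2$ and $\geq 3$ receive different contributions, as reflected in its definition), and the $\frac{1}{\nr(\fn)}$ on the left is the overall volume renormalisation of the Hecke congruence subgroup.

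On the geometric side, the identity contribution produces the main term $4 D_F^{3/2}\,\nu(\fn)\,L_{\fin}(1,\eta)\int_{\fX_S^0}\alpha\,d\mu_S^\eta$, where the measure $d\mu_S^\eta$ emerges by expanding the local spherical function at $v \in S$ against the local Plancherel density for unramified principal series, twisted by the unramified $\eta_v$-local factor: this accounts for the two cases $\eta_v(\varpi_v) = \pm 1$ and produces the displayed densities against the Sato--Tate measure. The remaining contributions are the (regular) hyperbolic orbital integrals and the unipotent term. These must be shown to be $O(\nr(\fn)^{-\delta})$ as $\nr(\fn) \to \infty$ with $l$ fixed. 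Using the fine expansions in Theorems \ref{HYPERBOLIC(fine)} and \ref{UNIPOTENT(fine)}, the hyperbolic orbital integral $J_{\mathrm{hyp}}(\gamma;\Phi)$ for a rational diagonal $\gamma \neq e$ factorises as a product of local orbital integrals; at places $v \mid \fn$ the support of the new-form test function forces a saving $q_v^{-c\,\mathrm{ord}_v(\fn)}$ for some $c>0$, while convergence at the archimedean places is secured by the weight condition $l_v \geq 6$ (which bounds the archimedean orbital integral and its derivatives in the spectral parameter uniformly in $\gamma$). A similar analysis at the unipotent term, using the condition $\eta_v \neq \mathbf{1}$ at $v \mid \fn$ to kill the constant-in-$\gamma$ divergent piece, yields a power saving.

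The main obstacle is the uniform bound of the geometric side's error. The delicate step is controlling the sum of hyperbolic orbital integrals over all non-trivial rational $\gamma$: one needs both the local saving at the level places and enough decay from the archimedean and $S$-components to sum absolutely, uniformly in the level, which is exactly why the weight lower bound $l_v \geq 6$ is imposed and why the restriction that $\fn, \ff, S$ be pairwise coprime is maintained. Combining the Möbius inversion for newforms, the identity-term computation of the Plancherel measure, and the power-saving bounds on the hyperbolic and unipotent contributions yields the asserted asymptotic with some explicit $\delta > 0$.
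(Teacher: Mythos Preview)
Your overall architecture is right in spirit, but you have misidentified which geometric term produces the main term and which is the error. In this relative trace formula for the split torus period, the identity double coset does \emph{not} give the main term: after the regularization procedure the identity contribution has constant term zero at $\lambda=0$ (see Lemma~\ref{IDEMPOTENT TERM}). The main term $4D_F^{3/2}\nu(\fn)L_{\fin}(1,\eta)\int_{\fX_S^0}\alpha\,d\mu_S^\eta$ comes from the \emph{unipotent} term $\tilde{\JJ}_{\rm u}^\eta(l,\fn|\alpha)$: when $\eta$ is nontrivial the constant $\bC_F^\eta(l,\fn)$ in Theorem~\ref{RELATIVETRACEFORMULA} collapses exactly to $L_{\fin}(1,\eta)$, and the $S$-factors $\Upsilon_S^\eta(\bfs)$ combine with $d\mu_S(\bfs)$ to produce the measure $d\mu_S^\eta$. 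Your plan to prove a power saving for the unipotent term would therefore annihilate the main term. The only geometric piece that must be shown to be $O(\nr(\fn)^{-\delta})$ is the hyperbolic term $\JJ_{\rm hyp}^\eta(l,\fn|\alpha)$; this follows not from the fine formulas in Theorem~\ref{HYPERBOLIC(fine)} but from the cruder majorants in Lemmas~\ref{hyper(rev)4} and \ref{hyper(rev)6} (see the proof of Lemma~\ref{HYPERBOLIC-TERM(crude)}), which is where the condition $l_v\ge 6$ actually enters.

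A second, smaller discrepancy: the paper does not build a test function from archimedean matrix coefficients in the Jacquet--Chen style. Instead it constructs an adelic Green function from Shintani functions at the archimedean places (Proposition~\ref{Sh-ftn}) and nonarchimedean Green functions, forms a Poincar\'e series, and takes its regularized $(H,\eta)$-period. The spectral side then comes out via Proposition~\ref{Reg-per} rather than via Jacquet--Zhang unfolding of a kernel. Your description of the passage from $\Pi_{\rm cus}(l,\fn)$ to $\Pi_{\rm cus}^*(l,\fn)$ via old/new form combinatorics and the origin of $\nu(\fn)$ is, however, correct; the paper defers this step to \cite[Theorems~2, 38]{Sugiyama2}.
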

We remark that our relative trace formula yields an exact formula for the $L$-value average \eqref{2ndMoment}, which reduces to a finite expression for some $(\fn,\eta,\a)$ (see Corollary \ref{stability}). As a corollary to this theorem, we have the following result (cf.\ \cite[Corollary B]{Ramakrishnan-Rogawski}, \cite[Theorem 3]{Sugiyama2}).

\begin{cor}
Let $l=(l_v)_{v\in \Sigma_\infty}$ be a weight such that $l_v\geq 6$ for all $v\in \Sigma_\infty$. Let $\eta$ be a quadratic idele class character of $F^\times $ with conductor $\ff$. Let $S$ be a finite set of finte places relatively prime to $\ff$ and $\{J_v\}_{v\in S}$ a collection of subintervals of $[-2,2]$. Given a sequence of $\cO$-ideals $\{\fn_k\}_{k\in \N}$ relatively prime to $\ff$ and $S$ such that $(-1)^{\e(\eta)}\tilde\eta(\fn_k)=+1$, $\eta_{v}(\varpi_{v})= -1$ for all prime divisors $v$ of $\gn_{k}$ and $\lim_{k\rightarrow \infty} \nr(\fn_k)=+\infty$, there exists $k_0$ with the following property: For any $k\geq k_0$, there exists $\pi \in \Pi_{\rm{cus}}^*(l,\fn_k)$ such that $L_{\fin}(1/2,\pi)L_{\fin}(1/2,\pi\otimes\eta)\not=0$ and $q_v^{\nu(\pi_v)/2}+q_v^{-\nu(\pi_v)/2} \in J_v$ for all $v\in S$. 
\end{cor}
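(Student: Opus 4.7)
The plan is to deduce the corollary from Theorem \ref{ED-THM} by a positivity-plus-approximation argument, proceeding by contradiction.

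Two positivity inputs underpin the argument. First, under the root-number assumption $(-1)^{\e(\eta)}\tilde\eta(\gn)=1$, the product $L(1/2,\pi)L(1/2,\pi\otimes\eta)$ equals $L(1/2,\pi_E)$, the central value of the base change of $\pi$ to the quadratic extension $E/F$ cut out by $\eta$, whose global root number is $+1$; Waldspurger-type positivity then yields $L(1/2,\pi_E)\ge 0$. Together with the positivity of $L^{S_\pi}(1,\pi;\mathrm{Ad})$, this shows the spectral weights $w_\pi:=L(1/2,\pi)L(1/2,\pi\otimes\eta)/L^{S_\pi}(1,\pi;\mathrm{Ad})$ are non-negative for every $\pi$ occurring in the sum. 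Second, the explicit formulas for $d\mu_v^{\eta_v}$ show that $d\mu_S^\eta$ is absolutely continuous with respect to the Sato--Tate measure on $(-2,2)^{|S|}$ and has continuous strictly positive density; in particular it assigns positive mass to $\prod_{v\in S}J_v^\circ$.

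Assume the conclusion fails, so that there is a subsequence $\{\fn_{k_j}\}$ along which no $\pi\in\Pi_{\rm cus}^*(l,\fn_{k_j})$ satisfies both $w_\pi>0$ and $x_v(\pi):=q_v^{\nu(\pi_v)/2}+q_v^{-\nu(\pi_v)/2}\in J_v$ for every $v\in S$. Pick a continuous $\phi\ge 0$ on $[-2,2]^{|S|}$ supported in $\prod_v J_v^\circ$ with $\int\phi\,d\mu_S^\eta>0$. Every even holomorphic function on $\fX_S$ restricted to the tempered locus $\fX_S^0$ is a function of the variables $x_v=q_v^{iy_v/2}+q_v^{-iy_v/2}\in[-2,2]$; indeed, expanding in a Fourier cosine series in $y_v$ and using $\cos(ny_v\log q_v/2)=T_n(x_v/2)$ exhibits it as a uniform limit on $[-2,2]^{|S|}$ of polynomials in the $x_v$'s. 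By Stone--Weierstrass there exists a polynomial $Q_\varepsilon(\bfx)$ with $\|Q_\varepsilon-\phi\|_\infty<\varepsilon$ on $[-2,2]^{|S|}$; setting $P_\varepsilon:=Q_\varepsilon-\varepsilon$ gives a polynomial satisfying $P_\varepsilon\le\phi$ pointwise on $[-2,2]^{|S|}$ together with $\int P_\varepsilon\,d\mu_S^\eta>0$ for $\varepsilon$ small enough. In particular $P_\varepsilon(\nu_S(\pi))\le 0$ whenever $\nu_S(\pi)\notin\prod_v J_v$.

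Apply Theorem \ref{ED-THM} to $\alpha=P_\varepsilon$. The right-hand side is bounded below by a fixed positive constant for all $j$ sufficiently large, since $\nu(\fn_{k_j})$ stays uniformly bounded away from $0$ and $\int P_\varepsilon\,d\mu_S^\eta>0$, while the $\Ocal(\nr(\fn_{k_j})^{-\delta})$ error tends to $0$. However, by the contradiction hypothesis combined with $w_\pi\ge 0$, every summand on the left-hand side is non-positive: whenever $w_\pi>0$ one has $\nu_S(\pi)\notin\prod_v J_v$ and thus $P_\varepsilon(\nu_S(\pi))\le 0$. The left-hand side is therefore $\le 0$, contradicting the positive lower bound on the right. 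The only delicate ingredient is the non-negativity of $L(1/2,\pi)L(1/2,\pi\otimes\eta)$ under the stated sign condition, which is standard via factorization through the quadratic base change; everything else is a routine Weierstrass approximation and direct application of Theorem \ref{ED-THM}.
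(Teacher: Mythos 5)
Your strategy (positivity of the spectral weights, Weierstrass approximation against the limit measure, proof by contradiction) is the standard and correct way to read off non-vanishing and equidistribution from Theorem~\ref{ED-THM}; the paper itself gives no proof of the Corollary, citing only analogous results, so there is no ``paper proof'' to match against, but this is the expected argument. Two points deserve to be made explicit.

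First, the step ``$P_\varepsilon(\nu_S(\pi))\le 0$ whenever $\nu_S(\pi)\notin\prod_vJ_v$'' uses that for every $v\in S$ the spectral parameter $\nu_v(\pi)$ lies on the tempered (purely imaginary) locus, so that $q_v^{\nu_v/2}+q_v^{-\nu_v/2}\in[-2,2]$ and the pointwise bound $P_\varepsilon\le\phi$ on $[-2,2]^{\#S}$ applies. If some $\pi$ were non-tempered at $v$ then $x_v(\pi)>2$, and your polynomial $P_\varepsilon$ could be arbitrarily large there, destroying the sign of the summand. This is harmless here because the Ramanujan bound is known for holomorphic Hilbert cusp forms (Blasius, cited as \cite{Blasius} in the paper, and quoted with $\theta=0$ in \S12), but you should cite it rather than leave the temperedness implicit.

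Second, and more seriously from the standpoint of rigor: Theorem~\ref{ED-THM} carries the hypothesis ``$\eta_v$ is non-trivial for every prime divisor $v$ of $\gn$'', i.e.\ $\eta_v(\varpi_v)=-1$ for each $v\mid\fn_k$. The Corollary as stated imposes no such restriction on $\fn_k$, and you invoke Theorem~\ref{ED-THM} for $\fn_{k_j}$ without checking it. Either the Corollary should be read with this condition on the sequence $\{\fn_k\}$ inherited implicitly from the theorem it is derived from (in which case your proof is complete modulo the Ramanujan remark above), or one needs an additional argument showing the asymptotic in Theorem~\ref{ED-THM} still holds without that hypothesis; as written, your proof only delivers the Corollary under the extra assumption. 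You should flag this mismatch explicitly and either restrict the statement or explain why it can be removed.
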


The so called convexity bound of $L_\fin(1/2,\pi)$ is
$$|L_\fin(1/2,\pi)| \ll_\epsilon \{\nr(\fn)\prod_{v\in \Sigma_\infty}l_v^2\}^{1/4+\epsilon}, \qquad \pi \in \Pi_{\rm{cus}}(l,\fn)
$$
 for any $\epsilon>0$. When $F=\Q$ (so the weight $l$ is a number) and $\gn =\ZZ$ , the bound $|L_\fin(1/2,\pi)| \ll_{\e} l^{1/3+\epsilon}$, which breaks the convexity bound in the weight aspect, has long been known (\cite{Peng}, \cite{JutilaMotohashi}). Thanks to a recent result by Michel and Venkatesh \cite{MichelVenkatesh}, existence of a subconvexity bound for $L_\fin(1/2,\pi)$ in any aspect in the general setting is now known; however, beyond its existence, an explicit form of the subconvex exponent is not obvious in their work. As a second application of our formula, we deduce a bound with an explicit subconvex exponent in the {\it weight aspect} for the $L$-function $L_\fin(1/2,\pi)\,L_\fin(1/2,\pi\otimes \eta)$ with $\eta$ an idele class character of $F^\times$ which is odd at all archimedean places, where $F$ is a general totally real number field.  

\begin{thm}\label{T4}
Let $l=(l_{v})_{v \in \Sigma_{\infty}}$ be a weight such that $l_{v} \ge 6$ for all $v \in \Sigma_{\infty}$.
Let $\fn$ be an arbitrary ideal of $\cO$ and $\eta$ a real valued idele class character of $F^\times$ such that $\eta_v(-1)=-1$ for all $v\in \Sigma_\infty$.
Suppose that the conductor $\gf$ of $\eta$ is relatively prime to $\gn$.
Then, for any $\e>0$, 
\begin{align*}
|L_{\fin}(1/2,\pi)\,L_\fin(1/2,\pi \otimes \eta)| \ll_{\epsilon} \nr(\ff)^{3/4
+\epsilon} \nr(\fn)^{1+\epsilon}\,\{\prod_{v\in \Sigma_\infty}l_v\}^{7/8+\epsilon}
\end{align*}
with the implied constant independent of $l=(l_v)_{v\in \Sigma_\infty}$, $\fn$, $\eta$ and $\pi\in \Pi_{\rm{cus}}(l,\fn)$. 
\end{thm}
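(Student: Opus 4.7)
My plan is to apply the amplification method to the explicit relative trace formula of Theorems~\ref{RELATIVETRACEFORMULA}, \ref{HYPERBOLIC(fine)} and~\ref{UNIPOTENT(fine)}, in the spirit of Duke--Friedlander--Iwaniec subconvexity and its adaptation to Hilbert modular forms. The crucial arithmetic input will be non-negativity: because $\eta$ is quadratic and $\eta_v(-1)=-1$ at every archimedean $v$, the product $L_\fin(1/2,\pi)L_\fin(1/2,\pi\otimes\eta)$ is the central value of the quadratic base change $L$-function to $E=F(\sqrt{\eta})$ and is non-negative by Waldspurger's toric period formula. This means that in the relative trace formula I can drop all spectral terms except the one attached to a fixed target $\pi_0\in \Pi_{\rm{cus}}(l,\fn)$.

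For the amplifier, I would fix $\pi_0$ and choose a set $S$ of unramified finite places of ``length'' $L$ (meaning $\sum_{v\in S}\log q_v\asymp \log L$) disjoint from $\fn\ff$, and set
$$
\alpha(\bfs)\;=\;\Bigl|\sum_{v\in S} x_v\bigl(q_v^{s_v/2}+q_v^{-s_v/2}\bigr)\Bigr|^{2},
$$
with coefficients $x_v$ depending on the Hecke eigenvalues of $\pi_0$ chosen so that $\alpha(\nu_S(\pi_0))\gg L^{2-\e}$. Inserting this $\alpha$ into the relative trace formula, using non-negativity to drop all spectral terms but $\pi_0$, and rearranging will yield
$$
L(1/2,\pi_0)L(1/2,\pi_0\otimes\eta)\,L^{2-\e}\;\ll\;L^{S_{\pi_0}}(1,\pi_0;{\rm Ad})\,\nr(\fn)\,\Bigl\{\prod_{v\in \Sigma_\infty}\tfrac{\{(l_v/2-1)!\}^{2}}{(l_v-2)!}\Bigr\}\,\Gcal(\alpha),
$$
where $\Gcal(\alpha)$ is the geometric side (elliptic, hyperbolic and unipotent contributions) of the amplified trace formula.

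The main step is to bound $\Gcal(\alpha)$. Opening the square in $\alpha$ produces a \emph{diagonal} piece of order $L\cdot \Gcal(1)$ together with an \emph{off-diagonal} piece governed by shifted non-archimedean Hecke operators indexed by pairs $(v,w)$ with $v\ne w$. Applying the explicit formulae of Theorems~\ref{HYPERBOLIC(fine)} and~\ref{UNIPOTENT(fine)}, each such orbital integral factorises into a product of local contributions: at finite places dividing $\ff$ one extracts a saving of $\nr(\ff)^{3/4+\e}$ from Sali\'e/Kloosterman-type cancellation, while at archimedean places the test function is a matrix coefficient of the holomorphic discrete series of weight $l_v$, whose Bessel-type transforms decay polynomially in $l_v$. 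On the basis of these inputs I expect estimates of the form (diagonal) $\ll L\cdot\nr(\fn)\nr(\ff)^{3/4+\e}\prod_v l_v^{-1+\e}$ and (off-diagonal) $\ll L^{2+\e}\cdot\nr(\fn)^{\e}\nr(\ff)^{3/4+\e}\prod_v l_v^{-2+\e}$, so that choosing $L\asymp\prod_v l_v^{1/4}$ balances the two and converts the trivial exponent $\prod_v l_v$ into $\prod_v l_v^{7/8}$. The hardest part, I believe, will be obtaining uniform archimedean decay of the Bessel transforms in the weight aspect together with the $\nr(\ff)^{3/4}$-type cancellation at ramified primes, and verifying that these estimates survive the amplifier insertion.

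To conclude, I would combine the amplified bound with the Hoffstein--Lockhart estimate $L_\fin(1,\pi;{\rm Ad})\ll_\e C(\pi)^\e$ and use Stirling's asymptotic $\Gamma_\CC(l_v/2)\sim 2(2\pi)^{-l_v/2}(l_v/2-1)!$ to pass from the completed to the finite $L$-values, arriving at the bound in the statement.
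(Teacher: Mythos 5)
Your high-level strategy matches the paper's Section~12: non-negativity of $L_\fin(1/2,\pi)L_\fin(1/2,\pi\otimes\eta)$ (via Lemma~\ref{value of PP} and \cite{Jacquet-Chen}) lets one drop all spectral terms but $\pi_0$ in Theorem~\ref{RELATIVETRACEFORMULA}, and an amplifier is then inserted and balanced against the geometric side. However, there are concrete gaps in the sketch.

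The most serious one is the amplifier itself. Writing $\alpha(\bfs)=\bigl|\sum_{v\in S}x_v(q_v^{s_v/2}+q_v^{-s_v/2})\bigr|^2$ and ``choosing $x_v$ so that $\alpha(\nu_S(\pi_0))\gg L^{2-\e}$'' is not something one can simply do: with $x_v=\overline{\lambda_v(\pi_0)}$ the value is $\bigl(\sum_v|\lambda_v(\pi_0)|^2\bigr)^2$, which could be arbitrarily small if $\lambda_v(\pi_0)$ happens to be small for all $v\in S$, and no a priori lower bound on Hecke eigenvalues is available. The paper resolves this with Iwaniec's amplifier, i.e.\ the test function $\a_S^\pi(\bfs)=\bigl(\sum_{v\in S}\{\lambda_v(\pi)(z_v+z_v^{-1})-(z_v^2+z_v^{-2}+1)\}\bigr)^2$ of \S 12.1; since $z_v+z_v^{-1}$ at $\nu_v(\pi)$ equals $\lambda_v(\pi)$ and $z_v^2+z_v^{-2}+1=\lambda_v(\pi)^2-1$, the Hecke relation forces each summand to equal $1$ at $\nu_S(\pi)$, giving the clean lower bound $\a_S^\pi(\nu_S(\pi))=(\#S)^2$ regardless of the size of $\lambda_v(\pi)$. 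You should make this choice explicit, since the rest of your argument depends on it.

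A second gap is the choice of $S$: the paper takes $S=S_K^{\fn,\eta}$, consisting of places with $\eta_v(\varpi_v)=-1$ and $K\le q_v\le 2K$ (\S 12.4). The inertness condition at $v\in S$ is not an incidental convenience: it forces the local weight $w_\fn^\eta(\pi)$ (Lemma~\ref{value of PP}) to remain non-negative and keeps the explicit local unipotent terms $U_v^{\eta_v}(\alpha_v)$ manageable (Proposition~\ref{MS5}); it also vanishes the diagonal-type contributions you expect, via Lemma~\ref{MS6.6}. Your sketch allows an unrestricted set of unramified places, which would not feed cleanly into Theorems~\ref{HYPERBOLIC(fine)}--\ref{UNIPOTENT(fine)}.

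Finally, the geometric-side estimates you predict are based on analogies that do not match the actual computations here. There is no elliptic orbit in the $H\times H$ double-coset decomposition used in \S 7; the geometric side is identity, unipotent, and hyperbolic (\eqref{GEOMETRIC IDENTITY}), and the balancing performed in \S 12.6 is between the unipotent term $\tilde\JJ_{\rm u}^\eta$ (Proposition~\ref{MS11}, $\ll|\Gcal(\eta)|\nr(\gf)^\e K^{1+\theta}$) and the hyperbolic term $\JJ_{\rm hyp}^\eta$ (Proposition~\ref{MS10}, $\ll\nr(\ff)^{1/4+\e}L^{-1/2}K^{5+\e}$), not between a Petersson-style diagonal and a Kloosterman-type off-diagonal. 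The archimedean saving $L^{-1/2}$ comes from the bound $|P_{l_v/2-1}(x)|\le(1-x^2)^{-1/2}(l_v/2-1)^{-1/2}$ on Legendre polynomials via Lemma~\ref{arcInt}(2), not from Bessel transforms. The exponent $\nr(\ff)^{3/4}$ does not come from Sali\'e/Kloosterman cancellation: it comes from the trivial bound on the ramified orbital integrals (Lemma~\ref{ramifiedJ} gives $\prod_{v\in S(\ff)}|J_v^{\eta_v}(b)|\ll\nr(\ff)^{-1+\e}$) combined with the $\nr(\fa)^{5/4+\e}$ lattice-point count in Lemma~\ref{MS6.5} and the Gauss-sum normalization $|\Gcal(\eta)|\asymp\nr(\ff)^{-1/2}$. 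With those inputs the optimal choice is $K\asymp L^{1/(8-2\theta)}$, so $K\asymp L^{1/8}$ (after the Ramanujan input $\theta=0$ of \cite{Blasius}), not the $L^{1/4}$ you propose; plugging in your $L^{1/4}$ does not balance the two geometric contributions.
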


Theorems~\ref{ED-THM} and \ref{T4} are obtained by the relative trace formula stated in \S 9 together with the explicit formulas of local terms given in \S 10 and \S 11. Technically speaking, there are substantial differences between \cite{Ramakrishnan-Rogawski}, \cite{FeigonWhitehouse} and ours in the way explained below. In \cite{FeigonWhitehouse}, by using the Jacquet-Langlands transfer and some refinements of Waldspurger's period formula, when the character $\eta$ is odd at all archimedean places, the equidistribution theorem is deduced from the relative trace formula developed by \cite{Jacquet-Chen} and explicated by the authors of \cite{FeigonWhitehouse} for an anisotropic inner form of $\GL(2)$ which is certainly an easier place to do analysis than $\GL(2)$. Contrary to this, like in \cite{Ramakrishnan-Rogawski}, we perform an explicit computation of the relative trace formula on $\GL(2)$ for the period along the split torus, which is slightly harder analytically due to the non-compactness of the spaces but much easier algebraically because we only have to consider the Hecke's zeta integral in the spectral side. The analytical difficulty can be resolved by the technique developed in \cite{Tsud} (see 6.3 and 6.5. For a different approach, we refer to \cite{Ramakrishnan-Rogawski}). For the algebraic aspect, we have an advantage from \cite{Sugiyama1} which completed the computation of local Hecke's zeta integrals for local old forms. Due to the direct nature of the method, we can rather easily drop several local and global constraints on automorphic representations and the character $\eta$ which is essential to move to an anisotropic group by the Jacquet-Langlands transfer. For example, the character $\eta$ is allowed to be trivial in our work. Moreover, the usage of the ``Shintani functions'' (see \S3.2) in place of the matrix coefficients of discrete series at archimedean places simplifies some computation of the archimedean orbital integrals compared with \cite{Ramakrishnan-Rogawski}. Similarly, the usage of the ``Green functions'' (see \S 4) at finite places makes it possible for us to compute the non-archimedean orbital integrals directly; our result is thorough in the sense that it covers not only the unit element of the spherical Hecke algebra but also all of its elements.

This article is organized as follows. After a preliminary section, in \S 3, we recall the definition of the Shintani functions for the symmetric pair $(\GL(2,\R),T)$ studied by \cite{Hirano} with $T$ being the diagonal torus, and prove several properties of them necessary later. In \S 4, we briefly review about the Green function on $\GL(2)$ over a non-archimedean local field, which was introduced in \cite[\S 5]{Tsud}. Combining these, in \S 5, we define a left $H_\A$-equivariant smooth function on the adele group $\GL(2, \A)$ with the ``reproducing property" (Lemma~\ref{Green and period}), calling it the adelic Green function. Here $H$ denotes the diagonal split torus of $\GL(2)$. In \S 6, after reviewing the explicit formulas of the toral period integrals of $\GL(2)$ cusp forms with arbitrary level given by \cite{Sugiyama1}, we compute the spectral expansion of the automorphic renormalized kernel \eqref{centralObj}, which is constructed by forming the sum of the adelic Green function translated by $H_F\bsl \GL(2, F)$ after a regularization to compensate ${\rm{vol}}(Z_{\AA}H_F\bsl H_\A)=\infty$, where $Z$ is the center of $\GL(2)$.
Although such a regularization is not needed in the spectral side since \eqref{centralObj} is cuspidal,
the regularization plays a role in the geometric side in \S8.
In \S 7 and \S 8, closely following \cite[\S 12]{Tsud}, we compute the geometric expression of the period integral of the automorphic renormalized kernel. Up to \S 8, most of the estimates and computations are obtained from the corresponding ones in \cite{Tsud} and \cite{Sugiyama2} by modification at archimedean places; we make the proofs as brief as possible by leaving detailed arguments to our previous works. In the final formula (Theorem~\ref{RELATIVETRACEFORMULA}), two linear functionals $\tilde \JJ^\eta_{{\rm{u}}}(l, \fn|\alpha)$ and $\JJ^\eta_{\rm{hyp}}(l, \fn|\alpha)$ in the test function $\alpha$ arise. We can deduce Theorem~\ref{ED-THM} easily from Theorem~\ref{RELATIVETRACEFORMULA} as explained in \S 9; the point is to show that the term $\JJ^\eta_{{\rm{hyp}}}(l, \fn|\alpha)$ amounts at most to $\nr(\fn)^{-\delta}$ giving an error term. The new and essential contribution of this paper to the relative trace formula is probably \S 10 and \S 11, which are devoted to computing the functionals $\tilde \JJ^\eta_{{\rm{u}}}(l, \fn|\alpha)$ and $\JJ^\eta_{\rm{hyp}}(l, \fn|\alpha)$ explicitly for particular but sufficiently general test functions $\alpha$. For the result, we refer to Theorems~\ref{HYPERBOLIC(fine)} and \ref{UNIPOTENT(fine)}. In the final section \S 12, we prove Theorem~\ref{T4} by applying the relative trace formula (Theorem~\ref{RELATIVETRACEFORMULA}) to a specially chosen test function (see 12.1) originally due to Iwaniec. In the proof, our explicit formula of orbital integrals for arbitrary Hecke functions plays an essential role. We would like to mention our work \cite{SugiyamaTsuzuki} where we obtain an analogue of results of \cite{Royer} for the central (derivative of) $L$-values of Hilbert modular forms; in \cite{SugiyamaTsuzuki}, the explicit relative trace formula to be developed in this article is also indispensable.

\medskip
\noindent
{\bf Basic notation and convention} : Let $\NN$ be the set of all positive integers and we write $\NN_{0}$ for $\NN \cup \{0\}$.
For any condition $P$, we put $\delta(P)=1$ if $P$ is true, and
$\delta(P)=0$ if $P$ is false, respectively. For any $z\in \C^\times$ and $\alpha\in \C$, we define $\log z$ and $z^\alpha$ by the formula 
$$
\log z=\log r+i\theta, \qquad z^{\alpha}=\exp(\alpha \log z)
$$
with $z=re^{i\theta} \,(r>0,\theta\in (-\pi,\pi])$. For a complex function $f(z)$ in $z\in \C$ and for $\sigma\in \R$, the contour integral $\int_{\sigma-i\infty}^{\sigma+i\infty} f(z)\d z$ along the vertical line $\Re(z)=\sigma$ is sometimes denoted by $\int_{L_\sigma}f(z)\d z$. We set $\Gamma_\R(s)=\pi^{-s/2}\Gamma(s/2)$ and $\Gamma_{\C}(s)=2(2\pi)^{-s}\Gamma(s)$. Set $1_2=\left[\begin{smallmatrix} 1 & 0 \\ 0 & 1 \end{smallmatrix}\right]$, the identity matrix. All the fractional ideals appearing in this paper are supposed to be non-zero. 

%%%%%%%%%%%%%%%%%%%%%%%%%%%
\section{Preliminaries}
%%%%%%%%%%%%%%%%%%%%%%%%%%%
We introduce basic objects and notation, which are used throughout this article. 

\subsection{}
Let $F$, $\AA$, $\cO$, $\Sigma_{\infty}$, $\Sigma_{\fin}$ and $\go_{v}$ for $v \in \Sigma_{\fin}$ be as in the introduction.
Set $\Sigma_F=\Sigma_\infty\cup \Sigma_\fin$
and $d_F=[F:\Q]$.
The finite adele ring of $F$ is denoted by $\AA_{\fin}$. 
For $v\in \Sigma_\fin$,
$F_{v}$ denotes the completion of $F$ at $v$ and
we fix a prime element $\varpi_v$ of $F_v$ and set $\fp_v=\varpi_v\cO_{v}$; the modulus of $F_v^\times$ is denoted by $|\,|_v$ and the associated order function is defined by $\ord_v=-\log_{q_v}|\,|_v$. Let $d_v$ be the local differential exponent of $F_v$ over $\Q_p$, where $p$ is the characteristic of $\cO_v/\gp_v$. 
The discriminant $D_F$ of $F/\Q$ is defined to be the absolute norm $\nr(\fd_{F/\Q})$, where $\fd_{F/\Q}$ is the global different of $F/\Q$.
The completed Dedekind zeta function of $F$ is denoted by $\zeta_{F}(s)$.
For any ideal $\ga$ of $\go$, let $S(\ga)$ denote the set of all $v \in \Sigma_{\fin}$ such that $\ord_{v}(\ga)\ge 1$.

\subsection{} 
Let $G$ be the $F$-algebraic group $\GL(2)$. For any $F$-subgroup $M$ of $G$, we set $M_\A=M(\A)$, $M_{F}=M(F)$, $M_\fin=M(\A_\fin)$, $M_\infty=M(F\otimes_\Q \R)$ and $M_v=M(F_v)$ for any $v\in \Sigma_F$. The points of finite adeles $G_\fin$ of $G$ is realized as a restricted direct product of the local groups $G_v$ with respect to the maximal compact subgroups $\bK_{v}=\GL(2,\cO_{v})$ over all $v\in \Sigma_\fin$.
%The direct product $\bK_\fin=\prod_{v\in \Sigma_\fin}\bK_v$ is a maximal compact subgroup of $G_\fin$.
For an ideal $\gn$ of $\go$,
let $\bfK_{0}(\gn\go_{v})$ be as in the introduction and
we put
$\bfK_{0}(\gn)=\prod_{v \in \Sigma_{\fin}}\bfK_{0}(\gn\go_{v})$,
which is an open compact subgroup of $\bfK_{\fin}=\prod_{v\in \Sigma_\fin}\bK_v$.
The Lie group $G_\infty$ is isomorphic to $\prod_{v\in \Sigma_\infty}G_v$. For each $v\in \Sigma_\infty$, let $\bK_v$ be the image of ${\rm{O}}(2,\R)$ by the isomorphism $\GL(2,\R) \cong G_v$. Note that $\bK_v^{0}$ is isomorphic to the rotation group $\SO(2, \RR)$. Set $\bK_{\infty}=\prod_{v\in \Sigma_\infty}\bK_v$ and $\bK=\bK_\fin\,\bK_\infty$. Let $Z$ be the center of $G$, $H$ the $F$-split torus of $G$ consisting of all the diagonal matrices and $N$ the $F$-subgroup of $G$ consisting of all the upper triangular unipotent matrices. Set $B=HN$.

\subsection{Haar measures}
\label{Haar measures}
For $v\in \Sigma_F$, let $\d x_v$ be the additive Haar measure of $F_v$ such that $\vol(\cO_v)=q_v^{-d_v/2}$ if $v\in \Sigma_\fin$ and $\vol(\{x\in F_v | \ |x|_v<1\})=2$ if $v\in \Sigma_\infty$. Fix a multiplicative Haar measure $\d^\times x_v$ on $F^\times_v$ by $\d^\times x_v=c_v\,\d x_v/|x_v|_v$, where $c_v=1$ if $v\in\Sigma_\infty$ and $c_v=(1-q^{-1}_v)^{-1}$ if $v\in \Sigma_\fin$. We fix a Haar measure of the idele group $\A^\times$ by $\d^\times x=\prod_{v}\d^\times x_v$. For $y>0$, let $\underline{y}\in \A^\times$ be the idele such that $\underline {y}_\iota=y^{1/d_F}$ for all $\iota\in \Sigma_\infty$ and $\underline{y}_v=1$ for all $v\in \Sigma_\fin$. Then, $y\mapsto {\underline{y}}$ is a section of the idele norm $|\,|_\A:\A^\times \rightarrow \R_+^\times$, which allows us to identify $\A^\times$ with the direct product of $\{\underline y|\,y>0\,\}$ and the norm one subgroup $\A^1=\{x\in \A^\times|\,|x|_\A=1\,\}$. We fix a Haar measure $\d^{1} u$ on $\A^1$ so that $\d^\times x=\d^{1} u\,\d^{\times} y$ when $x=u{\underline y}$ with $x\in \A^\times$, $u\in \A^1$ and $y>0$. 
%$$
%\int_{\A^\times} f(x)\,\d^\times x=\int_{0}^{+\infty} \int_{\A^1} f({\underline% y}\,u)\,\d^{\times}y\,\d^1 u, \quad f\in L^1(\A^\times).
%$$

We fix Haar measures $\d h_v$, $\d n_v$ and $\d k_v$ on groups $H_v$, $N_v$, $\bK_v$ respectively by setting $\d h_v=\d^\times t_{1,v}\,\d^\times t_{2,v}$ if $h_v=\left[\begin{smallmatrix} t_{1,v} & 0 \\ 0 & t_{2,v} \end{smallmatrix}\right]$, $\d n_v=\d x_v$ if $n_v=\left[\begin{smallmatrix} 1 & x_v \\ 0 & 1 \end{smallmatrix}\right]$ and by requiring $\vol(\bK_v, d k_v)=1$. Then we normalize the Haar meausre $\d g_v=\d h_v\,\d n_v\,\d k_v$ on $G_v$ by using the Iwasawa decomposition $G_v=H_vN_v\bK_v$.
We note that $\vol(\bfK_v, dg_v)=q_v^{-3d_v/2}$.
By taking the tensor product of measures $\d g_v$ on $G_v$, we fix a Haar measure $\d g$ on $G_\A$.

Let $\varphi$ be a smooth function on $G_\A$. The right translation of $\varphi$ by $g\in G_\A$ is denoted by $R(g)\varphi$, i.e., $[R(g)\varphi](h)=\varphi(hg)$. The derived action of the universal enveloping algebra of the complexified
Lie algebra $\mathfrak g_{\infty}=\Lie(G_{\infty})_\C$
on smooth functions on $G_\A$ is also denoted by $R$. Let $W$ and $\bar W$ be the element $\tfrac{1}{2} \left(\begin{smallmatrix} 1 & -i \\ -i & -1 \end{smallmatrix}\right)$ of ${\frak {sl}}_2(\C)$ and its complex conjugate, respectively. For any $v\in \Sigma_\infty$, the elements of ${\rm{Lie}}(G_v)_{\C}$ corresponding to $W$ and ${\overline {W}}$ are denoted by $W_v$ and ${\overline{W}}_v$, respectively. 
For any compactly supported smooth function $f$ on the direct product $G_S$ of $\{G_v\}_{v\in S}$ with a finite subset $S\subset \Sigma_F$, the right translation of $\varphi$ by $f$ is defined by the convolution
$R(f)\varphi(x)=\int_{G_S} \varphi(x g_S)\,f(g_S)\,\d g_S$ for $x\in G_\AA$ with respect to the product measure $d g_S=\otimes_{v\in S} \d g_v$.

\subsection{}
\label{add char and gauss sum}
Given a real valued idele class character $\eta$ of $F^\times$ with conductor $\ff$, we set $f(\eta_v)={\rm{ord}}_v(\ff)$ for $v\in \Sigma_\fin$. For any $v\in \Sigma_\infty$, there exists $\epsilon_v\in \{0,1\}$ such that $\eta_v(x)=(x/|x|_{v})^{\epsilon_v}$; we call $\epsilon_v$ the sign of $\eta$ at $v$, and set $\epsilon(\eta)=\sum_{v\in \Sigma_\infty} \epsilon_v$. Let $I(\ff)$ be the group of fractional ideals relatively prime to $\ff$; then we define a character $\tilde \eta:I(\ff) \rightarrow \{\pm 1\}$ by setting $\tilde \eta(\fp_v \cap \go)=\eta_v(\varpi_v)$ for any $v\in \Sigma_\fin-S(\ff)$. The Gauss sum $\cG(\eta)$ for $\eta$ is defined to be the product of 
$$
\cG(\eta_v)=\int_{\cO_v^\times} \eta_v(u\varpi_v^{-d_v-f(\eta_v)})\,\psi_{F,v}(u\varpi_v^{-d_v-f(\eta_v)})\,\d^\times u, 
$$
over all $v\in \Sigma_\fin$, where $\psi_{F}=\psi_{\Q}\circ \tr_{F/\Q}$ with $\psi_\Q$ being the character of $\Q\bsl \A$ such that $\psi_\Q(x)=\exp(2\pi i x)$ for $x\in \R$.

\subsection{}
Fix a relatively compact subset $\omega_B$ of $B_\A^1=\{\left[\begin{smallmatrix} a & b \\ 0 & d\end{smallmatrix}\right]|\,a,\,d\in\A^{1},\,b\in \A\,\}$ such that $B_\A^{1}=B_F\,\omega_B$. Let $\fS^1= \omega_B\,\{\left[\begin{smallmatrix} \underline{t} & 0 \\ 0 & \underline{t}^{-1}\end{smallmatrix}\right]|\, t>0, \, t^2>c\,\}\bK$ with some $c>0$ be a Siegel domain such that $G_\A=Z_\A G_F\,\fS^1$. Define $y:G_\A\rightarrow \R_{+}^{\times}$ by setting $y\left(\left[\begin{smallmatrix} a & b \\ 0 & d\end{smallmatrix}\right]k\right)=|{a}/{d}|_\A$ for any $\left[\begin{smallmatrix} a & b \\ 0 & d\end{smallmatrix}\right]\in B_\A$ and $k\in \bK$.

%%%%%%%%%%%%%%%%%%%%%%%%%%%%%%%%%%%%%%%%%%%%%%%%%%%%%%%%%%%%%%
\section{Holomorphic Shintani functions on $\GL(2,\R)$}
%%%%%%%%%%%%%%%%%%%%%%%%%%%%%%%%%%%%%%%%%%%%%%%%%%%%%%%%%%%%%%

Consider the following one parameter subgroups in $\GL(2,\R)$: 
\begin{align*}
k_\theta&=\left[\begin{smallmatrix} \cos \theta & -\sin \theta \\ \sin \theta & \cos \theta \end{smallmatrix} \right], \qquad a_r=\left[\begin{smallmatrix} \ch r & \sh r \\ \sh r & \ch r \end{smallmatrix} \right],
\end{align*}
where $\theta,\,r\in \R$. We have $\SO(2, \RR)=\{k_\theta|\,\theta\in \R\,\}$. 

\subsection{Discrete series of $\PGL(2,\R)$}
For $n\in \Z$, let $\tau_n$ be the character of $\SO(2, \RR)$ defined by 
\begin{align*}
\tau_n\left(k_\theta \right)=e^{i n \theta}, \qquad \theta \in \R.
\end{align*}
Let $l\geq 2$ be an even integer. Recall that there correspond discrete series representations $D_{l}^{+}$ and $D_{l}^{-}$ of $\SL_2(\R)$ such that $D_l^\pm |\SO(2, \RR)$ is a direct sum of characters $\tau_n$ for all $n\in \pm(l+2\N_0)$. We have a unitary representation $D_l$ of $\GL_2(\R)$ such that (a) $D_l$ has the trivial central character and (b) $D_{l}|\SL_2(\R)=D_l^{+}\oplus D_l^{-}$. We call $D_l$ the discrete series representation of $\PGL_2(\R)$ of minimal $\SO(2, \RR)$-type $l$.

\subsection{Shintani functions}
Let $f(\tau)$ be a cuspform on the upper half plane satisfying the modularity condition $f((a\tau+b)/(c\tau+d))=(c\tau+d)^{l}\,f(\tau)$ for any matrix $\left[\begin{smallmatrix} a & b \\ c & d \end{smallmatrix}\right]$ in a fixed congruence subgroup $\Gamma$ of $\PSL_2(\Z)$. Then it is lifted to a left $\Gamma$-invariant function $\tilde f$ on the group $\GL(2,\R)$ by setting 
$$
 \tilde f(g)=(\det g)^{l/2}(ci+d)^{-l}\,f\left(\tfrac{ai+b}{ci+d}\right) \times \delta(\det g>0), \qquad g=\left[\begin{smallmatrix} a & b \\ c & d \end{smallmatrix}\right]\in \GL(2,\R).
$$
Let $\tilde f_c$ be the complex conjugate of $\tilde f$. Then, $\tilde f_c$ satisfies the conditions 
\begin{align*}
\tilde f_c(gk_\theta)&=\tau_l(k_\theta)\,\tilde f_{c}(g), \hspace{2mm} (\forall k_\theta\in \SO(2, \RR)) , \hspace{7mm}
[R(\overline{W})\tilde f_{c}](g)=0.
\end{align*}
Since ${\rm{Ad}}(k_\theta)\overline{W}=e^{-2i\theta}\,\overline{W}$ in any $({\frak {gl}}_2(\RR), {\rm O}(2, \RR))$-module $(\pi,V)$, we have $\pi(\overline{W})V[\tau_{l}]\subset V[\tau_{l-2}]$, where
$$
 V[\tau_l]=\{v\in V|\,\pi(k_\theta)v=e^{il\theta}\,v\,(\forall k_\theta \in \SO(2, \RR))\,\}.
$$ 
Let $V$ be the $({\frak g}{\frak l}_2(\R),{\rm O}(2, \RR))$-submodule of the regular representation $L^{2}(\Gamma\bsl \GL(2,\R))$ generated by $\tilde f_c$. Then the condition above, or equivalently $\tilde f_{c} \in V[\tau_{l}]$ and $R(\overline{W})\tilde f_{c}=0$, tells us that inside the module $V$ (which is a finite sum of discrete series $D_{l}$) the vector $\tilde f_c$ is extremal.
For $z \in \CC$,
%a complex number $z$,
let $\chi_z$ be the quasi-character of the diagonal split torus $T$ defined by 
$
\chi_z\left(\left[\begin{smallmatrix} t_1 & 0 \\ 0 & t_2 \end{smallmatrix}\right]\right)=|t_1/t_2|^{z}.
$
The integral 
$$
\phi(g)=\int_{\Gamma \cap T \bsl T} \tilde f_c(h g)\,\chi_{-z}(h)\,\d h, \qquad g\in \GL(2,\R),
$$
often called the $(T,\chi_z)$-period integral of $\tilde f_c$, satisfies the following two conditions: 
\begin{itemize}
\item $
\phi\left(\left[\begin{smallmatrix} t_1 & 0 \\ 0 & t_2\end{smallmatrix} \right]\,g\,k_\theta\right)=|t_1/t_2|^{z}\,\tau_{l}(k_\theta)\,\phi(g) \quad
{\text{for all $\left[\begin{smallmatrix} t_1 & 0 \\ 0 & t_2\end{smallmatrix} \right] \in T$ and $\theta\in \R$}}, 
$
\item $R(\overline{W})\phi=0$.
\end{itemize}
A function having these properties is called a holomorphic Shintani function of weight $l$. The next proposition tells that these conditions determine the function $\phi(g)$ uniquely up to a constant multiple. 

\begin{prop} \label{Sh-ftn} $($\cite[Proposition 5.3]{Hirano}$)$ Let $z\in \C$. For each even integer $l\geq 2$, there exists a unique $\C$-valued $C^\infty$-function $\Psi^{(z)}(l;-)$ on $\GL(2,\R)$ with the properties:
\begin{itemize}
\item[(S-i)] It satisfies the equivariance condition 
$$
\Psi^{(z)}\left(l;\left[\begin{smallmatrix} t_1 & 0 \\ 0 & t_2\end{smallmatrix} \right]\,g\,k_\theta\right)=|t_1/t_2|^{z}\,\tau_{l}(k_\theta)\,\Psi^{(z)}(l;g) \quad{\text{for all $\left[\begin{smallmatrix} t_1 & 0 \\ 0 & t_2\end{smallmatrix} \right] \in T$ and $\theta\in \R$}}. 
$$
\item[(S-ii)] It satisfies the differential equation 
$$R(\overline{W})\,\Psi^{(z)}(l;-)=0.$$
\item[(S-iii)] $\Psi^{(z)}(l;1_2)=1$. 
\end{itemize}
We have the explicit formula
$$
 \Psi^{(z)}(l;a_r)=2^{-l/2}\,(-y)^{(2z-l)/4}\,(1-y)^{l/2}\qquad {\text{with $y=\left(\frac{e^{2r}-i}{e^{2r}+i}\right)^2$}}.$$
\end{prop}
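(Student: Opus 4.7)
The plan is to use the left $T$-equivariance and right $\SO(2,\RR)$-equivariance to reduce the problem to a first-order linear ODE along the one-parameter subgroup $\{a_r\}_{r\in\RR}$, solve it under the normalisation (S-iii), and verify that the resulting solution matches the closed form stated in the proposition.

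For uniqueness, I would first observe that (S-i) expresses $\Psi^{(z)}(l;-)$ on the open dense set $T\cdot\{a_r\}\cdot\SO(2,\RR)$ in terms of the single function $\phi(r):=\Psi^{(z)}(l;a_r)$; since $\dim T\backslash \GL(2,\RR)/\SO(2,\RR)=1$ generically with $\{a_r\}$ a cross-section, smoothness on the exceptional locus is then forced by continuity and the task reduces to determining $\phi$.

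To extract the ODE I would decompose $\bar W\in\mathfrak{gl}_2(\CC)$ relative to the $r$-dependent direct-sum splitting $\mathfrak{gl}_2(\CC)=\Ad(a_r^{-1})(\mathfrak t_\CC)\oplus \CC A_1\oplus \mathfrak k_\CC$ with $A_1=\tfrac{d}{dr}a_r\big|_{r=0}=\left[\begin{smallmatrix}0&1\\1&0\end{smallmatrix}\right]$ and $\mathfrak k_\CC=\CC K_1$, $K_1=\left[\begin{smallmatrix}0&-1\\1&0\end{smallmatrix}\right]$. A short calculation using $\Ad(a_r^{-1})H=(\ch 2r)H-(\sh 2r)K_1$ gives
\[
\bar W = \Ad(a_r^{-1})\!\left(\tfrac{H}{2\ch 2r}\right) + \tfrac{i}{2}A_1 + \tfrac{\th 2r}{2}K_1.
\]
The three summands act on $\Psi^{(z)}(l;-)$ at $g=a_r$ by, respectively, the scalar $d\chi_z(H/(2\ch 2r))=z/\ch 2r$, the operator $\tfrac{i}{2}\,d/dr$ on $\phi(r)$, and the scalar $d\tau_l((\th 2r)K_1/2)=il(\th 2r)/2$, so (S-ii) becomes
\[
\phi'(r) = \left(\tfrac{2iz}{\ch 2r} - l\,\th 2r\right)\phi(r),
\]
whose unique solution with $\phi(0)=1$ is $\phi(r) = e^{iz\arctan(\sh 2r)}\,(\ch 2r)^{-l/2}$.

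To match this with $2^{-l/2}(-y)^{(2z-l)/4}(1-y)^{l/2}$ for $y=\bigl(\tfrac{e^{2r}-i}{e^{2r}+i}\bigr)^2$, I would use the algebraic identities $1-y=4ie^{2r}/(e^{2r}+i)^2$ and $(e^{2r}+i)(e^{2r}-i)=2e^{2r}\ch 2r$ to recover the weight-$l$ factor, and the trigonometric identity $\arctan(\sh 2r) = 2\arctan(e^{2r})-\pi/2$ (equivalent to $\arg(-y)=2\arctan(\sh 2r)$ in the principal branch) to recover the factor $(-y)^{z/2}$; existence on all of $G$ then follows by defining $\Psi^{(z)}(l;-)$ via (S-i) from this closed-form $\phi$ and verifying the three axioms directly. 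The main technical obstacle I expect is the branch-cut bookkeeping in this last step: the factors $(-y)^{(2z-l)/4}$ and $(1-y)^{l/2}$ must be defined consistently with the paper's principal-log convention so that the closed form is actually smooth across $r=0$ (where $y=-1$ sits on the branch cut of $\log z$) and globally equal to $e^{iz\arctan(\sh 2r)}(\ch 2r)^{-l/2}$ for all $r\in\RR$. The reduction to, and solution of, the ODE are otherwise mechanical.
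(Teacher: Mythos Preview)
Your approach is correct and is the standard one: reduce via the equivariance (S-i) to a scalar function $\phi(r)=\Psi^{(z)}(l;a_r)$ on the one-parameter family $\{a_r\}$, translate $R(\bar W)\Psi=0$ into the first-order linear ODE $\phi'(r)=\bigl(\tfrac{2iz}{\ch 2r}-l\,\th 2r\bigr)\phi(r)$ via the decomposition $\bar W=\Ad(a_r^{-1})\bigl(\tfrac{H}{2\ch 2r}\bigr)+\tfrac{i}{2}A_1+\tfrac{\th 2r}{2}K_1$, integrate, and match with the closed form. The paper itself gives no proof but simply cites \cite[Proposition~5.3]{Hirano}, so there is nothing in the paper to compare against; your derivation is presumably close to Hirano's.

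One minor correction on your stated obstacle: there is in fact no branch-cut difficulty at $r=0$. At that point $y=\bigl(\tfrac{1-i}{1+i}\bigr)^2=(-i)^2=-1$, so $-y=1$ and $1-y=2$ are both strictly positive reals, well inside the domain of the principal logarithm. More globally, your own identity $\arg(-y)=\pi-4\arctan(e^{-2r})$ lies in $(-\pi,\pi)$ for every finite $r$, so the principal branch of $(-y)^{(2z-l)/4}$ is smooth on all of $\RR$; the boundary value $-y\to -1$ is approached only as $r\to\pm\infty$. The bookkeeping you anticipate is therefore straightforward.
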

We remark that the function $\Psi^{(z)}(l;-)$ is characterized by its restriction to the torus $A=\{a_r|\,r\in \R\,\}$ due to the property (S-i) and the decomposition $\GL(2, \RR)=T\,A\,\SO(2, \RR)$
(cf.\ \cite[Lemma 3.1]{Hirano}).

\begin{lem}\label{Shintani-val-unipotent}
Let $\Psi^{(z)}(l;-)$ be as in Proposition~\ref{Sh-ftn}. Then, 
$$ \Psi^{(z)}\left(l;\left[\begin{smallmatrix} 1 & x \\ 0 & 1\end{smallmatrix} \right]\right)=(1+ix)^{z-l/2}, \qquad x \in \R.
$$
\end{lem}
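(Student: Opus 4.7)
The plan is to invoke the uniqueness asserted in Proposition~\ref{Sh-ftn}: it suffices to match the proposed value $(1+ix)^{z-l/2}$ at $n_{x}$ with the explicit formula for $\Psi^{(z)}(l;a_{r})$ after transporting $n_{x}$ into a left-$T$ and right-$\SO(2,\RR)$ orbit of some $a_{r}$. First I would seek a decomposition $n_{x} = h \cdot a_{r} \cdot k_{\theta}$ with $h = \diag(t_{1}, t_{2}) \in T$, $r \in \RR$ and $k_{\theta} \in \SO(2,\RR)$. Solving the matrix identity entry by entry yields
$\sinh(2r) = x$, $t_{1}/t_{2} = \cosh(2r) = \sqrt{1+x^{2}}$, and $\tan\theta = -\tanh r$; in particular $e^{i\theta} = (1 - i\tanh r)/\sqrt{1 + \tanh^{2} r}$.

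Next, the equivariance condition (S-i) in Proposition~\ref{Sh-ftn} gives
$$
\Psi^{(z)}(l; n_{x}) = (t_{1}/t_{2})^{z}\, e^{il\theta}\, \Psi^{(z)}(l; a_{r}),
$$
and I would simplify each factor on the right using the pair of key identities
$$
(e^{2r}-i)^{2} = -2i\,e^{2r}\,(1+ix), \qquad (e^{2r}+i)^{2} = 2i\,e^{2r}\,(1-ix),
$$
both of which follow immediately from $\sinh(2r) = x$. These identities yield $-y = (1+ix)/(1-ix)$ and $1-y = 2/(1-ix)$ in the notation of Proposition~\ref{Sh-ftn}, whence
$$
\Psi^{(z)}(l; a_{r}) = (1+ix)^{(2z-l)/4}\,(1-ix)^{-(2z+l)/4}.
$$

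The two remaining factors simplify in parallel: $(t_{1}/t_{2})^{z} = (1+x^{2})^{z/2} = (1+ix)^{z/2}(1-ix)^{z/2}$, while combining the identity $\cosh r \pm i \sinh r = (1 \pm i)(e^{2r} \mp i)/(2 e^{r})$ with the same two key identities yields $e^{il\theta} = ((1-ix)/(1+ix))^{l/4}$. Multiplying the three factors together, the exponents of $(1+ix)$ and $(1-ix)$ in the resulting product sum to $\tfrac{z}{2} - \tfrac{l}{4} + \tfrac{2z-l}{4} = z - \tfrac{l}{2}$ and $\tfrac{z}{2} + \tfrac{l}{4} - \tfrac{2z+l}{4} = 0$ respectively, so the product collapses to $(1+ix)^{z-l/2}$, as claimed.

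The main subtlety is the bookkeeping of branches of the various complex powers above; I would pin these down by requiring continuity in $x \in \RR$ together with the normalization at $x = 0$ forced by (S-iii), which renders the chain of equalities unambiguous. Everything else then reduces to the matrix decomposition and elementary algebraic consequences of $\sinh(2r) = x$.
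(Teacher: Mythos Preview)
Your proof is correct and follows essentially the same approach as the paper: decompose $n_{x}$ as $\diag(t,t^{-1})\,a_{r}\,k_{\theta}$, identify $\sinh 2r = x$, and substitute into the explicit formula of Proposition~\ref{Sh-ftn} via the equivariance (S-i). The paper records $y=\tfrac{x-i}{x+i}$ and $1-y=\tfrac{2i}{x+i}$ and leaves the remaining algebra implicit, whereas you organize the same computation through the identities $(e^{2r}\mp i)^{2}=\mp 2ie^{2r}(1\pm ix)$ and track the exponents of $1\pm ix$ explicitly; your closing remark on fixing branches by continuity and (S-iii) is a welcome addition that the paper leaves tacit.
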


\begin{proof}
By a direct computation, $\left[\begin{smallmatrix} 1 & x \\ 0 & 1\end{smallmatrix} \right]
=\left[\begin{smallmatrix} t & 0 \\ 0 & t^{-1} \end{smallmatrix} \right]
\, a_r\,k_\theta$ with 
{\allowdisplaybreaks 
\begin{align*}
t&=(1+x^2)^{1/4}, \qquad \ch 2r=(1+x^{2})^{1/2}, \qquad \sh 2r=x,\\
e^{i\theta}&=\frac{(\sqrt{1+x^{2}}+1)^{1/2}}{\sqrt{2}\,(1+x^2)^{1/4}}\,\left(1-\frac{ix}{\sqrt{1+x^{2}}+1}\right),
\end{align*}
}and $y=\frac{x-i}{x+i}$, $1-y=\frac{2i}{x+i}$. Using these, we have the desired formula by a direct computation. 
\end{proof}

\begin{lem}\label{estimate of Shintani}
We have the estimate
$$
|\Psi^{(z)}\left(l;\left[\begin{smallmatrix} t_{1} & 0\\ 0 & t_{2} \end{smallmatrix}\right] a_{r}k\right)| \leq 2^{-l/2}\,|t_{1}/t_{2}|^{\Re(z)} e^{\pi |\Im(z)|/2} ( \ch 2r )^{-l/2}$$
for any $t_{1}, t_{2} \in \RR^{\times}$, $r \in \RR$ and $k \in \SO(2, \RR)$.
%In particular, $|\Psi^{(z)}(l;a_{r})|\, 2 \ch 2r$ is integrable on $\RR$ if $l \geq 4$.
\end{lem}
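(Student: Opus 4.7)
The strategy is to invoke the equivariance in (S-i) to reduce to bounding $\Psi^{(z)}(l;a_r)$ on the Cartan factor $a_r$ alone, and then to estimate the explicit formula of Proposition~\ref{Sh-ftn} factor by factor. Since $k \in \SO(2,\RR)$ is a rotation $k_\theta$ with $|\tau_l(k_\theta)| = |e^{il\theta}| = 1$, (S-i) immediately gives
\[
\left|\Psi^{(z)}\left(l;\left[\begin{smallmatrix} t_1 & 0 \\ 0 & t_2 \end{smallmatrix}\right] a_r k\right)\right| = |t_1/t_2|^{\Re(z)}\,|\Psi^{(z)}(l;a_r)|,
\]
so it suffices to establish the one-variable estimate for $|\Psi^{(z)}(l;a_r)|$.

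Substituting the explicit formula $\Psi^{(z)}(l;a_r) = 2^{-l/2}(-y)^{(2z-l)/4}(1-y)^{l/2}$, I would bound each factor separately. A direct calculation gives
\[
1-y = \frac{4ie^{2r}}{(e^{2r}+i)^2}, \qquad |1-y| = \frac{4e^{2r}}{e^{4r}+1} = \frac{2}{\ch 2r},
\]
so $|(1-y)^{l/2}| = (2/\ch 2r)^{l/2}$ holds with no branch ambiguity since $l/2 \in \NN$. For the factor $(-y)^{(2z-l)/4}$, the identity $|e^{2r}-i| = |e^{2r}+i| = \sqrt{e^{4r}+1}$ yields $|y|=|-y|=1$, so the factor lies on the unit circle raised to a complex power and its modulus depends only on $\arg(-y)$.

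The single delicate step is to pin down $\arg(-y)$ so that the principal-branch exponentiation is available with a controlled argument. Writing $(e^{2r}-i)/(e^{2r}+i) = \exp(-2i\arctan(e^{-2r}))$, we obtain $-y = \exp(i\{\pi - 4\arctan(e^{-2r})\})$, and since $\arctan(e^{-2r}) \in (0,\pi/2)$, the exponent $\theta(r) = \pi - 4\arctan(e^{-2r})$ always lies in the open interval $(-\pi,\pi)$ and thus coincides with the principal argument of $-y$. With $\alpha = (2z-l)/4$, so that $\Im\alpha = \Im(z)/2$, the principal branch gives $|(-y)^\alpha| = e^{-\theta(r)\,\Im\alpha}$, which is bounded by $e^{\pi|\Im(z)|/2}$ because $|\theta(r)| < \pi$. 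Combining the three factors produces the claimed pointwise inequality.

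Finally, integrability of $|\Psi^{(z)}(l;a_r)|\,2\ch 2r$ is immediate: the bound just established majorizes this product by a constant multiple of $(\ch 2r)^{1 - l/2}$, and for $l \ge 4$ the exponent $1 - l/2$ is at most $-1$, giving decay of order $e^{-2(l/2-1)|r|}$ as $|r|\to\infty$, which is integrable on $\RR$. The main (indeed the only) obstacle is the branch determination in the middle step; once $\arg(-y)$ is placed inside $(-\pi,\pi)$, the rest is routine manipulation of moduli.
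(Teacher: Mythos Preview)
Your approach is the same as the paper's: reduce via the equivariance (S-i) to the Cartan element $a_r$, then bound each factor of the explicit formula in Proposition~\ref{Sh-ftn}. Your treatment of the branch of $(-y)^{(2z-l)/4}$ is in fact more careful than the paper's, which simply asserts $|(-y)^{(2z-l)/4}|\le e^{\pi|\Im(z)|/2}$ from $|y|=1$.

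There is, however, one point where your write-up does not match your own computation. You correctly obtain $|1-y|=2/\ch 2r$ (check $r=0$: $y=-1$, $|1-y|=2$, $\ch 0=1$), whereas the paper records $|1-y|=(\ch 2r)^{-1}$, which is a slip. When you multiply the three factors, the $2^{-l/2}$ from the formula cancels against the $2^{l/2}$ coming from $|1-y|^{l/2}=(2/\ch 2r)^{l/2}$, and what you have actually established is
\[
\bigl|\Psi^{(z)}(l;[\begin{smallmatrix} t_1 & 0\\ 0 & t_2\end{smallmatrix}]\,a_r k)\bigr|\le |t_1/t_2|^{\Re(z)}\,e^{\pi|\Im(z)|/2}\,(\ch 2r)^{-l/2},
\]
\emph{without} the leading $2^{-l/2}$. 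So your sentence ``Combining the three factors produces the claimed pointwise inequality'' is not accurate: the stated inequality with $2^{-l/2}$ is in fact false (take $z\in\RR$ and $r=0$; then the left side is $1$ while the right side is $2^{-l/2}<1$). The paper's own proof inherits this harmless constant error via the slip in $|1-y|$. The integrability assertion is unaffected, since only the $(\ch 2r)^{-l/2}$ decay matters there, and all later uses of the lemma depend only on this decay and the $e^{\pi|\Im(z)|/2}$ growth.
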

\begin{proof}
Set $y=\left(\frac{e^{2r}-i}{e^{2r}+i}\right)^{2}$. Then,  
$$y = \left(\tanh 2r - \frac{i}{\cosh 2r}\right)^{2} = 1 - \frac{2}{\cosh^{2}2r} - \frac{2 i \tanh 2r}{\ch 2r}.$$
Hence, by a direct computation, we have 
$|1-y| = (\cosh 2r)^{-1}.$
Furthermore, by $|y|=1$, we have
$|(-y)^{(2z-l)/4}| \leq e^{\pi |\Im(z)|/2}$.
This completes the proof.
\end{proof}

\subsection{An inner-product formula of Shintani functions}

For an even integer $l\geq 2$ and $z\in \C$, let us consider the integral
$$
C_l(z)= \int_{1}^{\infty} \left\{\left(-\left(\frac{u-i}{u+i}\right)^2\right)^{z}+\left(-\left(\frac{u+i}{u-i}\right)^2\right)^{z}\right\}\,(1+u^2)^{1-l}\,u^{l-2}\,\d u. 
$$
\begin{lem} \label{Clproperty}
\label{C_l(z)}
The integral $C_l(z)$ converges absolutely. It has the following properties. 
\begin{itemize}
\item[(i)] The function $z\mapsto C_l(z)$ is entire and satisfies the functional equation 
$$ C_l(-z)=C_l(z). $$
\item[(ii)] The value at $z=0$ is given by 
$$C_l(0)=2^{-1}{\Gamma((l-1)/2)^2}{\Gamma(l-1)}^{-1}= 2^{3-2l}{\pi\Gamma(l-1)}
{\Gamma(l/2)^{-2}}.$$
\item[(iii)] We have 
$$
 |C_l(z)|\leq C_l(0)\,\exp(\pi|\Im(z)|), \qquad z\in \C.
$$
\end{itemize}
\end{lem}
\begin{proof} By the variable change $v^{-1}=1+u^2$, we have
\begin{align*}
C_l(0)&=2\int_{1}^{\infty}(1+u^2)^{1-l}u^{l-2}\,\d u
={2}^{-1}\int_{0}^{1}(1-v)^{(l-3)/2}v^{(l-3)/2}\,\d v 
=2^{-1}{\Gamma((l-1)/2)^2}{\Gamma(l-1)}^{-1}
\end{align*}
as desired in (ii). Remark that the second equality in (ii) is obtained by the duplication formula.
Since $w=-((u-i)/(u+i))^2$ satisfies $|w|=1$, by definition, we have $w^z=\exp(i\theta z)$ with $\theta\in (-\pi,\pi]$. Thus, $|w^{z}|=\exp(-\Im(z)\,\theta)\leq \exp(\pi|\Im z|)$, by which (iii) is immediate. From definition, we have the relation $w^{-z}=(w^{-1})^{z}$, which shows the functional equation in (i). 
\end{proof}

The inner-product of Shintani functions $\Psi^{(z)}(l;-)$ and $\Psi^{(-\bar z)}(l;-)$ is given as follows.

\begin{prop}\label{lem:inner prod of Shintani}
We have
\begin{align*}
\int_{T \bsl \GL(2,\R)} \Psi^{(z)}(l;g)\,\overline{\Psi^{(-\bar z)}(l;g)}\,\d g = 2^{l-1}\,C_l(z).
\end{align*}
\end{prop}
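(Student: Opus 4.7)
The plan is to transform both sides into the same trigonometric integral $\int_{-\pi/2}^{\pi/2}\cos^{l-2}\theta\,e^{2iz\theta}\,d\theta$.

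For the left-hand side, I would first observe that the integrand is left $T$-invariant: by (S-i) applied to both factors, the transformation factors $\chi_{z}(t)$ and $\overline{\chi_{-\bar z}(t)}=\chi_{-z}(t)$ cancel. It is also right $\SO(2,\RR)$-invariant, because $\tau_{l}$ and $\overline{\tau_{l}}$ cancel. Using the Iwasawa decomposition and choosing coset representatives $g=nk_{\theta}$ for $T\backslash G$ with $n=\left[\begin{smallmatrix} 1 & x \\ 0 & 1\end{smallmatrix}\right]$, Lemma~\ref{Shintani-val-unipotent} together with (S-i) gives $\Psi^{(z)}(l;nk_{\theta})=e^{il\theta}(1+ix)^{z-l/2}$. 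The principal-branch identities $(1\pm ix)=(1+x^{2})^{1/2}e^{\pm i\arctan x}$ then yield
$$
\Psi^{(z)}(l;nk_{\theta})\,\overline{\Psi^{(-\bar z)}(l;nk_{\theta})} = (1+ix)^{z-l/2}(1-ix)^{-z-l/2} = (1+x^{2})^{-l/2}\,e^{2iz\arctan x}.
$$
Up to the overall measure constant (which I pin down at $z=0$), the integral thus reduces to $\int_{\R}(1+x^{2})^{-l/2}\,e^{2iz\arctan x}\,dx$; the substitution $\theta=\arctan x$ turns this into $\int_{-\pi/2}^{\pi/2}\cos^{l-2}\theta\,e^{2iz\theta}\,d\theta$.

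To rewrite $C_l(z)$ in the same form, I would substitute $u=\cot(\theta/2)$, equivalently $\theta=2\arctan(1/u)\in(0,\pi/2)$, for $u\in(1,\infty)$. Using $1+u^{2}=\csc^{2}(\theta/2)$ and $\sin\theta=2\sin(\theta/2)\cos(\theta/2)$ one computes $(1+u^{2})^{1-l}u^{l-2}\,du = -2^{-(l-1)}\sin^{l-2}\theta\,d\theta$. A principal-branch check shows that for $u>1$ the quantity $-((u-i)/(u+i))^{2}$ has argument $\pi-4\arctan(1/u)=\pi-2\theta$, so the two summands in $C_l(z)$ become $e^{iz(\pi-2\theta)}$ and $e^{-iz(\pi-2\theta)}$. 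Hence
$$
C_{l}(z) = 2^{-(l-1)}\int_{0}^{\pi/2}\bigl(e^{iz(\pi-2\theta)}+e^{-iz(\pi-2\theta)}\bigr)\sin^{l-2}\theta\,d\theta.
$$
A final change of variable $\delta=\pi/2-\theta$ converts this into $2^{-(l-1)}\int_{0}^{\pi/2}(e^{2iz\delta}+e^{-2iz\delta})\cos^{l-2}\delta\,d\delta$, and folding via the $\delta\mapsto-\delta$ symmetry yields $2^{-(l-1)}\int_{-\pi/2}^{\pi/2}\cos^{l-2}\delta\,e^{2iz\delta}\,d\delta$. Matching against the left-hand side gives $I=2^{l-1}C_l(z)$.

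The main technical obstacle is the measure bookkeeping on $T\backslash G$ expressed in $(n,k)$-coordinates, in view of the overcount in the Iwasawa decomposition. I would discharge this by testing at $z=0$, where the Beta-integral $\int_{\R}(1+x^{2})^{-l/2}\,dx=\sqrt\pi\,\Gamma((l-1)/2)/\Gamma(l/2)$ equals $2^{l-1}C_l(0)$ by Legendre's duplication formula applied to Lemma~\ref{C_l(z)}(ii); the identity for general $z$ then follows by analyticity in $z$.
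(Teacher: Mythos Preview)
Your approach is correct and genuinely different from the paper's. The paper parametrizes $T\backslash G$ by the one-parameter family $\{a_r\}$ via the formula $\int_{T\backslash G}f\,dg=2\int_\R f(a_r)\cosh 2r\,dr$ and plugs in the explicit formula for $\Psi^{(z)}(l;a_r)$ from Proposition~\ref{Sh-ftn}; the substitution $u=e^{2r}$ then recovers $C_l(z)$ directly from its defining integral. You instead use the Iwasawa picture and Lemma~\ref{Shintani-val-unipotent}, reducing both sides to the same trigonometric integral $\int_{-\pi/2}^{\pi/2}\cos^{l-2}\theta\,e^{2iz\theta}\,d\theta$. Your substitution $u=\cot(\theta/2)$ on the $C_l(z)$ side is clean and the principal-branch check is right. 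Either route works; yours has the advantage of avoiding the somewhat opaque variable $y=\bigl(\tfrac{e^{2r}-i}{e^{2r}+i}\bigr)^2$.

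There is one soft spot. Your plan to ``discharge'' the measure constant by testing at $z=0$ is circular: you have no independent computation of the left-hand side at $z=0$ (or at any other value), so matching $I(0)=2^{l-1}C_l(0)$ only confirms your transformation of $C_l(z)$, not the constant in $\mathrm{LHS}=c\cdot I(z)$. The honest fix is to do the measure bookkeeping you flagged. From $dg=dh\,dn\,dk$ one gets $\int_{T\backslash G}f=\int_N\int_{\mathbf K}f(nk)\,dk\,dn$; since $\mathbf K=O(2)$ has two $SO(2)$-cosets and $f$ is only right $SO(2)$-invariant, the $\mathbf K$-integral is $\tfrac12\bigl(f(n)+f(nw)\bigr)$ with $w=\mathrm{diag}(1,-1)$. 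Using $nw=w\cdot n(-x)$ and the left $T$-equivariance (with $|1/(-1)|^{z}=1$) you find $f(n(x)w)=f(n(-x))$, so after integrating over $x\in\R$ the two pieces coincide and the constant is exactly $1$. With that in place your argument is complete.
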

\begin{proof} Set $f(g)=\Psi^{(z)}(l;g)\,\overline{\Psi^{(-\bar z)}(l;g)}$.  We have 
\begin{align*}
\int_{T\bsl \GL(2,\R)} f(g)\,\d g=2\,\int_\R f(a_r)\,\ch 2r\, \d r 
\end{align*}
by the formula \cite[(3.3)]{Tsud}, which is checked by computing the Jacobian of the coordinate transform from the Iwasawa decomposition to the decomposition $G=T A\,\SO(2)$. From Proposition~\ref{Sh-ftn}, 
\begin{align*}
f(a_r)&=2^{-l}\,(-y)^{-l/2+z}\,(1-y)^{l} \quad \text{with $y=\left(\frac{e^{2r}-i}{e^{2r}+i}\right)^{2}$}.  
\end{align*}
By this, we compute 
{\allowdisplaybreaks
\begin{align*}
2 \int_{0}^{+\infty}f(a_r)\,\ch 2r\,\d r
&=2^{1-l}\int_{0}^{\infty} (-y)^{-l/2+z}\,(1-y)^{l}\,\ch 2r\,\d r \\
&=2^{l-1} \int_1^{\infty}\left\{-\left(\frac{u-i}{u+i}\right)^2\right\}^{z} (1+u^2)^{1-l}\,u^{l-2}\,\d u, 
\end{align*}}
setting $u=e^{2r}$. In the same way, we have
{\allowdisplaybreaks
\begin{align*}
2 \int_{-\infty}^{0} f(a_r)\,\ch 2r\,\d r
=2^{l-1} \int_1^{\infty}\left\{-\left(\frac{u+i}{u-i}\right)^2\right\}^{z} (1+u^2)^{1-l}\,u^{l-2}\,\d u. 
\end{align*}}
\end{proof}

%%%%%%%%%%%%%%%%%%%%%%%%%%%%%%%%%%%%%%%%%%%%%%%%%%%%%%
\subsection{Orbital integrals of Shintani functions}
%%%%%%%%%%%%%%%%%%%%%%%%%%%%%%%%%%%%%%%%%%%%%%%%%%%%%%
Set $w_{0}= k_{\pi/2}=\left[\begin{smallmatrix}0 & -1\\1 & 0 \end{smallmatrix}\right]$.

\begin{lem}\label{lem:orbital integral of Shintani}
If $0< \Re(z) <l/2$, then, for $\e,\,\e' \in \{0, 1\}$, we have
{\allowdisplaybreaks
\begin{align*}
\int_{\RR^{\times}}\Psi^{(0)}\left(l;\left[\begin{smallmatrix}1 & x\\ 0 &1\end{smallmatrix}\right]w_{0}^{\e'}\right)|x|^{z}\sgn^{\e}(x)d^{\times}x &=2 i^{l\e'}
{\Gamma(z)\Gamma(l/2-z)}{\Gamma(l/2)}^{-1}
i^{\e}\cos\left(\tfrac{\pi}{2}(z+\e)\right),
\\
\int_{\RR^{\times}}\Psi^{(0)}\left(l;\left[\begin{smallmatrix}1 & 0\\ x &1\end{smallmatrix}\right]w_{0}^{\e'}\right)|x|^{z}\sgn^{\e}(x)d^{\times}x &=2i^{l\e'}{\Gamma(z)\Gamma(l/2-z)}{\Gamma(l/2)}^{-1}(-i)^{\e}\cos\left(\tfrac{\pi}{2}(z+\e)\right).
\end{align*}
}
\end{lem}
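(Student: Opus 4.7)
The plan is to first reduce both identities to the case $\epsilon'=0$ using the right $\SO(2,\RR)$-equivariance of $\Psi^{(0)}(l;-)$, and then to evaluate each of the two integrals by an explicit formula for $\Psi^{(0)}(l;-)$ on the respective unipotent subgroup combined with the standard Mellin transform of $(1\pm ix)^{-l/2}$. Since $w_{0}=k_{\pi/2}\in \SO(2,\RR)$ and $\tau_{l}(w_{0})=e^{il\pi/2}=i^{l}$, property (S-i) of Proposition~\ref{Sh-ftn} gives $\Psi^{(0)}(l;gw_{0}^{\epsilon'})=i^{l\epsilon'}\Psi^{(0)}(l;g)$, which accounts for the factor $i^{l\epsilon'}$ appearing in both stated formulas.

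For the first integral, Lemma~\ref{Shintani-val-unipotent} specialized to $z=0$ gives $\Psi^{(0)}(l;\left[\begin{smallmatrix}1 & x\\ 0 & 1\end{smallmatrix}\right])=(1+ix)^{-l/2}$. Writing $d^{\times}x=|x|^{-1}\,dx$ and splitting the integral over $\RR^{\times}$ into positive and negative pieces reduces it to
$$\int_{0}^{\infty}\bigl[(1+ix)^{-l/2}+(-1)^{\epsilon}(1-ix)^{-l/2}\bigr]x^{z-1}\,dx.$$
The standard Mellin identity $\int_{0}^{\infty}(1+ax)^{-l/2}x^{z-1}\,dx=a^{-z}\Gamma(z)\Gamma(l/2-z)/\Gamma(l/2)$, valid for $0<\Re(z)<l/2$ and $a\in\{i,-i\}$, combined with the branch convention $i^{-z}=e^{-i\pi z/2}$ and $(-i)^{-z}=e^{i\pi z/2}$, then produces $(e^{-i\pi z/2}+(-1)^{\epsilon}e^{i\pi z/2})\,\Gamma(z)\Gamma(l/2-z)/\Gamma(l/2)$. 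A check in the two cases $\epsilon\in\{0,1\}$ confirms that the prefactor equals $2\,i^{\epsilon}\cos(\pi(z+\epsilon)/2)$.

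For the second integral, the key new input is the value $\Psi^{(0)}(l;\left[\begin{smallmatrix}1 & 0\\ x & 1\end{smallmatrix}\right])$. I would compute this via the Iwasawa decomposition $\left[\begin{smallmatrix}1 & 0\\ x & 1\end{smallmatrix}\right]=\left[\begin{smallmatrix}a & 0\\ 0 & d\end{smallmatrix}\right]\left[\begin{smallmatrix}1 & x\\ 0 & 1\end{smallmatrix}\right]k_{\theta}$ with $a=(1+x^{2})^{-1/2}$, $d=(1+x^{2})^{1/2}$, and $e^{i\theta}=(1+ix)(1+x^{2})^{-1/2}$. Applying (S-i) at $z=0$, Lemma~\ref{Shintani-val-unipotent} at $z=0$, and the identity $1+x^{2}=(1+ix)(1-ix)$ collapses the product to
$$\Psi^{(0)}(l;\left[\begin{smallmatrix}1 & 0\\ x & 1\end{smallmatrix}\right])=e^{il\theta}(1+ix)^{-l/2}=\frac{(1+ix)^{l/2}}{(1+x^{2})^{l/2}}=(1-ix)^{-l/2},$$
namely the complex conjugate (in the real variable $x$) of the value obtained in the upper-triangular case. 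Repeating the Mellin computation with $1+ix$ replaced by $1-ix$ simply swaps $e^{-i\pi z/2}\leftrightarrow e^{i\pi z/2}$, producing $(e^{i\pi z/2}+(-1)^{\epsilon}e^{-i\pi z/2})\Gamma(z)\Gamma(l/2-z)/\Gamma(l/2)=2(-i)^{\epsilon}\cos(\pi(z+\epsilon)/2)\,\Gamma(z)\Gamma(l/2-z)/\Gamma(l/2)$, which is the claimed formula.

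The only mildly nontrivial step is the Iwasawa computation for the lower unipotent matrix together with the algebraic collapse $e^{il\theta}(1+ix)^{-l/2}=(1-ix)^{-l/2}$; everything else is routine once the branches of the complex powers are tracked consistently with the paper's convention. Convergence throughout the strip $0<\Re(z)<l/2$ is automatic from $|(1\pm ix)^{-l/2}|=(1+x^{2})^{-l/4}$ together with the behavior of $|x|^{z-1}$ near $0$ and $\infty$, so no additional analytic justification is required.
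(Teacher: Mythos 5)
Your proposal is correct and matches the paper's proof essentially step for step: the same reduction via $\Psi^{(0)}(l;gw_0)=i^l\Psi^{(0)}(l;g)$, the same Iwasawa decomposition of $\left[\begin{smallmatrix}1&0\\x&1\end{smallmatrix}\right]$ yielding $(1-ix)^{-l/2}$, and the same Mellin evaluation $\int_0^\infty(1\pm ix)^{-l/2}x^{z-1}\,dx=(\pm i)^{-z}\Gamma(z)\Gamma(l/2-z)/\Gamma(l/2)$ followed by the trigonometric repackaging of $i^{\mp z}+(-1)^\epsilon i^{\pm z}$. The only cosmetic difference is that the paper computes $\Psi^{(z)}(l;\left[\begin{smallmatrix}1&0\\t&1\end{smallmatrix}\right])=(1-it)^{-z-l/2}$ for general $z$ before setting $z=0$, whereas you specialize to $z=0$ from the start; this has no mathematical effect.
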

\begin{proof} Let $J_{l,\epsilon}(z)$ denote the first integral with $\e'=0$. From Lemma~\ref{Shintani-val-unipotent}, we have $J_{l,\epsilon}(z)=J^+_l(z)+(-1)^{\epsilon}J^{-}_l(z)$ with 
\begin{align*}
J^{\pm}_l(z)&=\int_{0}^{\infty}(1\pm ix)^{-l/2}\,x^{z}\,\d^\times x.
\end{align*}
By the formula \cite[3.194.3]{Gradshteyn}, we have 
\begin{align*}
J^{\pm}_l(z)=(\pm i)^{-z}B(z,l/2-z)=(\pm i)^{-z}{\Gamma(z)\Gamma(l/2-z)}{\Gamma(l/2)}^{-1} \quad (l/2>\Re(z)>0).
\end{align*}
Hence, 
\begin{align*}
J_{l,\epsilon}(z)={\Gamma(z)\Gamma(l/2-z)}{\Gamma(l/2)}^{-1}\{i^{-z}+(-1)^{\epsilon} (-i)^{-z}\}.
\end{align*}
Since $i^{-z}+(-1)^{\epsilon} (-i)^{-z}=2 i^{\e} \cos(\pi (z+\epsilon)/2)$, we are done. We have the Iwasawa decomposition  
\begin{align*}
\left[\begin{smallmatrix} 1 & 0 \\ x & 1\end{smallmatrix} \right]
&=\left[\begin{smallmatrix} \frac{1}{\sqrt{1+x^{2}}} & 0 \\ 0 & \sqrt{1+x^{2}} \end{smallmatrix} \right]
\left[\begin{smallmatrix} 1 & x \\ 0 & 1 \end{smallmatrix} \right]
\,k_\theta \qquad {\text{with $e^{i\theta}=\frac{1+ix}{\sqrt{1+x^{2}}}$}}.
\end{align*}
Hence, by Lemma \ref{Shintani-val-unipotent}, we obtain
\begin{align*}
\Psi^{(z)}\left(l;\left[\begin{smallmatrix}1&0\\x&1\end{smallmatrix}\right]\right)
= & \left(\frac{1}{1+x^{2}}\right)^{z}\times \left(\frac{1+ix}{\sqrt{1+x^{2}}}\right)^{l}\times (1+ix)^{z-l/2}=(1-ix)^{-z-l/2}
\end{align*}
Using this formula, in the same way as above, we can prove the second formula with $\e'=0$. The remaining two formulas follow immediately from the proved ones by the relation $\Psi^{(0)}(l;gw_0)=i^{l}\,\Psi^{(0)}(l;g)$.  
\end{proof}

%%%%%%%%%%%%%%%%%%%%%%%%%%%%%%%%%%%%%%%%%%%%%%%%%%%%%%%%%%%%%%%%%%%%%%%%%%%%%%%%
\section{Green's functions on $\GL(2)$ over non-archimedean local fields}
%%%%%%%%%%%%%%%%%%%%%%%%%%%%%%%%%%%%%%%%%%%%%%%%%%%%%%%%%%%%%%%%%%%%%%%%%%%%%%%%
This section is a review of results in \cite[\S 5]{Tsud}. We fix a place $v\in \Sigma_\fin$. For $z\in \C$, there exists a unique function $\Phi_{0,v}^{(z)}:G_v\rightarrow \C$ such that
\begin{align}
\Phi_{0,v}^{(z)}(\left[\begin{smallmatrix} t_1 & 0 \\ 0 & t_2 \end{smallmatrix}\right]\left[\begin{smallmatrix} 1& x \\ 0 & 1\end{smallmatrix}\right] k)=|t_1/t_2|_v^{z}\,\delta(x\in \cO_v), \qquad \left[\begin{smallmatrix} t_1 & 0 \\ 0 & t_2 \end{smallmatrix}\right]\in H_v,\,\left[\begin{smallmatrix} 1& x \\ 0 & 1\end{smallmatrix}\right]\in N_v ,\,k\in \bK_v. 
 \label{vPhi_0}
\end{align}

 Given $z\in \C$ and $s\in \C/4 \pi i (\log q_v)^{-1}\Z$, we consider the following inhomogeneous equation
\begin{align}
R\left({\bT}_v- (q_v^{(1-s)/2}+q_v^{(1+s)/2})\,1_{\bK_v}\right) \Psi=\Phi_{0,v}^{(z)}
 \label{v-Eigeneq1}
\end{align}
with the unknown function $\Psi:G_v\rightarrow \C$ possessing the $(H_v,\bK_v)$-equivariance: 
\begin{align}
\Psi\left(
\left[\begin{smallmatrix} t_1 & 0 \\ 0 & t_2 \end{smallmatrix}\right]
gk\right)=|t_1/t_2|_v^{z}\,\Psi(g), \qquad \left[\begin{smallmatrix} t_1 & 0 \\ 0 & t_2 \end{smallmatrix}\right]\in H_v,\,k\in \bK_v.
 \label{v-Eigeneq2}
\end{align}
Here ${\bT}_v$ and $1_{\bK_v}$ are elements of the spherical Hecke algebra $\cH(G_v,\bK_v)$ defined by 
\begin{align*}
\bT_v=\vol(\bfK_v, dg_v)^{-1}\,{\rm ch}_{\bK_v\left[\begin{smallmatrix} \varpi_v & 0 \\ 0 & 1 \end{smallmatrix}\right]\bK_v}, \qquad 1_{\bK_v}=\vol(\bfK_v, dg_v)^{-1}\,{\rm ch}_{\bK_v}.
\end{align*}
We note that $\vol(\bfK_v, dg_v)=q_v^{-3d_v/2}$ (see \S \ref{Haar measures}).

\begin{lem} $($\cite[Lemma 5.2]{Tsud}$)$ \label{v-Greenftn}
Suppose $\Re(s)>|2\Re(z)-1|$. Then, there exists a unique bounded function $\Psi_v^{(z)}(s;-):G_v\rightarrow \C$ satisfying \eqref{v-Eigeneq1} and \eqref{v-Eigeneq2}, whose values on $N_v$ are given by
\begin{align}
\Psi_v^{(z)}\left(s;\left[\begin{smallmatrix} 1 & x \\ 0 & 1 \end{smallmatrix}\right]\right)=-q^{-(s+1)/2}_v(1-q_v^{-(s-2z+1)/2})^{-1}(1-q_v^{-(s+2z+1)/2})^{-1}\sup(1,|x|_v)^{-(s-2z+1)/2}, \qquad x\in F_v.
 \label{v-Greenftn0}
\end{align}
\end{lem}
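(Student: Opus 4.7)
The plan is to use the $(H_v,\bK_v)$-equivariance~\eqref{v-Eigeneq2} to reduce~\eqref{v-Eigeneq1} to a scalar linear recurrence, then analyze bounded solutions. Writing $\psi(x) := \Psi(\left[\begin{smallmatrix} 1 & x \\ 0 & 1 \end{smallmatrix}\right])$, the right invariance under $\diag(u,1)\in\bK_v$ for $u\in\cO_v^\times$ (combined with the left $H_v$-equivariance) forces $\cO_v^\times$-scaling invariance, and the right invariance under $\left[\begin{smallmatrix} 1 & b \\ 0 & 1 \end{smallmatrix}\right]\in\bK_v$ for $b\in\cO_v$ forces $\cO_v$-translation invariance. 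Thus $\psi(x)=f(|x|_v)$ for a scalar sequence, constant $=: c_0$ on the orbit $|x|_v\le 1$; the unknown reduces to $(c_0, f(q_v), f(q_v^2),\dots)$.

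Next, using the right coset decomposition
\begin{align*}
\bK_v\,\left[\begin{smallmatrix} \varpi_v & 0 \\ 0 & 1 \end{smallmatrix}\right]\,\bK_v = \left[\begin{smallmatrix} 1 & 0 \\ 0 & \varpi_v \end{smallmatrix}\right]\,\bK_v \;\sqcup\; \bigsqcup_{j\in\cO_v/\gp_v} \left[\begin{smallmatrix} \varpi_v & j \\ 0 & 1 \end{smallmatrix}\right]\,\bK_v,
\end{align*}
I would expand $R(\bT_v)\Psi$ on $N_v$ and Iwasawa-normalize each summand. This converts~\eqref{v-Eigeneq1} into: (a) a second-order linear recurrence $q_v^{1-z}\,f(q_v^{m+1})+q_v^{z}\,f(q_v^{m-1})-\sigma_s\,f(q_v^m)=0$ for $m\ge 2$, with $\sigma_s := q_v^{(1-s)/2}+q_v^{(1+s)/2}$; (b) a boundary relation at $|x|_v=q_v$ linking $f(q_v^2), f(q_v), c_0$; and (c) an inhomogeneous equation at $x\in\cO_v$ where $\Phi_{0,v}^{(z)}$ contributes the right-hand side $1$.

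The identity $\sigma_s^2-4q_v = q_v(q_v^{s/2}-q_v^{-s/2})^2$ yields the characteristic roots $\mu_\pm = q_v^{z+(\pm s-1)/2}$ of $\mu^2-q_v^{z-1}\sigma_s\mu+q_v^{2z-1}$, and the hypothesis $\Re(s)>|2\Re(z)-1|$ ensures $|\mu_+|>1>|\mu_-|$; so the bounded solution has the form $f(q_v^m)=B\,\mu_-^m$. Substituting into (b) and applying the characteristic identity collapses to $B=c_0$, reducing the whole problem to the single inhomogeneous equation (c). The coefficient of $c_0$ in (c), after substituting $f(q_v)=c_0\mu_-$ and cancellation, becomes $q_v^{z}+q_v^{-z}-q_v^{(s+1)/2}-q_v^{-(s+1)/2}$; writing $a:=q_v^{z}$, $b:=q_v^{(s+1)/2}$ this equals $(a-b)(ab-1)/(ab)$ and re-exponentiates as $-q_v^{(s+1)/2}(1-q_v^{-(s-2z+1)/2})(1-q_v^{-(s+2z+1)/2})$. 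Inverting gives the stated $c_0$, and since $\mu_-^m=\sup(1,|x|_v)^{-(s-2z+1)/2}$ on $|x|_v\ge 1$ this yields~\eqref{v-Greenftn0}. Uniqueness is automatic: the bounded kernel of the homogeneous recurrence is one-dimensional, and (c) determines the remaining scalar. The trickiest step is the boundary bookkeeping at $|x|_v=1$, where $\sum_{j\in\cO_v/\gp_v}\psi((j+x)/\varpi_v)$ must be split into the unique $j\equiv -x\pmod{\gp_v}$ (contributing $c_0$) and the remaining $q_v-1$ terms (each contributing $f(q_v)$); missing this split gives a wrong recurrence and kills the $(1-q_v^{-(s+2z+1)/2})^{-1}$ factor in the answer.
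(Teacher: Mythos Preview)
Your argument is correct and is the standard approach. The paper itself does not give a proof of this lemma; it merely cites \cite[Lemma~5.2]{Tsud}. Your reduction via the Iwasawa decomposition and the right coset decomposition of the Hecke operator, followed by solving the resulting second-order recurrence under a boundedness constraint, is exactly how one expects such Green functions to be constructed, and your algebra checks out (in particular the factorization $q_v^{z}+q_v^{-z}-q_v^{(s+1)/2}-q_v^{-(s+1)/2}=-q_v^{(s+1)/2}(1-q_v^{-(s-2z+1)/2})(1-q_v^{-(s+2z+1)/2})$ and the identification $\mu_-^m=\sup(1,|x|_v)^{-(s-2z+1)/2}$).

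One small point worth making explicit: the phrase ``bounded function on $G_v$'' cannot be taken literally when $\Re(z)\neq 0$, since already $\Psi(\diag(t,1))=|t|_v^{z}\,c_0$ is unbounded. What is really meant (and what you implicitly use) is boundedness of $\psi$ on $N_v$, equivalently boundedness of the sequence $m\mapsto f(q_v^m)$. This is justified because the Iwasawa decomposition $G_v=H_vN_v\bK_v$ together with the equivariance \eqref{v-Eigeneq2} shows that $H_v\backslash G_v/\bK_v$ is parametrized by $\sup(1,|x|_v)\in\{q_v^m:m\geq 0\}$, so the datum of $\Psi$ is exactly the sequence $(f(q_v^m))_{m\geq 0}$. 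You might add a sentence to this effect; otherwise the proof is complete.
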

\begin{proof} We review the proof from \cite[Lemmas 5.1 and 5.2]{Tsud}. By the decomposition $G_{v}= \coprod_{m\ge 0} H_{v}{\rm n}_{m}\bfK_{v}$ with ${\rm n}_{m}= \left[\begin{smallmatrix} 1 & \varpi_{v}^{-m} \\ 0 & 1 \end{smallmatrix}\right]$, the condition (\ref{v-Eigeneq2}) implies that a function $\Psi$ satisfying (\ref{v-Eigeneq1}) and (\ref{v-Eigeneq2}) is determined by the system of numbers $a(m) = \Psi({\rm n}_{m})$, $m \ge 0$. The equation (\ref{v-Eigeneq1}) yields a recurrence relation among $a(m-1)$, $a(m)$ and $a(m+1)$. By solving it, we are done. 
\end{proof}

The following lemma is necessary in the proof of Proposition~\ref{GreenEquation}. 

\begin{lem} $($\cite[Lemma 5.4]{Tsud}$)$\label{NONARCHGREEN-EQ}
Let $f:G_v\rightarrow \C$ be a smooth function such that $f\left(\left[\begin{smallmatrix} t_1 & 0\\ 0 & t_2\end{smallmatrix}\right]gk\right)=|t_1/t_2|_v^{-z}\,f(g)$ for any $t_1,t_2\in F_v^\times$ and for any $k\in \bK_v$. Then, the equality
\begin{align}
\int_{H_v\bsl G_v} \Psi_v^{(z)}(s;g)\,[R(\bT_v-(q_v^{(1+s)/2}+q_v^{(1-s)/2})\,1_{\bK_v})f](g)\,\d g=\vol(H_v\bsl H_v\bK_v)\,f(1_{2})
 \label{NONARCHGREEN-EQ0}
\end{align}
holds as long as the integral on the left-hand side converges absolutely. 
\end{lem}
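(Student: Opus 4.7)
The plan is to prove the identity as a Green-type formula: transfer the operator $\bT_v - c\,1_{\bK_v}$ (with $c:=q_v^{(1+s)/2}+q_v^{(1-s)/2}$) from $f$ to $\Psi_v^{(z)}(s;-)$ by formal self-adjointness, apply the defining equation \eqref{v-Eigeneq1} to collapse the left-hand side to $\int_{H_v\bsl G_v}\Phi_{0,v}^{(z)}(g)\,f(g)\,dg$, and then evaluate that integral by exploiting the small support of $\Phi_{0,v}^{(z)}$.

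\textbf{Step 1 (self-adjointness).} The algebraic heart of the argument is that both $\bT_v$ and $1_{\bK_v}$ act as formally self-adjoint operators with respect to the pairing $(\Psi,f)\mapsto \int_{H_v\bsl G_v}\Psi(g)f(g)\,dg$ between functions of the matched left-equivariance types $\chi_z$ and $\chi_{-z}$. A standard change of variable $g'=gy$, using unimodularity of $G_v$, shows that the adjoint of right-convolution by a bi-$\bK_v$-invariant kernel $h$ is right-convolution by $h^\vee(y):=h(y^{-1})$; for $1_{\bK_v}$ this is visibly the same operator. For $\bT_v$, the inverse double coset is $\bK_v\alpha^{-1}\bK_v$ with $\alpha=\left[\begin{smallmatrix}\varpi_v & 0\\ 0 & 1\end{smallmatrix}\right]$, and using $w:=\left[\begin{smallmatrix}0 & -1\\ 1 & 0\end{smallmatrix}\right]\in\bK_v$ to swap the diagonal, one finds $\bK_v\alpha^{-1}\bK_v=\varpi_v^{-1}\cdot\bK_v\alpha\bK_v$. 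Since the equivariance character $|t_1/t_2|_v^z$ is trivial on the center $Z_v$, evaluating against $\Psi_v^{(z)}(s;-)$ absorbs the central factor $\varpi_v^{-1}$, and we conclude $R(\bT_v^\vee)\Psi_v^{(z)}(s;-)=R(\bT_v)\Psi_v^{(z)}(s;-)$.

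\textbf{Step 2 (collapse via the defining equation).} Combining Step 1 with \eqref{v-Eigeneq1}, the left-hand side of \eqref{NONARCHGREEN-EQ0} rewrites as $\int_{H_v\bsl G_v}[R(\bT_v-c\,1_{\bK_v})\Psi_v^{(z)}(s;-)](g)\,f(g)\,dg$, which by the defining equation equals $\int_{H_v\bsl G_v}\Phi_{0,v}^{(z)}(g)\,f(g)\,dg$.

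\textbf{Step 3 (evaluation of the remaining integral).} By \eqref{vPhi_0}, $\Phi_{0,v}^{(z)}$ is supported on $H_v\bK_v$ and satisfies $\Phi_{0,v}^{(z)}|_{\bK_v}\equiv 1$. Via the canonical identification $H_v\bsl H_v\bK_v\cong (H_v\cap\bK_v)\bsl\bK_v$, and using $\chi_z\cdot\chi_{-z}=1$ so that the product $\Phi_{0,v}^{(z)}f$ is a well-defined function on the quotient, the integral reduces to $\int_{(H_v\cap\bK_v)\bsl\bK_v}f(k)\,dk$. The hypothesis on $f$, specialised to $g=e$, reads $f(hk)=\chi_{-z}(h)f(e)$, hence $f\equiv f(e)$ on $\bK_v$; the integral therefore equals $f(e)\cdot\vol((H_v\cap\bK_v)\bsl\bK_v)=f(e)\cdot\vol(H_v\bsl H_v\bK_v)$, as desired. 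The principal subtlety, conceptual rather than computational, is the self-adjointness claim of Step 1: the naive identity $\bK_v\alpha\bK_v=\bK_v\alpha^{-1}\bK_v$ fails inside $\GL_2(F_v)$, and what rescues the argument is precisely that the obstruction is central, hence harmless against the equivariance of $\Psi_v^{(z)}(s;-)$.
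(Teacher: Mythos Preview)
Your proof is correct and follows the standard Green's-formula approach: move the Hecke operator across by self-adjointness (using that the obstruction $\bK_v\alpha\bK_v\neq\bK_v\alpha^{-1}\bK_v$ is purely central and hence invisible to $\Psi_v^{(z)}$), invoke the defining inhomogeneous equation \eqref{v-Eigeneq1} to reduce to $\int_{H_v\bsl G_v}\Phi_{0,v}^{(z)}(g)f(g)\,dg$, and then evaluate using the support and equivariance of $\Phi_{0,v}^{(z)}$. The paper itself gives no proof here, deferring to \cite[Lemma 5.4]{Tsud}; your argument is exactly the natural one and is what that reference carries out.

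One small technical point worth being explicit about: the stated hypothesis is absolute convergence of the \emph{combined} integral on the left of \eqref{NONARCHGREEN-EQ0}, whereas the change of variables in Step~1 treats the finitely many coset translates $\int \Psi(g)f(g\gamma_i)\,dg$ separately. In the applications (e.g.\ Proposition~\ref{GreenEquation}) these individual integrals converge because $\Psi_v^{(z)}(s;-)$ is bounded and $f$ decays; alternatively one can note that after the swap the resulting integral $\int \Phi_{0,v}^{(z)}f$ is over the compact set $H_v\bsl H_v\bK_v$, so convergence on that side is automatic, and one argues by truncation. This is routine, but if you want the lemma to stand on its own you should either strengthen the hypothesis slightly or add a sentence justifying the interchange.
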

\begin{proof}
 We review the proof from \cite[Lemma 5.4]{Tsud}. In the left-hand side of (\ref{NONARCHGREEN-EQ0}), we move the operator $R(\bT_v-(q_v^{(1+s)/2}+q_v^{(1-s)/2})\,1_{\bK_v})$ applied for $f$ to the front of $\Psi_v^{(z)}$ by a simple variable change; then due to \eqref{v-Eigeneq1}, we have the equality 
$$\int_{H_{v}\bsl G_{v}}
[R(\bT_v-(q_v^{(1+s)/2}+q_v^{(1-s)/2})\,1_{\bK_v})\Psi_{v}^{(z)}(s)](g) f(g)dg = \int_{H_{v}\bsl G_{v}}\Phi_{0, v}^{(z)}(g)f(g)dg,
$$
whose right-hand side equals $\vol(H_v\bsl H_v\bK_v)\,f(1_{2})$ by \eqref{vPhi_0}.
\end{proof}

%%%%%%%%%%%%%%%%%%%%%%%%%%%%%%%%%%%%%%%%%%%%
\section{Automorphic Green functions}
%%%%%%%%%%%%%%%%%%%%%%%%%%%%%%%%%%%%%%%%%%%%
Let $S\subset \Sigma_\fin$ be a finite subset. Put 
$$ \fX_S=\prod_{v\in S} \left(\C/{4\pi i }{(\log q_v)}^{-1}\Z\right),
$$
which we regard as a complex manifold in the obvious way. Note that for any $\bfc\in \R^S$, the slice ${\mathbb L}_S(\bfc)=\{\bs\in \fX_S|\Re(\bfs)=\bfc\}$ is a compact set homeomorphic to the torus $({\bf S}^1)^{S}$. 

Given $\bfs \in \mathfrak{X}_{S}$, $z \in \CC$, an ideal $\gn \subset \go$ such that $S(\fn)\cap S=\emp$, and a family $l = (l_{v})_{v \in \Sigma_{\infty}} \in (2\ZZ_{\ge 2})^{\Sigma_{\infty}}$, {\it the adelic Green function} $\Psi_l^{(z)}(\fn|\bs,-)$ is defined by
$$\Psi^{(z)}_{l} (\gn|{\bf s}; g) := \prod_{v \in \Sigma_{\infty}} \Psi_{v}^{(z)}(l_{v}; g_{v}) \prod_{v \in S} \Psi_{v}^{(z)}(s_{v}; g_{v})
\prod_{v \in S(\gn)} \Phi_{\gn, v}^{(z)}(g_{v}) \prod_{v \in \Sigma_{\fin}-(S\cup S(\gn))} \Phi_{0, v}^{(z)}(g_{v})$$
for any $g = (g_{v})_{v\in \Sigma_{F}} \in G_{\AA}$, where $\Psi_{v}^{(z)}(l_{v};-)$ for $v \in \Sigma_{\infty}$ is the holomorphic Shintani function on $G_v\cong \GL(2,\R)$ defined in Proposition~\ref{Sh-ftn}, $\Psi_v^{(z)}(s;-)$ for $v\in S$ is the Green function recalled in \S 4, and for any $v \in \Sigma_{\fin}$, we set
$$\Phi_{\gn, v}^{(z)} \left(\left[\begin{smallmatrix}t_{1} & 0 \\ 0 & t_{2} \end{smallmatrix}\right]\left[\begin{smallmatrix} 1 & x \\ 0 & 1 \end{smallmatrix}\right] k
\right) = |{t_{1}}/{t_{2}}|_{v}^{z}\delta(x \in \go_{v}) \delta(k \in \bfK_{0}(\gn \go_{v})), \quad t_{1}, t_{2} \in F_{v}^{\times}, \, x \in F_{v},\,k \in \bfK_{v}.$$
We remark that $\Phi_{\fn,v}^{(z)}=\Phi_{0,v}^{(z)}$ if $v\in \Sigma_\fin-S(\fn)$. The adelic Green function $\Psi_l^{(z)}(\fn|{\bf s},-)$ is a smooth function on $G_\A$ having the equivariance 
$$
\Psi_l^{(z)}(\fn|{\bf s}; hgk_\infty k_\fin)=\{\prod_{v\in \Sigma_\infty} \tau_{l_v}(k_v)\} \chi_{z}(h)\Psi_l^{(z)}(\fn|{\bf s},g), \quad g\in G_\A
$$
for any $h\in H_\A$, $k_\infty=(k_v)_{v\in \Sigma_\infty}\in \bK_\infty^0$ and $k_\fin \in \bK_0(\fn)$, where $\chi_{z}:H_{F}\backslash H_{\AA} \rightarrow \CC^{\times}$ is the quasi-character defined by
$$\chi_{z}\left(\left[\begin{smallmatrix}t_{1} & 0 \\ 0 & t_{2}\end{smallmatrix}\right] \right) = |t_{1}/t_{2}|_{\AA}^{z}, \quad t_{1}, t_{2} \in \AA^{\times}.
$$
To state the most important property of the adelic Green functions, we introduce the $(H,\chi_z)$-period integral of $\varphi \in C_{c}^{\infty}(Z_\A G_{F} \backslash G_{\AA})$ by setting
$$
 \varphi^{H, (z)}(g)= \int_{Z_\A H_{F} \backslash H_{\AA}} \varphi(hg)\chi_{z}(h)dh.
$$
The integral $\varphi^{H, (z)}(g)$ converges absolutely and satisfies $\varphi^{H, (z)}(hg) = \chi_{z}(h)^{-1}\varphi^{H, (z)}(g)$ for any $h \in H_{\AA}$ (cf.\ \S3.2). Let $C_{c}^{\infty}(Z_\A G_{F} \backslash G_{\AA})[\tau_{l}]$ be the space of $\varphi\in C_{c}^{\infty}(Z_\A G_{F} \backslash G_{\AA})$ such that 
$$\varphi(g k_{\infty}) = \{ \prod_{v \in \Sigma_{\infty}} \tau_{l_{v}}(k_{v})\}\, \varphi(g) \quad {\text{ for all $k_{\infty} = (k_{v})_{v \in \Sigma_{\infty}} \in \bK_\infty^{0}$ and $g\in G_\A$}}. $$

\begin{lem}\label{lem:factorization of phi^{H,(z)}}
Suppose $\varphi \in C_{c}^{\infty}(Z_\A G_{F} \backslash G_{\AA})[\tau_{l}]$ and $R(\overline{W}_v)\,\varphi=0$ for all $v \in \Sigma_{\infty}$. Then we have
$$\overline{\varphi}^{H, (z)}(g_\fin g_\infty) = \{ \prod_{v \in \Sigma_{\infty}}\overline{\Psi_{v}^{(-{\bar z})}(l_{v}, g_{v})} \} \overline{\varphi}^{H, (z)}(g_{\fin})$$
for $g_\infty=(g_v)_{v\in \Sigma_\infty}\in G_\infty$ and $g_\fin\in  G_\fin$. 
\end{lem}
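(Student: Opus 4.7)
The plan is to identify $g_\infty \mapsto \overline{\varphi}^{H,(z)}(g_\fin g_\infty)$ as a scalar multiple of $\prod_{v\in\Sigma_\infty}\overline{\Psi_v^{(-\bar z)}(l_v;g_v)}$ by matching the defining characterization of Shintani functions in Proposition~\ref{Sh-ftn}, and then to read off the scalar by evaluation at $g_\infty=1$.

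I would begin by extracting the invariance properties of $\overline{\varphi}^{H,(z)}$ from those of $\overline\varphi$. Triviality of $\chi_z$ on $Z_\AA$ and on $H_F$ (by the product formula) together with the compact support of $\varphi$ modulo $Z_\AA G_F$ and the injectivity of $h\mapsto hg$ on $Z_\AA H_F\backslash H_\AA \to Z_\AA G_F\backslash G_\AA$ secure absolute convergence. A change of variable yields the left equivariance $\overline{\varphi}^{H,(z)}(h_0 g)=\chi_{-z}(h_0)\,\overline{\varphi}^{H,(z)}(g)$. The right $\bK_\infty^{0}$-type of $\overline{\varphi}$ is $\prod_v\tau_{-l_v}$ (since $\overline{\tau_{l_v}}=\tau_{-l_v}$), and this is inherited by $\overline{\varphi}^{H,(z)}$. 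Finally, $R(\overline W_v)\varphi=0$ gives $R(W_v)\overline{\varphi}=0$ by $\CC$-antilinearity of complex conjugation on the complexified Lie algebra; since $R(W_v)$ commutes with left translation by $H_\AA$, this passes to $R(W_v)\overline{\varphi}^{H,(z)}=0$ for every $v\in\Sigma_\infty$.

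Next, fix $g_\fin$ and regard $F(g_\infty):=\overline{\varphi}^{H,(z)}(g_\fin g_\infty)$ as a smooth function on $G_\infty=\prod_{v\in\Sigma_\infty}G_v$; then $F$ has left $H_\infty$-type $\chi_{-z}$, right $\bK_\infty^0$-type $\prod_v\tau_{-l_v}$, and is annihilated by each $R(W_v)$. I would now apply Proposition~\ref{Sh-ftn} with parameter $-\bar z$ in place of $z$ and take complex conjugates: the resulting function on each $G_v$ is the unique smooth solution on $G_v$ of the transformation law $|t_1/t_2|^{-z}\tau_{-l_v}$ together with $R(W_v)=0$ normalized to $1$ at $1_2$, namely $\overline{\Psi_v^{(-\bar z)}(l_v;-)}$. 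Using the multiplicity-one content of Proposition~\ref{Sh-ftn} (i.e.\ that solutions to (S-i) and (S-ii) alone form a one-dimensional space), a place-by-place induction on $|\Sigma_\infty|$---fixing all but one infinite variable, applying local uniqueness at the free place, and peeling off places one at a time---yields
\[
F(g_\infty)=C(g_\fin)\prod_{v\in\Sigma_\infty}\overline{\Psi_v^{(-\bar z)}(l_v;g_v)}
\]
for some function $C(g_\fin)$ depending only on the finite part.

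Setting $g_\infty=1$ and using $\Psi_v^{(-\bar z)}(l_v;1_2)=1$ from (S-iii) of Proposition~\ref{Sh-ftn} identifies $C(g_\fin)=\overline{\varphi}^{H,(z)}(g_\fin)$, giving the claim. The main obstacle I anticipate is the careful conjugation and sign bookkeeping: one must check that Proposition~\ref{Sh-ftn} applied with parameter $-\bar z$ and then conjugated produces \emph{precisely} the transformation type $(\chi_{-z},\tau_{-l_v})$ and the differential equation $R(W_v)F=0$ that $F$ actually satisfies, rather than some mismatched variant. A secondary subtlety is that Proposition~\ref{Sh-ftn} is stated together with the normalization (S-iii), so I rely implicitly on the stronger ``up to scalar'' uniqueness for solutions to (S-i) and (S-ii); this is essentially the multiplicity-one assertion underlying Hirano's theorem.
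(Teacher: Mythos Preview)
Your proof is correct and follows essentially the same route as the paper: verify that $g_\infty\mapsto\overline{\varphi}^{H,(z)}(g_\fin g_\infty)$ satisfies the left $H_v$-equivariance with character $|t_1/t_2|_v^{-z}$, the right $\bK_v^0$-type $\tau_{-l_v}$, and the annihilation $R(W_v)=0$, then invoke the uniqueness in Proposition~\ref{Sh-ftn} (applied with parameter $-\bar z$ and conjugated) to identify the function up to a scalar, and finally evaluate at $g_\infty=e$ using (S-iii). The paper's proof is terser---it does not spell out the conjugation bookkeeping or the place-by-place reduction you sketch---but the argument is the same.
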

\begin{proof} Let $g_\fin \in G_\fin$. For any $v \in \Sigma_{\infty}$,
we can easily verify
$$\overline{\varphi}^{H, (z)}\left(g_\fin \left[\begin{smallmatrix}t_{1} & 0 \\ 0 & t_{2} \end{smallmatrix}\right]g_\infty k\right) = |t_{1}/t_{2}|_{v}^{-z}\tau_{l_{v}}(k)^{-1}\overline{\varphi}^{H, (z)}(g_\fin g_\infty), \hspace{3mm} t_{1}, t_{2} \in F_{v}^{\times}, k \in \bK_v^0,\, g_{\infty} \in G_{\infty}.$$
Moreover we have $R(W_v)\,(\overline{\varphi}^{H, (z)})=0$
by the equality $R(W_v)\,(\overline{\varphi}^{H, (z)}) = (\overline{R(\bar W_v)
\,\varphi})^{H, (z)}$. Thus the uniqueness of Shintani functions (Proposition~\ref{Sh-ftn}) yields a constant $C$ such that
$$\overline{\varphi}^{H, (z)}(g_\fin g_\infty) = C \prod_{v \in \Sigma_{\infty}} \overline{\Psi_{v}^{(-{\bar z})}(l_{v}; g_{v})}\quad {\text{for all $g_\infty \in G_\infty$}}.$$
By setting $g_\infty = 1_{2}$, we have $C = \overline{\varphi}^{H, (z)}(g_{\fin}) \{ \prod_{v \in \Sigma_{\infty}}\overline{\Psi_{v}^{(-{\bar z})}(l_{v}; 1_{2})} \}^{-1} = \overline{\varphi}^{H, (z)}(g_{\fin})$. This completes the proof.
\end{proof}

For $\bfs \in \mathfrak{X}_{S}$, we consider the element
$${\bfT}_{S}(\bfs) = \bigotimes_{v \in S}\{\TT_{v} - (q_{v}^{(1-s_{v})/2} + q_{v}^{(1+s_{v})/2})\,1_{\bfK_{v}}\}$$
of the Hecke algebra $\bigotimes_{v \in S} \cH(G_{v}, \bfK_{v})$. We also set
$$
q(\bs)=\inf\{(\Re(s_v)+1)/4\,|\,v\in S\,\}.
$$

\begin{prop} \label{GreenEquation}
Suppose $q(\bfs) > 2 |\Re(z)|+1$. For $\varphi \in C_{c}^{\infty}(Z_\A G_{F} \backslash G_{\AA})[\tau_{l}]^{\bfK_{0}(\gn)}$ such that $R(\overline{W}_v)\,\varphi = 0$ for all $v \in \Sigma_{\infty}$, the function $g \mapsto \Psi_{l}^{(z)}(\gn | \bfs; g)\overline{\varphi}^{H, (z)}(g)$ is integrable on $H_{\AA} \backslash G_{\AA}$. Moreover, we have
$$
\int_{H_{\AA} \backslash G_{\AA}} \Psi_{l}^{(z)}(\gn|{\bf s}; g) [R(\bfT_{S}(\bfs)){\overline{\varphi}}^{H, (z)}](g)dg =
\{ \prod_{v \in \Sigma_{\infty}}2^{l_{v}-1}C_{l_{v}}(z) \}\vol(H_{\fin} \backslash H_{\fin}\bfK_{0}(\gn)) {\overline{\varphi}}^{H, (z)}(1_{2}).
$$
\end{prop}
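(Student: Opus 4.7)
The plan is to decompose the integral over $H_\A\bsl G_\A$ into local factors, handling the places in $S$, the archimedean places, and the remaining finite places separately. By definition the adelic Green function $\Psi_l^{(z)}(\fn|\bs,g)$ is a product of local factors, the Hecke operator $\bfT_S(\bs)$ is a tensor product over $v\in S$ of local operators, and the period $\overline{\varphi}^{H,(z)}$ admits the archimedean--finite factorization of Lemma~\ref{lem:factorization of phi^{H,(z)}}. Absolute convergence of the integral has to be settled first: since $\varphi\in C_c^\infty(Z_\A G_F\bsl G_\A)$, the period $\overline{\varphi}^{H,(z)}$ has compact support modulo $H_\A$, while Lemmas~\ref{estimate of Shintani} and \ref{v-Greenftn-est} give uniform bounds on $|\Psi_v^{(z)}(l_v;g_v)|$ and $|\Psi_v^{(z)}(s_v;g_v)|$ that are bounded modulo $H_v$ at the finite places and decay enough for integrability at the archimedean places under $l_v\ge 4$; the hypothesis $q(\bs)>2|\Re(z)|+1$ is exactly what makes the estimate of Lemma~\ref{v-Greenftn-est} usable.

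At each $v\in S$, since $S\cap S(\fn)=\emp$, the function $\overline{\varphi}^{H,(z)}$ is right $\bK_v$-invariant there and left $H_v$-equivariant with character $\chi_{-z}$, exactly as required by Lemma~\ref{NONARCHGREEN-EQ}. Applying that lemma in the $G_v$-variable after fixing all other coordinates converts the local integral into $\vol(H_v\bsl H_v\bK_v)$ times evaluation at $e_v$; iterating over all $v\in S$ collects the factor $\prod_{v\in S}\vol(H_v\bsl H_v\bK_v)$ and sets the $S$-coordinates of $\overline{\varphi}^{H,(z)}$ to the identity. Then invoking Lemma~\ref{lem:factorization of phi^{H,(z)}} separates the archimedean variables, and the archimedean piece of the integral factorizes into
$$
\prod_{v\in\Sigma_\infty}\int_{H_v\bsl G_v}\Psi_v^{(z)}(l_v;g_v)\,\overline{\Psi_v^{(-\bar z)}(l_v;g_v)}\,\d g_v=\prod_{v\in\Sigma_\infty}2^{l_v-1}C_{l_v}(z)
$$
by Proposition~\ref{lem:inner prod of Shintani}.

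For each $v\in\Sigma_\fin-S$, the function $\Phi_{\fn,v}^{(z)}$ (which reduces to $\Phi_{0,v}^{(z)}$ when $v\notin S(\fn)$) is supported on $H_v\bK_0(\fn\cO_v)$ and equals $\chi_z(h)$ there. Combining this with the $H_v$-equivariance of $\overline{\varphi}^{H,(z)}$, which precisely cancels $\chi_z$, and with its right $\bK_0(\fn\cO_v)$-invariance, the integral at each such $v$ collapses to $\vol(H_v\bsl H_v\bK_0(\fn\cO_v))\cdot\overline{\varphi}^{H,(z)}(e_v)$. Multiplying over all $v\in\Sigma_\fin-S$ and folding in the factors from $v\in S$ (where $\bK_0(\fn\cO_v)=\bK_v$) rebuilds $\vol(H_\fin\bsl H_\fin\bK_0(\fn))$ together with the evaluation $\overline{\varphi}^{H,(z)}(e)$. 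Assembling the three contributions yields the claimed identity. The main technical obstacle is the Fubini-type justification for interchanging the various $G_v$-integrations and for moving the Hecke action $\bfT_S(\bs)$ inside the archimedean integration; this is taken care of by the absolute convergence established at the outset, after which the proof is a clean assembly of the local reproducing formulas already prepared in \S 3 and \S 4.
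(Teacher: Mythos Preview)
Your proposal is correct and follows essentially the same approach as the paper's proof: factorize the period $\overline{\varphi}^{H,(z)}$ at the archimedean places via Lemma~\ref{lem:factorization of phi^{H,(z)}}, evaluate the resulting archimedean integrals by Proposition~\ref{lem:inner prod of Shintani}, and handle the places $v\in S$ with the local reproducing identity of Lemma~\ref{NONARCHGREEN-EQ}. The paper's write-up is terser (it simply quotes these three results and refers to \cite[Lemma~6.3]{Tsud} for the detailed argument), but the logical structure and the key inputs are identical to yours.
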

\begin{proof} We follow the argument in the proof of \cite[Lemma 6.3]{Tsud}. By Lemma \ref{lem:factorization of phi^{H,(z)}}, the integral in the left-hand side is the product of 
\begin{align*}
& \prod_{v\in \Sigma_\infty} \int_{H_{v} \backslash G_{v}} \Psi_{v}^{(z)}(l_{v}; g_{v})\overline{\Psi_{v}^{(-\overline{z})}(l_{v}; g_{v})}\,\d g_v, 
\end{align*}
which is evaluated by Proposition \ref{lem:inner prod of Shintani}
and 
\begin{align*}
&\int_{H_\fin\bsl G_\fin} \{\prod_{v \in S} \Psi_{v}^{(z)}(s_{v}; g_{v}) \prod_{v\in S(\gn)} \Phi_{\gn, v}^{(z)}(g_{v})\prod_{v \in \Sigma_{\fin}-(S \cup S(\gn))}\Phi_{0, v}^{(z)}(g_{v})\} \,[R(\bfT_{S}(\bfs)){\overline{\varphi}}^{H, (z)}](g_{\fin})\,\d g_\fin,
\end{align*}
which yields the factor $\vol(H_{\fin} \backslash H_{\fin}\bfK_{0}(\gn)) {\overline{\varphi}}^{H, (z)}(1_{2})$ by Lemma \ref{NONARCHGREEN-EQ}.
\end{proof}

%%%%%%%%%%%%%%%%%%%%%%%%%%%%%%%%%%%%%%%%%%%
\subsection{Regularization of periods and automorphic smoothed kernels}
%%%%%%%%%%%%%%%%%%%%%%%%%%%%%%%%%%%%%%%%%%%
For a weight $l=(l_v)_{v\in \Sigma_\infty}\in (2\N)^{\Sigma_\infty}$, set
$\ul=\inf_{v\in \Sigma_\infty}l_v$. In this subsection, we introduce the automorphic renormalized smoothed kernel function $\hat{\bf \Psi}_{\beta,\lambda}^{l}(\fn|\alpha;g)$ depending on a complex parameter $\lambda$. We show that $\hat{\bf \Psi}_{\beta,\lambda}^{l}(\fn|\alpha;g)$, originally defined by the Poincar\'{e} series \eqref{centralObj} convergent for $\Re(\l)>0$, becomes square integrable (even cuspidal) when $\ul \ge 4$ and $1/2<\Re(\lambda)<\ul/2-1$. 

\subsubsection{}
Let $\Bcal$ denote the space of all the entire functions $\b(z)$ on $\C$ such that $\b(z) = \b(-z)$ satisfying the following condition: For any interval $[a,b] \subset \RR$, there exist $A>0$ and $B \in \RR$ such that the estimate
$$|\b(\s+it)| \ll e^{-A(|t|+B)^{2}}, \hspace{5mm} \s \in [a, b],\ t \in \RR$$
holds. We impose a stronger condition than  \cite[(6.1)]{Tsud} to have the inclusion $C_l\Bcal \subset \Bcal$, which is seen from Lemma~\ref{Clproperty} (iii). For $\b\in \Bcal$ and $(\bfs,\l)\in \fX_S\times \C$ such that $q(\bfs)>1$, $\Re(\l)>1-q(\bfs)$, we define the renormalized Green function by
$$\Psi_{\b, \l}^{l}(\gn|\bfs; g) = \frac{1}{2\pi i}\int_{L_{\s}}\frac{\b(z)}{z+\l}\{\Psi_{l}^{(z)}(\gn |\bfs; g ) + \Psi_{l}^{(-z)}(\gn |\bfs; g) \}dz,
$$
where the contour is taken so that $-\inf(q(\bfs)-1,\Re(\l))< \s < q(\bfs)-1$. The defining integral is absolutely convergent and is independent of the choice of a contour; the function $\l\mapsto \Psi_{\b, \l}^{l}(\gn|\bfs; g)$ is holomorphic on the region $\Re(\l)>1-q(\bfs)$ which contains $\l=0$. 

\subsubsection{}
Our main interest is the central $L$-values $L(1/2,\pi)$ which are essentially the $(H,\1)$-period of cusp forms belonging to $\pi$ by Hecke's zeta integral. Proposition~\ref{GreenEquation} strongly suggests that the automorphic object
\begin{align}
\sum_{\gamma\in H_F\bsl G_F}\Psi_l^{(0)}(\gn|\bfs; \gamma g),
 \label{fakeKernel}
\end{align}
if well-defined, might have the spectral resolution describable by the $(H,\1)$-period integral of cusp forms $\varphi$ through the following formal computation 
{\small\begin{align}
&\int_{Z_\A G_F\bsl G_\A} \bigl\{\sum_{\gamma\in H_F\bsl G_F}\Psi_l^{(0)}(\gn|\bfs; \gamma g)\bigr\}\,[R(\bfT_S(\bs))\varphi](g)\,\d g
 \label{formalcomputation}
\\
&=\int_{Z_\A H_F \bsl G_\A} \Psi_l^{(0)}(\gn|\bfs; g)\,[R(\bfT_S(\bs))\varphi](g)\,\d g
 \notag
\\
&=\int_{H_\A\bsl G_\A} \Psi_l^{(0)}(\gn|\bfs; g)\,[R(\bfT_S(\bs))\varphi^{H,(0)}](g)
\d g 
={\rm {const.}}\, \varphi^{H,(0)}(1_2).
 \notag
\end{align}
}Unfortunately, this is not attained for free due to the divergence of the series $\sum_{\gamma}|\Psi_l^{(0)}(\gn|\bfs; \gamma g)|$ for almost all $g$ caused by the fact that $H_FZ_\A\bsl H_\A\cong F^\times \bsl \A^\times$ is of infinite volume. Since ${\rm{CT}}_{\l=0}\Psi_{\b, \l}^{l}(\gn|\bfs; g)=\Psi_l^{(0)}(\gn|\bfs; g)\,\b(0)$ by \cite[Lemma 6.5 or 6.9]{Tsud} and since $\Psi_{\b, \l}^{l}(\gn|\bfs; g)$ with large $\Re(\l)$ behaves on $G_\A$ well enough to ensure the absolute convergence of the Poincar\'{e} series 
\begin{align}
{\bf \Psi}_{\b, \l}^{l}(\gn|\bfs; g) = \sum_{\gamma \in H_{F} \bsl G_{F}} {\Psi}_{\b, \l}^{l}(\gn|\bfs; \gamma g), \hspace{3mm} g \in G_{\AA},
 \label{autGreenKernel}
\end{align} 
we expect some substitute for \eqref{fakeKernel} could be gained as the constant term at $\l=0$ of the analytic continuation in $\l$ of the series \eqref{autGreenKernel}. This circle of ideas motivates our study of the series \eqref{autGreenKernel}. 

\begin{lem}\label{estimate of automorphic Green}
Suppose $\ul \ge 4$.
\begin{enumerate}
\item The series ${\bf \Psi}_{\b, \l}^{l}(\gn|\bfs; g)$ converges absolutely and locally uniformly in $(\l, \bfs, g) \in \{\Re(\l)>0\} \times \{q(\bfs)>1\} \times G_{\AA}$. For a fixed $(\l, \bfs)$ in this region, ${\bf \Psi}_{\b, \l}^{l}(\gn|\bfs; g)$ is a continuous function in $g\in G_{\AA}$, which is left $Z_{\AA}G_{F}$-invariant and right $\bfK_{0}(\gn)$-invariant, and satisfies
$${\bf \Psi}_{\b, \l}^{l}(\gn|\bfs; g k_{v}) = \tau_{l_{v}}(k_{v}) {\bf \Psi}_{\b, \l}^{l}(\gn|\bfs; g)$$
for all $v \in \Sigma_{\infty}$ and $k_{v} \in \bfK_{v}^{0}$.

\item Let $(\l, \bfs)$ be an element of $\CC \times \mathfrak{X}_{S}$ such that $2\Re(\l)>1$,
$q(\bfs)>2\Re(\l)+1$ and $\ul/2 > \Re(\l)+1$.
Then, for any $\s \in (1/2 ,\Re(\l))$, we have the estimate
$$|{\bf \Psi}_{\b, \l}^{l}(\gn|\bfs; g)| \prec y(g)^{1-\s}, \ \ g \in \gS^1.$$

\end{enumerate}
\end{lem}
\begin{proof}
The same proof of \cite[Proposition 8.1]{Tsud} goes through with a minor modification. The outline is as follows.
For $p>0$ and $q>1$, set
\begin{align*}
\Xi_{l, p, q, S}([\begin{smallmatrix}
t_{1} & 0 \\ 0& t_{2}
\end{smallmatrix}](a_{r_{v}})_{v \in \Sigma_{\infty}}
([\begin{smallmatrix}
1 & x_{v} \\ 0& 1 \end{smallmatrix}])_{v \in \Sigma_{\fin}} k)
= & \inf\{ |t_{1}/t_{2}|_{\AA}^{p}, |t_{1}/t_{2}|_{\AA}^{-p} \}
\prod_{v \in \Sigma_{\infty}}(\ch 2 r_{v})^{-\ul/2} \\
& \times \prod_{v \in S}\sup(1, |x_{v}|_{v})^{-q}\prod_{v \in \Sigma_{\fin}-S} \delta(x_{v} \in \go_{v})
\end{align*}
for $t_{1}, t_{2} \in \AA^{\times}$, $(r_{v})_{v \in \Sigma_{\infty}} \in \RR^{\Sigma_{\infty}}$ and $\ (x_{v})_{v \in \Sigma_{\fin}} \in \AA_{\fin}$,
and set
$${\bf\Xi}_{l, p, q, S}(g)=\sum_{\gamma \in H_{F}\bsl G_{F}}\Xi_{l, p, q, S}(\gamma g), \qquad g \in G_{\AA}.$$
By Lemma \ref{estimate of Shintani}, $\Xi_{l, \s, q(\bfs), S}$ with $0<\s< \inf(\Re(\l), q(\bfs)-1)$ gives a majorant of $\Psi_{\b, \l}^{l}(\gn | \bfs)$ in the same way as \cite[Lemma 6.7]{Tsud}. Thus to prove the convergence and the estimation for ${\mathbf\Psi}_{\b, \l}^{l}(\gn | \bfs)$, it is enough to establish that ${\bf\Xi}_{l, p, q, S}$ is locally uniformly convergent in $G_{\AA}$, and that$${\bf\Xi}_{l, p, q, S}(g) \ll y(g)^{1-p}, \quad g \in \gS^{1}$$
if $1+2p<q$ and $1+p<\ul/2$. To have these, we modify the proof of \cite[Lemma 3.5]{Tsud} by replacing $q$ in the archimedean factors of $\Xi_{p, q, S}$ used there with $\ul/2>1$. We also note that the condition $1+p<\ul/2$ is necessary to guarantee $\int_{\RR}\ch(2 r_{v})^{p-\ul/2+1}dr_{v} <\infty$. 
\end{proof}

For a fixed $(\l, \bfs)$ such that $2\Re(\l)>1$, $q(\bfs)>2\Re(\l)+1$ and $\ul
/2 > \Re(\l)+1$, the function
${\bf \Psi}_{\b, \l}^{l}(\gn|\bfs)$ defines
a distribution on $Z_\A G_{F} \bsl G_{\AA}$ by
$$\langle {\bf \Psi}_{\b, \l}^{l}(\gn|\bfs), \varphi \rangle = \int_{Z_\A G_{F} \bsl G_{\AA}}
 {\bf \Psi}_{\b, \l}^{l}(\gn|\bfs; g)\varphi(g) dg, \hspace{5mm}
\varphi \in C_{c}^{\infty}(Z_\A G_{F} \bsl G_{\AA})^{\bK_0(\fn)}.$$
We remark that the absolute convergence of the integral is valid for any rapidly decreasing function $\varphi$ by Lemma \ref{estimate of automorphic Green} (2).
\subsubsection{} 
We need to make the computation \eqref{formalcomputation} regorous using ${\mathbf\Psi}_{\b, \l}^{l}(\gn|\bfs; g)$. For this, the notion of periods should be modified properly. Let us recall the regularization of period integrals along $H$, which was introduced in \cite[\S 7]{Tsud}. For $t>0$, set
$$\hat{\b}_{\l}(t) = \frac{1}{2\pi i}\int_{L_{\s}}\frac{\b(z)}{z+\l}t^{z}dz \hspace{5mm}(\s>-\Re(\l)).$$
We have the estimate $|\hat\beta_{\l}(t)|\ll \inf\{t^{\s}, t^{-\Re(\l)}\}, \ t>0$ (\cite[Lemma 7.1]{Tsud}). Given a real valued idele class character $\eta$ of $F^\times$, let $x_{\eta} = (x_{\eta,v})_{v \in \Sigma_{F}} \in \AA$ be the adele such that $x_{\eta,v} = 0$ for $v \in \Sigma_{\infty}$ and $x_{\eta,v}=\varpi_{v}^{-f(\eta_{v})}$ for $v \in \Sigma_{\rm fin}$, and let $x_{\eta}^*$ be the idele
such that the finite component of $x_{\eta}^{*}$ coincides with the projection of $x_{\eta}$ to $\AA_{\fin}^\times$ and all archimedean components of $x_{\eta}^{*}$ are equal to $1$.
A continuous function $\varphi$ on $Z_\A G_F\bsl G_\A$ is said to have the regularized $(H,\eta)$-period $P_{\rm{reg}}^\eta(\varphi)\in \C$ if the following condition is satisfied: For any $\beta \in \Bcal$, the integral
\begin{align}
P_{\beta,\lambda}^{\eta}(\varphi)=\int_{F^\times \bsl \A^\times} \varphi\left(\left[\begin{smallmatrix} t & 0 \\ 0 & 1 \end{smallmatrix}\right] \left[\begin{smallmatrix} 1 & x_\eta \\ 0 & 1 \end{smallmatrix}\right]\right)\,\eta(tx_\eta^*)\,\{\hat\beta_\lambda(|t|_\A)+\hat\beta_\lambda(|t|_\A^{-1}) \}\,\d^\times t
 \label{PBLE}
\end{align} 
converges absolutely when $\Re(\l)\gg 0$ and is continued meromorphically in a neighborhood of $\lambda=0$ with the constant term
${\rm CT}_{\l=0}P_{\b, \l}^{\eta}(\varphi) = P_{\rm{reg}}^{\eta}(\varphi) \b(0)$ in its Laurent expansion at $0$. We note that if $\varphi\in C^\infty(Z_\A G_F\bsl G_\A)$ is rapidly decreasing on $\fS^1$, then by \cite[Lemma 7.3]{Tsud}, the regularized period $P_{\rm reg}^{\1}(\varphi)$ coincides with the $(H,\1)$-period. 

The following lemma is shown along the same line of the formal compuation \eqref{formalcomputation}.   
\begin{lem}
\label{Green and period}
Assume $\ul \ge 4$. Let $(\l, \bfs)$ be an element of $\CC \times \mathfrak{X}_{S}$ such that $2\Re(\l)>1$, $q(\bfs)>2\Re(\l)+1$ and $\ul/2 > \Re(\l)+1$. Then, for any rapidly decreasing function $\varphi \in C^{\infty}(Z_\A G_{F} \bsl G_{\AA})[\tau_{l}]^{\bfK_{0}(\gn)}$ such that $R(\overline{W}_v)\varphi=0$ for all $v \in \Sigma_{\infty}$, we have
$$
\langle {\bf \Psi}_{\b, \l}^{l}(\gn|\bfs), R(\bfT_{S}(\bfs))\overline{\varphi} \rangle
= \{ \prod_{v \in \Sigma_{\infty}}2^{l_{v}-1} \} \vol(H_{\fin} \bsl H_{\fin}\bfK_{0}(\gn)) P_{\b C_{l}, \l}^{\bf 1}(\overline{\varphi}),$$
where
$C_{l}(z)=\prod_{v \in \Sigma_{\infty}}C_{l_{v}}(z)$.
\end{lem}
\begin{proof}
The proof is given in the same way as \cite[Lemma 8.2]{Tsud} with the aid of Lemma \ref{estimate of Shintani} and Proposition \ref{GreenEquation}. We note that $P_{\b C_{l},\lambda}^{{\bf 1}}(\overline{\varphi})$ is well-defined because $\b C_{l}$ belongs to $\Bcal$.
\end{proof}

\subsubsection{} 
Assume $\ul\geq 4$. Given a holomorphic function $\alpha(\bs)$ on $\fX_S$ such that $\alpha(\varepsilon \bs)=\alpha(\bs)$ for all $\varepsilon\in \{\pm 1\}^{S}$, we define the renormalized smoothed kernel 
$$\hat{\Psi}_{\b, \l}^{l}(\gn|\a ; g)= \left( \frac{1}{2\pi i} \right)^{\#S}\int_{\LL_{S}(\bf{c})}{\Psi}_{\b, \l}^{l}(\gn|\bfs;g)\,\a(\bfs) d\mu_{S}(\bfs)$$
for $\Re(\l)>0$ with ${\bf c}\in \R^{S}$ such that $q({\bf c})>\sup(\Re(\l)+1,2)$, where $\int_{\LL_S(\bfc)}f(\bfs)\,\d\mu_{S}(\bfs)$ means the multidimensional contour integral along the slice
$\LL_S(\bfc)=\{\bfs\in \fX_S|\Re(\bfs)=\bfc\}$ oriented naturally with respect to the form $\d \mu_{S}(\bs)=\prod_{v\in S} \d \mu_v(s_v)$ with
\begin{align*}
\d \mu_{v}(s_v)=2^{-1}\log q_v\,(q_v^{(1+s_v)/2}-q_v^{(1-s_v)/2})\,\d s_v.
\end{align*}
For $\Re(\l)>0$, let us consider the Poincar\'{e} series
\begin{align}
\hat{\bf \Psi}_{\b, \l}^{l}(\gn|\a; g) = \sum_{\gamma \in H_{F} \bsl G_{F}} \hat{\Psi}_{\b, \l}^{l}(\gn|\a; \gamma g), \hspace{3mm} g \in G_{\AA},
\label{centralObj}
\end{align} 
which is a central object in this paper. We introduce \eqref{centralObj} because it has a much nicer spectral expansion than ${\bf \Psi}_{\b, \l}^{l}(\gn|\bfs)$ (see Lemma~\ref{regsmoothedGreenSPECT}). In the same way as in \cite{Tsud}, we analyze this series and obtain the following.

\begin{lem}\label{estimate of smooth kernel}
\begin{enumerate}
\item The series $\hat{\bf \Psi}_{\b, \l}^{l}(\gn|\a; g)$ converges absolutely and locally uniformly in $(\l, g) \in \{\Re(\l)>0\} \times G_{\AA}$. The function $g \mapsto \hat{\bf \Psi}_{\b, \l}^{l}(\gn|\a; g)$ is continuous on $G_{\AA}$,  left $Z_\A G_{F}$-invariant, and right $\bfK_{0}(\gn)$-invariant; moreover it satisfies
\begin{align}
\hat{\bf \Psi}_{\b, \l}^{l}(\gn|\a; g k_{v}) = \tau_{l_{v}}(k_{v}) \hat{\bf \Psi}_{\b, \l}^{l}(\gn|\a; g) \quad \text{for all $k_v\in \bfK_{v}^0$}
 \label{estimate of smooth kernel1}
\end{align}
for $v \in \Sigma_{\infty}$.
\item For $0<\Re(\l)<\ul/2-1$, the function $\hat{\bf \Psi}_{\b, \l}^{l}(\gn|\a; g)$ belongs to $L^{m}(Z_\A G_{F} \bsl G_{\AA})$ for any $m>0$ such that $m(1-\Re(\l))<1$.
\end{enumerate}
\end{lem}
\begin{proof}
The argument in the proof of \cite[Proposition 9.1]{Tsud} works
with a minor modification;
We use $\Xi_{l, p, q, S}$ and ${\bf\Xi}_{l, p, q, S}$ given in the proof
of Lemma \ref{estimate of automorphic Green}.
\end{proof}

\begin{prop}\label{cuspidal}
For $1/2< \Re(\l) < \ul/2-1$, the function $\hat{\bf \Psi}_{\b, \l}^{l}(\gn|\a; g)$ is cuspidal.
\end{prop}
\begin{proof} From Proposition~\ref{Sh-ftn} and Lemma \ref{estimate of smooth kernel} (1), the function $g \mapsto \hat{\bf \Psi}_{\b, \l}^{l}(\gn|\a; g)$ on $G_{\AA}$ is smooth and satisfies the equation
\begin{align}
&R(\overline{W}_v)\,\hat{\bf \Psi}_{\b, \l}^{l}(\gn|\a; g)=0, \quad g\in G_\A
 \label{cuspidal1}
\end{align}
for all $v \in \Sigma_{\infty}$. From this equation together with the $\bK_v^0$-equivariance \eqref{estimate of smooth kernel1} of $\hat{\bf \Psi}_{\b, \l}^{l}(\gn|\a; g)$, the Casimir element of $G_v$ for each $v\in \Sigma_\infty$ acts on $\hat{\bf \Psi}_{\b, \l}^{l}(\gn|\a; g)$ by a scalar. Hence there exists a compactly supported smooth function $f$ on $G_\A$ such that $\hat{\bf \Psi}_{\b, \l}^{l}(\gn|\a)*f=\hat{\bf \Psi}_{\b, \l}^{l}(\gn|\a)$ by \cite[Theorem 2.14]{Borel}. From Lemma \ref{estimate of smooth kernel} (2), $\hat{\bf \Psi}_{\b, \l}^{l}(\gn|\a)$ belongs to $L^{2}(Z_\A G_{F} \backslash G_{\AA})^{\bK_0(\fn)}$. Thus, for any $X\in \fg_\infty$, the derivative $R(X)\hat{\bf \Psi}_{\b, \l}^{l}(\gn|\a)=\hat{\bf \Psi}_{\b, \l}^{l}(\gn|\a)*R(-X)f$ also belongs to the same $L^2$-space. Let $V$ be the $(\mathfrak{g}_{\infty}, \bfK_{\infty})$-submodule of $L^2(Z_\A G_{F} \backslash G_{\AA})^{\bK_0(\fn)}$ generated by $\hat{\bf \Psi}_{\b, \l}^{l}(\gn|\a; g)$ ; from \eqref{estimate of smooth kernel1} and \eqref{cuspidal1}, $V$ is decomposed into a finite sum of the discrete series representation $\boxtimes_{v \in \Sigma_{\infty}}D_{l_{v}}$ of $\PGL(2, F \otimes_{\QQ} \RR)$ of weight $(l_{v})_{v\in \Sigma_{\infty}}$. By Wallach's criterion \cite[Theorem 4.3]{Wallach}, the space $V$ is contained in the cuspidal part of $L^{2}(Z_\A G_{F} \backslash G_{\AA})$.
\end{proof}

By Proposition \ref{cuspidal}, for $1/2<\Re(\l)<\ul/2-1$, the function $\hat{\bf \Psi}_{\b, \l}^{l}(\gn|\a; g)$ has the spectral expansion
\begin{align}
\hat{\bf \Psi}_{\b, \l}^{l}(\gn|\a; g) = \sum_{\pi \in \Pi_{\rm cus}(l, \gn)}\sum_{\varphi \in \Bcal(\pi; l, \gn)}\langle \hat{\bf \Psi}_{\b, \l}^{l}(\gn|\a) | \varphi \rangle_{L^{2}} \varphi(g)
 \label{L^2SPECT}
\end{align}
for almost all $g\in G_\A$. Here $\langle \cdot | \cdot \rangle_{L^{2}}$ is the $L^{2}$-inner product on $L^{2}(Z_{\AA}G_{F} \backslash G_{\AA})$, and
$\Bcal(\pi; l, \gn)$ is an orthonormal basis of $\{\varphi\in L^2(Z_\A G_F\bsl G_\A)[\tau_l]^{\bK_0(\fn)} \ | \ R(\overline{W}_v)\varphi=0\,(\forall v\in \Sigma_\infty)\}$,
which consists of smooth functions.
From the finite dimensionality of the space above, the sum in \eqref{L^2SPECT} is finite and the equality holds pointwisely for all $g$.

%%%%%%%%%%%%%%%%%%%%%%%%%%%%%%%%%%%%%%
\section{Spectral side}
%%%%%%%%%%%%%%%%%%%%%%%%%%%%%%%%%%%%%%

From this section until \S 11, we fix an even weight $l=(l_v)_{v\in \Sigma_\infty}$, an ideal $\fn \subset \cO$, an idele class character $\eta$ of $F^\times $ such that $\eta^2={\bf 1}$ whose conductor $\ff$ is relatively prime to $\fn$, and a finite subset $S\subset \Sigma_\fin-S(\gn\gf)$. Using the spectral expansion \eqref{L^2SPECT},
we show that $\hat{\bf \Psi}_{\beta,\lambda}^{l}(\fn|\alpha;g)$ has an entire extension to the whole $\lambda$-plane. As the value at $\lambda=0$ of the entire extension, we define the regularized kernel $\hat {\bf \Psi}_{\rm{reg}}^{l}(\fn|\alpha;g)$, which is our desired substitute for the divergent series \eqref{fakeKernel}, and obtain its spectral expression. The upshot of this section is Proposition~\ref{Reg-per}, which gives the period integral of the regularized kernel.

\subsection{Extremal Whittaker vectors of discrete series}
For $v \in \Sigma_{\infty}$, let $\pi_{v}$ be the discrete series representation of $\PGL(2, \RR)$ of minimal $\bfK_{v}^0$-type $l_{v}$. Let $V_{\pi_{v}}$ denote the Whittaker model of $\pi_{v}$ with respect to the character $\psi_{F,v}$ (see \S \ref{add char and gauss sum}). It is known that $V_{\pi_{v}}[\tau_{l_v}]$ contains a unique vector $\phi_{0,v}$ characterized by  
\begin{align}
\phi_{0, v}\left(\left[\begin{smallmatrix}y & 0\\ 0 & 1\end{smallmatrix}\right]\right) = 2|y|_{v}^{l_{v}/2}e^{2\pi y}\delta(y<0), \quad y \in \RR^{\times}.
 \label{ExplicitformWhittaker}
\end{align}
We remark that $\phi_{0,v}$ is extremal, i.e., $\pi_{v}(\overline{W})\phi_{0, v} = 0$, and $V_{\pi_{v}}[\tau_{l_{v}}]=\CC \phi_{0, v}$. We should also note that the local epsilon factor of $\pi_v$ is given as
$\e(s, \pi_{v} \otimes \sgn^{m}, \psi_{F,v})=i^{l_{v}}$
 for $m \in \{ 0, 1\}$. 

\subsection{Construction of basis} 
Let $(\pi,V_\pi)$ be an irreducible cuspidal automorphic representation of $G_{\AA}$ with trivial central character such that $V_\pi\subset L^{2}(Z_\A G_F\bsl G_\A)$. We fix a family $\{ (\pi_{v},V_{\pi_v}) \}_{v \in \Sigma_{F}}$ of unitarizable irreducible admissible representations of $G_v$ with $V_{\pi_v}$ being contained in the $\psi_{F,v}$-Whittaker functions on $G_v$ such that $\pi \cong \bigotimes_{v\in \Sigma_{F}}\pi_{v}$. The conductor of ${\pi}$ is defined to be the ideal ${\gf}_{\pi}$ determined by the condition ${\gf}_{\pi}{\go}_{v} = {\gp}_{v}^{c(\pi_{v})}$ for all $v \in \Sigma_{\fin}$, where $c(\pi_v)$ is the minimal non-negative integer among those $c\in \N_0$ such that $V_{\pi_v}^{\bK_0(\fp_v^{c})}\not=\{0\}$. Let $\Pi_{\rm{cus}}(l,\fn)$ denote the set of all those cuspidal representations $\pi$ such that $\pi_{v}\cong D_{l_v}$ for all $v \in \Sigma_{\infty}$ and $\fn\subset \ff_\pi$. 

For $\pi \in \Pi_{\rm{cus}}(l,\fn)$, let $\Lambda_\pi(\gn)$ be the set of all maps $\rho:\Sigma_\fin\rightarrow \N_0$ such that $\rho(v)\in \{0,\cdots,{\rm{ord}}_v(\fn\ff_\pi^{-1})\}$ for all $v\in \Sigma_\fin$. Corresponding to each $\rho\in \Lambda_\pi(\fn)$, we have a cusp form $\varphi_{\pi, \rho} \in V_{\pi}[\tau_{l}]^{{\bf K}_{0}(\gn)}$ as the image of the decomposable tensor
$$\displaystyle \bigotimes_{v\in\Sigma_{\infty}} \phi_{0,v}\otimes \bigotimes_{v\in S({\gn}{\gf}_{\pi}^{-1})}\phi_{\rho(v),v} \otimes \bigotimes_{v\in\Sigma_{\fin}-S({\gn}{\gf}_{\pi}^{-1})}\phi_{0,v}
$$
by the isomorphism $V_{\pi}\cong \bigotimes_{v\in \Sigma_{F}} V_{\pi_{v}}$,
 where for each $v \in \Sigma_{\fin}$, the system $\{\phi_{k, v}|\,0\leq k\leq {\rm{ord}}_v(\fn\ff_\pi^{-1})\}$ is the basis of $V_{\pi_v}^{\bfK_{0}(\gn\go_{v})}$ constructed in \cite{Sugiyama1}. We remark that $\phi_{0,v}$ is the local new vector of $\pi_v$. In this way, we have an orthogonal basis $\{\varphi_{\pi,\rho}|\rho\in \Lambda_\pi(\fn)\}$ of the finite dimensional space $V_{\pi}[\tau_{l}]^{{\bf K}_{0}(\gn)}$ equipped with the $L^2$-inner-product on $Z_\A G_F\bsl G_\A$ (\cite[Proposition 17]{Sugiyama1}). The vector $\varphi_{\pi,\rho_0}$ with $\rho_0(v)=0$ for all $v\in \Sigma_\fin$ is denoted by $\varphi_{\pi}^{\rm{new}}$. 

\smallskip
\noindent
{\bf Remark}: Let $S_{k}(\fn\ff_\pi^{-1})$ be the set of $v\in S(\fn\ff_\pi^{-1})$ such that ${\rm{ord}}_{v}(\fn\ff_\pi^{-1})=k$ and $n$ the maximal non-negative integer $k$ such that $S_{k}(\fn\ff_{\pi}^{-1})\not=\emp$. For $\rho\in \Lambda_\pi(\fn)$, by writing $\rho_k=\rho|_{S_k(\fn\ff_\pi^{-1})}$ for each $0\leq k \leq n$, we can identify $\rho$ with the family of maps $(\rho_k)_{0\leq k\leq n}$ as done in \cite{Sugiyama1}.

\subsection{Regularized periods and standard $L$-values}(For details, see \cite[\S 7]{Tsud} and \cite{Sugiyama1}.)
In this paragraph, $\pi$ denotes an element of $\Pi_{\rm{cus}}(l,\fn)$. We note that for the cusp forms $\varphi\in V_\pi$, the regularized period $P_{\rm reg}^{\eta}(\varphi)$ defined by \eqref{PBLE} coincides with the global zeta integral $$Z^{*}(1/2,\eta,\varphi)=\int_{F^\times \bsl \A^\times}\varphi\left(\left[\begin{smallmatrix} t & 0 \\ 0 & 1 \end{smallmatrix}\right] \left[\begin{smallmatrix} 1 & x_\eta \\ 0 & 1 \end{smallmatrix}\right]\right)\,\eta(tx_\eta^*)\,\d^\times t,$$
 which is absolutely convergent. 

\begin{prop}\label{periods of cusp forms} 
For any $\rho \in \Lambda_{\pi}(\gn)$, $\varphi_{\pi, \rho}$ has the regularized $(H, \eta)$-period given by
$$P_{\rm reg}^{\eta}(\varphi_{\pi, \rho}) 
=Z^{*}(1/2,\eta,\varphi_{\pi, \rho})=(-1)^{\e(\eta)}\Gcal(\eta)\{ \prod_{v \in S(\gn\gf_{\pi}^{-1})}Q_{\rho(v), v}^{\pi_{v}}(\eta_{v}, 1)\} L(1/2, \pi \otimes \eta).$$
Here $Q_{k, v}^{\pi_{v}}(\eta_{v}, 1)$ with $v \in S(\gn \gf_{\pi}^{-1})$ and $k\in \{1,\dots,\ord_{v}(\gn\gf_\pi^{-1})\}$ is the constant appearing in \cite[Main Theorem A]{Sugiyama1}, and $\Gcal(\eta)$ is the Gauss sum defined in \S \ref{add char and gauss sum}.
\end{prop}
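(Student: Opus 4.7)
The plan is to reduce the computation of $P^{\eta}_{\rm reg}(\varphi_{\pi,\rho})$ to an unfolding of the global Jacquet--Hecke zeta integral and then to evaluate the resulting product of local factors place by place. Since $\varphi_{\pi,\rho}$ is a cusp form, it is rapidly decreasing on the Siegel domain $\fS^{1}$, so by the remark immediately preceding the statement one has
$$
P^{\eta}_{\rm reg}(\varphi_{\pi,\rho})=Z^{*}(1/2,\eta,\varphi_{\pi,\rho})=\int_{F^{\times}\bsl\AA^{\times}}\varphi_{\pi,\rho}\!\left(\left[\begin{smallmatrix} t & 0 \\ 0 & 1\end{smallmatrix}\right]\!\left[\begin{smallmatrix} 1 & x_{\eta} \\ 0 & 1\end{smallmatrix}\right]\right)\eta(tx_{\eta}^{*})\,\d^{\times}t,
$$
so the task is to evaluate this absolutely convergent integral.

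Next I would perform the standard unfolding. Inserting the Whittaker expansion $\varphi_{\pi,\rho}(g)=\sum_{\alpha\in F^{\times}}W_{\varphi_{\pi,\rho}}\!\left(\left[\begin{smallmatrix} \alpha & 0 \\ 0 & 1\end{smallmatrix}\right]g\right)$, collapsing the sum over $\alpha$ with the $F^{\times}$-quotient (using $\eta|_{F^{\times}}={\bf 1}$), and applying the left $N_{\AA}$-quasi-invariance $W\!\left(\left[\begin{smallmatrix} t & 0 \\ 0 & 1\end{smallmatrix}\right]\!\left[\begin{smallmatrix} 1 & x_{\eta} \\ 0 & 1\end{smallmatrix}\right]\right)=\psi_{F}(tx_{\eta})\,W\!\left(\left[\begin{smallmatrix} t & 0 \\ 0 & 1\end{smallmatrix}\right]\right)$ rewrites the integral as $\eta(x_{\eta}^{*})\int_{\AA^{\times}}W_{\varphi_{\pi,\rho}}\!\left(\left[\begin{smallmatrix} t & 0 \\ 0 & 1\end{smallmatrix}\right]\right)\psi_{F}(tx_{\eta})\,\eta(t)\,\d^{\times}t$. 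Because $\varphi_{\pi,\rho}$ is by construction a pure tensor under $\pi\cong\bigotimes_{v}\pi_{v}$, with archimedean components $\phi_{0,v}$, local-old components $\phi_{\rho(v),v}$ at $v\in S(\gn\gf_{\pi}^{-1})$, and local new vectors elsewhere, the global Whittaker function factors, and the integral splits into the Eulerian product $\eta(x_{\eta}^{*})\prod_{v\in\Sigma_{F}}Z_{v}(1/2,\eta_{v},W_{v})$ of local Hecke zeta integrals evaluated on the shifted arguments.

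I would then evaluate the three kinds of local factors separately. At $v\in\Sigma_{\infty}$, the shift is trivial ($x_{\eta,v}=0$), and the explicit formula \eqref{ExplicitformWhittaker} for $\phi_{0,v}$ reduces $Z_{v}$ to a single gamma integral, yielding $(-1)^{\e_{v}}L(1/2,\pi_{v}\otimes\eta_{v})$, whose product over $\Sigma_{\infty}$ produces the sign $(-1)^{\e(\eta)}$. At $v\in\Sigma_{\fin}-S(\gn\gf_{\pi}^{-1})$, the vector $W_{v}$ is (a normalization of) the local new vector; a direct computation---trivial when $\eta_{v}$ is unramified, and producing $\Gcal(\eta_{v})$ in conjunction with the twist $\psi_{F,v}(tx_{\eta,v})$ and the outer factor $\eta_{v}(x_{\eta,v}^{*})$ when $v\in S(\gf)$---produces $L(1/2,\pi_{v}\otimes\eta_{v})$ together with the local Gauss-sum contribution. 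At $v\in S(\gn\gf_{\pi}^{-1})$, the vector is $\phi_{\rho(v),v}$ and the identity $Z_{v}(1/2,\eta_{v},\phi_{\rho(v),v})=Q_{\rho(v),v}^{\pi_{v}}(\eta_{v},1)L(1/2,\pi_{v}\otimes\eta_{v})$ is precisely the local content of \cite[Main Theorem A]{Sugiyama1}. Assembling the three kinds of factors and recognizing $\prod_{v}L(1/2,\pi_{v}\otimes\eta_{v})=L(1/2,\pi\otimes\eta)$ yields the claimed closed form.

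The main technical obstacle---and the reason the paper describes the argument only as a \emph{minor} modification of \cite[Main Theorem A]{Sugiyama1}---is the careful bookkeeping of the adelic shift $x_{\eta}$ together with the Haar-measure normalizations of \S 2.3 at places $v\in S(\gf)$, which are \emph{not} covered by Sugiyama's theorem as stated (there $v\notin S(\gn\gf_{\pi}^{-1})$, so $W_{v}$ is the new vector, but $\eta_{v}$ is ramified). One must verify directly that the twist $\psi_{F,v}(tx_{\eta,v})$, the outer character value $\eta_{v}(x_{\eta,v}^{*})$, and the $q_{v}^{-d_{v}/2}$ normalization of $\d^{\times}t_{v}$ combine to produce exactly the local Gauss sum $\Gcal(\eta_{v})$ of \S 2.4, so that the product over $v\in S(\gf)$ assembles into the global $\Gcal(\eta)$ in the statement. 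Once this compatibility is confirmed, the remainder is routine.
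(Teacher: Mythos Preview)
Your proposal is correct and follows essentially the same path the paper has in mind: reduce $P^{\eta}_{\rm reg}$ to the absolutely convergent global zeta integral $Z^{*}(1/2,\eta,\varphi_{\pi,\rho})$, unfold via the Whittaker expansion, factor into local Hecke zeta integrals, and evaluate place by place using \eqref{ExplicitformWhittaker} at archimedean places and the local computations of \cite{Sugiyama1} at finite places. The ``minor modification'' the paper alludes to is precisely the replacement of the archimedean Whittaker data used in \cite{Sugiyama1} by the discrete-series extremal vector \eqref{ExplicitformWhittaker}, which you have carried out correctly; the bookkeeping at $v\in S(\gf)$ that you flag is already accounted for in the global formulation of \cite[Main Theorem A]{Sugiyama1}, so no additional new verification is needed there.
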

\begin{proof} The first identity is obtained by \cite[Lemma 7.3]{Tsud}. The second identity follows basically from \cite[Main Theorem A]{Sugiyama1}. Although the hypothesis $\eta_v(-1)=1$ for all $v\in \Sigma_\infty$ in \cite[Main Theorem A]{Sugiyama1} is not satisfied in our setting, it is easy to modify the proof at archimedean places by means of \eqref{ExplicitformWhittaker}. 
\end{proof}

Set
$$\PP^{\eta}(\pi ; l, \gn)= \sum_{\varphi \in \Bcal(\pi;l, \gn)} \overline{
P^{{\bf 1}}_{{\rm{reg}}}(\varphi)} P^\eta_{{\rm{reg}}}(\varphi),$$
where $\Bcal(\pi;l, \gn)$ is an orthonormal basis of
$V_{\pi}[\tau_{l}]^{\bfK_{0}(\gn)}$.

\begin{lem} \label{value of PP}
The sum $\PP^{\eta}(\pi ; l, \gn)$ is independent of the choice of $\Bcal(\pi;l,\gn)$. We have
$$\PP^{\eta}(\pi ; l, \gn) = D_{F}^{-1/2}(-1)^{\e(\eta)}\Gcal(\eta)w^{\eta}_{\gn}(\pi) \frac{L(1/2, \pi)L(1/2, \pi \otimes \eta)}{\|\varphi_{\pi}^{\rm new}\|^{2}},$$
and that the value $(-1)^{-\e(\eta)}\Gcal(\eta)^{-1}\PP^{\eta}(\pi ; l, \gn)$ is non-negative. Here $w_{\gn}^{\eta}(\pi)$ is the explicit non-negative constant given by
$$w_{\gn}^{\eta}(\pi)=\prod_{v\in S(\gn\gf_\pi^{-1})}r(\pi_v,\eta_v)$$
with $r(\pi_v,\eta_v)$ defined as follows. Set $k_v=\ord_v(\gn\gf_\pi^{-1})$. If $\eta_v(\varpi_v)=-1$,   
\begin{align*}
r(\pi_v,\eta_v)
=\dfrac{1+(-1)^{k_v}}{2}
\begin{cases}
(q_v+1)(q_v-1)^{-1}, \qquad &(c(\pi_v)=0), 
\\
1, \qquad &(c(\pi_v)\geq 1).
\end{cases}
\end{align*} If $\eta_v(\varpi_v)=1$, 
{\small\begin{align*}
r(\pi_v,\eta_v)
=
\begin{cases}
\dfrac{q_v+1}{(1+q_v^{1/2}\alpha_v)(1+q_v^{1/2}\alpha_v^{-1})} \left\{
2+\dfrac{k_v-1}{q_v-1}(1-\alpha_vq_v^{1/2})(1-\alpha_v^{-1}q_v^{1/2})
\right\}, \qquad &(c(\pi_v)=0), 
\\
1+k_v\dfrac{1-q_v^{-1}\chi_v(\varpi_v)}{1+q_v^{-1}\chi_v(\varpi_v)}
, \qquad &(c(\pi_v)=1), \\
k_v+1, \qquad &(c(\pi_v)\geq 2),
\end{cases}
\end{align*}
}where $(\alpha_v,\alpha_v^{-1})$ is the Satake parameter of $\pi_v$ if $c(\pi_v)=0$, and $\chi_v$ is the unramified character of $F_v^\times$ such that $\pi_v\cong \sigma(\chi_v|\,|_v^{1/2}, \chi_v|\,|_v^{-1/2})$ if $c(\pi_v)=1$.

If $\eta$ satisfies $\eta_v(\varpi_v)=-1$ for all $v\in S(\fn)$, then $w_\gn^\eta(\pi)=0$ unless $\gn\gf_\pi^{-1}$ is a square of integral ideal.
\end{lem}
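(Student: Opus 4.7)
The plan is to expand $\PP^\eta(\pi;l,\gn)$ in the orthogonal basis $\{\varphi_{\pi,\rho}\}_{\rho \in \Lambda_\pi(\gn)}$ of $V_\pi[\tau_l]^{\bK_0(\gn)}$ recalled in 6.2, then apply Proposition~\ref{periods of cusp forms} and exploit the factorization of both the period formula and the global $L^2$-norm across the local places in $S(\gn\gf_\pi^{-1})$. Independence from the choice of $\Bcal(\pi;l,\gn)$ is formal: $P^{\bf 1}_{\rm reg}$ and $P^\eta_{\rm reg}$ are linear functionals on the finite-dimensional Hilbert space $V_\pi[\tau_l]^{\bK_0(\gn)}$, and the expression $\sum_{\varphi \in \Bcal}\overline{P^{\bf 1}_{\rm reg}(\varphi)}P^\eta_{\rm reg}(\varphi)$ computes the Hermitian inner product of their Riesz representatives, which does not depend on the orthonormal basis chosen.

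Normalizing the orthogonal basis $\{\varphi_{\pi,\rho}\}$ yields
$$\PP^\eta(\pi;l,\gn) = \sum_{\rho \in \Lambda_\pi(\gn)} \frac{\overline{P^{\bf 1}_{\rm reg}(\varphi_{\pi,\rho})}\,P^\eta_{\rm reg}(\varphi_{\pi,\rho})}{\|\varphi_{\pi,\rho}\|^2}.$$
Applying Proposition~\ref{periods of cusp forms} for both $\eta = {\bf 1}$ (noting $\e({\bf 1})=0$ and $\Gcal({\bf 1}) = D_F^{-1/2}$ by the standard Gauss-sum computation for the trivial character under our measure conventions) and for the given $\eta$, and using the reality of $L(1/2,\pi)$ and $L(1/2,\pi\otimes\eta)$ coming from self-duality, the numerator factorizes as
$$D_F^{-1/2}(-1)^{\e(\eta)}\Gcal(\eta)\,L(1/2,\pi)\,L(1/2,\pi\otimes\eta)\prod_{v\in S(\gn\gf_\pi^{-1})}\overline{Q^{\pi_v}_{\rho(v),v}({\bf 1}_v,1)}\,Q^{\pi_v}_{\rho(v),v}(\eta_v,1).$$
Orthogonality of the local vectors $\{\phi_{k,v}\}$ at each $v$ gives the factorization $\|\varphi_{\pi,\rho}\|^2 = \|\varphi_\pi^{\rm new}\|^2\prod_{v\in S(\gn\gf_\pi^{-1})}(\|\phi_{\rho(v),v}\|_v^2/\|\phi_{0,v}\|_v^2)$, so the sum over $\rho$ factorizes as a product over $v \in S(\gn\gf_\pi^{-1})$ and produces the asserted closed formula with
$$w_\gn^\eta(\pi) = \prod_{v\in S(\gn\gf_\pi^{-1})} r(\pi_v,\eta_v), \qquad r(\pi_v,\eta_v) = \sum_{k=0}^{\ord_v(\gn\gf_\pi^{-1})}\frac{\|\phi_{0,v}\|_v^2}{\|\phi_{k,v}\|_v^2}\,\overline{Q^{\pi_v}_{k,v}({\bf 1}_v,1)}\,Q^{\pi_v}_{k,v}(\eta_v,1).$$

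The main technical content is the verification of the explicit closed form for each $r(\pi_v,\eta_v)$. I would insert the explicit expressions for $Q^{\pi_v}_{k,v}(\eta_v,1)$ and for the local $L^2$-norms from \cite{Sugiyama1} and evaluate the finite sum case by case according to $\eta_v(\varpi_v)\in\{\pm 1\}$ and $c(\pi_v)\in\{0,1,\ge 2\}$. The principal obstacle lies in the spherical case $c(\pi_v)=0$ with $\eta_v(\varpi_v)=1$: the sum is a polynomial expression in the Satake parameter $\alpha_v$ whose collapse to the displayed rational form requires careful resummation. In the case $\eta_v(\varpi_v)=-1$, the alternating signs in $Q^{\pi_v}_{k,v}(\eta_v,1)$ produce the factor $(1+(-1)^{k_v})/2$ directly.

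Non-negativity of $(-1)^{-\e(\eta)}\Gcal(\eta)^{-1}\PP^\eta(\pi;l,\gn)$ reduces, via the established formula, to two facts: (i) $L(1/2,\pi)L(1/2,\pi\otimes\eta)\ge 0$ for self-dual Hilbert cuspidal $\pi$ with trivial central character, known by Waldspurger's formula; and (ii) $r(\pi_v,\eta_v)\ge 0$, visible from the displayed closed forms (using $|\alpha_v|=1$ in the tempered spherical case). The vanishing claim is then immediate: under the hypothesis $\eta_v(\varpi_v)=-1$ for every $v\in S(\gn)$, and since $S(\gn\gf_\pi^{-1})\subseteq S(\gn)$, each factor $r(\pi_v,\eta_v)$ in $w_\gn^\eta(\pi)$ carries the coefficient $(1+(-1)^{\ord_v(\gn\gf_\pi^{-1})})/2$; this kills the product unless every $\ord_v(\gn\gf_\pi^{-1})$ is even, i.e., unless $\gn\gf_\pi^{-1}$ is a square of an integral ideal.
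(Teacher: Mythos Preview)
Your approach is correct and coincides with the paper's: expand $\PP^\eta(\pi;l,\gn)$ in the orthogonal basis $\{\varphi_{\pi,\rho}\}_{\rho\in\Lambda_\pi(\gn)}$, apply Proposition~\ref{periods of cusp forms} to both periods, factorize the norms and the $Q$-constants over $S(\gn\gf_\pi^{-1})$, and reduce to the local sums defining $r(\pi_v,\eta_v)$, whose explicit evaluation is carried out in \cite[Lemma~12]{Sugiyama2}. The only discrepancy is the reference for the non-negativity of $L(1/2,\pi)L(1/2,\pi\otimes\eta)$: the paper invokes the result of Jacquet--Chen \cite{Jacquet-Chen} (a direct relative-trace-formula proof), not Waldspurger's formula.
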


\begin{proof}
With the aid of Proposition \ref{periods of cusp forms}, we obtain the assertion in the same way as \cite[Lemma 12]{Sugiyama2}. The non-negativity of $(-1)^{-\e(\eta)}\Gcal(\eta)^{-1}\PP^{\eta}(\pi ; l, \gn)$ follows from $w_{\gn}^{\eta}(\pi)\ge 0$ combined with the non-negativity of $L(1/2, \pi)L(1/2, \pi \otimes \eta)$ proved in \cite{Jacquet-Chen}.
\end{proof}

The sign of the functional equation of the $L$-function $L(s,\pi)L(s,\pi\otimes \eta)$ is given as follows. 

\begin{lem}\label{central=zero}
We have $\e(1/2, \pi)\e(1/2, \pi \otimes \eta)=(-1)^{\e(\eta)}\tilde{\eta}(\gf_{\pi}).$ In particular, $L(1/2, \pi)L(1/2, \pi \otimes \eta) = 0$ unless $(-1)^{\e(\eta)}\tilde{\eta}(\gf_{\pi})=1$.
\end{lem}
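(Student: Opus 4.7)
The strategy is to factor the global root number $\e(1/2,\pi)\e(1/2,\pi\otimes\eta)$ place-by-place. Since $\pi$ has trivial central character, both $\pi$ and $\pi\otimes\eta$ are self-dual, so each of $\e(1/2,\pi)$ and $\e(1/2,\pi\otimes\eta)$ equals $\pm 1$; consequently the product equals the ratio
\[
\e(1/2,\pi)\e(1/2,\pi\otimes\eta)=\frac{\e(1/2,\pi\otimes\eta)}{\e(1/2,\pi)}=\prod_{v}\frac{\e(1/2,\pi_v\otimes\eta_v,\psi_{F,v})}{\e(1/2,\pi_v,\psi_{F,v})}.
\]
Working with the ratio is convenient because it is automatically trivial at every place where $\pi_v\otimes\eta_v\cong\pi_v$, which lets one concentrate on a finite set of places.

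\textbf{Local computation.} I would split $\Sigma_F$ into four subsets. (i) For $v\in\Sigma_\infty$ I would invoke the identity $\e(s,\pi_v\otimes\sgn^{m},\psi_{F,v})=i^{l_v}$ recalled in \S6.1, which is independent of $m\in\{0,1\}$, making the local ratio equal to $1$. (ii) For $v\in\Sigma_\fin$ outside $S(\fn)\cup S(\ff)$ both $\pi_v$ and $\eta_v$ are unramified and the ratio is again $1$. (iii) For $v\in S(\fn)$ (so $\eta_v$ is unramified since $\gcd(\fn,\ff)=1$) I would use the standard twist formula $\e(s,\pi_v\otimes\eta_v,\psi_{F,v})=\eta_v(\varpi_v)^{c(\pi_v)+2d_v}\,\e(s,\pi_v,\psi_{F,v})$ and apply $\eta_v^2=1$ to kill the $2d_v$, obtaining $\eta_v(\varpi_v)^{c(\pi_v)}$. (iv) For $v\in S(\ff)$ (so $\pi_v$ is unramified) I would write $\pi_v=\pi(\chi_{1,v},\chi_{2,v})$ with $\chi_{1,v}\chi_{2,v}=1$, decompose $\e(s,\pi_v\otimes\eta_v)=\e(s,\chi_{1,v}\eta_v)\,\e(s,\chi_{2,v}\eta_v)$, use the twist rule $\e(s,\chi_{i,v}\eta_v,\psi_{F,v})=\chi_{i,v}(\varpi_v)^{c(\eta_v)+d_v}\e(s,\eta_v,\psi_{F,v})$, and finish with the local functional equation $\e(1/2,\eta_v,\psi_{F,v})^{2}=\eta_v(-1)$, giving $\eta_v(-1)$ (the denominator $\e(1/2,\pi_v)$ is $1$ at an unramified place with trivial central character). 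Multiplying the four contributions yields
\[
\e(1/2,\pi)\e(1/2,\pi\otimes\eta)=\Bigl(\prod_{v\in S(\fn)}\eta_v(\varpi_v)^{c(\pi_v)}\Bigr)\Bigl(\prod_{v\in S(\ff)}\eta_v(-1)\Bigr)=\tilde\eta(\ff_\pi)\cdot\prod_{v\in S(\ff)}\eta_v(-1),
\]
the last equality because $c(\pi_v)=0$ for $v\in S(\fn)\setminus S(\ff_\pi)$.

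\textbf{Global product formula and vanishing.} Applying reciprocity $\eta(-1)=\prod_v\eta_v(-1)=1$ to $-1\in F^{\times}$, together with $\eta_v(-1)=1$ at all unramified finite $v$ and $\eta_v(-1)=(-1)^{\e_v}$ at archimedean $v$, I deduce $\prod_{v\in S(\ff)}\eta_v(-1)=(-1)^{\e(\eta)}$, which completes the identification of the root number with $(-1)^{\e(\eta)}\tilde\eta(\ff_\pi)$. For the second assertion, each of $\e(1/2,\pi),\,\e(1/2,\pi\otimes\eta)$ equals $\pm 1$, and a product of $-1$ forces at least one of them to be $-1$; the self-dual functional equation $L(s,\pi)=\e(s,\pi)L(1-s,\pi)$ (and its twist by $\eta$) then forces the corresponding central $L$-value to vanish, giving $L(1/2,\pi)L(1/2,\pi\otimes\eta)=0$. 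The only delicate point in the argument is the careful bookkeeping of conductor exponents and the differential exponent $d_v$ in case (iii): the factor $\eta_v(\varpi_v)^{2d_v}$ arises naturally but drops out precisely because $\eta$ is quadratic; the same $d_v$ in case (iv) is absorbed into $\chi_{i,v}(\varpi_v)^{d_v}$ whose product over $i=1,2$ is $(\chi_{1,v}\chi_{2,v})(\varpi_v)^{d_v}=1$.
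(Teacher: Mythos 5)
Your proof is correct and takes essentially the same approach as the paper: the paper factors the global root number into local epsilon factors and invokes \cite[Lemma 13]{Sugiyama2} for the resulting bookkeeping, whereas you carry out the local computations from scratch (supplying what that citation encapsulates). The only cosmetic difference is that you compute the ratio $\e(1/2,\pi\otimes\eta)/\e(1/2,\pi)$ rather than the product, which is legitimate since both root numbers are $\pm1$; this lets the archimedean factors cancel automatically, while the paper instead evaluates $\prod_{v\in\Sigma_\infty}i^{2l_v}=1$ using that all $l_v$ are even.
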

\begin{proof}
Since $l_{v}$ is even for all $v \in \Sigma_{\infty}$, by virtue of \cite[Lemma 13]{Sugiyama2}, we have
{\small $$\e(1/2, \pi)\e(1/2, \pi \otimes \eta)=\prod_{v\in \Sigma_{\infty}}i^{2l_{v}}\prod_{v \in S(\gf)}\eta_{v}(-1)\prod_{v \in S(\gf_{\pi})}\eta_{v}(\varpi_{v}^{c(\pi_{v})}) = \eta_{\fin}(-1)\tilde{\eta}(\gf_{\pi})
=(-1)^{\e(\eta)}\tilde{\eta}(\gf_{\pi}).$$
}By the functional equation, we are done. 
\end{proof}
%%%%%%%%%%%%%%%%%%%%%%%%%%%%%%%%%%%%%
\subsection{Adjoint $L$-functions}
%%%%%%%%%%%%%%%%%%%%%%%%%%%%%%%%%%%%%
Let $E(\nu, g)=\sum_{\gamma \in B_F\bsl G_F}y(\gamma g)^{(\nu+1)/2}\,(\Re(\nu)>1)$ be the $\bK$-spherical Eisenstein series on $G_\AA$.

\begin{lem}\label{adjoint L} 
For any $\pi \in \Pi_{\rm{cus}}(l,\fn)$, 
\begin{align}
& \int_{Z_{\AA}G_{F}\backslash G_{\AA}} \varphi_{\pi}^{\rm new}(g)\overline{\varphi_{\pi}^{\rm new}}(g)E(2s-1, g)dg 
\label{adjointL-0} \\
= & \{\prod_{v \in \Sigma_{\infty}}2^{1-l_{v}}\}
\frac{{\rm N}(\gf_{\pi})^{s}D_{F}^{s-3/2}}
{[\bfK_{\fin}:\bfK_{0}(\gf_{\pi})]}
\frac{\zeta_{F}(s)L(s, \pi; {\rm Ad})}{\zeta_{F}(2s)}
\prod_{v \in S_{\pi}}\frac{q_{v}^{d_{v}(3/2-s)}Z_{v}(s)}{q_{v}^{c(\pi_{v})(s-1)}L(s, \pi_{v}; {\rm Ad})}\frac{1+q_{v}^{-1}}{1+q_{v}^{-s}}
\notag
\end{align}
for $\Re(s)\gg0$ and
$\|\varphi_{\pi}^{\rm new}\|^{2}=2\{\prod_{v \in \Sigma_{\infty}}2^{1-l_{v}}\}{\rm N}(\gf_{\pi})[\bfK_{\fin}: \bfK_{0}(\gf_{\pi})]^{-1}L^{S_{\pi}}(1, \pi; {\rm Ad}).$
Here we set $S_{\pi} := \{ v \in \Sigma_{\fin} | \ord_{v}(\gf_{\pi})\ge 2 \}$
and
$Z_{v}(s):=\int_{\bfK_{v}}\int_{F_{v}^{\times}}\phi_{0, v}\left([\begin{smallmatrix}t&0 \\ 0&1\end{smallmatrix}]k\right) \overline{\phi_{0, v}\left([\begin{smallmatrix}t&0 \\ 0&1\end{smallmatrix}]k\right)}|t|_{v}^{s-1}d^{\times}tdk
$ for $ v \in \Sigma_{F}$.
\end{lem}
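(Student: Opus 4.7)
The strategy is classical Rankin--Selberg unfolding, followed by explicit local Whittaker integral computations and a residue extraction at $s=1$. In the region $\Re(s)\gg 0$ I would unfold the Eisenstein series via $E(2s-1,g)=\sum_{\gamma\in B_{F}\bsl G_{F}}y(\gamma g)^{s}$, rewriting the left-hand side of \eqref{adjointL-0} as $I(s):=\int_{Z_{\AA}B_{F}\bsl G_{\AA}}|\varphi_{\pi}^{\rm new}(g)|^{2}\,y(g)^{s}\,dg$. Inserting the Whittaker expansion $\varphi_{\pi}^{\rm new}(g)=\sum_{a\in F^{\times}}W_{\pi}(\diag(a,1)\,g)$ and applying Parseval on the compact abelian quotient $N_{F}\bsl N_{\AA}$, the sum over $a\in F^{\times}$ collapses the $H_{F}\bsl H_{\AA}\cong F^{\times}\bsl\AA^{\times}$-integration, yielding the Eulerian integral
$$I(s)=\int_{Z_{\AA}N_{\AA}\bsl G_{\AA}}|W_{\pi}(g)|^{2}\,y(g)^{s}\,dg=\prod_{v\in\Sigma_{F}}I_{v}(s),$$
where $W_{\pi}=\bigotimes_{v}W_{\pi,v}$, with $W_{\pi,v}=\phi_{0,v}$ at $v\in\Sigma_{\infty}$ (cf.~\S 3.1) and $W_{\pi,v}$ the local new-vector Whittaker function at $v\in\Sigma_{\fin}$.

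Next I would evaluate each $I_{v}(s)$ explicitly. At $v\in\Sigma_{\infty}$, the explicit formula \eqref{ExplicitformWhittaker} reduces $I_{v}(s)$ to a gamma integral and produces the factor $2^{1-l_{v}}\,\zeta_{F,v}(s)L(s,\pi_{v};{\rm Ad})/\zeta_{F,v}(2s)$ up to a local measure constant. At unramified $v\notin S_{\pi}\cup\Sigma_{\infty}$, the Casselman--Shalika formula together with the classical Macdonald identity gives $I_{v}(s)=c_{v}\,\zeta_{F,v}(s)L(s,\pi_{v};{\rm Ad})/\zeta_{F,v}(2s)$, and the local volume constants $c_{v}$ assemble globally into the discriminant factor $D_{F}^{s-3/2}$. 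At ramified $v\in S_{\pi}$ the local integral is what the paper denotes $Z_{v}(s)$; the correction $q_{v}^{d_{v}(3/2-s)}/(q_{v}^{c(\pi_{v})(s-1)}L(s,\pi_{v};{\rm Ad}))\cdot(1+q_{v}^{-1})/(1+q_{v}^{-s})$ appearing on the right-hand side of \eqref{adjointL-0} is precisely the discrepancy between $Z_{v}(s)$ and the unramified formula, together with the measure change from $\bfK_{v}$ to $\bfK_{0}(\ff_{\pi}\cO_{v})$, which globally produces the index $[\bfK_{\fin}:\bfK_{0}(\ff_{\pi})]^{-1}$.

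For the norm formula I would take residues at $s=1$ of both sides of \eqref{adjointL-0}: the Eisenstein series $E(2s-1,\cdot)$ has a simple pole there whose residue is a constant function, and on the right the pole comes solely from $\zeta_{F}(s)$ while every other factor is holomorphic and non-zero at $s=1$. Cancelling the common Dedekind residue, using that the ramified correction collapses to $1$ at $s=1$ (since $Z_{v}(1)$ restores $L(1,\pi_{v};{\rm Ad})$ and the remaining measure factors cancel), and that $L(1,\pi;{\rm Ad})=L^{S_{\pi}}(1,\pi;{\rm Ad})\prod_{v\in S_{\pi}}L(1,\pi_{v};{\rm Ad})$, yields the stated expression for $\|\varphi_{\pi}^{\rm new}\|^{2}$; the overall factor $2$ comes from the normalization of the Eisenstein residue combined with the fixed Haar measure on $Z_{\AA}\bsl H_{\AA}$. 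The principal difficulty is at the ramified finite places $v\in S_{\pi}$: identifying $Z_{v}(s)$ with the stated correction factor requires the explicit Atkin--Lehner/Casselman description of the local new vector and a careful separation of the Haar measure between the $F_{v}^{\times}$-integral and the integration over $\bfK_{v}/\bfK_{0}(\ff_{\pi}\cO_{v})$. A secondary bookkeeping task is to verify that the global product of local measure normalizations reconstructs the discriminant power $D_{F}^{s-3/2}$ and the index $[\bfK_{\fin}:\bfK_{0}(\ff_{\pi})]$ with the correct exponents.
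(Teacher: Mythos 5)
Your proposal follows the same route as the paper's proof: Rankin--Selberg unfolding of the Eisenstein series reduces the left-hand side to a product over all places of the local Whittaker integrals $Z_v(s)$, the archimedean factors are evaluated directly from the explicit formula \eqref{ExplicitformWhittaker}, the unramified finite factors are handled by the Macdonald/Casselman--Shalika computation (the paper delegates these to \cite[Lemma 2.14, Corollary 2.15]{Tsud} and \cite[Lemma 14]{Sugiyama2}), and the norm formula is obtained by taking the residue at $s=1$. Your account is correct; the paper's write-up is merely a terse version of the same argument.
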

\begin{proof} By the standard procedure, we see that the left-hand side of \eqref{adjointL-0} is a product of the integrals
$Z_{v}(s)$ over all $ v \in \Sigma_{F}$. If $v \in \Sigma_{\infty}$, using \eqref{ExplicitformWhittaker}, we easily have $Z_{v}(s) =2^{1-l_{v}}{\Gamma_{\RR}(s)}{\Gamma_{\RR}(2s)}^{-1}L(s, \pi_{v}; {\rm Ad}).$ Together with the computations at finite places (cf.\ \cite[Lemma 2.14 and Corollary 2.15]{Tsud} and \cite[Lemma 14]{Sugiyama2}), this completes the proof. 
\end{proof}

\noindent
{\bf Remark} : Nelson, Pitale and Saha \cite{Saha} also considered the integrals $Z_v(s)$
and gave explicit formulas of $Z_{v}(s)$.
However, as already remarked in \cite[1.3]{Saha}, it seems difficult to give a simple formula of $Z_{v}(s)$ for $v \in S_{\pi}$.

\subsection{Spectral parameters} \label{Spectral parameters}
Let $\pi\in \Pi_{\rm{cus}}(l,\gn)$. For any $v\in \Sigma_\fin-S(\ff_\pi)$, the $v$-component $\pi_v$ of $\pi$ is isomorphic to the $\bK_v$-spherical principal series representation $ I_v(\nu_v)={\rm{Ind}}_{B_v}^{G_v}(|\,|_v^{\nu_v/2}\boxtimes|\,|_v^{-\nu_v/2})$ with $\nu_v\in \C$ such that $\nu_v$ or $\nu_v+\frac{2\pi i}{\log q_v}$ belongs to $(0,1)\cup i\R_{\geq 0}$. The point $\nu_S(\pi)=\{\nu_v\}_{v\in S}$ of $\fX_S$ is called the spectral parameter of $\pi$ at $S$. We have the Hecke eigenequation 
\begin{align}
R(\bT_v)\varphi=(q_v^{(1+\nu_v)/2}+q_v^{(1-\nu_v)/2})\varphi, \quad \varphi \in V_{\pi}^{\bK_v}, \,v\in \Sigma_\fin-S(\ff_\pi).
 \label{HeckeEEQ}
\end{align}
Since the Hecke operator $R(\bT_v)$ acting on the space $L^2(Z_\A G_F\bsl G_\A)$ is self-adjoint, the eigenvalue $q_v^{(1+\nu_v)/2}+q_v^{(1-\nu_v)/2}$ is a real number. 

%%%%%%%%%%%%%%%%%%%%%%%%%%%%%%%%%%%%%%%%%%%
\subsection{The spectral side}
%%%%%%%%%%%%%%%%%%%%%%%%%%%%%%%%%%%%%%%%%%%

By means of Lemma \ref{Green and period}, we can explicitly describe the coefficients of $\hat{\bf \Psi}_{\b, \l}^{l}(\gn|\a)$ in the $L^{2}$-expansion in terms of $(H, \bf1)$-period integrals and the spectral parameters of cupsidal representations. 

\begin{lem} \label{regsmoothedGreenSPECT}
Let $\pi \in \Pi_{\rm cus}(l, \gn)$ and $\nu_{S}(\pi)=(\nu(\pi_{v}))_{v \in S}$ the spectral parameter of $\pi$ at $S$. Then, for any $\varphi \in V_{\pi}[\tau_l]^{\bK_0(\fn)}$ and for $1/2<\Re(\l)<\ul/2-1$, we have 
$$\langle \hat{\bf \Psi}_{\b, \l}^{l}(\gn|\a)| \varphi \rangle_{L^{2}}
=(-1)^{\#S}\{\prod_{v\in \Sigma_{\infty}}2^{l_{v}-1}\}D_{F}^{-1/2}[\bfK_{\fin}:\bfK_{0}(\gn)]^{-1}\a(\nu_{ S}(\pi))P_{\b C_{l}, \l}^{\bf1}(\overline{\varphi}).$$
\end{lem}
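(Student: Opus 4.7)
The plan is to combine three ingredients: (a) the Poincar\'e series presentation \eqref{centralObj} of $\hat{\bf\Psi}^{l}_{\b,\l}(\gn|\a;-)$, (b) the reproducing formula of Lemma~\ref{Green and period}, and (c) a residue evaluation of the resulting contour integral on $\fX_{S}$. First I would unfold the Poincar\'e series: writing the inner product as an integral over $Z_{\AA}G_{F}\bsl G_{\AA}$ and using the left $H_{F}$-invariance of each summand in \eqref{centralObj} together with the left $G_{F}$-invariance of $\varphi$, the standard unfolding produces
\[
\langle \hat{\bf \Psi}_{\b,\l}^{l}(\gn|\a)\,|\,\varphi \rangle_{L^{2}} = \int_{Z_{\AA}H_{F}\bsl G_{\AA}}\hat{\Psi}^{l}_{\b,\l}(\gn|\a;g)\,\overline{\varphi(g)}\,dg,
\]
which converges absolutely by the cuspidality of $\varphi$ combined with Lemma~\ref{estimate of smooth kernel}(2).

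Next I would swap integrals. Using the dominating pointwise bounds from Lemmas~\ref{estimate of Shintani} and~\ref{v-Greenftn-est} together with compactness of $\LL_{S}(\bfc)$, Fubini allows me to pull the contour integration outside and rewrite the above as $(1/(2\pi i))^{\#S}\int_{\LL_{S}(\bfc)}\a(\bfs)\,A(\bfs)\,d\mu_{S}(\bfs)$, where
\[
A(\bfs)=\int_{Z_{\AA}H_{F}\bsl G_{\AA}}\Psi_{\b,\l}^{l}(\gn|\bfs;g)\,\overline{\varphi(g)}\,dg.
\]
For each $v\in S$, the hypothesis $S\subset \Sigma_\fin-S(\gn\gf)$ and $\gf_\pi|\gn$ ensures $\pi_v$ is unramified spherical principal series, so $\overline{\varphi}$ is a Hecke eigenvector: $R(\TT_{v})\overline{\varphi}=(q_{v}^{(1+\nu_{v}(\pi_v))/2}+q_{v}^{(1-\nu_{v}(\pi_v))/2})\overline{\varphi}$. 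Therefore $R(\bfT_{S}(\bfs))\overline{\varphi}$ is a scalar multiple of $\overline{\varphi}$, and Lemma~\ref{Green and period} (after unfolding its left-hand side via the Poincar\'e series defining $\bf\Psi$) gives
\[
A(\bfs)=\frac{\{\prod_{v \in \Sigma_{\infty}}2^{l_{v}-1}\}\,\vol(H_{\fin} \bsl H_{\fin}\bfK_{0}(\gn))\,P_{\b C_{l},\l}^{\bf 1}(\overline{\varphi})}{\prod_{v\in S}\{(q_{v}^{(1+\nu_{v}(\pi_v))/2}+q_{v}^{(1-\nu_{v}(\pi_v))/2})-(q_{v}^{(1-s_{v})/2}+q_{v}^{(1+s_{v})/2})\}}.
\]

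It remains to compute the contour integral of $\a(\bfs)$ divided by this denominator, against the measure $d\mu_{S}(\bfs)$, variable-by-variable. For each $v\in S$, the one-dimensional integrand (including the measure factor $2^{-1}\log q_v\,(q_v^{(1+s_v)/2}-q_v^{(1-s_v)/2})\,ds_v$) is odd under $s_v\mapsto -s_v$ by the assumed evenness $\a(-\bfs)=\a(\bfs)$, so its integrals along $\Re(s_v)=\pm c_v$ are negatives of one another. Applying the residue theorem on the cylinder $\CC/(4\pi i(\log q_v)^{-1})\ZZ$ in the strip $|\Re(s_v)|<c_v$, I pick up two simple poles at $s_v=\pm \nu_v(\pi_v)$; a direct calculation of $\mathrm{Res}$ using $\partial_{s_v}[q_v^{(1-s_v)/2}+q_v^{(1+s_v)/2}]$ shows each residue equals $-\a_v(\ldots,\nu_v(\pi_v),\ldots)$, and combined with the antisymmetry the one-variable value is $-\a(\ldots,\nu_v(\pi_v),\ldots)$. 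Iterating over $v\in S$ produces the factor $(-1)^{\#S}\a(\nu_{S}(\pi))$. The claim then reduces to the volume identity $\vol(H_{\fin}\bsl H_{\fin}\bfK_{0}(\gn)) = D_F^{-1/2}[\bfK_\fin:\bfK_0(\gn)]^{-1}$, which follows from the chosen local Haar measures together with $[\bfK_\fin:\bfK_0(\gn)] = [\bfK_\fin:\bfK_0(\gn)]$. The hard part will be (a) justifying the Fubini exchange rigorously since the contour is non-compact on each cylinder factor and the $g$-integral is only conditionally controlled via Siegel-set estimates, and (b) keeping careful track of orientations and signs in the multidimensional residue calculation; the rest is a direct application of the already-established reproducing property of the adelic Green function.
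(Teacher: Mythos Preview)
Your approach is correct and essentially identical to the paper's: the paper first swaps the $\bfs$-integral with the pairing (citing \cite[Lemma~9.2]{Tsud}), then invokes Lemma~\ref{Green and period} together with the Hecke eigenvalue property, and finally evaluates the resulting contour integral via \cite[Lemma~9.5]{Tsud}---which is exactly the residue computation you sketch, with the oddness of the integrand under $s_v\mapsto -s_v$ furnishing the factor $(-1)^{\#S}\a(\nu_S(\pi))$.

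One small correction: your worry in the ``hard part'' about Fubini is unfounded, since each slice $\LL_S(\bfc)$ is a compact torus (a product of circles on the cylinders $\C/(4\pi i/\log q_v)\Z$), so the $\bfs$-integral is over a compact set and the exchange is immediate once you have a uniform bound on the $g$-integral; the genuine analytic input is the Siegel-domain estimate of Lemma~\ref{estimate of automorphic Green} already used in Lemma~\ref{Green and period}.
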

\begin{proof}
In the same way as \cite[Lemma 9.2]{Tsud} with the aid of the majorant $\Xi_{l, \Re(\l)-\e, q(\bfc), S}$ for any sufficiently small $\e>0$, (Note: in the proof of \cite[Lemma 9.2]{Tsud}, the majorant of the integral (9.3) should be corrected to $\Xi_{\Re(\l)-\e, q(\bfc), S_{\fin}}$.) we have
$$\langle \hat{\bf \Psi}_{\b, \l}^{l}(\gn|\a) | \varphi \rangle_{L^{2}} = \left( \frac{1}{2\pi i} \right)^{\#S}\int_{\LL_{S}(\bf{c})}\langle {\bf \Psi}_{\b, \l}^{l}(\gn|\bfs), \bar{\varphi} \rangle \a(\bfs) d\mu_{S}(\bfs)$$
for any rapidly decreasing function $\varphi \in C^{\infty}(Z_\A G_{F} \bsl G_{\AA})[\tau_{l}]^{\bfK_{0}(\gn)}$, where $q(\bfc)$ is sufficiently large.
Contrary to \cite[Lemma 9.2]{Tsud}, the condition $\Re(\l)>1$ is not needed.
Indeed, in the proof of \cite[Lemma 9.2]{Tsud}, the estimate $|\varphi(g)|\ll\|g\|_{\AA}^{1+\e}$ is replaced with
$|\varphi(g)|\ll\|g\|_{\AA}^{-m}$ for any $m>0$, and
moreover, $\int_{1}^{\infty}y^{-\Re(\l)+1+2 \e}d^{\times}y$ is replaced with
$\int_{1}^{\infty}y^{-\Re(\l)-m+\e}d^{\times}y$.
(Note: in the proof of \cite[Lemma 9.2]{Tsud}, the first factor of the last integral should be corrected to $\int_{1}^{\infty}y^{-\Re(\l)+1+\e}d^{\times}y$.)
Thus, by Lemma \ref{Green and period} and \eqref{HeckeEEQ}, $\langle \hat{\bf \Psi}_{\b, \l}^{l}(\gn|\a)| \varphi \rangle_{L^{2}}$ is equal to $\{ \prod_{v \in \Sigma_{\infty}}2^{l_{v}-1} \} \vol(H_{\fin} \bsl H_{\fin}\bfK_{0}(\gn)) P_{\b C_{l}, \l}^{\bf 1}(\overline{\varphi})$ times the integral 
\begin{align*}
& \left( \frac{1}{2\pi i} \right)^{\#S}\int_{\LL_{S}(\bf{c})} 
\{\prod_{v \in S}
(q_{v}^{(1+\nu(\pi_{v}))/2}+q_{v}^{(1-\nu(\pi_{v}))/2}
-q_{v}^{(1+s_{v})/2}-q_{v}^{(1-s_{v})/2})\}^{-1} \a(\bfs) d\mu_{S}(\bfs).
\end{align*}
Here we note $q_{v}^{(1+\nu(\pi_{v}))/2}+q_{v}^{(1-\nu(\pi_{v}))/2}\in \R$ (see \S \ref{Spectral parameters}) and $\vol(H_{\fin}\bsl H_{\fin}\bfK_{0}(\gn))=D_{F}^{-1/2}[\bfK_{\fin} :\bfK_{0}(\gn)]$ from \cite[Lemma 8.3]{Tsud}. The integral is computed as $(-1)^{\# S}\a(\bfs)$. Indeed, we may assume that $\a$ is decomposable, i.e., $\a=\otimes_{v \in S}\, \a_v$, and invoke the formula
$$\frac{1}{2\pi i} \int_{c_v-2\pi i(\log q_v)^{-1}}^{c_v+2\pi i (\log q_v)^{-1}} \{q_{v}^{(1+\nu(\pi_{v}))/2}+q_{v}^{(1-\nu(\pi_{v}))/2}
-q_{v}^{(1+s_{v})/2}-q_{v}^{(1-s_{v})/2})\}^{-1} \a_v(s_v) d\mu_{v}(s_v)=-\a_v(s_v)$$
shown in \cite[Lemma 9.5]{Tsud}. This completes the proof.
\end{proof}

By this lemma and \eqref{L^2SPECT}, we have
{\small
\begin{align*}
\hat{\bf \Psi}_{\b, \l}^{l}(\gn|\a; g) = &
\frac{(-1)^{\#S}\{\prod_{v\in \Sigma_{\infty}}2^{l_{v}-1}\}
D_{F}^{-1/2}}{[\bfK_{\fin}:\bfK_{0}(\gn)]}\sum_{\pi \in \Pi_{\rm cus}(l, \gn)}\sum_{\varphi \in \Bcal(\pi; l, \gn)}
\a(\nu_{S}(\pi))P_{\b C_{l}, \l}^{\bf1}(\overline{\varphi})\varphi(g), \quad g\in G_\A. 
\end{align*}
}The integral $P_{\b C_{l}, \l}^{\bf1}(\overline{\varphi})$ is continued to an entire function in $\l$ for any cusp form $\varphi$ by \cite[Lemma 7.3]{Tsud}. As a finite linear combination of such, the function $\hat{\bf \Psi}_{\b, \l}^{l}(\gn|\a; g)$ has a holomorphic analytic continuation to the whole $\l$-plane. Since ${\rm{CT}}_{\lambda=0}P^{\eta}_{\beta C_l,\lambda}(\bar \varphi)=C_l(0)\,P^\eta_{\rm{reg}}(\bar\varphi)\,\b(0)$, we can define the regularized automorphic smoothed kernel $\hat{\bf \Psi}_{\rm reg}^{l}(\gn|\a; g)$ by the relation
$${\rm CT}_{\l=0}\hat{\bf \Psi}_{\b, \l}^{l}(\gn|\a; g)= \hat{\bf \Psi}_{\rm reg}^{l}(\gn|\a; g) \b(0)$$
for any $\b \in \Bcal$. Indeed, we have the expression
$$\hat{\bf \Psi}_{\rm reg}^{l}(\gn|\a; g) =
\frac{(-1)^{\#S}\{\prod_{v\in \Sigma_{\infty}}2^{l_{v}-1}\}C_{l}(0)
D_{F}^{-1/2}}{[\bfK_{\fin}:\bfK_{0}(\gn)]}
\sum_{\pi \in \Pi_{\rm cus}(l, \gn)}\sum_{\varphi \in \Bcal(\pi;l, \gn)}
\a(\nu_{S}(\pi)) \overline{P_{\rm reg}^{\bf 1}(\varphi)} \varphi(g),
$$
which is valid pointwisely with the summation being finite. By computing the period integral $P_{\rm{reg}}^{\eta}(\hat{\bf \Psi}_{\rm reg}^{l}(\gn|\a))$ in terms of this expansion, we obtain the one side of the relative trace formula, the {\it spectral side}.

\begin{prop} \label{Reg-per}
Suppose $\ul \ge 4$.
The function $\hat{\bf \Psi}_{\rm reg}^{l}(\gn|\a)$ has the regularized $(H, \eta)$-period given by
\begin{align*}
P_{\rm reg}^{\eta}(\hat{\bf \Psi}_{\rm reg}^{l}(\gn|\a)) =&
(-1)^{\#S}
\{ \prod_{v \in \Sigma_{\infty}}2\pi \frac{\Gamma(l_{v}-1)}{\Gamma(l_{v}/2)^{2}} \}
D_{F}^{-1}[\bfK_{\fin}:\bfK_{0}(\gn)]^{-1} \times (-1)^{\e(\eta)}\Gcal(\eta) \\
& \times \sum_{\pi \in \Pi_{\rm cus}(l,\gn)}
w_{\gn}^{\eta}(\pi) \frac{L(1/2, \pi) L(1/2, \pi \otimes \eta)}
{2{\rm N}(\gf_{\pi})[\bfK_{\fin}: \bfK_{0}(\gf_{\pi})]^{-1} L^{S_{\pi}}(1, \pi; {\rm Ad})} \a(\nu_{S}(\pi)).
\end{align*}
\end{prop}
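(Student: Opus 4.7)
The plan is to combine the pointwise spectral expansion of $\hat{\bf \Psi}_{\rm reg}^{l}(\gn|\a;-)$ stated just above the proposition with the two explicit formulas already at our disposal: Lemma~\ref{value of PP}, which evaluates the sum $\PP^{\eta}(\pi;l,\gn)=\sum_{\varphi\in \Bcal(\pi;l,\gn)}\overline{P^{\bf1}_{\rm reg}(\varphi)}\,P^{\eta}_{\rm reg}(\varphi)$ in closed form, and Lemma~\ref{adjoint L}, which identifies $\|\varphi_{\pi}^{\rm new}\|^{2}$ with a value of the adjoint $L$-function.

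First I would justify that
\[
P_{\rm reg}^{\eta}\bigl(\hat{\bf \Psi}_{\rm reg}^{l}(\gn|\a;-)\bigr)
=\sum_{\pi\in\Pi_{\rm cus}(l,\gn)}\sum_{\varphi\in\Bcal(\pi;l,\gn)}
\frac{(-1)^{\#S}\{\prod_{v\in\Sigma_{\infty}}2^{l_{v}-1}\}\,C_{l}(0)\,D_{F}^{-1/2}}{[\bfK_{\fin}:\bfK_{0}(\gn)]}\,
\a(\nu_{S}(\pi))\,\overline{P_{\rm reg}^{\bf 1}(\varphi)}\,P_{\rm reg}^{\eta}(\varphi).
\]
This simply uses linearity of the functional $\varphi\mapsto P^{\eta}_{\rm reg}(\varphi)$ applied to the spectral expansion, which (as noted before the proposition) is a \emph{finite} sum of cusp forms; each summand is rapidly decreasing on $\fS^{1}$, and for such functions $P^{\eta}_{\rm reg}$ agrees with the absolutely convergent global zeta integral $Z^{*}(1/2,\eta,\cdot)$, so no convergence issues arise.

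Next I would recognize the inner double sum over $\varphi$ as $\PP^{\eta}(\pi;l,\gn)$ and substitute the closed form from Lemma~\ref{value of PP}:
\[
\PP^{\eta}(\pi;l,\gn)=D_{F}^{-1/2}(-1)^{\e(\eta)}\Gcal(\eta)\,w^{\eta}_{\gn}(\pi)\,\frac{L(1/2,\pi)L(1/2,\pi\otimes\eta)}{\|\varphi_{\pi}^{\rm new}\|^{2}}.
\]
Then I would insert the formula $\|\varphi_{\pi}^{\rm new}\|^{2}=2\{\prod_{v\in\Sigma_{\infty}}2^{1-l_{v}}\}{\rm N}(\gf_{\pi})[\bfK_{\fin}:\bfK_{0}(\gf_{\pi})]^{-1}L^{S_{\pi}}(1,\pi;{\rm Ad})$ from Lemma~\ref{adjoint L}, and the value $C_{l_{v}}(0)=2^{3-2l_{v}}\pi\,\Gamma(l_{v}-1)/\Gamma(l_{v}/2)^{2}$ from Lemma~\ref{C_l(z)}(ii).

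Finally I would collect the archimedean scalars: the product of the prefactor $\prod_{v}2^{l_{v}-1}$ with $C_{l}(0)=\prod_{v}C_{l_{v}}(0)$ and with $\|\varphi_{\pi}^{\rm new}\|^{-2}$ contributes, per archimedean place, $2^{l_{v}-1}\cdot 2^{3-2l_{v}}\cdot 2^{l_{v}-1}=2$, giving $\prod_{v}2\pi\,\Gamma(l_{v}-1)/\Gamma(l_{v}/2)^{2}$ as in the claim, while the two copies of $D_{F}^{-1/2}$ combine into $D_{F}^{-1}$, and the $\frac{1}{2{\rm N}(\gf_{\pi})[\bfK_{\fin}:\bfK_{0}(\gf_{\pi})]^{-1}L^{S_{\pi}}(1,\pi;{\rm Ad})}$ factor matches the denominator in the target formula. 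The computation is essentially bookkeeping of numerical constants; the only nontrivial point is the book-keeping itself, so I expect no real obstacle, provided one is careful with the gamma-factor normalization and with the index $[\bfK_{\fin}:\bfK_{0}(\gn)]$ versus $[\bfK_{\fin}:\bfK_{0}(\gf_{\pi})]$ (which is absorbed into the explicit expression and need not cancel).
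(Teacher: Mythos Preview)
Your proposal is correct and follows essentially the same route as the paper: term-wise application of $P^{\eta}_{\rm reg}$ to the finite spectral expansion (justified by cuspidality), identification of the inner sum as $\PP^{\eta}(\pi;l,\gn)$ via Lemma~\ref{value of PP}, substitution of $\|\varphi_{\pi}^{\rm new}\|^{2}$ from Lemma~\ref{adjoint L}, and insertion of $C_{l_v}(0)$ from Lemma~\ref{C_l(z)}(ii). Your bookkeeping of the archimedean powers of $2$ and of $D_F^{-1/2}\cdot D_F^{-1/2}=D_F^{-1}$ is accurate.
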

\begin{proof} As was remarked in \S 6.3, for a cups form $\varphi$, the regularized period $P_{\rm{reg}}^\eta(\varphi)$ becomes the usual zeta integral $Z^{*}(1/2,\eta,\varphi)$ which is absolutely convergent. Thus, by term-wise integration, we have
{\small \begin{align*}
P_{\rm reg}^{\eta}(\hat{\bf \Psi}_{\rm reg}^{l}(\gn|\a)) = &
\frac{(-1)^{\#S}\{\prod_{v\in \Sigma_{\infty}}2^{l_{v}-1}\}C_{l}(0)
D_{F}^{-1/2}}{[\bfK_{\fin}:\bfK_{0}(\gn)]}\sum_{\pi \in \Pi_{\rm cus}(l,\gn)}\sum_{\varphi \in \Bcal(\pi; l, \gn)}
\overline{P_{\rm reg}^{\bf 1}(\varphi)} P_{\rm reg}^{\eta}(\varphi)\a(\nu_{S}(\pi)).
\end{align*}}
Then we obtain the assertion by Lemma \ref{C_l(z)} (ii), Proposition \ref{periods of cusp forms}, Lemmas \ref{value of PP} and \ref{adjoint L}.
\end{proof}

%%%%%%%%%%%%%%%%%%%%%%%%%%%%%%%%%
\section{Geometric expansions}
%%%%%%%%%%%%%%%%%%%%%%%%%%%%%%%%%

The reader might wonder why we take the regularized period in Proposition~\ref{Reg-per} which seems unnecessary because the function $\hat{\mathbf\Psi}_{\rm{reg}}^{l}(\fn|\a)$ is cuspidal and the usual period integral makes sense. The reason should become evident from our computation of the other side of the relative trace formula, the {\it geometric side}, to be performed in this section and the next. Suppose $\ul=\inf_{v\in \Sigma_\infty} l_{v} \ge 4$.  We compute the quantity $P^\eta_{\rm{reg}}(\hat{\bf \Psi}_{\rm reg}^{l}(\gn|\a))$ by using the series expression \eqref{centralObj}. The first step is to break the sum in \eqref{centralObj} over $H_F\bsl G_F$ to a sum of subseries according to double cosets $H_F\delta H_F$. For $\delta \in G_{F}$, we put ${\rm St(\delta)} := H_{F} \cap \delta^{-1}H_{F}\delta$. Then, the following elements of $G_{F}$ form a complete set of representatives for the double coset space $H_{F} \backslash G_{F}/H_{F}$:
\begin{center}
$e = [\begin{smallmatrix}1 & 0 \\ 0 & 1\end{smallmatrix}]$,
$w_{0} = [\begin{smallmatrix} 0 & -1 \\ 1 & 0\end{smallmatrix}]$,

$u = [\begin{smallmatrix}1 & 1 \\ 0 & 1\end{smallmatrix}]$,
$\overline{u} = [\begin{smallmatrix}1 & 0 \\ 1 & 1\end{smallmatrix}]$,
$uw_{0} = [\begin{smallmatrix}1 & -1 \\ 1 & 0\end{smallmatrix}]$,
$\overline{u}w_{0}= [\begin{smallmatrix}0 & -1 \\ 1 & -1\end{smallmatrix}]$,

$\delta_{b} = [\begin{smallmatrix} 1+b^{-1} & 1 \\ 1 & 1\end{smallmatrix}], \hspace{2mm}{b\in F^{\times} - \{-1\}}$.
\end{center}
Moreover, we have ${\rm St}(e) ={\rm St}(w_{0}) = H_{F}$ and ${\rm St}(\delta) = Z_{F}$ for any $\delta \in \{u, \overline{u},
uw_{0}, \overline{u}w_{0}\} \cup \{\delta_{b} | b \in F^{\times} - \{-1\} \}$. (See \cite[Lemma 1]{Ramakrishnan-Rogawski} and \cite[Lemma 73]{Tsud}). Thus we obtain the following expression for $\Re(\l) > 0$:
$$
\hat{{\bf \Psi}}_{\b, \l}^{l} \left(\gn | \a; [\begin{smallmatrix} t & 0 \\ 0 & 1\end{smallmatrix}]
[\begin{smallmatrix}1 & x_{\eta} \\ 0 & 1\end{smallmatrix}]\right)
= \sum_{\delta}J_{\delta}(\b, \l, \a; t),
$$
where $\delta$ runs through the double coset representatives listed above and, for each such $\delta$, $J_{\delta}(\b, \l, \a; t)$ is the sum of $\hat{\Psi}_{\b, \l}^{l} \left(\gn | \a; 
\delta \gamma
[\begin{smallmatrix} t & 0 \\ 0 & 1\end{smallmatrix}]
[\begin{smallmatrix}1 & x_{\eta} \\ 0 & 1\end{smallmatrix}]\right)
$ for $\gamma\in {\rm{St}}(\delta)\bsl H_F$.

\begin{lem} \label{JJ-id}
The function $\l \mapsto J_{e}(\b, \l, \a; t)$ and $\l \mapsto J_{w_{0}}(\b, \l, \a; t)$
are entire on $\CC$. Moreover their values at $\l = 0$ are $J_{\rm id}(\a; t)\b(0)$ and $i^{\tilde{l}}\delta(\gn = \go)J_{\rm id}(\a; t)\b(0)$, respectively, where
$$
J_{\rm id}(\a; t) = \delta(\gf = \go)\left( \frac{1}{2\pi i} \right)^{\#S}\int_{\LL_{S}(\bfc)}\Upsilon_{S}^{\bf1}(\bfs) \a(\bfs) d\mu_{S}(\bfs)$$
with $\tilde{l}= \sum_{v \in \Sigma_{\infty}}l_{v}$ and
$$
\Upsilon_{S}^{\bf1}(\bfs) = \prod_{v \in S}(1 - q_{v}^{-(s_{v}+1)/2})^{-1}(1 - q_{v}^{(s_{v}+1)/2})^{-1}.$$
\end{lem}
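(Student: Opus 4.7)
The plan is to use $\mathrm{St}(e) = \mathrm{St}(w_0) = H_F$ to collapse each orbital sum defining $J_\delta$ to a single term, then evaluate the resulting double contour integral place-by-place, and finally perform one contour shift that produces a residue giving the value at $\lambda=0$. Setting $h = \left[\begin{smallmatrix}t&0\\0&1\end{smallmatrix}\right]\left[\begin{smallmatrix}1&x_\eta\\0&1\end{smallmatrix}\right]$, this reduces the problem to computing $\hat\Psi_{\beta,\lambda}^l(\gn|\alpha;h)$ and $\hat\Psi_{\beta,\lambda}^l(\gn|\alpha;w_0 h)$. Using the $H_\AA$-equivariance of $\Psi_l^{(z)}$ together with the identity $w_0\,\diag(t,1) = \diag(1,t)\,w_0$, one factors out the scalars $|t|_\AA^{\pm z}$ from $\Psi_l^{(\pm z)}(\gn|\bfs;h)$ and $\Psi_l^{(\pm z)}(\gn|\bfs;w_0 h)$, reducing to the evaluation of $\Psi_l^{(z)}(\gn|\bfs;g)$ at $g = u_{x_\eta}$ and $g = w_0 u_{x_\eta}$, respectively, where $u_y := \left[\begin{smallmatrix}1&y\\0&1\end{smallmatrix}\right]$.

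These local computations produce the characteristic functions $\delta(\gf = \go_F)$ and $\delta(\gn = \go_F)$ in the statement. For $J_e$: at each $v \in S(\gf)$ (disjoint from $S \cup S(\gn)$ by hypothesis), $|x_{\eta,v}|_v = q_v^{f(\eta_v)} > 1$, so $\Phi_{0,v}^{(z)}(u_{x_{\eta,v}}) = \delta(x_{\eta,v} \in \go_v) = 0$; hence the product vanishes unless $\gf = \go_F$, in which case $x_\eta = 0$ and it collapses to $F(\bfs, z) := \prod_{v \in S} \Psi_v^{(z)}(s_v; e)$ (the remaining local factors all being $1$ at the identity). For $J_{w_0}$ two further obstructions arise: (i) at each $v \in S(\gn)$ a direct Iwasawa analysis shows $w_0 \notin B_v \bK_0(\gn\go_v)$, so $\Phi_{\gn,v}^{(z)}(w_0) = 0$; (ii) at each $v \in S(\gf)$, the Iwasawa decomposition of $w_0 u_{x_{\eta,v}}$ with $|x_{\eta,v}|_v > 1$ produces a unipotent part whose off-diagonal entry has negative valuation, again killing $\Phi_{0,v}^{(z)}$. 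Hence $J_{w_0}$ vanishes unless $\gn = \gf = \go_F$; in that case the only non-unit local contributions come from the archimedean factors $\Psi_v^{(z)}(l_v; w_0) = \tau_{l_v}(k_{\pi/2}) = i^{l_v}$, giving $\Psi_l^{(z)}(\go_F|\bfs;w_0) = i^{\tilde l}F(\bfs, z)$.

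For the contour manipulation, substitute $z \mapsto -z$ in the $\Psi_l^{(-z)}$ piece of the definition of $\Psi_{\beta,\lambda}^l$ (legitimate because $\beta(-z) = \beta(z)$), producing an integral over $L_{-\sigma}$. Shift it rightward to $L_\sigma$; for $|\Re(\lambda)| < \sigma < q(\bfs)-1$ this crosses the simple pole at $z = \lambda$, contributing the residue $\beta(\lambda)|t|_\AA^{\pm\lambda}F(\bfs,\lambda)$, and the two surviving integrals along $L_\sigma$ combine into one with kernel $-2\lambda/(z^2 - \lambda^2)$. Integrating against $\alpha(\bfs)\,d\mu_S(\bfs)$ on $L_S(\bfc)$ yields a decomposition
\[
J_\delta(\beta,\lambda,\alpha;t) = A_\delta(\lambda) + \beta(\lambda)B_\delta(\lambda),
\]
in which $A_\delta$ carries the factor $-2\lambda$ (so vanishes at $\lambda = 0$) and
\[
B_\delta(\lambda) = \varepsilon_\delta\,|t|_\AA^{\pm\lambda}\,\Bigl(\tfrac{1}{2\pi i}\Bigr)^{\#S}\int_{L_S(\bfc)}\alpha(\bfs)F(\bfs,\lambda)\,d\mu_S(\bfs),
\]
with $\varepsilon_e = \delta(\gf = \go_F)$ and $\varepsilon_{w_0} = i^{\tilde l}\delta(\gn = \go_F)\delta(\gf = \go_F)$. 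Evaluating at $\lambda = 0$ and using the elementary identity $F(\bfs,0) = \Upsilon_S^{\bf 1}(\bfs)$, immediate from the explicit formula of Lemma \ref{v-Greenftn} via the rewriting $(1 - q^{(s+1)/2})^{-1} = -q^{-(s+1)/2}(1 - q^{-(s+1)/2})^{-1}$, yields the asserted formulas.

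The main technical point is verifying that $A_\delta$ and $B_\delta$ extend to entire functions of $\lambda$: as $\lambda$ varies, both the pole at $z = \pm\lambda$ and the $\bfs$-poles of $F(\bfs,\lambda)$ (at $s_v = \pm 2\lambda \mp 1$ modulo periods) move, so one must deform $\sigma$ and $\bfc$ simultaneously with $\lambda$ to keep them off the respective contours. This two-parameter contour adjustment is exactly the standard maneuver already used in \cite[Proposition 9.1]{Tsud}, and transfers without change; no additional archimedean analysis is needed since $\Psi_v^{(z)}(l_v;e) = 1$ trivializes the archimedean Shintani factors at these cosets, making the argument notably lighter than at the hyperbolic and unipotent cosets.
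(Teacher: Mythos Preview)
Your proof is correct and follows essentially the same approach as the paper, which simply notes that $\Psi_v^{(0)}(l_v;e)=1$ trivializes the archimedean factors and then defers to \cite[Lemma~11.2]{Tsud} for the contour-shifting argument you have spelled out explicitly. Your local vanishing computations at $S(\gf)$ and $S(\gn)$, the identity $F(\bfs,0)=\Upsilon_S^{\bf 1}(\bfs)$, and the residue/contour manipulation producing $A_\delta(\lambda)+\beta(\lambda)B_\delta(\lambda)$ are all the details that the paper leaves implicit in that citation.
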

\begin{proof}
Since $\Psi_{v}^{(0)}(l_{v}; 1_{2} ) = 1$ for all $v \in \Sigma_{\infty}$, the assertion is proved in the same way as \cite[Lemma 11.2]{Tsud}. 
\end{proof}

We put
$$J_{{\rm u}}(\b, \l, \a; t) = J_{u}(\b, \l, \a, t) + J_{\overline{u}w_{0}}(\b, \l, \a, t)$$
and
$$J_{\bar{\rm u}}(\b, \l, \a; t) = J_{uw_{0}}(\b, \l, \a, t) + J_{\bar{u}}(\b, \l, \a, t).$$

\begin{lem} \label{JJ-unipotent}
For $* \in \{{\rm u}, \bar{\rm u} \}$, the function $\l \mapsto J_{*}(\b, \l, \a; t)$ on $\Re(\l)>0$ has a holomorphic continuation to $\C$ whose value at $\l=0$ is equal to
$J_{*}(\a, t)\b(0)$, where
{\small \begin{align*}
J_{\rm u}(\a; t) = & \left( \frac{1}{2\pi i} \right)^{\#S}\sum_{a \in F^{\times}}\int_{\LL_{S}(\bfc)}
\bigg\{ \Psi_{l}^{(0)}\left(\gn|\bfs; [\begin{smallmatrix}1 & at^{-1} \\ 0 & 1 \end{smallmatrix}][\begin{smallmatrix}1 & x_{\eta} \\ 0 & 1 \end{smallmatrix}]\right) +
\Psi_{l}^{(0)}\left(\gn|\bfs; [\begin{smallmatrix}1 & 0 \\ at^{-1} & 1 \end{smallmatrix}][\begin{smallmatrix}1 & 0 \\ -x_{\eta} & 1 \end{smallmatrix}]w_{0}\right)
\bigg\} \a(\bfs)d \mu_{S}(\bfs)
\end{align*}
}and
{\small \begin{align*}
J_{\bar{\rm u}}(\a; t) = & \left( \frac{1}{2\pi i} \right)^{\#S}\sum_{a \in F^{\times}}\int_{\LL_{S}(\bfc)}
\bigg\{ \Psi_{l}^{(0)}\left(\gn|\bfs; [\begin{smallmatrix}1 & 0 \\ at & 1 \end{smallmatrix}][\begin{smallmatrix}1 & x_{\eta} \\ 0 & 1 \end{smallmatrix}]\right) +\Psi_{l}^{(0)}\left(\gn|\bfs; [\begin{smallmatrix}1 & at \\ 0 & 1 \end{smallmatrix}][\begin{smallmatrix}1 & 0 \\ -x_{\eta} & 1 \end{smallmatrix}]w_{0}\right)
\bigg\} \a(\bfs)d \mu_{S}(\bfs).
\end{align*}}
\end{lem}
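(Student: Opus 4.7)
The plan is parallel to Lemma~\ref{JJ-id} and the proof of \cite[Lemma~11.2]{Tsud}, with the added feature of summing over $Z_F\bsl H_F$ (parametrized by $a\in F^\times$ via $\gamma_a=\left[\begin{smallmatrix}a & 0\\ 0 & 1\end{smallmatrix}\right]$, since ${\rm St}(\delta)=Z_F$ for $\delta\in\{u,\bar u,uw_0,\bar uw_0\}$). The first step is to verify the matrix identities
\begin{align*}
u\gamma_a\left[\begin{smallmatrix}t & 0\\ 0 & 1\end{smallmatrix}\right]\left[\begin{smallmatrix}1 & x_\eta\\ 0 & 1\end{smallmatrix}\right]&=\left[\begin{smallmatrix}at & 0\\ 0 & 1\end{smallmatrix}\right]\left[\begin{smallmatrix}1 & (at)^{-1}+x_\eta\\ 0 & 1\end{smallmatrix}\right],\\
\bar uw_0\gamma_a\left[\begin{smallmatrix}t & 0\\ 0 & 1\end{smallmatrix}\right]\left[\begin{smallmatrix}1 & x_\eta\\ 0 & 1\end{smallmatrix}\right]&=\left[\begin{smallmatrix}1 & 0\\ 0 & at\end{smallmatrix}\right]\left[\begin{smallmatrix}1 & 0\\ (at)^{-1} & 1\end{smallmatrix}\right]\left[\begin{smallmatrix}1 & 0\\ -x_\eta & 1\end{smallmatrix}\right]w_0,
\end{align*}
and the analogues for $\bar u$ and $uw_0$ (which naturally feature $at$ in place of $(at)^{-1}$). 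Applying the left $H_\A$-equivariance of $\Psi_l^{(\pm z)}(\gn|\bfs;-)$ to the diagonal factor extracts $|at|_\A^{\pm z}=|t|_\A^{\pm z}$ (using $|a|_\A=1$), and substituting $a\mapsto a^{-1}$ (a bijection of $F^\times$) in the cases $\delta\in\{u,\bar uw_0\}$ converts $(at)^{-1}$ into $at^{-1}$, matching the arguments on the right-hand side of the claimed formula.

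For $\Re(\lambda)>1$, inserting the defining contour integral of $\Psi_{\beta,\lambda}^l(\gn|\bfs;-)$ and interchanging the sum $\sum_{a\in F^\times}$ with the integrals over $\LL_S(\bfc)$ and $L_\sigma$, I would rewrite
\[
J_\delta(\beta,\lambda,\alpha;t) = \Bigl(\tfrac{1}{2\pi i}\Bigr)^{\#S+1}\int_{\LL_S(\bfc)}\int_{L_\sigma}\frac{\beta(z)}{z+\lambda}\bigl\{|t|_\A^{\epsilon_\delta z}S_\delta(z;\bfs,t)+|t|_\A^{-\epsilon_\delta z}S_\delta(-z;\bfs,t)\bigr\}\,dz\,\alpha(\bfs)\,d\mu_S(\bfs),
\]
where $\epsilon_\delta\in\{\pm 1\}$ is the sign of the $H_\A$-prefactor and $S_\delta(z;\bfs,t)=\sum_{a\in F^\times}\Psi_l^{(z)}(\gn|\bfs;g_\delta^*(a,t))$, with $g_\delta^*(a,t)$ the reduced element in the claimed formula. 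Absolute convergence of $S_\delta$ and the validity of the interchange follow from Lemmas~\ref{estimate of Shintani} and \ref{v-Greenftn-est}, together with the integrality conditions at the finite places outside $S\cup S(\gn)$ (which confine $a$ to a lattice in $F$); the hypothesis $l_v\ge 4$ supplies the archimedean decay needed to beat the lattice-point count.

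To continue $\lambda\mapsto J_\delta(\beta,\lambda,\alpha;t)$ holomorphically to all of $\CC$ and to extract the value at $\lambda=0$, one slides $\sigma$ in the above contour representation to avoid the pole at $z=-\lambda$ while keeping $|\sigma|<q(\bfc)-1$; enlarging $\bfc$ if necessary extends this to any $\lambda\in\CC$. For the value at $\lambda=0$, pick $\sigma>0$ and exploit the $z\leftrightarrow -z$ symmetry together with $\beta(-z)=\beta(z)$: substituting $z'=-z$ in the second summand converts it into $-\int_{L_{-\sigma}}\frac{\beta(z')}{z'}|t|_\A^{\epsilon_\delta z'}S_\delta(z';\bfs,t)\,dz'$, so the difference $\int_{L_\sigma}-\int_{L_{-\sigma}}$ equals the residue at $z=0$, giving $\beta(0)\,S_\delta(0;\bfs,t)$ (the prefactor $|t|_\A^{\pm z}$ trivializing there). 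Hence ${\rm CT}_{\lambda=0}J_\delta(\beta,\lambda,\alpha;t)=\beta(0)$ times the $\delta$-summand of $J_{\rm u}(\alpha;t)$ (resp.\ $J_{\bar{\rm u}}(\alpha;t)$), and summing the pairs $(u,\bar uw_0)$ and $(uw_0,\bar u)$ proves the lemma. The principal obstacle is rigorously securing the sum/integral interchanges on the shifted contour and the associated absolute convergence of $S_\delta$; these are technical variants of the estimates in \cite{Tsud}, and the hypothesis $l_v\ge 4$ enters precisely at this point.
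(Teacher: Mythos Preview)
Your proposal is correct and follows essentially the same route as the paper's proof (which in turn refers to \cite[Lemma 11.3]{Tsud}): the matrix identities, the $H_\A$-equivariance extracting $|t|_\A^{\pm z}$, the majorization of $\sum_{a\in F^\times}$ by the archimedean decay from Lemma~\ref{estimate of Shintani} and the $S$-adic decay from Lemma~\ref{v-Greenftn-est} together with the lattice constraint at the remaining finite places, and the contour-shift/residue evaluation at $\lambda=0$ are exactly the ingredients the paper uses. The only cosmetic difference is that the paper makes the explicit majorant $f(a)$ and the comparison with $\int_\AA f(a)\,da$ visible, whereas you spell out the residue step more fully; both emphases are routine.
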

\begin{proof} We follow the proof of \cite[Lemma 11.3]{Tsud}. Take $\s >0$ such that $l_{v}/2 >\s+1$. Let us examine $J_{\rm u}(\b, \l, \a; t)$.
First we consider the sum of the functions
$\hat{\Psi}_{\b, \l}^{l}\left(\gn |\a; u[\begin{smallmatrix}t & 0 \\ 0 & 1 \end{smallmatrix}][\begin{smallmatrix}a^{-1} & 0 \\ 0 & 1 \end{smallmatrix}]
[\begin{smallmatrix}1 & x_{\eta} \\ 0 & 1 \end{smallmatrix}]\right)=$
{\small 
\begin{align*}
&\left(\tfrac{1}{2\pi i}\right)^{\#S}{\textstyle{\int_{\LL_{S}(\bfc)}}}
\bigg\{\tfrac{1}{2\pi i}{\textstyle{\int_{L_{\s}}}}\tfrac{\b(z)}{z+\l}\{|t|_{\AA}^{z}\Psi^{(z)}\left(\gn |\bfs ; [\begin{smallmatrix}1 & at^{-1} \\ 0 & 1 \end{smallmatrix}][\begin{smallmatrix}1 & x_{\eta} \\ 0 & 1 \end{smallmatrix}] \right) 
+|t|_{\AA}^{-z}\Psi^{(-z)}(\gn |\bfs; [\begin{smallmatrix}1 & at^{-1} \\ 0 & 1 \end{smallmatrix}] [\begin{smallmatrix}1 & x_{\eta} \\ 0 & 1 \end{smallmatrix}])\} dz \bigg\}\a(\bfs)d \mu_{S}(\bfs)
\end{align*}
}over all $a\in F^\times$. Here $\bfc$ is taken so that $q(\bfc)$ is sufficiently large. There exists an ideal $\ga$ of $F$
such that the estimate
$$\left|\Psi_{l}^{(\pm z)}\left(\gn|\bfs; [\begin{smallmatrix}1 & at^{-1} \\ 0 & 1 \end{smallmatrix}][\begin{smallmatrix}1 & x_{\eta} \\ 0 & 1 \end{smallmatrix}]\right)\right|
\prec f(a), \hspace{5mm} a \in F^{\times}, (\bfs, z) \in \LL_{S}(\bfc)\times L_{\s}$$
holds, where
$$f(a)= \prod_{v \in \Sigma_{\infty}}|1+ia_{v}t_{v}^{-1}|_{v}^{\s-l_{v}/2}
\prod_{v \in S}\sup(1, |a_{v}t_{v}^{-1}|_{v})^{-(2q(\bfc)-\s)}
\prod_{\Sigma_{\fin}-S}\delta(a_{v} \in \ga\go_{v}), \quad a\in \AA.$$
Thus to establish the absolute convergence of the sum of 
$\hat{\Psi}_{\b, \l}^{l}\left(\gn |\a; u[\begin{smallmatrix}t & 0 \\ 0 & 1 \end{smallmatrix}][\begin{smallmatrix}a^{-1} & 0 \\ 0 & 1 \end{smallmatrix}]
[\begin{smallmatrix}1 & x_{\eta} \\ 0 & 1 \end{smallmatrix}]\right)$ over $a\in F^\times$, it is enough to show $\sum_{a\in F^\times} f(a)<+\infty$. The convergence of the latter sum in turn follows from the convergence of the integral $\int_{\AA}f(a)da$, which is a product of the archimedean integrals for all $v \in \Sigma_{\infty}$ convergent when $l_{v}/2 - \s > 1$ and the non-archimedean ones convergent for sufficiently large $q(\bfc)$.

The sum of the functions $\hat{\Psi}_{\b, \l}^{l}\left(\gn |\a; \bar{u}w_{0}[\begin{smallmatrix}t & 0 \\ 0 & 1 \end{smallmatrix}][ \begin{smallmatrix}a^{-1} & 0 \\ 0 & 1 \end{smallmatrix}][\begin{smallmatrix}1 & x_{\eta} \\ 0 & 1 \end{smallmatrix}]\right)$ over $a\in F^\times$ is analyzed similarly. By the estimate 
$$\left|\Psi_{l}^{(\pm z)}\left(\gn|\bfs; [\begin{smallmatrix}1 & 0 \\ at^{-1} & 1 \end{smallmatrix}][\begin{smallmatrix}1 & 0 \\ -x_{\eta} & 1 \end{smallmatrix}]w_{0}\right)\right|
\prec f(a), \hspace{5mm} a \in F^{\times}, (\bfs, z) \in \LL_{S}(\bfc)\times L_{\s},$$
the problem is reduced to the convergence of the same series $\sum_{a\in F^\times} f(a)$ as above. Hence the assertion on $J_{\rm u}(\b, \l, \a; t)$ is obtained. The integral $J_{\bar{\rm u}}(\b, \l, \a; t)$ is examined in the same way. This completes the proof.
\end{proof}

%%%%%%%%%%%%%%%%%%%%%%%%%%%%%%%%%
\subsection{Hyperbolic terms}
%%%%%%%%%%%%%%%%%%%%%%%%%%%%%%%%%
We consider the convergence of
$$J_{\rm hyp}(\b, \l, \a; t)=\sum_{b \in F^{\times}-\{-1\}}J_{\delta_{b}}(\b, \l, \a; t).$$

Let $v \in \Sigma_{\infty}$. For $t\in F_{v}^{\times}$, $b\in F_{v}^\times-\{-1\}$ and $\sigma,\,\rho\in \R$, set
$$f^{(\s)}(l_{v}; t,b) = \{(b+1)^{2}t^{2} + b^{2}\}^{\s/2-l_{v}/4}(1+t^{-2})^{-\s/2-l_{v}/4}|t|_{v}^{-2\s}$$
and
$$M_{v}(\s, \rho,l_{v}; b) = |b+1|_{v}^{-(\s-\rho)_{-}}|b|_{v}^{l_{v}/4-\s/2} \times \int_{F_{v}^{\times}}f^{(\s)}(l_{v}; t,b)|t|_{v}^{\s+\rho}d^{\times}t,$$
where $q_{-}=\inf(0,q)$ for $q\in \R$. 

\begin{lem} \label{hyper(rev)0}
Let $v \in \Sigma_{\infty}$. Then, for any $\s \in \RR$ we have
$$\left|\Psi_{v}^{(z)}\left(l_{v}; [\begin{smallmatrix}1+b^{-1}&1 \\1&1 \end{smallmatrix}] [\begin{smallmatrix}t&0\\0&1 \end{smallmatrix}]\right) \right| \le 
|b|_{v}^{-\s}|t|_{v}^{\s}e^{\pi|\Im(z)|/2} f_{v}^{(\s)}(l_{v}; t, b), \hspace{5mm} t\in F_{v}^{\times}, \ b\in F_{v}^{\times}-\{-1 \}, \ z \in L_{\s}.$$
\end{lem}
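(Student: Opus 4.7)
The plan is to reduce everything to Lemma~\ref{Shintani-val-unipotent} (the value of the Shintani function on the unipotent subgroup) via the $(T,\SO(2))$-equivariance of Proposition~\ref{Sh-ftn}. First I compute an Iwasawa decomposition of
$$
M := \left[\begin{smallmatrix}1+b^{-1}&1\\1&1\end{smallmatrix}\right]\left[\begin{smallmatrix}t&0\\0&1\end{smallmatrix}\right] = \left[\begin{smallmatrix}(1+b^{-1})t & 1 \\ t & 1\end{smallmatrix}\right].
$$
Since $\det M = b^{-1}t$, the M\"obius image of $i$ is $M\cdot i = \frac{1+(1+b^{-1})t^{2}}{1+t^{2}} + i\,\frac{b^{-1}t}{1+t^{2}}$. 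The standard Iwasawa formulas then give $M = \diag(t_{1},t_{2})\left[\begin{smallmatrix}1&x'\\ 0&1\end{smallmatrix}\right]k$ with $k\in \SO(2)$, $|t_{1}/t_{2}| = |b|^{-1}|t|(1+t^{2})^{-1}$, and $x' = (b+(b+1)t^{2})/t$.

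Combining Proposition~\ref{Sh-ftn} with Lemma~\ref{Shintani-val-unipotent} yields
$$
\Psi_{v}^{(z)}(l_{v};M) = |t_{1}/t_{2}|^{z}\,\tau_{l_{v}}(k)\,(1+ix')^{z-l_{v}/2}.
$$
Taking absolute value for $\Re(z)=\sigma$ and using $|\tau_{l_{v}}(k)|=1$ reduces the matter to estimating $|1+ix'|^{\sigma-l_{v}/2}\,e^{-\Im(z)\arg(1+ix')}$. Since $\Re(1+ix')=1>0$, the principal argument satisfies $|\arg(1+ix')|<\pi/2$, so $|e^{-\Im(z)\arg(1+ix')}|\le e^{\pi|\Im(z)|/2}$, which accounts for the exponential factor in the statement (so the stated identity is to be read as an inequality, with the $|t_{1}/t_{2}|$ and $|1+ix'|$ factors contributing exactly).

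The remainder is pure algebra. The key identity is
$$
t^{2} + (b+(b+1)t^{2})^{2} = ((b+1)^{2}t^{2}+b^{2})(1+t^{2}),
$$
from which $|1+ix'| = |t|^{-1}\{((b+1)^{2}t^{2}+b^{2})(1+t^{2})\}^{1/2}$. Substituting this and rewriting $(1+t^{-2})^{-\sigma/2-l_{v}/4} = |t|^{\sigma+l_{v}/2}(1+t^{2})^{-\sigma/2-l_{v}/4}$ inside the definition of $f_{v}^{(\sigma)}(l_{v};t,b)$, the powers of $|t|$, $1+t^{2}$, $(b+1)^{2}t^{2}+b^{2}$ and $|b|$ all line up to produce $|b|_{v}^{-\sigma}|t|_{v}^{\sigma}e^{\pi|\Im(z)|/2}f_{v}^{(\sigma)}(l_{v};t,b)$. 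The only mild subtlety is the case $b^{-1}t<0$ (i.e.\ $\det M<0$): the Iwasawa decomposition of $\GL(2,\R)=B\cdot\SO(2)$ then forces one of $t_{1},t_{2}$ to be negative, but $|t_{1}/t_{2}|$ remains positive and $x'$ is still real, so the same argument applies verbatim. I expect the only genuine obstacle to be the verification of the algebraic identity displayed above.
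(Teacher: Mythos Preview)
Your proposal is correct and follows essentially the same route as the paper: compute the Iwasawa decomposition $\delta_b\left[\begin{smallmatrix}t&0\\0&1\end{smallmatrix}\right]=\diag(t_1,t_2)\left[\begin{smallmatrix}1&x'\\0&1\end{smallmatrix}\right]k$, identify $|1+ix'|=(1+t^{-2})^{1/2}\{(b+1)^2t^2+b^2\}^{1/2}$ via the algebraic identity you display, and then invoke Proposition~\ref{Sh-ftn} and Lemma~\ref{Shintani-val-unipotent}. Your observation that the stated equality is really an inequality (the factor $e^{-\Im(z)\arg(1+ix')}$ is only bounded by $e^{\pi|\Im(z)|/2}$, not equal to it) is correct and consistent with how the lemma is used downstream.
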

\begin{proof} By writing the Iwasawa decomposition $[\begin{smallmatrix}1+b^{-1}&1 \\1&1 \end{smallmatrix}] [\begin{smallmatrix}t&0\\0&1 \end{smallmatrix}]=\left[\begin{smallmatrix} r & 0 \\ 0 & r^{-1} \end{smallmatrix}\right] [\begin{smallmatrix}1&x\\0&1 \end{smallmatrix}] k$ explicitly, we have $|1+ix|=(1+t^{-2})^{1/2}((b+1)^{2}t^{2}+b^{2})^{1/2}$. Then the assertion follows from Proposition \ref{Sh-ftn} and Lemma \ref{Shintani-val-unipotent}. 
\end{proof}

\begin{lem}\label{hyper(rev)1}
Let $v\in\Sigma_\infty$ and $l_{v} \in 2\Z_{\geq 2}$. Let $\s, \rho \in \RR$. Then the estimate
\begin{align}
M_{v}(\s, \rho,l_{v}; b) \prec |b+1|_{v}^{-l_{v}/4+\s/2-(\s-\rho)_{-}}, \hspace{5mm} b \in F_{v}^{\times}-\{-1\}
 \label{hyper(rev)1-0}
\end{align}
holds if $l_{v}/4>|\rho|-\s/2$ and $l_{v}/4>\s/2$. Moreover, for $\e>0$ and $c \in \RR$, the function
$|b(b+1)|_v^{\epsilon}\,
|b|_{v}^{-l_{v}/4+(c+1)/4}
\,M_{v}(\s,\rho,l_{v}; b)$ in $b\in F_{v}$ is locally bounded if
\begin{align}
\big| |\rho|-\sigma \big|+(\sigma-\rho)_{-}<\epsilon/3<1, \quad l_{v}/4>\s/2-(\sigma-\rho)_{-}+1,
\quad (c+1)/4>\s/2-(\sigma-\rho)_{-}.
 \label{hyper(rev)1-1}
\end{align}
\end{lem}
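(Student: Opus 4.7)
The strategy is to estimate the integral
\[
I(b) := \int_{\RR}\{(b+1)^2 t^2 + b^2\}^{-\alpha}(1+t^2)^{-(\alpha+\sigma)}|t|_v^{\rho+l_v/2-1}\,dt,
\]
with $\alpha := l_v/4-\sigma/2 > 0$, which is obtained from $\int_{F_v^\times} f^{(\sigma)}(l_v;t,b)|t|_v^{\sigma+\rho}\,d^\times t$ via the identity $(1+t^{-2})^{-\sigma/2-l_v/4} = |t|_v^{\sigma+l_v/2}(1+t^2)^{-\sigma/2-l_v/4}$. The integrand of $I(b)$ is $\prec |t|_v^{\rho+l_v/2-1}$ near $0$ and $\prec |t|_v^{\rho-l_v/2-1}$ near $\infty$, both integrable since $l_v/2 > |\rho|$ (a consequence of the two conditions in the hypothesis of (i)). Since $M_v(\sigma,\rho,l_v;b) = |b+1|_v^{-(\sigma-\rho)_{-}}|b|_v^{\alpha}\,I(b)$, the bound \eqref{hyper(rev)1-0} is equivalent to $I(b)\prec |b(b+1)|_v^{-\alpha}$.

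For (i), I would split $\RR$ into three regions dictated by the scales in the integrand: $R_1 = \{|t|_v \le 1\}$, $R_2 = \{1 < |t|_v \le |b+1|_v^{-1}\}$ (empty when $|b+1|_v \ge 1$), and $R_3 = \{|t|_v > |b+1|_v^{-1}\}$. On $R_3$, $(b+1)^2 t^2 + b^2 \asymp (b+1)^2 t^2$ and $(1+t^2)\asymp t^2$, so the integral over $R_3$ reduces to an explicit power of $|b+1|_v$, namely $\prec |b+1|_v^{\sigma-\rho}$. On $R_2$ one further splits at $|t|_v = |b/(b+1)|_v$: below this threshold $(b+1)^2 t^2 + b^2 \asymp b^2$, above it $(b+1)^2 t^2 + b^2 \asymp (b+1)^2 t^2$; the resulting elementary power integrals sum (after case analysis on whether $\rho < \sigma$ or $\rho > \sigma$) to $\prec |b(b+1)|_v^{-\alpha}$. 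On $R_1$, $(1+t^{-2})^{-(\alpha+\sigma)} \asymp |t|_v^{2(\alpha+\sigma)}$ while $\{(b+1)^2 t^2 + b^2\}^{-\alpha} \le |b|_v^{-2\alpha}$, yielding a contribution $\prec |b|_v^{-2\alpha}$. Summing the three contributions and multiplying by the prefactor $|b+1|_v^{-(\sigma-\rho)_{-}}|b|_v^{\alpha}$, the apparent singularity $|b+1|_v^{\sigma-\rho}$ from $R_3$ in the case $\rho > \sigma$ is precisely cancelled by $|b+1|_v^{-(\sigma-\rho)_{-}} = |b+1|_v^{\rho-\sigma}$, delivering \eqref{hyper(rev)1-0}.

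For (ii), by (i) the product $|b(b+1)|_v^\epsilon |b|_v^{-l_v/4-\sigma/2+|\rho|} M_v(\sigma,\rho,l_v;b)$ is bounded on any compact set avoiding $\{0,-1\}$, so only the behavior near $b = -1$ and $b = 0$ needs checking. Near $b = -1$, the $R_3$-contribution is $\prec |b+1|_v^{\sigma-\rho}$ when $\rho \ge \sigma$ and bounded otherwise, and the factor $|b+1|_v^\epsilon$ keeps the product bounded. Near $b = 0$, the same regional analysis refined for small $|b|_v$ gives $I(b) \prec |b|_v^{\min(0,\sigma+\rho)}$ up to possible logarithmic factors, so combined with $|b|_v^{\alpha}\cdot|b|_v^{-l_v/4-\sigma/2+|\rho|} = |b|_v^{|\rho|-\sigma}$ and $|b|_v^\epsilon$, local boundedness reduces to
\[
 \epsilon + |\rho| - \sigma + \min(0,\sigma+\rho) \ge 0,
\]
which is implied by $||\rho|-\sigma| + (\sigma-\rho)_{-} < \epsilon/3$ after case analysis on the signs of $\rho$ and $\sigma+\rho$. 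The auxiliary hypothesis $l_v/4 > \sigma/2 - (\sigma-\rho)_{-} + 1$ provides the strict integrability margin needed to absorb the logarithmic boundary cases uniformly.

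The main obstacle is exponent bookkeeping across the several case splits on $\rho - \sigma$, $\sigma + \rho$, and $|b|_v$ versus $|b+1|_v$. No one step requires a new idea beyond those already used in the proof of (i), but the conditions \eqref{hyper(rev)1-1} are tight and must be verified carefully in each regime to ensure the cushion $|b(b+1)|_v^\epsilon$ dominates all residual boundary singularities.
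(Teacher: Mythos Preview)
Your regional decomposition is a workable strategy, but it differs from the paper's much shorter argument and has a gap as stated. For part (i), the paper uses the single inequality $b^2 + t^2(b+1)^2 \ge 2|b(b+1)t|$: since $\sigma/2-l_v/4<0$, this bounds $\{(b+1)^2t^2+b^2\}^{\sigma/2-l_v/4}$ by $(2|b(b+1)t|)^{\sigma/2-l_v/4}$, factoring the $b$-dependence out of the integral entirely. The remaining $t$-integral is then a constant (a Beta-type integral, convergent precisely under the stated hypotheses), and \eqref{hyper(rev)1-0} drops out with no case analysis. For part (ii), the paper rewrites the quantity in terms of $\mathfrak{m}(r;b(b+1))=\int_0^\infty[(1+t^{-2})\{b^2+t^2(b+1)^2\}]^{-r/4}\,d^\times t$ and invokes \cite[Lemma~15.5]{Tsud} for the local boundedness of $|c|^\epsilon\mathfrak{m}(r;c)$ when $r>0$; the conditions \eqref{hyper(rev)1-1} are exactly what is needed to place the relevant exponents in range.

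The gap in your argument: the asymptotic $(b+1)^2t^2+b^2\asymp(b+1)^2t^2$ you claim on $R_3=\{|t|_v>|b+1|_v^{-1}\}$ fails for large $|b|_v$. The crossover between the two terms occurs at $|t|_v\sim|b/(b+1)|_v$, which for $|b|_v\to\infty$ is $\sim1$, not $|b+1|_v^{-1}\sim|b|_v^{-1}$; for instance at $b=100$, $t=1/2$ one has $(b+1)^2t^2\approx 2550$ while $b^2=10000$. Your computed $R_3$-contribution $|b+1|_v^{\sigma-\rho}$ therefore grows with $|b|_v$ (when $\sigma>\rho$), contradicting the required decay $I(b)\prec|b(b+1)|_v^{-\alpha}$. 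The fix is easy---dispose of $|b|_v\ge2$ first via the trivial bound $\{(b+1)^2t^2+b^2\}^{-\alpha}\le|b|_v^{-2\alpha}$, or split additionally at $|t|_v=|b/(b+1)|_v$---but as written the argument is incomplete. The AM--GM trick sidesteps all of this and is worth knowing.
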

\begin{proof}
The assertion is proved in a similar way to \cite[Lemma 11.14]{Tsud}.
By $b^{2}+t^{2}(b+1)^{2} \ge 2|b|\ |b+1|\ |t|$ and $\s/2-l_{v}/4<0$,
we estimate
\begin{align*}
M_{v}(\s, \rho,l_{v}; b) \prec &
|b+1|^{-(\s-\rho)_{-}}|b|^{l_{v}/4-\s/2} \times \int_{0}^{\infty}
\{|b||b+1||t|\}^{\s/2-l_{v}/4}(1+t^{-2})^{-\s/2-l_{v}/4}
|t|^{-\s+\rho}d^{\times}t \\
= & |b+1|^{-l_{v}/4+\s/2-(\s-\rho)_{-}} \int_{0}^{\infty}
|t|^{\rho+l_{v}/4+\s/2}(1+t^{2})^{-\s/2-l_{v}/4}d^{\times}t.
\end{align*}
The integral converges absolutely if $l_{v}/4>|\rho|-\s/2$. In the same way as in the proof of \cite[Lemma 11.14]{Tsud}, we have
\begin{align*}
& |b(b+1)|^{\epsilon}\,|b|^{-l_v/4+(c+1)/4}\,
M_{v}(\s, \rho,l_{v}; b) \\
\ll & |b+1|^{\sigma-|\rho|-(\sigma-\rho)_{-}+\epsilon/3}|b|^{(c+1)/4+\s/2-|\rho|+\epsilon/3}\,|b(b+1)|^{\epsilon/3}{\frak m}(r;b(b+1)),\end{align*}
where $r=l_v+2\sigma-4|\rho|-4\epsilon/3$ and $\fm(r;b(b+1))=\int_{0}^{\infty} [(1+t^{-2})(b^2+t^2(b+1)^2\}]^{-r/4}\,\d^\times t$. By \cite[Lemma 15.5]{Tsud}, the function $|b(b+1)|^\epsilon \,{\frak m}(r;b(b+1))$ (with $r>0$) is locally bounded on $F_{v}$. From this, $|b(b+1)|^\epsilon |b|^{-l_v/4+(c+1)/4} M_{v}(\s, \rho,l_{v}; b)$ is also locally bounded on $F_{v}$ if 
$$\sigma-|\rho|-(\sigma-\rho)_{-}+\epsilon/3\geq 0,\quad r=l_v+2\sigma-4|\rho|-4\epsilon/3>0,\quad
(c+1)/4+\s/2-|\rho|+\e/3\ge 0.
$$
This condition is satisfied by \eqref{hyper(rev)1-1}. Thus, under \eqref{hyper(rev)1-1}, the estimate \eqref{hyper(rev)1-0} is extendable to $F_{v}$; from this, the last assertion is obvious. 
\end{proof}

Let $\bfc=(c_v)_{v\in S} \in\RR^{S}$, $l=(l_v)_{v\in \Sigma_\infty} \in (2\Z_{\geq 2})^{\Sigma_\infty}$, $t\in \A^\times$, $b\in F^\times-\{-1\}$ and $\sigma,\,\rho\in \R$. For $v\in S$, we put
\begin{align*}
f_v^{(\sigma)}(c_v;t_v,b)&=\inf(1,|t_v|_v^{-2})^{\sigma} 
\begin{cases}
 \sup(1,|t_v|_v^{-1}|b|_v)^{-(c_v+1)/2+\sigma}, \quad (|t_v|_v\leq 1), \\
 \sup(1,|t_v|_v|b+1|_v)^{-(c_v+1)/2+\sigma}, \quad (|t_v|_v>1),
\end{cases}
\\
M_v(\sigma,\rho,c;b)&=\sup(1,|b+1|_v)^{-(c+1)/4+\sigma/2+|\sigma-\rho|}, 
\end{align*}
and for $v\in \Sigma_\fin-S$, we put
\begin{align*}
f_v^{(\sigma)}(t_v,b)&=\inf(1,|t_v|^{-2})^{\s}\,\delta(b\in\fp_v^{-f(\eta_v)},q_v^{-2f(\eta_v)}|b|_v\leq |t_v|_v\leq |b+1|_v^{-1}).
\end{align*}
Then, define
{\small \begin{align*}
N(\gn|\s, l, \bfc; t,b)= & |t|_{\AA}^{\s} \prod_{v \in \Sigma_{\infty}} f_{v}^{(\s)}(l_{v}; t_{v}, b) \prod_{v \in S} f_{v}^{(\s)}(c_{v}; t_{v}, b)
%\\ & \times 
\prod_{v \in S(\gn)}\delta(t_{v} \in \gn \go_{v})f_{v}^{(\s)}(t_{v}, b)\prod_{v \in \Sigma_{\fin}- (S \cup S(\gn))}f_{v}^{(\s)}(t_{v}, b),
\end{align*}}
{\small \begin{align*}
M(\gn|\s, \rho, l, \bfc; b) = & \prod_{v \in \Sigma_{\infty}} |b|_{v}^{-l_{v}/4+\s/2} M_{v}(\s, \rho, l_{v}; b)
\prod_{v \in S}|b|_{v}^{-(c_{v}+1)/4+\s/2}M_{v}(\s, \rho; c_{v}, b)\\
& \times
\prod_{v \in \Sigma_{\fin}}\sup(1, |b|_v^{\sigma+\rho})
\prod_{v \in \Sigma_{\fin}-S} \delta(b \in \gf^{-1}\gn\go_{v})
\end{align*}
}and
$M_{\e}(\gn|\s, \rho, l, \bfc; b) = \{\prod_{v\in \Sigma_\infty}|b(b+1)|_v^{\epsilon}\}\,M(\gn|\s, \rho, l, \bfc; b)$
for $\e \ge 0$.
By closely following \cite[11.4]{Tsud}, we have the following series of lemmas.
\begin{lem}\label{hyper(rev)2}
If $q(\bfc) > |\s|+1$, then we have
$$\left| \Psi_{l}^{(z)}\left(\gn |\bfs; \delta_{b}[\begin{smallmatrix}t&0\\ 0&1\end{smallmatrix}][\begin{smallmatrix}1&x_{\eta}\\ 0&1\end{smallmatrix}]\right) \right| \prec N(\gn|\s,l, \bfc; t,b)e^{d_{F}\pi|\Im(z)|/2}$$
uniformly in $(z, \bfs) \in L_{\s}\times \LL_{S}(\bfc),\ b \in F^{\times}-\{-1\},\ t \in \AA^{\times}.$
\end{lem}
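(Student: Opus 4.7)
The plan is to decompose $|\Psi_l^{(z)}(\gn|\bs;g)|$ into a product of local factors using the definition of the adelic Green function, bound each factor separately using the tools developed in the earlier sections, and then recombine. Writing $g_v = \delta_b\left[\begin{smallmatrix}t_v & 0\\ 0 & 1\end{smallmatrix}\right]\left[\begin{smallmatrix}1 & x_{\eta,v}\\ 0 & 1\end{smallmatrix}\right]$ (recall $x_{\eta,v}=0$ for $v\in\Sigma_\infty$ and $x_{\eta,v}=\varpi_v^{-f(\eta_v)}$ for $v\in\Sigma_\fin$), the argument is modelled on \cite[\S11.4]{Tsud}; the only genuinely new ingredient is the archimedean estimate already prepared in Lemma~\ref{hyper(rev)0}.

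At each archimedean place $v$, since $x_{\eta,v}=0$, Lemma~\ref{hyper(rev)0} applies directly and gives
\[
|\Psi_v^{(z)}(l_v;g_v)| \;=\; |b|_v^{-\sigma}\,|t_v|_v^{\sigma}\,e^{\pi|\Im z|/2}\,f_v^{(\sigma)}(l_v;t_v,b).
\]
Multiplying these $d_F$ bounds produces the factor $e^{d_F\pi|\Im z|/2}$ together with $\prod_{v\in\Sigma_\infty}|b|_v^{-\sigma}|t_v|_v^\sigma f_v^{(\sigma)}(l_v;t_v,b)$.

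At each finite place $v$, the plan is to perform an explicit Iwasawa decomposition $g_v = \left[\begin{smallmatrix}t_{1,v} & *\\ 0 & t_{2,v}\end{smallmatrix}\right]n(\xi_v)k_v$ with $k_v\in\bfK_v$, which depends on whether $|t_v|_v\le 1$ (so that the second row of $g_v$ is $v$-adically dominant) or $|t_v|_v>1$ (so that the first row dominates). For $v\in S$, the equivariance \eqref{v-Eigeneq2} reduces the bound to an estimate of $|\Psi_v^{(z)}(s_v;n(\xi_v))|$; Lemma~\ref{v-Greenftn-est} dominates this by $\sup(1,|\xi_v|_v)^{-(\Re(s_v)+1)/2+\Re(z)}$, and the two cases for $\xi_v$ (essentially $t_v^{-1}b$ or $t_v(b+1)$) reproduce the piecewise definition of $f_v^{(\sigma)}(c_v;t_v,b)$, together with a prefactor $|t_{1,v}/t_{2,v}|_v^\sigma$. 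For $v\in S(\gn)$ and $v\in\Sigma_\fin-(S\cup S(\gn))$, the functions $\Phi_{\gn,v}^{(z)}$ and $\Phi_{0,v}^{(z)}$ are evaluated from the explicit formula~\eqref{vPhi_0}; the resulting characteristic functions, combined with the $\bfK_0(\gn\go_v)$-condition at $v\in S(\gn)$, give exactly the support conditions $b\in\gp_v^{-f(\eta_v)}$, $q_v^{-2f(\eta_v)}|b|_v\le|t_v|_v\le|b+1|_v^{-1}$, and $\delta(t_v\in\gn\go_v)$ contained in the definition of $f_v^{(\sigma)}(t_v,b)$.

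Finally, taking the product of all local bounds and invoking the product formula $\prod_{v\in\Sigma_F}|b|_v=1$ (which absorbs the archimedean $|b|_v^{-\sigma}$ factors into the combined $|t_{1,v}/t_{2,v}|_v^\sigma$ prefactors at finite places and reassembles $|t|_\AA^\sigma$ on top) produces exactly $N(\gn|\sigma,l,\bfc;t,b)\cdot e^{d_F\pi|\Im z|/2}$. The main technical obstacle is the case analysis in the non-archimedean Iwasawa decomposition, including the interaction with the twist $n(x_{\eta,v})$ at places where $f(\eta_v)>0$; since these computations are carried out in detail in \cite[\S11.4]{Tsud}, they can be imported essentially verbatim, the only substantive adjustment being the replacement of the archimedean matrix-coefficient estimate there by Lemma~\ref{hyper(rev)0} here.
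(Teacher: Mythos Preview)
Your proposal is correct and follows essentially the same approach as the paper: the paper's proof is the single line ``This follows from Lemma~\ref{hyper(rev)0} and \cite[Corollary 11.6, Lemma 11.10]{Tsud},'' which is exactly the decomposition you describe---archimedean factors via Lemma~\ref{hyper(rev)0}, non-archimedean factors imported from \cite[\S11]{Tsud}. Your write-up simply unpacks the mechanism (local Iwasawa decomposition, the product formula to reassemble $|t|_\AA^\sigma$) that the cited results encode.
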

\begin{proof}
 This follows from Lemma~\ref{hyper(rev)0} and \cite[Corollary 11.6, Lemma 11.10]{Tsud}.
\end{proof}

\begin{lem}\label{hyper(rev)3}
If $q(\bfc)>|\s|+|\rho|+1$, $\ul/4>\sup(\s/2 , |\rho|-\s/2)$ and $\sigma\not=\pm\rho$, then we have
$$\int_{\AA^{\times}}N(\gn|\s, l, \bfc; t,b)|t|_{\AA}^{\rho}d^{\times}t \prec_{\e} M_{\e}(\gn|\s, \rho, l, \bfc; b)\nr(\gn)^{\e}, \hspace{5mm}b \in F^{\times}-\{-1\}$$
for any $\e>0$, with the implied constant independent of the ideal $\fn$.
\end{lem}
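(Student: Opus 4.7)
The plan is to exploit the product structure. Since the integrand $N(\gn|\sigma,l,\bfc;t,b)|t|_\AA^\rho$ factors as $|t|_\AA^{\sigma+\rho}$ times a product over all places $v$ of local functions depending only on $t_v$, Fubini reduces the adelic integral to a product of one-dimensional local integrals
\begin{align*}
\int_{\AA^\times} N(\gn|\sigma,l,\bfc;t,b)|t|_\AA^\rho\,\d^\times t = \prod_{v}I_v(b),
\end{align*}
and I would estimate each $I_v(b)$ by the corresponding local factor of $M_\e(\gn|\sigma,\rho,l,\bfc;b)$.

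At an archimedean place $v$, the integral $I_v(b)=\int_{F_v^\times}f_v^{(\sigma)}(l_v;t_v,b)|t_v|_v^{\sigma+\rho}\d^\times t_v$ equals $|b+1|_v^{(\sigma-\rho)_-}|b|_v^{-l_v/4+\sigma/2}M_v(\sigma,\rho,l_v;b)$ by the very definition of $M_v$; Lemma \ref{hyper(rev)1} then both guarantees convergence---under $\inf_v l_v/4>\sup(\sigma/2,|\rho|-\sigma/2)$, which is part of the hypothesis---and, after absorbing the additional factor $|b(b+1)|_v^\e$, yields the locally bounded estimate that supplies the archimedean factor of $M_\e$. For $v\in S$ I would split the integral at $|t_v|_v=1$ using the piecewise definition of $f_v^{(\sigma)}(c_v;t_v,b)$ and sum the two resulting geometric series; they converge thanks to $q(\bfc)>|\sigma|+|\rho|+1$ and produce exactly $|b|_v^{-(c_v+1)/4+\sigma/2}M_v(\sigma,\rho,c_v;b)\sup(1,|b|_v^{\sigma+\rho})$. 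For the remaining finite places the explicit support conditions baked into $f_v^{(\sigma)}(t_v,b)$---namely $b\in \gp_v^{-f(\eta_v)}$ together with $q_v^{-2f(\eta_v)}|b|_v\le |t_v|_v\le |b+1|_v^{-1}$---reduce $I_v(b)$ to a finite sum of powers of $q_v$ dominated by $\delta(b\in \gf^{-1}\gn\go_v)\sup(1,|b|_v^{\sigma+\rho})$, and at $v\in S(\gn)$ the extra constraint $t_v\in \gn\go_v$ simply truncates the lower end of this range.

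The main obstacle lies in the bookkeeping at the places $v\in S(\gn)$: each such local integral is dominated by a divisor-type function in $\ord_v(\gn)$, and one must show that the product of these contributions over $v\in S(\gn)$ amounts to $O_\e(\nr(\gn)^\e)$. This is the standard divisor-bound phenomenon, but requires care to keep the exponents uniform in $\gn$ and to verify that the truncation at $v\in S(\gn)$ does not enlarge the integral beyond the unramified factor of $M$. The hypothesis $\sigma\ne\pm\rho$ enters because at these endpoints the geometric series at places in $S$ would degenerate into divergent harmonic sums, which is also why the implied constant will blow up as $\sigma\to\pm\rho$. Apart from these points the argument is parallel to the corresponding estimates in \cite[\S 11.4]{Tsud}, to which the routine calculations can be delegated.
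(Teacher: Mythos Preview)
Your approach is essentially the same as the paper's: the paper's proof consists solely of the citation ``\textit{cf.}\ \cite[Lemma 11.16]{Tsud}'', so both you and the paper are relying on the local-factorization argument carried out there, and your sketch accurately outlines how that argument proceeds.

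Two minor points of precision. First, at places $v\in S$ the local integral does not \emph{equal} $|b|_v^{-(c_v+1)/4+\sigma/2}M_v(\sigma,\rho,c_v;b)\sup(1,|b|_v^{\sigma+\rho})$ exactly; it is only \emph{bounded} by a constant multiple of this. Second, your diagnosis of the $\nr(\gn)^\e$ factor is slightly off. At $v\in S(\gn)$ the hypothesis $\sigma\ne\pm\rho$ makes the relevant geometric series converge to something bounded by a constant $C_{q_v}$ depending only on $(q_v,\sigma,\rho)$, not a divisor function in $\ord_v(\gn)$. The factor $\nr(\gn)^\e$ arises instead from controlling the \emph{product} $\prod_{v\in S(\gn)}C_{q_v}$ over the growing set $S(\gn)$; since $\#S(\gn)\ll \log\nr(\gn)/\log\log\nr(\gn)$, this product is $\ll_\e \nr(\gn)^\e$. (The analogous product over finite places where $|b(b+1)|_v<1$ is what gets absorbed by the archimedean factor $\prod_{v\in\Sigma_\infty}|b(b+1)|_v^\e$ of $M_\e$, via the product formula.) Neither of these points affects the validity of your overall strategy.
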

\begin{proof}
We can apply the same argument in \cite[Lemma 11.16]{Tsud}
by using $l_{\iota}$ in place of $c_{\iota}+1$ for all $\iota \in \Sigma_{\infty}$.
\end{proof}

\begin{lem}\label{hyper(rev)4}
Let $U$ be a compact subset of $\AA^{\times}$.
If $q(\bfc)>|\s|+|\rho|+1$, $\ul/4>\sup(\s/2 , |\rho|-\s/2)$ and $\sigma\not=\pm\rho$, then we have
$$\sum_{t \in F^{\times}}N(\gn|\s, l, \bfc; t,b) \prec_\e M_{\e}(\gn|\s, \rho, l, \bfc; b)\,\nr(\gn)^{\e},
\hspace{5mm} b \in F^{\times}, t \in U$$
for any $\e>0$, with the implied constant independent of the ideal $\fn$.
\end{lem}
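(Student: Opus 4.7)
The plan is to reduce the discrete sum estimate to the integral estimate of Lemma~\ref{hyper(rev)3} by a standard thickening argument that exploits the discreteness of $F^{\times}$ inside $\A^{\times}$. The starting observation is that for every $t\in F^{\times}$ one has $|t|_{\A}=1$ by the product formula, so I can freely insert the factor $|t|_{\A}^{\rho}$ into each summand of the target sum without changing anything, and $|v|_{\A}^{\rho}$ will stay within bounded factors of $1$ for $v$ ranging in a small neighborhood of $t$ together with $U$.

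Next I choose an open neighborhood $U_{0}$ of $1$ in $\A^{\times}$ of the product form $U_{0}=U_{\infty}\times\prod_{v\in \Sigma_{\fin}}\go_{v}^{\times}$, where $U_{\infty}$ is a small relatively compact neighborhood of $1$ at the archimedean places. By inspecting each local factor appearing in the definition of $N(\gn|\s, l, \bfc; t, b)$, I verify the slow-variation estimate
\begin{equation*}
N(\gn|\s, l, \bfc; tu, b)\asymp N(\gn|\s, l, \bfc; t, b),\qquad u\in U_{0},
\end{equation*}
uniformly in $t\in \A^{\times}$, in $b\in F^{\times}-\{-1\}$, and in the ideal $\gn$. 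At every finite place this is immediate because $|tu|_{v}=|t|_{v}$ for $u\in \go_{v}^{\times}$ and the conditions $t_{v}\in \gn\go_{v}$ and $q_{v}^{-2f(\eta_{v})}|b|_{v}\le |t_{v}|_{v}\le |b+1|_{v}^{-1}$ defining the non-archimedean factors of $N$ are preserved by multiplication by a local unit. At the archimedean places the estimate is a direct consequence of the explicit formula recalled in Lemma~\ref{hyper(rev)0}: the quantities $(b+1)^{2}t_{v}^{2}+b^{2}$, $(1+t_{v}^{-2})$ and $|t_{v}|_{v}^{-2\s}$ each change by a bounded factor when $t_{v}$ is rescaled within $U_{\infty}$.

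With the slow variation in hand, I dominate each summand by an average,
\begin{equation*}
N(\gn|\s, l, \bfc; t, b)\ll \frac{1}{\vol(U_{0})}\int_{tU_{0}}N(\gn|\s, l, \bfc; v, b)\,|v|_{\A}^{\rho}\,d^{\times}v,
\end{equation*}
sum over $t\in F^{\times}$, and use the elementary fact that the translates $\{tU_{0}\}_{t\in F^{\times}}$ have bounded overlap in $\A^{\times}$ because $F^{\times}\cap (U_{0}U_{0}^{-1})$ is finite by discreteness of $F^{\times}$ in $\A^{\times}$. The outcome is
\begin{equation*}
\sum_{t\in F^{\times}}N(\gn|\s, l, \bfc; t, b)\ll \int_{\A^{\times}}N(\gn|\s, l, \bfc; v, b)\,|v|_{\A}^{\rho}\,d^{\times}v,
\end{equation*}
and an application of Lemma~\ref{hyper(rev)3} gives the desired bound $M_{\e}(\gn|\s, \rho, l, \bfc; b)\nr(\gn)^{\e}$. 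The uniformity in a compact parameter $U$ is automatic since rescaling the argument by an element of $U$ only alters $|v|_{\A}^{\rho}$ by a constant depending on $U$.

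The principal obstacle is to ensure that the implicit constant in the archimedean slow-variation estimate does not deteriorate as $b$ degenerates to $0$, to $-1$, or to $\infty$; this requires a careful case analysis distinguishing whether $|b|_{v}$ is small, moderate, or large (exactly parallel to the bookkeeping carried out in the proof of \cite[Lemma~11.17]{Tsud}). Once that verification is in place, the remaining ingredients, namely the trivial slow variation at finite places, the bounded-overlap property, and the reduction to the integral version, are entirely routine.
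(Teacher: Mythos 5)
Your proposal is correct and follows what is almost certainly the argument behind the paper's one-line citation of \cite[Corollary~11.17]{Tsud}: the sum over $F^{\times}$ (a discrete cocompact-mod-units subgroup of $\AA^{\times}$) is reduced to the integral of Lemma~\ref{hyper(rev)3} by thickening each point to a translate of a fixed compact neighborhood of the identity, exploiting local-unit invariance at finite places and slow variation at infinite places, then invoking bounded overlap. The insertion of $|v|_{\AA}^{\rho}$ is legitimate exactly as you say because $|t|_{\AA}=1$ for $t\in F^{\times}$ and $|v|_{\AA}\asymp 1$ on each translate $t U_0$ (with constant depending only on $U_0$ and on the compact set $U$).

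One remark: the ``principal obstacle'' you flag at the end is in fact a non-issue. Writing out $f^{(\s)}(l_v;ut,b)=\{u^2(b+1)^2t^2+b^2\}^{\s/2-l_v/4}(1+u^{-2}t^{-2})^{-\s/2-l_v/4}|ut|_v^{-2\s}$ for $u$ in a fixed compact subset of $\RR^{\times}$, the positivity of both summands $u^2(b+1)^2t^2$ and $b^2$ immediately gives $u^2(b+1)^2t^2+b^2\asymp (b+1)^2t^2+b^2$ with constants depending only on the range of $u$, and the other two factors behave trivially; the comparison $f^{(\s)}(l_v;ut,b)\asymp f^{(\s)}(l_v;t,b)$ therefore holds uniformly in $t$ \emph{and} in $b$, with no case analysis according to the size of $|b|_v$ needed. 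So the archimedean slow-variation estimate is actually the easiest step, and the proof as sketched is complete once the bounded-overlap observation is spelled out.
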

\begin{proof}
This follows from Lemma \ref{hyper(rev)3} and the argument in \cite[Corollary 11.17]{Tsud}.
\end{proof}

\begin{lem}\label{hyper(rev)6}
If $\s+\rho>-1$, $\s\not=\pm\rho$, $(c+1)/4 >5|\s|/2+2|\rho|+1$, 
$\ul/4>|\s|+|\rho|+1$ and
$\ul/2>(c+1)/4 +3|\s|/2+|\rho|+1$ hold,
then, we have the estimate
$$\sum_{b\in F^{\times}-\{-1\}}M_{\e}(\gn|\s, \rho, l, {\underline c} ; b) \prec
{\rm N}(\gn)^{-(c+1)/4+\s/2+|\s+\rho|}
$$
for any $\e>0$ such that $\big| |\rho|-\sigma \big|+(\sigma-\rho)_{-}<\epsilon/3<1$ and $\ul/2>(c+1)/4 +3|\s|/2+|\rho| +1+ 2\e$, with the implied constant independent of $\gn$. Here ${\underline c}=(c_v)_{v\in S}$ with $c_v=c\,(\forall v\in S)$. 
\end{lem}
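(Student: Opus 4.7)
The plan is to reduce the sum in question to the series estimated in Lemma~\ref{hyper(rev)5}, extracting a power of $\nr(\gn)$ from the finite places outside $S$. The key observation is that the summand $M_{\e}(\gn|\s,\rho,l,\underline{c};b)$ differs from the summand in Lemma~\ref{hyper(rev)5} precisely by: (i) factors of $|b|_v^{\s/2}$ at $v\in \Sigma_\infty$ and $|b|_v^{-(c+1)/4+\s/2}$ at $v\in S$; (ii) the indicator functions $\delta(b\in \gf^{-1}\gn\go_v)$ for $v\in \Sigma_\fin - S$; (iii) the extra factors $\sup(1,|b|_v^{\s+\rho})$ at $v\in \Sigma_\fin - S$. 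I would exploit the product formula $\prod_{v}|b|_v=1$ (valid for $b\in F^\times$) to convert the type (i) factors into $\prod_{v\in \Sigma_\fin - S}|b|_v^{-\s/2+(c+1)/4}$.

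Next, I would analyze these transferred factors together with the indicator functions in (ii): the constraint $b\in \gf^{-1}\gn\go_v$ at each $v\in \Sigma_\fin - S$ forces $|b|_v\leq N\gp_v^{-\ord_v(\gn)+\ord_v(\gf)}$. Since $(c+1)/4 > \sigma/2$ by the hypothesis $(c+1)/4 > 5|\s|/2+2|\rho|+1$, the exponent $-\s/2+(c+1)/4$ is positive, and the product $\prod_{v\in \Sigma_\fin -S}|b|_v^{-\s/2+(c+1)/4}$ is bounded by a constant (depending on $\gf$) times $\nr(\gn)^{-(c+1)/4+\s/2}$. Similarly, the factors $\sup(1,|b|_v^{\s+\rho})$ at $v\in \Sigma_\fin - S$ contribute at most a constant times $\nr(\gn)^{|\s+\rho|}$: in the case $\s+\rho\geq 0$, only the places $v\in S(\gf)$ contribute (since $|b|_v\leq 1$ elsewhere), while in the case $\s+\rho<0$, the worst case comes from $v\in S(\gn)$ where $|b|_v$ can be as small as $N\gp_v^{-\ord_v(\gn)+\ord_v(\gf)}$.

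After this bookkeeping, the sum over $b\in F^\times -\{-1\}$ satisfying the $\Sigma_\fin - S$-integrality condition factors through the $S$-integer ring $\go(S)$, and the remaining summand has exactly the form of Lemma~\ref{hyper(rev)5}. One then applies Lemma~\ref{hyper(rev)5} to obtain convergence; the hypotheses on $\sigma$, $\rho$, $c$, $l$, and on $\e$ carry over verbatim, and the choice $||\rho|-\s|+(\s-\rho)_{-}<\e/3<1$ together with the lower bound on $\inf_{v}l_v/2$ are imposed precisely so that this application is legitimate.

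The main technical obstacle will be the careful accounting at places in $S(\gn)\cup S(\gf)$ to ensure no extra factors of $\nr(\gn)^{\delta}$ are lost; in particular, the sign analysis in the $\sup(1,|b|_v^{\s+\rho})$ factors requires treating the two signs of $\s+\rho$ separately, and verifying that the bound $\nr(\gn)^{|\s+\rho|}$ is simultaneously achievable together with the $\nr(\gn)^{-(c+1)/4+\s/2}$ factor. This parallels the computation in \cite[Lemma 11.19]{Tsud}, so I would follow that line, with the sole difference that the archimedean factors are now handled by Lemma~\ref{hyper(rev)1} rather than its non-archimedean analogue in \cite{Tsud}.
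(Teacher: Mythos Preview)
Your proposal is correct and follows essentially the same approach as the paper's proof, which simply refers to \cite[Lemma 11.19]{Tsud}: transfer the $|b|_v$-powers at $\Sigma_\infty\cup S$ to $\Sigma_\fin-S$ via the product formula, use the integrality constraints $b\in\gf^{-1}\gn\cO_v$ to extract the $\nr(\gn)$-power, and reduce the remaining sum to the convergent series of Lemma~\ref{hyper(rev)5}. Your remark that the archimedean input is Lemma~\ref{hyper(rev)1} (replacing the corresponding archimedean lemma in \cite{Tsud}) is exactly the only modification needed.
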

\begin{proof}
	We give a proof in a similar way to \cite[Lemma 11.19]{Tsud}, replacing $c_{\iota}+1$ with $l_{\iota}$	for all $\iota \in \Sigma_{\infty}$.
Under the assumption on $l, \s, \rho, c$ in this lemma, the series
	\begin{align*}
	\sum_{b \in \go(S)-\{-1\}} & 
	\{ \prod_{v \in \Sigma_{\infty}}|b|_{v}^{-l_{v}/4+(c+1)/4} M_{v}(\s, \rho, l_{v}; b)\} 
	\{\prod_{v \in S}\sup(1, |b|_{v}^{\s+\rho}) M_{v}(\s, \rho, c_{v};b) \}\,|\nr(b(b+1))|^{\e},
	\end{align*}
	which is denoted by $A_{S}(\s,\rho, l, c)$, converges for any $\e>0$ such that $\big| |\rho|-\sigma \big|+(\sigma-\rho)_{-}<\epsilon/3<1$ and $\ul/2>(c+1)/4 +3|\s|/2+|\rho| +1+ 2\e$. Here $\go(S)$ denotes the $S$-integer ring of $F$. Indeed, this follows from Lemma \ref{hyper(rev)1} and \cite[Lemma 11.18]{Tsud}. By noting the Artin product formula $|b|_{\AA}=1$ for $b \in F^{\times}$,
we have
{\small \allowdisplaybreaks
\begin{align*}
	& \sum_{b\in F^{\times}-\{-1\}}M_{\e}(\gn|\s, \rho, l, {\underline c} ; b) \\
	= & \sum_{b \in \gf^{-1}\gn\go(S) -\{0, -1\}}\{\prod_{v\in \Sigma_{\fin}}\sup(1, |b|_{v}^{\s+\rho})\}\{\prod_{v \in S}|b|_{v}^{-(c+1)/4+\s/2}M_{v}(\s, \rho, c; b)\}\\
	& \times\{\prod_{v \in \Sigma_{\infty}}|b|^{-l_{v}/4+\s/2}M_{v}(\s, \rho, l_{v}; b )\}|\nr(b(b+1))|^{\e} \\
	=&\sum_{\substack{ b \in \gf^{-1}\gn\go(S)/\go(S)^{\times}\\ b\neq 0, -1}}\{\prod_{v\in \Sigma_{\fin}-S}\sup(1, |b|_{v}^{\s+\rho}) |b|_{v}^{(c+1)/4-\s/2}\} \\
	& \times\sum_{u \in \go(S)^{\times}}\{\prod_{v \in \Sigma_{\infty}}|ub|^{-l_{v}/4+(c+1)/4}M_{v}(\s, \rho, l_{v}; ub )\}
	\{\prod_{v \in S}\sup(1, |ub|_{v}^{\s+\rho})M_{v}(\s, \rho, c; ub)\} \\
	&\times |\nr(ub(ub+1))|^{\e} \\
	\ll & \sum_{\substack{b \in \gf^{-1}\gn\go(S)/\go(S)^{\times} \\ b\neq 0, -1}}\{\prod_{v\in \Sigma_{\fin}-S}\sup(1, |b|_{v}^{\s+\rho}) |b|_{v}^{(c+1)/4-\s/2}\} \times A_{S}(\s,\rho, l, c).
\end{align*}
}We note that the series in the last line is majorized by ${\rm N}(\gn)^{-(c+1)/4+\s/2+|\s+\rho|}$ as in the proof of \cite[Lemma 11.19]{Tsud}.
\end{proof}

\begin{lem} \label{JJ-hyperbolic}
Let $l=(l_{v})_{v \in \Sigma_{\infty}}\in (2\ZZ_{\ge2})^{\Sigma_{\infty}}$ and $c, \s \in \RR$.
Assume the following conditions:
\begin{center}
$\ul\ge 6$, \quad $\s>-1$,
\quad $(c+1)/4 > 9|\s|/2 +1,$
\quad $\ul/2 > (c+1)/4+5|\s|/2+1.$\end{center}
Then, for any compact subset $U$ of $\AA^{\times}$, the series
$$
\sum_{b \in F^{\times}-\{-1\}}\sum_{a \in F^{\times}}
\left| \Psi_{l}^{(z)}\left(\gn |\bfs; \delta_{b}[\begin{smallmatrix}at&0\\ 0&1\end{smallmatrix}][\begin{smallmatrix}1&x_{\eta}\\ 0&1\end{smallmatrix}]\right) \right|$$
converges uniformly in $(t, z, \bfs) \in U \times L_{\s} \times \LL_{S}(\underline{c})$, and there exists $\epsilon>0$ such that, for $\rho \in \RR$ satisfying $0<||\rho|-\sigma|<\epsilon$ and $\s+\rho>-1$, the integral
$$\sum_{b \in F^{\times}-\{-1\}}\int_{t \in \AA^{\times}}
\left| \Psi_{l}^{(z)}\left(\gn |\bfs; \delta_{b}[\begin{smallmatrix}t&0\\ 0&1\end{smallmatrix}][\begin{smallmatrix}1&x_{\eta}\\ 0&1\end{smallmatrix}]\right) \right||t|_{\AA}^{\rho}d^{\times}t$$
converges uniformly in $(z, \bfs) \in L_{\s} \times \LL_{S}(\underline{c})$.
\end{lem}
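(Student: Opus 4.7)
The strategy is parallel to the one carried out in \cite[\S 11.4]{Tsud}: majorise the adelic Green function pointwise by the auxiliary function $N(\gn|\sigma,l,\underline c;t,b)$ supplied by Lemma~\ref{hyper(rev)2}, and then dispatch the resulting sum/integral by the estimates in Lemmas~\ref{hyper(rev)3}--\ref{hyper(rev)6}. Since the assumption $(c+1)/4>9|\sigma|/2+1$ in particular guarantees $q(\underline c)=(c+1)/4>|\sigma|+1$, Lemma~\ref{hyper(rev)2} applies and gives, uniformly in $(z,\bfs)\in L_\sigma\times\LL_S(\underline c)$,
$$
\Bigl|\Psi_l^{(z)}\bigl(\gn|\bfs;\delta_b[\begin{smallmatrix}t&0\\0&1\end{smallmatrix}][\begin{smallmatrix}1&x_\eta\\0&1\end{smallmatrix}]\bigr)\Bigr|\prec N(\gn|\sigma,l,\underline c;t,b)\,e^{d_F\pi|\Im(z)|/2}.
$$

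For the first assertion, I would substitute $t\mapsto at$ with $a\in F^\times$ and $t\in U$, and sum first over $a$ using Lemma~\ref{hyper(rev)4} (the sum $\sum_{a\in F^\times}N(\gn|\sigma,l,\underline c;at,b)$ being uniformly bounded for $t$ in the compact set $U$) to obtain, for each fixed $b$, a bound $\prec_\e M_\e(\gn|\sigma,\rho,l,\underline c;b)\,\nr(\gn)^\e$ for a suitably chosen $\rho$. The outer $b$-summation is then handled by Lemma~\ref{hyper(rev)6}, producing a finite constant times $\nr(\gn)^{-(c+1)/4+\sigma/2+|\sigma+\rho|}$ which is independent of $t\in U$, $z\in L_\sigma$, and $\bfs\in\LL_S(\underline c)$ (the archimedean factor $e^{d_F\pi|\Im(z)|/2}$ being bounded on any compact subset of $L_\sigma$, which is what uniform convergence in $z$ requires here). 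This yields the claimed uniform convergence.

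For the second assertion, the same pointwise bound reduces the problem to majorising
$$
\sum_{b\in F^\times-\{-1\}}\int_{\A^\times}N(\gn|\sigma,l,\underline c;t,b)\,|t|_\A^{\rho}\,d^\times t.
$$
The inner integral for each $b$ is controlled by Lemma~\ref{hyper(rev)3}, giving $\prec_\e M_\e(\gn|\sigma,\rho,l,\underline c;b)\,\nr(\gn)^\e$, after which Lemma~\ref{hyper(rev)6} disposes of the $b$-summation.

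The main obstacle is the bookkeeping needed to exhibit a single $\e>0$ and an admissible range of $\rho$ which simultaneously satisfies every hypothesis of Lemmas~\ref{hyper(rev)3}, \ref{hyper(rev)4}, \ref{hyper(rev)5}, \ref{hyper(rev)6}: namely $(c+1)/4>|\sigma|+|\rho|+1$ and $(c+1)/4>5|\sigma|/2+2|\rho|+1$; $\inf_v l_v/4>\max(\sigma/2,|\rho|-\sigma/2)$, $\inf_v l_v/4>|\sigma|+|\rho|+1$, and $\inf_v l_v/2>(c+1)/4+3|\sigma|/2+|\rho|+1+2\e$; together with $\sigma+\rho>-1$, $\sigma\neq\pm\rho$, and $||\rho|-\sigma|+(\sigma-\rho)_-<\e/3<1$. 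My plan is to pick $\rho=\sigma-\eta$ with $\eta>0$ small (choosing the opposite sign if $\sigma$ is close to $-1$, so that $\sigma+\rho>-1$ remains available thanks to $\sigma>-1$), and then $\e$ slightly larger than $3\eta$. With such a choice, $|\rho|\to|\sigma|$ and $(\sigma-\rho)_-=0$, so the standing hypotheses $(c+1)/4>9|\sigma|/2+1$ and $\inf_v l_v/2>(c+1)/4+5|\sigma|/2+1$ leave exactly the margin required to absorb the extra terms $2\e$ and $|\rho|-|\sigma|$ that arise in the secondary conditions. Matching these margins is the technical heart of the proof; once it is done, all the ingredient estimates apply and the two convergence statements follow at once.
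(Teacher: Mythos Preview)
Your proposal is correct and follows the same route as the paper, whose entire proof is the one-line reference to Lemmas~\ref{hyper(rev)2}--\ref{hyper(rev)6}. You have in fact gone further by spelling out the parameter bookkeeping that makes those lemmas applicable, and your observation that the exponents $9|\sigma|/2$ and $5|\sigma|/2$ in the standing hypotheses arise from setting $|\rho|\approx|\sigma|$ in the conditions $(c+1)/4>5|\sigma|/2+2|\rho|+1$ and $\inf_v l_v/2>(c+1)/4+3|\sigma|/2+|\rho|+1$ of Lemmas~\ref{hyper(rev)5}--\ref{hyper(rev)6} is exactly right.

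Two small remarks. First, for the second assertion the claim is that the integral converges for \emph{every} $\rho$ with $0<||\rho|-\sigma|<\epsilon$ and $\sigma+\rho>-1$, not just one well-chosen $\rho$; your analysis already covers this, since the verifications you sketch depend only on $|\rho|$ being close to $|\sigma|$, but the write-up should be phrased accordingly (and note that for $\sigma<0$ the range is empty once $\epsilon<|\sigma|$, so that case is vacuous). Second, your caveat about $e^{d_F\pi|\Im(z)|/2}$ is well taken: strictly speaking the convergence is only locally uniform in $z\in L_\sigma$, which is all that is needed downstream (in Lemma~\ref{HYPERBOLIC-TERM(crude)} the $z$-integral is against $\beta(z)/(z+\lambda)$ with $\beta\in\Bcal$, or one simply takes $z=0$).
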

\begin{proof}
By assumption, we can take $\rho \in \RR$ such that $(c+1)/4 > 5|\s|/2+2|\rho|+1$, $\s+\rho>-1$, $\ul/4>|\s|+|\rho|+1$
and $\ul/2 > (c+1)/4 + 3|\s|/2 + |\rho| +1$
(we can take $\rho=0$ if $\s>-1$ and $\s\neq 0$ ).
Thus the assertion follows from Lemmas~\ref{hyper(rev)2}, \ref{hyper(rev)3}, \ref{hyper(rev)4}, and \ref{hyper(rev)6}.
We remark that the condition $\ul \ge 6$ is forced by the third and the fourth inequalities in Lemma \ref{JJ-hyperbolic}; indeed, they imply
$\ul/2 > 7|\s| +2$, and hence
$\ul> 4$.
\end{proof}

\begin{lem} \label{JJ-hyperbolic2}
Suppose $\ul \ge 6$. The function $J_{\rm hyp}(\b, \l, \a;t)$ on $\Re(\l)>1$ has a holomorphic continuation to $\CC$ whose value at $\l=0$ equals $J_{\rm hyp}(\a;t)\b(0)$, where
$$
J_{\rm hyp}(\a; t) = \sum_{b \in F^{\times}-\{-1\}}\sum_{a \in F^{\times}}
\hat{\Psi}_{l}^{(0)}\left(\gn |\a; \delta_{b}[\begin{smallmatrix}at & 0\\ 0&1\end{smallmatrix}][\begin{smallmatrix}1&x_{\eta}\\ 0&1\end{smallmatrix}]
\right).$$
The series converges absolutely and uniformly in $t \in \AA^{\times}$.
Here we set
$$\hat{\Psi}_{l}^{(0)}\left(\gn |\a; g\right)= \left(\frac{1}{2\pi i}\right)^{\#S}\int_{\LL_{S}(\bfc)}\Psi_{l}^{(0)}(\gn |\bfs; g) \a(\bfs)d\mu_{S}(\bfs)$$
with $\bfc$ being an element of $\RR^S$ such that $q(\bfc)>1$ $(${\rm cf}.\ \cite[\S 6.3]{Tsud}$)$.
\end{lem}
\begin{proof}
This follows from Lemma \ref{JJ-hyperbolic} in the same way as \cite[Lemma 11.21]{Tsud}.
\end{proof}

From Lemmas~\ref{JJ-id}, \ref{JJ-unipotent} and \ref{JJ-hyperbolic2}, we have
\begin{align}
\hat{{\bf \Psi}}_{\rm reg}^{l}\left(\gn | \a; [\begin{smallmatrix}t&0\\0&1\end{smallmatrix}][\begin{smallmatrix}1&x_{\eta}\\0&1\end{smallmatrix}]\right)
= (1+i^{\tilde{l}}\delta(\gn=\go))J_{\rm id}(\a;t) + J_{\rm u}(\a;t)+J_{\bar{\rm u}}(\a;t) +J_{\rm hyp}(\a;t)
 \label{GEOMETRIC IDENTITY}
\end{align} 
for any $t\in \A^\times$. Some terms on the right-hand side, viewed as functions on $H_F\bsl H_\A$ individually, have divergent $(H,\eta)$-period integrals; to proceed further, we need to regularize them.

%%%%%%%%%%%%%%%%%%%%%%%%%%%%%%%%%%%%%%%%%%%%%%%%
\section{Geometric side}
%%%%%%%%%%%%%%%%%%%%%%%%%%%%%%%%%%%%%%%%%%%%%%%%

Suppose $\ul=\inf_{v \in \Sigma_{\infty}} l_{v} \ge 6$.
We fix a holomorphic function $\alpha(\bfs)$ on $\fX_S$ such that $\alpha(\varepsilon\bfs)=\alpha(\bfs)$ for any $\varepsilon \in \{\pm 1\}^S$. Let $\beta\in \Bcal$ as before. For $\natural \in \{\rm id, u, \bar{u}, hyp\}$,
we set
$$\JJ_{\natural}^{\eta}(\b, \l; \a)= \int_{F^{\times}\backslash \AA^{\times}} J_{\natural}(\a;t)\{\hat{\b}_\lambda(|t|_{\AA})
+\hat{\b}_\lambda(|t|_{\AA}^{-1})\}\eta(t x_{\eta}^{*})d^{\times}t.$$
 In this section, we shall show that this integral converges absolutely when $\Re(\lambda)\gg 0$ and has a meromorphic continuation to a neighborhood of $\lambda=0$; at the same time, we determine the constant term in its Laurent expansion at $\lambda=0$. As a result, by the identity 
$$
P_{\beta,\lambda}^\eta(\hat{{\bf \Psi}}_{\rm reg}^{l}(\gn|\alpha))
=
\JJ_{\rm id}^{\eta}(\b, \l; \a)
+i^{\tilde l}\delta(\gn=\go)\JJ_{\rm id}^{\eta}(\b, \l; \a)
+\JJ_{\rm u}^{\eta}(\b, \l; \a)
+\JJ_{\rm \bar u}^{\eta}(\b, \l; \a)
+\JJ_{\rm hyp}^{\eta}(\b, \l; \a)
$$
obtained from \eqref{GEOMETRIC IDENTITY}, we have another expression of $P_{\rm{reg}}^\eta(\hat{{\bf \Psi}}_{\rm reg}^{l}(\gn|\alpha))$ already computed in Proposition~\ref{Reg-per} by means of the spectral expansion. 
   
\begin{lem}\label{IDEMPOTENT TERM}
For $\Re(\l)>0$, the integral $\JJ_{\rm id}^{\eta}(\b, \l; \a)$ converges absolutely
and we have
$$\JJ_{\rm id}^\eta(\b, \l; \a)= \delta_{\eta, \bf 1}\vol(F^{\times}\backslash \AA^{1})
\left(\frac{1}{2\pi i}\right)^{\#S}\int_{\LL_{S}(\bfc)}\Upsilon_{S}^{\bf 1}(\bfs)\a(\bfs)d\mu_{S}(\bfs)\frac{2\b(0)}{\l},$$
where $\delta_{\eta, \bf1} = \delta(\eta = \bf1)$. We have a meromorphic continuation of $\JJ_{\rm id}^{\eta}(\b, \l; \a)\,(\Re(\l)>0)$ to $\C$ with ${\rm CT}_{\l=0}\JJ_{\rm id}^\eta(\b, \l; \a) =0$. 
\end{lem}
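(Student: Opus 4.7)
The plan is to observe that $J_{\rm id}(\alpha;t)$ is actually independent of $t$ (as its formula in Lemma~\ref{JJ-id} shows no $t$-dependence), so the integral splits as a product: the constant $J_{\rm id}(\alpha)$ times an integral over $t$. Moreover, the formula for $J_{\rm id}$ carries a factor $\delta(\ff=\go_F)$, which is exactly $\delta_{\eta,{\bf 1}}$ since $\ff$ is the conductor of $\eta$. Hence if $\eta\neq{\bf 1}$, the lemma is trivial. For the nontrivial case $\eta={\bf 1}$, $x_\eta^*=1$ and we are left with evaluating $\int_{F^\times\bsl\AA^\times}\{\hat\beta_\lambda(|t|_\AA)+\hat\beta_\lambda(|t|_\AA^{-1})\}d^\times t$.

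Using the identification $\AA^\times\cong\AA^1\times\RR_{>0}$ via $t=u\underline{y}$, the integrand depends only on $y=|t|_\AA$, so integration along $\AA^1$ yields the factor $\vol(F^\times\bsl\AA^1)$ and we reduce to computing $\int_0^\infty\{\hat\beta_\lambda(y)+\hat\beta_\lambda(y^{-1})\}\,d^\times y$. The substitution $y\mapsto y^{-1}$ shows the two pieces are equal, so it suffices to evaluate $I(\lambda):=\int_0^\infty\hat\beta_\lambda(y)\,d^\times y$.

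For the main computation, I would split $I(\lambda)=\int_0^1+\int_1^\infty$ and swap the $y$-integral with the contour integral defining $\hat\beta_\lambda$. On $(0,1]$, pick the contour at $\sigma_1>0$ so that $\int_0^1 y^{z-1}dy=z^{-1}$ converges; on $[1,\infty)$, pick the contour at $\sigma_2$ with $-\Re(\lambda)<\sigma_2<0$ so that $\int_1^\infty y^{z-1}dy=-z^{-1}$ converges. The rapid decay of $\beta$ on vertical lines (from $\beta\in\Bcal$) justifies Fubini in each piece. One then obtains
\begin{align*}
I(\lambda)=\frac{1}{2\pi i}\int_{L_{\sigma_1}}\frac{\beta(z)}{z(z+\lambda)}\,dz-\frac{1}{2\pi i}\int_{L_{\sigma_2}}\frac{\beta(z)}{z(z+\lambda)}\,dz,
\end{align*}
and shifting the $L_{\sigma_1}$-contour to $L_{\sigma_2}$ picks up the residue at $z=0$ (the pole at $z=-\lambda$ lies to the left of $L_{\sigma_2}$ and contributes nothing). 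The residue equals $\beta(0)/\lambda$, so $I(\lambda)=\beta(0)/\lambda$ and hence the $t$-integral equals $2\beta(0)/\lambda$. Multiplying by $J_{\rm id}(\alpha)\cdot\vol(F^\times\bsl\AA^1)$ produces the stated formula.

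Finally, the meromorphic continuation to $\CC$ is automatic from the explicit form: the right-hand side is manifestly meromorphic in $\lambda$, having only a simple pole at $\lambda=0$ with residue $2\beta(0)\cdot\delta_{\eta,{\bf 1}}\cdot\vol(F^\times\bsl\AA^1)\cdot(2\pi i)^{-\#S}\int_{\LL_S(\bfc)}\Upsilon_S^{\bf 1}(\bfs)\alpha(\bfs)\,d\mu_S(\bfs)$, so the constant term ${\rm CT}_{\lambda=0}\JJ_{\rm id}^\eta(\beta,\lambda;\alpha)$ vanishes. The only subtle step is justifying the Fubini interchange and the contour shift; both follow from the growth condition $|\beta(\sigma+it)|\ll e^{-t^2}(1+|t|)^{-m}$ built into $\Bcal$, exactly as in the cited argument of \cite[Lemma 11.2]{Tsud}, so I would simply invoke that reference for the technical estimates rather than redo them.
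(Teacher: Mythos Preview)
Your approach is correct and follows the same direct computation that the paper's cited reference \cite[Lemma 11.2]{Tsud} carries out, so there is no substantive difference in method. The paper itself gives no explicit argument beyond that citation.

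There is, however, one small inaccuracy: the identification $\delta(\ff=\go_F)=\delta_{\eta,{\bf 1}}$ is not valid in general. A totally real field can admit a nontrivial quadratic idele class character $\eta$ that is unramified at every finite place (this happens whenever the narrow class group has nontrivial $2$-torsion), in which case $\ff=\go_F$ but $\eta\neq{\bf 1}$. In that situation $J_{\rm id}(\alpha;t)$ is a nonzero constant and your argument as written does not immediately give $\JJ_{\rm id}^\eta=0$. The fix is short: when $\ff=\go_F$ one has $x_\eta^*=1$, and in the decomposition $t=u\underline{y}$ the factor $\eta(\underline{y})=\prod_{v\in\Sigma_\infty}\operatorname{sgn}^{\epsilon_v}(y^{1/d_F})=1$, so the inner integral becomes $\int_{F^\times\bsl\AA^1}\eta(u)\,d^1u$, which vanishes by orthogonality since a nontrivial idele class character trivial on $\{\underline{y}\}$ must be nontrivial on $\AA^1$. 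With this one-line addition your proof is complete.
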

\begin{proof} The first claim is shown in the same way as \cite[Lemma 12.2]{Tsud}. From the expression, the function $\JJ_{\rm id}^\eta(\b, \l; \a)$ $(\Re(\l)>0)$ which is just a constant multiple of $\l^{-1}$ obviously has a meromorphic continuation to $\CC$ with ${\rm CT}_{\l=0}\JJ_{\rm id}^\eta(\b, \l; \a) =0$. 
\end{proof}

Let us examine the terms $\JJ_{\rm u}^\eta(\b, \l; \a)$ and $ \JJ_{\rm \bar u}^\eta(\b, \l; \a)$. Assume that $q(\Re(\bfs))>\Re(\l)> \s >1$ and $1<\s<\ul/2$ and set
{\allowdisplaybreaks
\begin{align*}
U_{0, \eta}^{\pm}(\l; \bfs)&= \frac{1}{2\pi i}\int_{L_{\mp \s}}\frac{\beta(z)}{z+\lambda} \int_{\AA^{\times}}\Psi_{l}^{(0)}\left(\gn|\bfs; [\begin{smallmatrix}1&t^{-1}\\ 0&1\end{smallmatrix}][\begin{smallmatrix}1&x_{\eta}\\ 0&1\end{smallmatrix}]\right)\eta(tx_{\eta}^{*})|t|_\A^{\pm z}d^{\times}tdz, \\
U_{1, \eta}^{\pm}(\l; \bfs)&=\frac{1}{2\pi i}\int_{L_{\mp \s}}\frac{\beta(z)}{z+\lambda} \int_{\AA^{\times}}\Psi_{l}^{(0)}\left(\gn | \bfs; [\begin{smallmatrix}1&0\\ t^{-1}&1\end{smallmatrix}][\begin{smallmatrix}1&0\\ -x_{\eta}&1\end{smallmatrix}]w_{0}\right)\eta(tx_{\eta}^{*})|t|_\A^{\pm z}d^{\times}tdz
\end{align*}
} with $w_0=\left[\begin{smallmatrix} 0 & -1 \\ 1 & 0 \end{smallmatrix}\right]$, and
\begin{align*}
\Upsilon_{S}^{\eta}(z;\bfs)&=\prod_{v \in S}(1-\eta_{v}(\varpi_{v})q_{v}^{-(z+(s_{v}+1)/2)})^{-1}(1-q_{v}^{(s_{v}+1)/2})^{-1}, \\
\Upsilon_{S, l}^{\eta}(z; \bfs)= & D_{F}^{-1/2}\{\# (\go/\gf)^{\times}\}^{-1}
\{\prod_{v \in \Sigma_{\infty}}
\frac{2\Gamma(-z) \Gamma(l_{v}/2+z)}{\Gamma_{\RR}(-z + \e_{v})\Gamma(l_{v}/2)}i^{\e_{v}}\cos \left(\frac{\pi}{2}(-z+\e_{v})\right)\}\, 
\Upsilon_{S}^{\eta}(z;\bfs).
\end{align*}
Here $\e_{v} \in \{0, 1\}$ is the sign of $\eta_{v}$ for $v \in \Sigma_{\infty}$ (see \S \ref{add char and gauss sum}).

\begin{lem} \label{U^pm}
The double integrals $U_{0,\eta}^{\pm}(\l; \bfs)$ and $U_{1,\eta}^{\pm}(\l; \bfs)$ converge absolutely and 
\begin{align*}
U_{0, \eta}^{\pm}(\l; \bfs)&=\frac{1}{2\pi i}\int_{L_{\mp\s}}\frac{\b(z)}{z+\l}{\rm N}(\gf)^{\mp z}L(\mp z, \eta)(-1)^{\e(\eta)}\Upsilon_{S, l}^{\eta}(\pm z; \bfs)dz, \\
U_{1, \eta}^{\pm}(\l; \bfs)& = \frac{1}{2 \pi i}\int_{L_{\mp \s}}\frac{\b(z)}{z+\l}
{\rm N}(\gf)^{\mp z}{\rm N}(\gn)^{\pm z}\tilde{\eta}(\gn) \delta(\gn=\go)
L(\mp z, \eta)
i^{\tilde{l}}\Upsilon_{S, l}^{\eta}(\pm z;\bfs)dz,
\end{align*}
where $\tilde l=\sum_{v\in \Sigma_\infty} l_v$ and $\epsilon(\eta)=\sum_{v\in \Sigma_\infty}\e_v$. 
\end{lem}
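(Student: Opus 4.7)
The plan is to reduce both integrals to products of local integrals via factorization, compute each factor using the explicit formulas already at hand, and assemble.

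Because the Green function $\Psi_l^{(0)}(\gn|\bfs;g)=\prod_v \Psi_v^{(0)}(\ast;g_v)$ is a pure tensor by construction, the measure $d^\times t=\prod_v d^\times t_v$ and the character $\eta(tx_\eta^*)=\eta(x_\eta^*)\prod_v\eta_v(t_v)$ both factor, so Fubini splits the inner adelic integral as $\eta(x_\eta^*)\prod_v I_v^\pm(z)$ with local factors $I_v^\pm(z)$. Absolute convergence of this product and of the outer contour integral is controlled by the Gamma estimates supplied by Lemma~\ref{lem:orbital integral of Shintani} at archimedean places, by Lemma~\ref{v-Greenftn-est} at $v\in S$, and by geometric decay of the unramified Euler factors at the remaining finite places; the assumptions $q(\Re(\bfs))>\Re(\lambda)>\sigma$ and $1<\sigma<4/3$ are exactly what is needed so that all these estimates hold uniformly on the contour $L_{\mp\sigma}$.

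The evaluation of $I_v^\pm(z)$ proceeds place by place. At $v\in\Sigma_\infty$, the change of variable $t\mapsto t^{-1}$ turns $I_v^\pm(z)$ into an orbital integral of the holomorphic Shintani function, computed by Lemma~\ref{lem:orbital integral of Shintani} with $\epsilon'=0$ for $U_{0,\eta}^\pm$ and $\epsilon'=1$ for $U_{1,\eta}^\pm$; dividing by the archimedean $L$-factor $L_v(\mp z,\eta)=\Gamma_\RR(\mp z+\epsilon_v)$ produces precisely the archimedean factor of $\Upsilon_{S,l}^\eta$, together with an extra $i^{l_v}$ for $U_{1,\eta}^\pm$ coming from the identity $\Psi^{(0)}(l;gw_0)=i^l\Psi^{(0)}(l;g)$. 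At $v\in S$, I insert the explicit $z=0$ formula of Lemma~\ref{v-Greenftn} for $\Psi_v^{(0)}(s_v;\left[\begin{smallmatrix}1 & y\\ 0 & 1\end{smallmatrix}\right])$, decompose $F_v^\times=\bigsqcup_{n\in\ZZ}\cO_v^\times\varpi_v^n$, and sum the two resulting geometric series; the partial fractions collapse via $\eta_v(\varpi_v)^2=1$ to yield the local Euler factor of $L(\mp z,\eta)\,\Upsilon_S^\eta(\pm z;\bfs)$ weighted by $q_v^{-d_v/2}$. At $v\in S(\gn)$ or $v\in\Sigma_\fin-(S\cup S(\gn)\cup S(\gf))$, the value of $\Phi_{\gn,v}^{(0)}$ at $\left[\begin{smallmatrix}1 & y\\ 0 & 1\end{smallmatrix}\right]$ is $\delta(y\in\cO_v)$, so the condition on $t$ reduces to $\ord_v(t)\le 0$ and produces $q_v^{-d_v/2}L_v(\mp z,\eta)$; for $U_{1,\eta}^\pm$ the analogous value at $\left[\begin{smallmatrix}1 & 0\\ y & 1\end{smallmatrix}\right]w_0$ requires its Iwasawa $\bfK_v$-component to lie in $\bfK_0(\gn\cO_v)$, and a direct check (for $|y|_v\le 1$ the matrix itself lies in $\bfK_v$ with lower-left entry $1$, for $|y|_v>1$ the unipotent component lies outside $\cO_v$) shows this is satisfied only when $\gn=\go$, accounting for $\delta(\gn=\go)$. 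At $v\in S(\gf)$, the integrability condition $t^{-1}\pm\varpi_v^{-f(\eta_v)}\in\cO_v$ pins $t$ to the single coset $\mp\varpi_v^{f(\eta_v)}(1+\varpi_v^{f(\eta_v)}\cO_v)$, on which $\eta_v$ takes the constant value $\eta_v(\pm 1)\eta_v(\varpi_v)^{f(\eta_v)}$; the multiplicative volume of this coset equals $(1-q_v^{-1})^{-1}q_v^{-f(\eta_v)-d_v/2}$.

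Finally I assemble: $\prod_{v\in\Sigma_\fin}q_v^{-d_v/2}=D_F^{-1/2}$ combines with $\prod_{v\in S(\gf)}(1-q_v^{-1})^{-1}\nr(\gf)^{-1}=\{\#(\go/\gf)^\times\}^{-1}$ to reconstitute the constants in $\Upsilon_{S,l}^\eta$; the factors $\eta_v(\varpi_v)^{f(\eta_v)}$ cancel against $\eta(x_\eta^*)$; the local Euler factors combine into $L(\mp z,\eta)$; the residual sign $\prod_{v\in S(\gf)}\eta_v(\pm 1)$ is converted via the product identity $\prod_v\eta_v(-1)=1$ into $(-1)^{\epsilon(\eta)}$ for $U_{0,\eta}^\pm$ and into $1$ for $U_{1,\eta}^\pm$; and the archimedean $i^{l_v}$ factors for $U_{1,\eta}^\pm$ multiply to $i^{\tilde l}$. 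The main obstacle is purely combinatorial bookkeeping at the ramified and archimedean places---keeping track of $\eta_v(\pm 1)$, $\eta_v(\varpi_v)^{f(\eta_v)}$, the volume factors $(1-q_v^{-1})^{-1}q_v^{-f(\eta_v)-d_v/2}$, and the Gamma-plus-sign factors from $\Sigma_\infty$---and verifying that all of these recombine cleanly into the compact closed form stated on the right-hand side.
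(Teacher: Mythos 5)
Your overall strategy is the same as the paper's: the paper's own proof is essentially a one‐line citation to \cite[Lemma 12.3]{Tsud} for the non‐archimedean local factors and to Lemma~\ref{lem:orbital integral of Shintani} for the archimedean ones, and your proposal is a correct fleshing‐out of that scheme. Your place‐by‐place analysis of the finite places (in particular the mechanism producing $\delta(\gn=\go)$ from the $\bfK_0(\gn\go_v)$‐condition on the Iwasawa $\bK_v$‐factor of $\left[\begin{smallmatrix}1&0\\y&1\end{smallmatrix}\right]w_0$, and the treatment of the ramified places $v\in S(\gf)$ giving the volume $(1-q_v^{-1})^{-1}q_v^{-f(\eta_v)-d_v/2}$) is sound, as is the assembly of $\prod_v q_v^{-d_v/2}=D_F^{-1/2}$ and the $\eta_v(-1)$ bookkeeping for $U_{0,\eta}^\pm$.

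One archimedean step as you describe it does not match Lemma~\ref{lem:orbital integral of Shintani}. You state that for $U_{1,\eta}^\pm$, applying the second formula of that lemma (with $\e'=1$) and dividing by $L_v(\mp z,\eta)=\Gamma_\RR(\mp z+\e_v)$ gives the archimedean factor of $\Upsilon_{S,l}^\eta$ times an extra $i^{l_v}$ from $\Psi^{(0)}(l;gw_0)=i^l\Psi^{(0)}(l;g)$. But the second formula carries $(-i)^{\e_v}$ where the first formula (and the archimedean factor of $\Upsilon_{S,l}^\eta$) carries $i^{\e_v}$: the identity $\Psi^{(0)}(l;gw_0)=i^l\Psi^{(0)}(l;g)$ applied to $g=\left[\begin{smallmatrix}1&0\\x&1\end{smallmatrix}\right]$ relates the $U_1$ integral to $\int(1-ix)^{-l_v/2}\sgn^{\e_v}(x)|x|^{\mp z}d^\times x$, which by $x\mapsto -x$ equals $(-1)^{\e_v}\int(1+ix)^{-l_v/2}\sgn^{\e_v}(x)|x|^{\mp z}d^\times x$. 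So the ratio of the $U_1$ to the $U_0$ archimedean local integrals is $i^{l_v}(-1)^{\e_v}$, not $i^{l_v}$, and the product over $\Sigma_\infty$ is $i^{\tilde l}(-1)^{\epsilon(\eta)}$. The extra $(-1)^{\epsilon(\eta)}$ does not appear in the stated formula for $U_{1,\eta}^\pm$; it is invisible in practice because $\delta(\gn=\go)$ together with the parity hypothesis $(-1)^{\epsilon(\eta)}\tilde\eta(\gn)=1$ assumed from \S 9 onward forces $(-1)^{\epsilon(\eta)}=1$ whenever $U_{1,\eta}^\pm\ne 0$. Nevertheless, your intermediate claim ``produces precisely the archimedean factor of $\Upsilon_{S,l}^\eta$ together with an extra $i^{l_v}$'' is not an exact consequence of Lemma~\ref{lem:orbital integral of Shintani} as stated, and this sign should be tracked (or the parity hypothesis invoked explicitly) if the lemma is to be justified independently of the later sections.
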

\begin{proof}
 This is proved in the same way as \cite[Lemma 12.3]{Tsud}; to compute the archimedean integral, we use Lemma~\ref{lem:orbital integral of Shintani}. 
\end{proof}

By $1 <\s <\ul/2$, the possible poles of the integrand of $U_{0, \eta}^{+}(\l; \bfs)$ in the region
$-\s< \Re(z) <\s$ are $z=0, -1$.
In fact, we observe that the integrand is holomorphic at $z=-1$. We shift the contour $L_{-\s}$ to $L_{\s}$; by the residue theorem, 
\begin{align*}
U_{0, \eta}^{+}(\l;\bfs) = & \frac{1}{2\pi i}\int_{L_{\s}}\frac{\b(z)}{z+\l}
{\rm N}(\gf)^{-z}L(-z, \eta)(-1)^{\e(\eta)}\Upsilon_{S, l}^{\eta}(z; \bfs)dz
- \frac{\b(0)}{\l}
\delta_{\eta, \bf1}R_{F} \Upsilon_{S}^{\bf 1}(\bfs),
\end{align*}
where $R_{F}$ is the residue of $\zeta_{F}(s)$ at $s=1$.
In a similar manner,
{\small \begin{align*}
U_{1, \eta}^{+}(\l;\bfs) = & \frac{1}{2\pi i}\int_{L_{\s}}\frac{\b(z)}{z+\l}
{\rm N}(\gf)^{-z}\,L(-z, \eta)\,\delta(\gn=\cO)\,i^{\tilde{l}}\,\Upsilon_{S, l}^{\eta}(z; \bfs)dz
- \frac{\b(0)}{\l}
\delta_{\eta, \bf1}\,R_{F}\,\delta(\gn=\cO)\,i^{\tilde{l}}\,\Upsilon_{S}^{\bf 1}(\bfs).
\end{align*}}

Define $C_{0}(\eta)$ and $R(\eta)$ by
$$L(s, \eta)={R(\eta)}(s-1)^{-1} + C_{0}(\eta) + \Ocal(s-1), \ (s \rightarrow 1).$$
We remark that $R_{F}=R(\eta)$ if $\eta$ is trivial.

\begin{lem}\label{UNIPOTEMT-TERM1(crude)}
The function $\lambda \mapsto \JJ_{\rm u}^{\eta}(\b,\l;\a)$ on $\Re(\l)>1$ has a meromorphic continuation to the region $\Re(\l)>-\ul/2$. The constant term of $\JJ_{\rm u}^{\eta}(\b,\l;\a)$ at $\l=0$ equals $\JJ_{\rm u}^{\eta}(l, \gn|\a)\b(0)$.
Here we put
\begin{align*}
\JJ_{\rm u}^{\eta}(l, \gn |\a) =
(-1)^{\e(\eta)}\Gcal(\eta)D_{F}^{1/2}(1+(-1)^{\e(\eta)}\tilde{\eta}(\gn)i^{\tilde{l}}\delta(\gn=\go))
\left(\frac{1}{2\pi i}\right)^{\#S}\int_{\LL_{S}(\bfc)}
\Upsilon_{S}^{\eta}(\bfs)\mathfrak{C}_{S, {\rm u}}^{\eta}(\bfs)\a(\bfs)d \mu_{S}(\bfs)
\end{align*}
with
\begin{align*}
\mathfrak{C}_{S, {\rm u}}^{\eta}(\bfs)
=\pi^{\e(\eta)}C_{0}(\eta)+R(\eta)
\bigg\{ -\frac{d_{F}}{2}(C_{\rm Euler}+\log \pi )
+ \sum_{v\in\Sigma_{\infty}}\sum_{k=1}^{l_{v}/2-1}\frac{1}{k} + \sum_{v \in S}\frac{\log q_{v}}{1-q_{v}^{(s_{v}+1)/2}} + \log D_{F} \bigg\}.
\end{align*}
In particular, we have $\mathfrak{C}_{S, {\rm u}}^{\eta}(\bfs)=L_{\fin}(1, \eta)$
if $\eta$ is non-trivial.
\end{lem}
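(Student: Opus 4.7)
The plan is to convert $\JJ_{\rm u}^{\eta}(\b,\l;\a)$ into a double contour integral of the quantities $U_{0,\eta}^{\pm}$ and $U_{1,\eta}^{\pm}$ via the unfolding trick, apply Lemma~\ref{U^pm} together with the contour shifts displayed above the statement, and extract the constant term at $\l=0$ by a Laurent expansion at $z=0$.

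First I would unfold: the $F^{\times}$-invariance of $\eta(tx_{\eta}^{*})$ and $|t|_{\A}$ lets one collapse the sum over $a\in F^{\times}$ appearing in $J_{\rm u}(\a;t)$ (Lemma~\ref{JJ-unipotent}) together with $\int_{F^{\times}\bsl\A^{\times}}\d^{\times}t$ into $\int_{\A^{\times}}\d^{\times}t$. Substituting the Mellin representation $\hat\b_{\l}(|t|_{\A}^{\pm 1})=\tfrac{1}{2\pi i}\int_{L_{\mp\s}}\tfrac{\b(z)}{z+\l}|t|_{\A}^{\pm z}\d z$ and interchanging the orders of integration, justified for $q(\Re(\bfs))>\Re(\l)>\s>1$ with $1<\s<4/3$, yields
$$\JJ_{\rm u}^{\eta}(\b,\l;\a)=\left(\frac{1}{2\pi i}\right)^{\#S}\int_{\LL_{S}(\bfc)}\bigl(U_{0,\eta}^{+}+U_{0,\eta}^{-}+U_{1,\eta}^{+}+U_{1,\eta}^{-}\bigr)(\l;\bfs)\,\a(\bfs)\,\d\mu_{S}(\bfs).$$
Applying Lemma~\ref{U^pm} and the contour-shift identities displayed just before the statement expresses each $U_{i,\eta}^{\pm}$ as a contour integral on $L_{\pm\s}$ plus a principal part of the shape $-\b(0)\l^{-1}(\cdots)\delta_{\eta,\bf 1}$. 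The principal $\l^{-1}$-contributions from the $+$ and $-$ pieces cancel by the symmetry $z\mapsto -z$ combined with $\b(-z)=\b(z)$, while the contour integrals on $L_{\pm\s}$ give the holomorphic continuation of $\JJ_{\rm u}^{\eta}(\b,\l;\a)$ in $\l$ to $\CC$.

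To identify the constant term $\JJ_{\rm u}^{\eta}(l,\gn|\a)\b(0)$ at $\l=0$, I would compute the Laurent expansion at $z=0$ of the integrand $\nr(\gf)^{\mp z}L(\mp z,\eta)(-1)^{\e(\eta)}\Upsilon_{S,l}^{\eta}(\pm z;\bfs)$. The key pieces of data are the Laurent expansion $L(s,\eta)=R(\eta)(s-1)^{-1}+C_{0}(\eta)+O(s-1)$ at $s=1$; the expansion of the archimedean factor $\tfrac{2\Gamma(-z)\Gamma(l_{v}/2+z)}{\Gamma_{\RR}(-z+\e_{v})\Gamma(l_{v}/2)}i^{\e_{v}}\cos(\tfrac{\pi}{2}(-z+\e_{v}))$ in $\Upsilon_{S,l}^{\eta}(z;\bfs)$, whose leading value at $z=0$ equals $\pi^{\e_{v}}$ via the reflection and duplication formulae for $\Gamma$ and whose next-order coefficient is $-\tfrac{1}{2}(C_{\rm Euler}+\log\pi)+\sum_{k=1}^{l_{v}/2-1}k^{-1}$ by the digamma identity; the expansion $\nr(\gf)^{\mp z}=1\mp z\log\nr(\gf)+O(z^{2})$; and the expansion of $\Upsilon_{S}^{\eta}(z;\bfs)$ producing $\sum_{v\in S}\log q_{v}/(1-q_{v}^{(s_{v}+1)/2})$. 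The prefactor $(-1)^{\e(\eta)}\Gcal(\eta)D_{F}^{1/2}$ emerges upon reassembling $D_{F}^{-1/2}\{\#(\go/\gf)^{\times}\}^{-1}\pi^{\e(\eta)}$ through the definition of the Gauss sum $\Gcal(\eta)$, and the factor $1+(-1)^{\e(\eta)}\tilde\eta(\gn)i^{\tilde l}\delta(\gn=\go)$ arises from summing the $U_{0,\eta}$ and $U_{1,\eta}$ contributions.

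The main obstacle will be the careful bookkeeping of the archimedean Laurent expansion, where the simple pole of $\Gamma(-z)$ is cancelled by the simple zero of $\cos(\tfrac{\pi}{2}(-z+\e_{v}))$ when $\e_{v}=1$, producing the finite limit $\pi^{\e(\eta)}$, and where all the subleading digamma, $\log\pi$, $\log D_{F}$ and $\log q_v$ corrections assemble exactly into $\mathfrak{C}_{S,{\rm u}}^{\eta}(\bfs)$. The simplification $\mathfrak{C}_{S,{\rm u}}^{\eta}(\bfs)=L_{\fin}(1,\eta)$ for non-trivial $\eta$ is then immediate because $R(\eta)=0$ and $\pi^{\e(\eta)}C_{0}(\eta)=\pi^{\e(\eta)}L(1,\eta)=L_{\fin}(1,\eta)$, using $\Gamma_{\R}(1+\e_{v})=\pi^{-\e_{v}}$ for $\e_{v}\in\{0,1\}$ in the completed $L$-function.
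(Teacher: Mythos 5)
Your proposal follows the same route as the paper: unfold to an adelic integral, insert the Mellin representation of $\hat\b_\l$, reduce to the functions $U_{i,\eta}^{\pm}$ of Lemma~\ref{U^pm}, shift the $+$ contours, and read off the constant term by a Laurent expansion at $z=0$. That much is right. But there are two genuine slips in the middle steps.

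First, the claim that ``the principal $\l^{-1}$-contributions from the $+$ and $-$ pieces cancel by the symmetry $z\mapsto -z$'' and that the result is a \emph{holomorphic} continuation is incorrect. The $-$ pieces $U_{i,\eta}^{-}$ live on $L_{\s}$ from the start and produce no residue, so there is nothing for the $+$ residues to cancel against. What actually happens in the paper is that the $+$ residues collect into $-\tfrac{1}{2}\{(-1)^{\e(\eta)}+i^{\tilde l}\delta(\gn=\go)\}\JJ_{\rm id}^{\eta}(\b,\l;\a)$, which for $\eta=\bf 1$ is a nonzero multiple of $\b(0)/\l$; the continuation is therefore only \emph{meromorphic}, with a genuine simple pole at $\l=0$ in general, exactly as the lemma asserts. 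The pole does not obstruct the computation because $\JJ_{\rm id}^{\eta}$ has vanishing constant term in its Laurent expansion at $\l=0$ — you need to say this, rather than that the pole disappears. (Your ``cancellation'' would in fact contradict the statement of the lemma, which promises only a meromorphic continuation.)

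Second, your treatment of the archimedean factor at $z=0$ is not quite right. The limit of $\tfrac{2\Gamma(-z)\Gamma(l_v/2+z)}{\Gamma_{\RR}(-z+\e_v)\Gamma(l_v/2)}i^{\e_v}\cos(\tfrac{\pi}{2}(-z+\e_v))$ is $(-i\pi)^{\e_v}$, not $\pi^{\e_v}$: when $\e_v=1$ the pole of $\Gamma(-z)$ meets the zero of $\sin(\pi z/2)$ and one gets $-i\pi$. The $\pi^{\e(\eta)}$ appearing in $\mathfrak{C}_{S,\rm u}^\eta$ is the \emph{product} of this $(-i\pi)^{\e(\eta)}$ with the $i^{\e(\eta)}$ contributed by the functional equation $L(-z,\eta)=\e(-z,\eta)L(1+z,\eta)$ that you must use to rewrite $L(-z,\eta)$ near $z=0$ in terms of the expansion of $L(s,\eta)$ at $s=1$. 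You allude to ``reassembling through the definition of the Gauss sum,'' but the functional equation — which is where $\Gcal(\eta)$, $i^{\e(\eta)}$, $D_F$ and $\nr(\ff)$ all enter — is never invoked explicitly, and without it the bookkeeping that produces $(-1)^{\e(\eta)}\Gcal(\eta)D_F^{1/2}$ and $\pi^{\e(\eta)}$ does not close.

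With those two points corrected, your remaining steps (the digamma computation of the subleading coefficient, the contribution $\sum_{v\in S}\log q_v/(1-q_v^{(s_v+1)/2})$ from $\Upsilon_S^{\eta}$, and the simplification to $L_{\fin}(1,\eta)$ when $R(\eta)=0$) match the paper's.
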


\begin{proof}
By definition,
$$
\JJ_{\rm u}^{\eta}(\b, \l;\a)
= \left(\frac{1}{2\pi i}\right)^{\# S} \int_{\LL_{S}(\bfc)}(U_{0, \eta}^{+}(\l;\bfs)+U_{0, \eta}^{-}(\l; \bfs)
+U_{1, \eta}^{+}(\l;\bfs)+ U_{1, \eta}^{-}(\l;\bfs))\a(\bfs)d\mu_{S}(\bfs).
$$
From Lemma~\ref{U^pm} and the computation after it, 
{\small \begin{align*}
\JJ_{\rm u}^{\eta}(\b, \l;\a)
= & \left(\frac{1}{2\pi i}\right)^{\# S} \int_{\LL_{S}(\bfc)}\frac{1}{2\pi i}
\int_{L_{\s}}\frac{\b(z)}{z+\l}
((-1)^{\e(\eta)}+i^{\tilde{l}}\delta(\gn=\go))
\{ {\rm N}(\gf)^{-z}L(-z, \eta)\Upsilon_{S, l}^{\eta}(z; \bfs) \\
& +{\rm N}(\gf)^{z}L(z, \eta)\Upsilon_{S, l}^{\eta}(-z; \bfs) \}dz
\a(\bfs)d\mu_{S}(\bfs )
-\frac{(-1)^{\e(\eta)}+i^{\tilde{l}}\delta(\gn=\go)}{2}\JJ_{\rm id}^{\eta}(\b, \l;\a),
\end{align*}
with $1<\s<\ul/2$ and $\Re(\l)>-\s$. Since $\sigma$ is arbitrary, this gives a meromorphic continuation of $\JJ_{\rm u}^{\eta}(\b, \l;\a)$ to $\Re(\l)>-\ul/2$.
By the above expression, 
\begin{align*}
& {\rm CT}_{\l=0}\JJ_{\rm u}^{\eta}(\b, \l;\a) \\
= & \left(\frac{1}{2\pi i}\right)^{\# S} \int_{\LL_{S}(\bfc)}
\frac{1}{2\pi i}
\int_{L_{\s}}\frac{\b(z)}{z}
((-1)^{\e(\eta)}+i^{\tilde{l}}\delta(\gn=\go))
\{ {\rm N}(\gf)^{-z}L(-z, \eta)\Upsilon_{S, l}^{\eta}(z; \bfs) \\
& +{\rm N}(\gf)^{z}L(z, \eta)\Upsilon_{S, l}^{\eta}(-z; \bfs) \}dz
\a(\bfs)d\mu_{S}(\bfs) \\
=& ((-1)^{\e(\eta)}+i^{\tilde{l}}\delta(\gn=\go))\frac{1}{2\pi i}\int_{L_{\s}}\frac{\b(z)}{z}\{f_{\rm u}(z)+f_{\rm u}(-z)\}dz\\
= & ((-1)^{\e(\eta)}+i^{\tilde{l}}\delta(\gn=\go)) \Res_{z=0}\left(\frac{\b(z)}{z}f_{\rm u}(z)\right) 
= ((-1)^{\e(\eta)}+i^{\tilde{l}}\delta(\gn=\go))\b(0){\rm CT}_{z=0}f_{\rm u}(z).
\end{align*}
}Here we put
$f_{\rm u}(z)={\rm N}(\gf)^{-z}L(-z, \eta)\Upsilon_{S, l}^{\eta}(z; \bfs)$.
By setting $\tilde{\Upsilon}_{S, l}^{\eta}(z;\bfs)=D_F^{1/2}\{\# (\cO/\ff)^\times \}{\Upsilon}_{S, l}^{\eta}(z;\bfs)$, the constant term is computed as follows:
{\small \allowdisplaybreaks
\begin{align*}
& {\rm CT}_{z=0}f_{\rm u}(z)
%= \frac{d}{dz}zf_{\rm u}(z)\bigg|_{z=0}
= \frac{d}{dz}{\rm N}(\gf)^{-z}zL(-z, \eta)\Upsilon_{S, l}^{\eta}(z; \bfs)\bigg|_{z=0} \\
%= & \frac{d}{dz} \bigg\{ {\rm N}(\gf)^{-z} \times z\e(-z, \eta)L(z+1, \eta) 
%\times \Upsilon_{S, l}^{\eta}(z; \bfs) \bigg\} \bigg|_{z=0} \\
= & \frac{d}{dz} \bigg\{ {\rm N}(\gf)^{-z} \times z i^{\e(\eta)}D_{F}^{1/2}{\rm N}(\gf)^{-1/2}\{\#(\go/\gf)^{\times}\}\Gcal(\eta)D_{F}^{1/2+z}{\rm N}(\gf)^{1/2+z} L(z+1, \eta) \\
& \times D_{F}^{-1/2}\{\#(\go/\gf)^{\times}\}^{-1}\tilde{\Upsilon}_{S, l}^{\eta}(z;\bfs) \bigg\} \bigg|_{z=0} \\
=& i^{\e(\eta)}\Gcal(\eta)D_{F}^{1/2}\times \frac{d}{dz} \bigg\{ D_{F}^{z}
zL(z+1, \eta) \times \tilde{\Upsilon}_{S,l}^{\eta}(z;\bfs) \bigg\} \bigg|_{z=0}\\
%=& i^{\e(\eta)}\Gcal(\eta)D_{F}^{1/2} \bigg\{ (\log D_{F})R(\eta)\tilde{\Upsilon}_{S}^{\eta}(
%\bfs) + C_{0}(\eta)\tilde{\Upsilon}_{S}^{\eta}(
%\bfs) + R(\eta) \frac{d}{dz}\tilde{\Upsilon}_{S,l}^{\eta}(z;\bfs)\bigg|_{z=0} \bigg\} \\
%=& i^{\e(\eta)}\Gcal(\eta)D_{F}^{1/2} \tilde{\Upsilon}_{S}^{\eta,l}(0;
%\bfs) \bigg\{ (\log D_{F})R(\eta) + C_{0}(\eta) + R(\eta) \frac{\frac{d}{dz}\tilde{\Upsilon}_{S,l}^{\eta}(z;\bfs)|_{z=0}}{\tilde{\Upsilon}_{S,l}^{\eta}(0;
%\bfs)} \bigg\} \\
=& \Gcal(\eta) D_{F}^{1/2} \pi^{\e(\eta)} \tilde{\Upsilon}_{S}^{\eta}(\bfs)
\bigg\{ (\log D_{F})R(\eta) + C_{0}(\eta) + R(\eta) \frac{\frac{d}{dz}\tilde{\Upsilon}_{S,l}^{\bf 1}(z;\bfs)|_{z=0}}{\tilde{\Upsilon}_{S,l}^{\bf 1}(0;
\bfs)} \bigg\}.
\end{align*}
}Here $\e(\eta)=\sum_{v \in \Sigma_{\infty}}\e_{v}$.
We note that
$\tilde{\Upsilon}_{S,l}^{\eta}(0; \bfs) = (-i\pi)^{\e(\eta)} \Upsilon_{S}^{\eta}(0;\bfs)$
holds by
$$\frac{2\Gamma(-z)\Gamma(l_{v}/2+z)}{\Gamma_{\RR}(-z+\e_{v})\Gamma(l_{v}/2)}
i^{\e_{v}} \cos\left(\frac{\pi}{2}(-z+\e_{v})\right)\bigg|_{z=0}
=(-i\pi)^{\e_{v}}$$
for $v \in \Sigma_{\infty}$. The logarithmic derivative of $\tilde{\Upsilon}_{S,l}^{\bf 1}(z; \bfs)$ at $z=0$ is computed as
{\small \allowdisplaybreaks
\begin{align*}
& \sum_{v \in \Sigma_{\infty}}\frac{d}{dz}\log \bigg\{\frac{2\Gamma(-z)\Gamma(l_{v}/2+z)}{\Gamma_{\RR}(-z) \Gamma(l_{v}/2)}\cos\left(\frac{\pi z}{2}\right)\bigg\}\bigg|_{z=0}
 +\ \sum_{v \in S}\frac{d}{dz}\log (1-
q_{v}^{-(z+(s_{v}+1)/2)})^{-1}(1-q_{v}^{(s_{v}+1)/2})^{-1}\bigg|_{z=0} \\
%=& \sum_{v \in \Sigma_{\infty}}
%\bigg\{-\psi(-z)+\psi(z+l_{v}/2)-\frac{\pi\sin(\pi z/2)}{2\cos(\pi z/2)}
%-\frac{1}{2}\log \pi +\frac{1}{2}\psi\left(\frac{-z}{2}\right) \bigg\}\bigg|_{z=0}\\
%& +\sum_{v \in S}
%\frac{-q_{v}^{-(z+(s_{v}+1)/2)}\log q_{v}}{1- q_{v}^{-(z+(s_{v}+1)/2)}}\bigg|_{z=0} \\
=& \sum_{v \in \Sigma_{\infty}}
\bigg\{\psi(l_{v}/2)
-\frac{1}{2}\log \pi +\bigg(\frac{1}{2}\psi\left(\frac{-z}{2}\right)-\psi(-z) \bigg)\bigg|_{z=0}\bigg\}
+\sum_{v \in S}
\frac{\log q_{v}}{1-q_{v}^{(s_{v}+1)/2}}.
\end{align*}
}By the formulas
{\small \begin{align*}
\psi(l_{v}/2)=-C_{\rm Euler}+ \sum_{k=1}^{l_{v}/2-1}\frac{1}{k}, \qquad
\bigg(\frac{1}{2}\psi\left(\frac{-z}{2}\right)-\psi(-z) \bigg)\bigg|_{z=0}
=\frac{1}{2}C_{\rm Euler},
\end{align*}
}we are done.
\end{proof}

Assume that $q(\Re(\bfs))>\Re(\l)> \s$ and $1<\s<\ul/2$.
Analyzing the integrals 
{\allowdisplaybreaks
\begin{align*}
&\frac{1}{2\pi i}\int_{L_{\pm \s}}\int_{\AA^{\times}}\Psi_{l}^{(0)}\left(\gn|\bfs; [\begin{smallmatrix}1&0 \\ t &1\end{smallmatrix}][\begin{smallmatrix}1&x_{\eta}\\ 0&1\end{smallmatrix}]\right)\eta(tx_{\eta}^{*})|t|_\A^{\pm z}d^{\times}tdz, \\
&\frac{1}{2\pi i}\int_{L_{\pm \s}}\int_{\AA^{\times}}\Psi_{l}^{(0)}\left(\gn|\bfs; [\begin{smallmatrix}1&t \\ 0 &1\end{smallmatrix}][\begin{smallmatrix}1&0 \\ - x_{\eta}&1\end{smallmatrix}]w_{0}\right)\eta(tx_{\eta}^{*})|t|_\A^{\pm z}d^{\times}tdz
\end{align*}
}in the same way as $U^{\pm}_{\epsilon,\eta}(\lambda;{\bf s})$, we obtain the following lemma.

\begin{lem} \label{UNIPOTEMT-TERM2(crude)}
The function $\lambda \mapsto \JJ_{\bar{\rm u}}^{\eta}(\b,\l;\a)$ on $\Re(\l)>1$ has a meromorphic continuation to the region $\Re(\l)>-\ul/2$. The constant term of $\JJ_{\bar{\rm u}}^{\eta}(\b,\l;\a)$ at $\l=0$ equals $\JJ_{\bar{\rm u}}^{\eta}(l, \gn|\a)\b(0)$.
Here we put
\begin{align*}
\JJ_{\bar{\rm u}}^{\eta}(l, \gn |\a) =
(-1)^{\e(\eta)}\Gcal(\eta)D_{F}^{1/2}((-1)^{\e(\eta)}\tilde{\eta}(\gn)+i^{\tilde{l}}\delta(\gn=\go))
\left(\frac{1}{2\pi i}\right)^{\#S}\int_{\LL_{S}(\bfc)}
\Upsilon_{S}^{\eta}(\bfs)\mathfrak{C}_{S, \bar{\rm u}}^{\eta}(\bfs)\a(\bfs)d \mu_{S}(\bfs)
\end{align*}
with $\mathfrak{C}_{S, \bar{\rm u}}^{\eta}(\bfs)
= \mathfrak{C}_{S, {\rm u}}^{\eta}(\bfs)+ R(\eta)\log{\rm N}(\gn).$
\end{lem}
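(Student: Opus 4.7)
The plan is to follow the proof of Lemma~\ref{UNIPOTEMT-TERM1(crude)} line-by-line, with modifications reflecting the replacement of $at^{-1}$ by $at$ in the arguments of the Shintani functions. First, for $\sigma>1$ with $1<\sigma<4/3$ and $q(\Re(\bfs))>\Re(\l)>\sigma$, I would introduce four adelic double integrals $V_{0,\eta}^{\pm}(\l;\bfs)$ and $V_{1,\eta}^{\pm}(\l;\bfs)$ in exact parallel with $U_{0,\eta}^{\pm}$ and $U_{1,\eta}^{\pm}$, but using $[\begin{smallmatrix}1 & 0\\ t & 1\end{smallmatrix}]$ and $[\begin{smallmatrix}1 & t\\ 0 & 1\end{smallmatrix}]w_{0}$ in the argument of $\Psi_{l}^{(0)}$ in place of $[\begin{smallmatrix}1 & t^{-1}\\ 0 & 1\end{smallmatrix}]$ and $[\begin{smallmatrix}1 & 0\\ t^{-1} & 1\end{smallmatrix}]w_{0}$, respectively. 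Unfolding the sum over $a\in F^{\times}$ inside the definition of $J_{\bar{\rm u}}(\a;t)$ via the substitution $u=at$ (noting $\eta(a)=1$ and $|a|_{\A}=1$ for $a\in F^{\times}$) yields
\[
\JJ_{\bar{\rm u}}^{\eta}(\b,\l;\a)=\left(\frac{1}{2\pi i}\right)^{\#S}\int_{\LL_{S}(\bfc)}\{V_{0,\eta}^{+}(\l;\bfs)+V_{0,\eta}^{-}(\l;\bfs)+V_{1,\eta}^{+}(\l;\bfs)+V_{1,\eta}^{-}(\l;\bfs)\}\a(\bfs)d\mu_{S}(\bfs).
\]

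Next, I would factorize each $V_{j,\eta}^{\pm}$ into local integrals, exactly as in the proof of Lemma~\ref{U^pm}. The archimedean factors come from the \emph{second} identity of Lemma~\ref{lem:orbital integral of Shintani} (the lower-triangular orbital integral), whose only difference from the first identity is that $i^{\e_{v}}$ is replaced by $(-i)^{\e_{v}}$; the product over $v\in\Sigma_{\infty}$ accordingly contributes a global sign $(-1)^{\e(\eta)}$. At the non-archimedean places, the interchange of upper-triangular and lower-triangular matrices effectively swaps the ``large $t$'' and ``small $t$'' regimes of the Shintani function's support, so that the level-dependent factors $\tilde\eta(\gn)$ and ${\rm N}(\gn)^{\pm z}$ now attach to the ``principal'' contribution $V_{0,\eta}^{\pm}$, while the $w_{0}$-twisted contribution $V_{1,\eta}^{\pm}$ still survives only when $\gn=\go$. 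Writing out the resulting closed-form expressions (mirroring Lemma~\ref{U^pm}) identifies the integrand of the resulting $z$-contour integral as $((-1)^{\e(\eta)}\tilde\eta(\gn)+i^{\tilde l}\delta(\gn=\go))$ times ${\rm N}(\gf)^{\mp z}{\rm N}(\gn)^{\pm z}L(\mp z,\eta)\Upsilon_{S,l}^{\eta}(\pm z;\bfs)$ (up to the relevant level twist).

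Finally, I would shift the contour from $L_{\mp\sigma}$ to $L_{\pm\sigma}$, crossing only the simple pole at $z=0$ (potential poles at $z=\pm1$ are cancelled by the gamma-function zeros of $\Upsilon_{S,l}^{\eta}$); this yields the meromorphic continuation of $\l\mapsto\JJ_{\bar{\rm u}}^{\eta}(\b,\l;\a)$ to $\CC$ and reduces the constant term at $\l=0$ to evaluating ${\rm CT}_{z=0}f_{\bar{\rm u}}(z;\bfs)$ for $f_{\bar{\rm u}}(z;\bfs)={\rm N}(\gf)^{-z}{\rm N}(\gn)^{z}L(-z,\eta)\Upsilon_{S,l}^{\eta}(z;\bfs)$. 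Logarithmic differentiation at $z=0$, performed verbatim as at the end of the proof of Lemma~\ref{UNIPOTEMT-TERM1(crude)}, then produces the formula for $\mathfrak{C}_{S,\bar{\rm u}}^{\eta}(\bfs)$; the only new contribution relative to $\mathfrak{C}_{S,{\rm u}}^{\eta}(\bfs)$ is the extra term $R(\eta)\frac{d}{dz}{\rm N}(\gn)^{z}|_{z=0}=R(\eta)\log{\rm N}(\gn)$, arising from the new factor ${\rm N}(\gn)^{z}$. I expect the main obstacle to be the careful bookkeeping of signs and level-dependent factors needed to arrive at the precise prefactor $((-1)^{\e(\eta)}\tilde\eta(\gn)+i^{\tilde l}\delta(\gn=\go))$, which differs structurally from the corresponding prefactor in $\JJ_{\rm u}^{\eta}(l,\gn|\a)$ precisely because the level factor $\tilde\eta(\gn)$ now attaches to the principal contribution $V_{0,\eta}^{\pm}$ rather than to the $w_{0}$-twisted contribution.
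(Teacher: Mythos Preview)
Your proposal is correct and follows essentially the same approach as the paper. The paper's own proof is extremely terse---it simply displays the two relevant adelic integrals (your $V_{0,\eta}^{\pm}$ and $V_{1,\eta}^{\pm}$) and says ``Analyzing the integrals \ldots\ in the same way as $U^{\pm}_{\epsilon,\eta}(\lambda;\bfs)$, we obtain the following lemma''---so you have in fact supplied more detail than the paper does, correctly isolating the three changes (the archimedean sign swap $i^{\e_v}\to(-i)^{\e_v}$ from the second formula of Lemma~\ref{lem:orbital integral of Shintani}, the migration of the level factor $\tilde\eta(\gn){\rm N}(\gn)^{\pm z}$ to the principal term, and the resulting extra $R(\eta)\log{\rm N}(\gn)$ in the constant term).
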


Let us consider the term $\JJ^\eta_{\rm{hyp}}(\beta,\lambda;\alpha)$, which is, by definition, equal to 
$$\int_{ \A^\times}\sum_{b\in F^\times-\{-1\}} \hat{\Psi}_l^{(0)}\left(\fn|{\a};\delta_b\,\left[\begin{smallmatrix} t & 0 \\ 0 &1 \end{smallmatrix}\right]\,\left[\begin{smallmatrix} 1 & x_\eta \\ 0 & 1 \end{smallmatrix}\right]\right)\,\{\hat\beta_\lambda(|t|_\A)+\hat\beta_{\lambda}(|t|_\A^{-1})\}\,\eta(tx_\eta^{*})\,\d^\times t.
$$
\begin{lem} \label{HYPERBOLIC-TERM(crude)}
The integral $\JJ^{\eta}_{\rm hyp}(\b, \l; \a)$ converges absolutely and has an analytic continuation to the region $\Re(\l)> -\e$ for some $\e>0$.
Moreover, we have
${\rm CT}_{\l=0}\JJ^{\eta}_{\rm hyp}(\b, \l; \a) = \JJ_{\rm hyp}^{\eta}(l, \gn|\a)\b(0)$.
Here $\JJ_{\rm hyp}^{\eta}(l, \gn|\a)$ is defined by 
\begin{align*}
\JJ_{\rm{hyp}}^{\eta}(l, \fn|\alpha)=\left(\frac{1}{2\pi i}\right)^{\# S}\int_{\LL_S(\bfc)} \fK_{S}^{\eta}(l,\gn|\bs)\,\alpha(\bs)\,\d \mu_S(\bs)
\end{align*}
with 
\begin{align*}
\fK_{S}^{\eta}(l,\gn|\bs)=\sum_{b\in  F^{\times}-\{-1\}} \int_{\A^\times} \Psi_{l}^{(0)}\left(\fn|\bs; \delta_b\,\left[\begin{smallmatrix} t & 0 \\ 0 &1 \end{smallmatrix}\right]\,\left[\begin{smallmatrix} 1 & x_\eta \\ 0 & 1 \end{smallmatrix}\right]\right)\,\eta(tx_\eta^{*})\,\d^\times t.
\end{align*}

\end{lem}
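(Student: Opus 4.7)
The plan is to reduce the verification to the uniform bounds assembled in Lemma \ref{JJ-hyperbolic} and then extract the constant term by a contour-shift argument. First, one unfolds the sum over $a \in F^\times$ hidden inside $J_{\rm hyp}(\a;t)$ (coming from the representatives of $\mathrm{St}(\delta_b)\bsl H_F \cong F^\times$) against the integration over $F^\times \bsl \AA^\times$. Since $|a|_\AA = 1$ and $\eta$ is trivial on $F^\times$, the factor $\{\hat\b_\l(|t|_\AA) + \hat\b_\l(|t|_\AA^{-1})\}\eta(tx_\eta^*)$ is invariant under $t \mapsto at$, and the unfolding gives
\begin{align*}
\JJ^{\eta}_{\rm hyp}(\b, \l; \a) = \left(\tfrac{1}{2\pi i}\right)^{\#S}\int_{\LL_S(\bfc)}\sum_{b \in F^\times - \{-1\}}\int_{\AA^\times}\Psi_l^{(0)}\left(\gn|\bfs;\delta_b[\begin{smallmatrix}t & 0 \\ 0 & 1\end{smallmatrix}][\begin{smallmatrix}1 & x_\eta \\ 0 & 1\end{smallmatrix}]\right)\{\hat\b_\l(|t|_\AA) + \hat\b_\l(|t|_\AA^{-1})\}\eta(tx_{\eta}^*)\,d^\times t\,\a(\bfs)\,d\mu_S(\bfs).
\end{align*}

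Next, I would substitute the Mellin representation
\begin{align*}
\hat\b_\l(|t|_\AA) + \hat\b_\l(|t|_\AA^{-1}) = \frac{1}{2\pi i}\int_{L_\sigma}\frac{\b(z)}{z+\l}(|t|_\AA^z + |t|_\AA^{-z})\,dz \quad (\sigma > -\Re(\l))
\end{align*}
and interchange the order of integration and summation. By Lemma \ref{JJ-hyperbolic}, the hypothesis $l_v \ge 6$ provides some $\e > 0$ such that, for any $\sigma \in (0, \e)$, the $t$-integral and the $b$-sum converge absolutely and uniformly for $(z, \bfs) \in L_\sigma \times \LL_S(\underline{c})$; together with the rapid decay of $\b$ on vertical strips, this validates Fubini and yields
\begin{align*}
\JJ^{\eta}_{\rm hyp}(\b, \l; \a) = \left(\tfrac{1}{2\pi i}\right)^{\#S+1}\int_{\LL_S(\bfc)}\int_{L_\sigma}\frac{\b(z)}{z+\l}\Xi(z;\bfs)\,dz\,\a(\bfs)\,d\mu_S(\bfs),
\end{align*}
where $\Xi(z;\bfs) = \sum_{b}\int_{\AA^\times}\Psi_l^{(0)}(\gn|\bfs;\delta_b[\begin{smallmatrix}t & 0\\ 0 & 1\end{smallmatrix}][\begin{smallmatrix}1 & x_\eta\\ 0 & 1\end{smallmatrix}])(|t|_\AA^z + |t|_\AA^{-z})\eta(tx_\eta^*)d^\times t$. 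The only $\l$-dependence appears through the factor $(z+\l)^{-1}$, which is holomorphic in $\l$ for $\Re(\l) > -\sigma$, so this delivers the analytic continuation to the half-plane $\Re(\l) > -\e$.

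For the constant term at $\l = 0$, observe that both $\b$ and $\Xi(\cdot; \bfs)$ are even, the latter evident from $|t|_\AA^z + |t|_\AA^{-z}$. Shifting $L_\sigma$ to $L_{-\sigma}$ picks up the simple pole at $z = 0$ with residue $\b(0)\Xi(0;\bfs)$, while the substitution $z \mapsto -z$ converts the shifted integral into $-\tfrac{1}{2\pi i}\int_{L_\sigma}\tfrac{\b(z)}{z}\Xi(z;\bfs)dz$; hence
\begin{align*}
\frac{1}{2\pi i}\int_{L_\sigma}\frac{\b(z)}{z}\Xi(z;\bfs)\,dz = \tfrac12 \b(0)\Xi(0;\bfs) = \b(0)\fK_S^\eta(l,\gn|\bfs),
\end{align*}
using $\Xi(0;\bfs) = 2\fK_S^\eta(l,\gn|\bfs)$. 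Integrating in $\bfs$ gives ${\rm CT}_{\l=0}\JJ^{\eta}_{\rm hyp}(\b, \l; \a) = \b(0)\JJ_{\rm hyp}^\eta(l,\gn|\a)$, as required. The main obstacle lies in the second step: securing absolute convergence in a full neighborhood of $\sigma = 0$ for the combined $(b, t)$ sum/integral against $|t|_\AA^{\pm z}$, which is borderline-divergent at $\sigma = 0$; here the margin furnished by $l_v \ge 6$ in Lemmas \ref{hyper(rev)2}--\ref{hyper(rev)6} is precisely what affords the positive $\e$ needed for both the interchange and the contour shift to operate.
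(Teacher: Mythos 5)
Your argument is correct and follows essentially the same route as the paper: unfold the $a$-sum in $J_{\rm hyp}(\a;t)$ against $F^\times\bsl\AA^\times$, substitute the Mellin representation of $\hat\b_\l$, and invoke the absolute-convergence estimates of Lemma~\ref{JJ-hyperbolic} (under $l_v\ge 6$) to justify Fubini and deduce holomorphy in $\l$ on a half-plane $\Re(\l)>-\e$. The one place you go beyond the paper's terse write-up is in spelling out the constant-term extraction via the contour shift $L_\rho\to L_{-\rho}$, the residue at $z=0$, and the evenness of $\b$ and $\Xi(\cdot;\bfs)$; the paper leaves that step implicit, but your computation $\tfrac{1}{2\pi i}\int_{L_\rho}\tfrac{\b(z)}{z}\Xi(z;\bfs)\,dz=\tfrac12\b(0)\Xi(0;\bfs)=\b(0)\fK_S^\eta(l,\gn|\bfs)$ is exactly the intended justification.
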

\begin{proof}
We take $c\in \RR$ such that $ \ul/2-1>(c+1)/4>1$. Then, from Lemma~\ref{JJ-hyperbolic}, there exists $\epsilon>0$ such that, for $0<|\rho|<\epsilon$ the integral
{\small \begin{align}
& \int_{\LL_{S}({\underline c})}|\a(\bfs)||d\mu_{S}(\bfs)|\int_{L_{\rho}}
\frac{|\b(z)|}{|z+\l|}
\sum_{b \in F^{\times}-\{-1\}}\int_{\AA^{\times}}\big|\Psi_{l}^{(0)}\left(\gn|\bfs; \delta_{b}[\begin{smallmatrix}t&0\\0&1\end{smallmatrix}]
[\begin{smallmatrix}1&x_{\eta}\\0&1\end{smallmatrix}]
\right)\big|\{|t|_{\AA}^{\rho}+|t|_\A^{-\rho}\}\,d^{\times}t|dz|, 
\label{++++}
\end{align}
}which is majorized by 
{\small \begin{align*}
&\int_{\LL_{S}({\underline c})}|\a(\bfs)||d\mu_{S}(\bfs)|\int_{L_{\rho}}\frac{|\b(z)|e^{d_{F}\pi|\Im(z)|/2}}
{|z+\l|}|dz|\sum_{b \in F^{\times}-\{-1\}}
\{ M_{\e}(\gn|0, \rho, l, \underline{c};b)+M_{\e}(\gn|0,-\rho, l, \underline{c};b)\},
\end{align*}
}is convergent. By $|t|_\A^{\rho}+|t|_\A^{-\rho}\geq 2$ $(t\in \A^\times)$, the integral \eqref{++++} is finite even for $\rho=0$.
Hence, we obtain an analytic continuation of the function
{\small
\begin{align*}
& \JJ_{\rm hyp}^{\eta}(\b, \l; \a) \\ = & \left(\tfrac{1}{2\pi i}\right)^{\#S}{\textstyle{\int_{\LL_{S}(\underline{c})}}} \bigl\{ \tfrac{1}{2\pi i}{\textstyle{\int_{L_{\rho}}}}\tfrac{\b(z)}{z+\l}
\bigl(\sum_{b \in F^{\times}-\{-1\}}{\textstyle{\int_{\AA^{\times}}}}\Psi_{l}^{(0)}\left(\gn |\bfs; \delta_{b}[\begin{smallmatrix}t&0\\0&1\end{smallmatrix}][\begin{smallmatrix}1&x_{\eta}\\0&1\end{smallmatrix}]\right) 
 (|t|_{\AA}^{z}+|t|_{\AA}^{-z})\,\eta(tx_\eta^*)\,d^{\times}t \bigl)\, dz \bigl\} \a(\bfs) d\mu_{S}(\bfs)
\end{align*}}
in the variable $\l$ to the region $\Re(\l)>-\e$.
\end{proof}

%%%%%%%%%%%%%%%%%%%%%%%%%%%%%%%%%%%%%%%%%%%%%%%%%%%%%%%%%
\section{The relative trace formula}
%%%%%%%%%%%%%%%%%%%%%%%%%%%%%%%%%%%%%%%%%%%%%%%%%%%%%%%%%
Let $\fn$ be an integral ideal of $F$, $l=(l_v)_{v\in \Sigma_\infty}$ an even weight with $l_v\geq 6$ for all $v\in \Sigma_\infty$, and $\eta$ a real valued idele class character of $F^\times$ unramified at all $v\in S(\fn)$. Let $\ff$ denote the conductor of $\eta$. We assume $(-1)^{\e(\eta)}\tilde{\eta}(\gn)=1$
(for the definition of $\e(\eta)$, see \S \ref{add char and gauss sum}). Put $\tilde l=\sum_{v\in \Sigma_\infty} l_v$. Let $S$ be a finite subset of $\Sigma_{\fin}$ disjoint from $S(\fn)\cup S(\ff)$. For $v\in S$, let $\ccA_v$ be the space of all holomorphic functions $\alpha_v(s_v)$ in $s_v\in \C$ satisfying $\alpha_v(s_v)=\alpha_v(-s_v)$ and $\alpha_v(s_v+\frac{4\pi i }{\log q_v})=\alpha_v(s_v)$. Set $\ccA_S=\bigotimes_{v\in S} \ccA_v$.

\begin{thm} \label{RELATIVETRACEFORMULA}
For any function $\alpha\in \ccA_S$, we have the identity 
\begin{align}
C(l,\fn,S) \sum_{\pi \in \Pi_{\rm{cus}}(l, \fn)}\II_{\rm cus}^\eta(\pi; l, \fn) \,\alpha(\nu_S(\pi))=
\tilde{\JJ}_{\rm u}^{\eta}(l, \gn|\a) + \JJ_{\rm hyp}^{\eta}(l, \gn|\a)
 \label{RELATIVE-TF}
\end{align}
Here $\nu_S(\pi)=\{\nu_v(\pi)\}_{v\in S}$ is the spectral parameter of $\pi$ at $S$ (see \S 6.5.2), 
\begin{align*}
C(l,\fn,S)&=(-1)^{\# S}\{ \prod_{v\in \Sigma_\infty}\frac{2 \pi \, \Gamma(l_v-1)}{\Gamma(l_v/2)^{2}}\} \frac{D_F^{-1}}{2} [\bK_\fin:\bK_0(\fn)]^{-1}
,\\
\II_{\rm cus}^\eta(\pi; l, \fn)&= (-1)^{\epsilon(\eta)}\cG(\eta)\,
\,w_\fn^\eta(\pi)\,\frac{L(1/2,\pi)L(1/2,\pi\otimes \eta)}{\nr(\ff_\pi)\,[\bK_\fin:\bK_0(\ff_\pi)]^{-1}\,L^{S_\pi}(1,\pi;{\rm{Ad}})}
\end{align*}
with $w_\fn^{\eta}(\pi)$ given in Lemma~\ref{value of PP}, and
\begin{align*}
\tilde{\JJ}_{\rm u}^{\eta}(l, \gn|\a)&=
2(-1)^{\e(\eta)}\Gcal(\eta)D_{F}^{1/2}(1+i^{\tilde{l}}\delta(\gn=\go))
\left(\frac{1}{2\pi i}\right)^{\#S}\int_{\LL_{S}(\bfc)}
\gU_{S}^{\eta}(l, \gn|\bfs)\a(\bfs)d\mu_{S}(\bfs), \\
\JJ_{\rm hyp}^\eta(l, \gn|\a)&=
\left(\frac{1}{2\pi i}\right)^{\# S}\int_{L_S(\bfc)} \fK_{S}^{\eta}(l,\gn|\bs)\,\alpha(\bs)\,\d \mu_S(\bs)
\end{align*}
with
\begin{align*}
\gU_{S}^{\eta}(l, \gn|\bfs) = & \prod_{v \in S}(1-\eta_{v}(\varpi_{v})q_{v}^{-(s_{v}+1)/2})^{-1}(1-q_{v}^{(s_{v}+1)/2})^{-1} \bigg\{{\bf C}_{F}^{\eta}(l,\gn)+R(\eta)\sum_{v \in S}\frac{\log q_{v}}{1-q_{v}^{(s_{v}+1)/2}}\bigg\}, \\
\fK_{S}^{\eta}(l,\gn|\bs)= & \sum_{b\in  F^{\times}-\{-1\}} \int_{\A^\times} \Psi_{l}^{(0)}\left(\fn|\bs;\delta_b\,\left[\begin{smallmatrix} t & 0 \\ 0 &1 \end{smallmatrix}\right]\,\left[\begin{smallmatrix} 1 & x_\eta \\ 0 & 1 \end{smallmatrix}\right]\right)\,\eta(tx_\eta^{*})\,\d^\times t
\end{align*}
and
\begin{align*}
{\bf C}_{F}^{\eta}(l, \gn)=\pi^{\e(\eta)}C_{0}(\eta) + R(\eta)
\bigg\{ -\frac{d_{F}}{2}(C_{\rm Euler}+\log \pi )
+ \log (D_{F}{\rm N}(\gn)^{1/2}) +\sum_{v\in\Sigma_{\infty}}\sum_{k=1}^{l_{v}/2-1}\frac{1}{k} \bigg\}.
\end{align*}
We remark ${\bf C}_{F}^{\eta}(l, \gn) = L_{\fin}(1, \eta)$ if $\eta$ is non-trivial.\end{thm}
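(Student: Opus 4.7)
The plan is to equate two evaluations of $P_{\rm reg}^\eta(\hat{\bf \Psi}_{\rm reg}^l(\gn|\alpha;-))$. On the spectral side, Proposition~\ref{Reg-per} already gives a closed expression in terms of central $L$-values: unpacking the prefactor there together with the internal factor $1/(2\nr(\ff_\pi)[\bK_\fin:\bK_0(\ff_\pi)]^{-1}L^{S_\pi}(1,\pi;{\rm Ad}))$ of each summand exhibits the quantity as $C(l,\gn,S)\sum_{\pi\in\Pi_{\rm cus}(l,\gn)}\II_{\rm cus}^\eta(\pi;l,\gn)\alpha(\nu_S(\pi))$, which is precisely the left-hand side of \eqref{RELATIVE-TF}.

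For the geometric side, we multiply the orbital decomposition \eqref{GEOMETRIC IDENTITY} by $\eta(tx_\eta^*)\{\hat\beta_\lambda(|t|_\A)+\hat\beta_\lambda(|t|_\A^{-1})\}$ and integrate against $d^\times t$ on $F^\times\bsl\A^\times$. For $\Re(\lambda)\gg 0$ this yields
$$P_{\beta,\lambda}^\eta(\hat{\bf \Psi}_{\rm reg}^l(\gn|\alpha;-)) = (1+i^{\tilde l}\delta(\gn=\go))\,\JJ_{\rm id}^\eta(\beta,\lambda;\alpha) + \JJ_{\rm u}^\eta(\beta,\lambda;\alpha) + \JJ_{\bar{\rm u}}^\eta(\beta,\lambda;\alpha) + \JJ_{\rm hyp}^\eta(\beta,\lambda;\alpha),$$
and each term on the right admits a meromorphic continuation to a neighbourhood of $\lambda=0$ by Lemmas~\ref{IDEMPOTENT TERM}, \ref{UNIPOTEMT-TERM1(crude)}, \ref{UNIPOTEMT-TERM2(crude)} and \ref{HYPERBOLIC-TERM(crude)}. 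Extracting the constant term at $\lambda=0$: the left-hand side becomes $P_{\rm reg}^\eta(\hat{\bf \Psi}_{\rm reg}^l(\gn|\alpha;-))\,\beta(0)$ by cuspidality (Proposition~\ref{cuspidal}) together with the definition of $P_{\rm reg}^\eta$; the identity contribution vanishes (Lemma~\ref{IDEMPOTENT TERM}); and the other three contribute $(\JJ_{\rm u}^\eta+\JJ_{\bar{\rm u}}^\eta+\JJ_{\rm hyp}^\eta)(l,\gn|\alpha)\,\beta(0)$. Choosing $\beta\in\Bcal$ with $\beta(0)\ne 0$ and cancelling produces the identity with $\JJ_{\rm u}^\eta+\JJ_{\bar{\rm u}}^\eta$ in place of $\tilde\JJ_{\rm u}^\eta$.

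The final step is to merge $\JJ_{\rm u}^\eta(l,\gn|\alpha)+\JJ_{\bar{\rm u}}^\eta(l,\gn|\alpha)$ into $\tilde\JJ_{\rm u}^\eta(l,\gn|\alpha)$ using the hypothesis $(-1)^{\e(\eta)}\tilde\eta(\gn)=1$. Under this hypothesis the two scalar prefactors of Lemmas~\ref{UNIPOTEMT-TERM1(crude)} and \ref{UNIPOTEMT-TERM2(crude)} both collapse to $(-1)^{\e(\eta)}\Gcal(\eta)D_F^{1/2}(1+i^{\tilde l}\delta(\gn=\go))$, generating the leading factor $2(-1)^{\e(\eta)}\Gcal(\eta)D_F^{1/2}(1+i^{\tilde l}\delta(\gn=\go))$ of $\tilde\JJ_{\rm u}^\eta$. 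Moreover $\mathfrak{C}_{S,\rm u}^\eta(\bfs)+\mathfrak{C}_{S,\bar{\rm u}}^\eta(\bfs)=2\mathfrak{C}_{S,\rm u}^\eta(\bfs)+R(\eta)\log\nr(\gn)$, and folding $\tfrac12 R(\eta)\log\nr(\gn)$ into the $\log D_F$ term of $\mathfrak{C}_{S,\rm u}^\eta$ produces the $\log(D_F\nr(\gn)^{1/2})$ appearing in ${\bf C}_F^\eta(l,\gn)$, while the remaining piece $R(\eta)\sum_{v\in S}(\log q_v)/(1-q_v^{(s_v+1)/2})$ sits inside $\gU_S^\eta(l,\gn|\bfs)$. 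The principal obstacle is purely this combinatorial bookkeeping---tracking the interplay between $(-1)^{\e(\eta)}\tilde\eta(\gn)$, $i^{\tilde l}\delta(\gn=\go)$ and the $\log$-contributions coming from the Laurent expansion of $L(-z,\eta)$ at $z=0$; the analytic content (absolute convergence of $\JJ_{\rm hyp}^\eta$ at $\lambda=0$ and the meromorphic extension of the unipotent terms) has already been done in the preceding two sections.
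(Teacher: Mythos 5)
Your proposal is correct and is essentially the paper's own proof, merely written out in more detail: the paper's one-line proof cites Proposition~\ref{Reg-per} for the spectral side and Lemmas~\ref{IDEMPOTENT TERM}, \ref{UNIPOTEMT-TERM1(crude)}, \ref{UNIPOTEMT-TERM2(crude)}, \ref{HYPERBOLIC-TERM(crude)} for the geometric side, and you have correctly unpacked the remaining bookkeeping (the collapse of the two unipotent prefactors to $1+i^{\tilde l}\delta(\gn=\go)$ under $(-1)^{\e(\eta)}\tilde\eta(\gn)=1$, the merging $\mathfrak{C}_{S,\rm u}^\eta + \mathfrak{C}_{S,\bar{\rm u}}^\eta = 2\mathfrak{C}_{S,\rm u}^\eta + R(\eta)\log\nr(\gn)$, and the folding of $\tfrac12 R(\eta)\log\nr(\gn)$ into $\log(D_F\nr(\gn)^{1/2})$).
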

\begin{proof}
From Lemmas~\ref{IDEMPOTENT TERM}, \ref{UNIPOTEMT-TERM1(crude)}, \ref{UNIPOTEMT-TERM2(crude)} and \ref{HYPERBOLIC-TERM(crude)}, $P_{\rm reg}^{\eta}(\hat{\bf\Psi}_{\rm reg}^{l}(\fn|\alpha))$ is given by the right-hand side of \eqref{RELATIVE-TF}; the left-hand side is provided by Proposition~\ref{Reg-per}. 
\end{proof}

We restrict our attention to the test functions of the form $\alpha(\bs)=\prod_{v\in S} \alpha_v^{(m_v)}(s_v)$ with 
\begin{align}
\alpha_v^{(m)}(s_v)=q_v^{ms_v/2}+q_v^{-ms_v/2}, \qquad v\in S ,\, m\in \N_{0}.
\label{HeckeFunctions}
\end{align}
As is well known, these functions form a $\C$-basis of the image of the spherical Hecke algebra $\cH(G_v,\bK_v)$ by the spherical Fourier transform. Thus, by restricting our consideration to these functions, no generality is lost practically. The following two theorems are proved in \S 10 and \S 11.
 
\begin{thm} \label{HYPERBOLIC(fine)}
For $\alpha=\otimes_{v\in S}\, \alpha_v$, we have 
\begin{align*}
\JJ_{\rm{hyp}}^{\eta}(l, \fn|\alpha)&=\sum_{b\in F^{\times}-\{-1\}} \{\prod_{v\in S} J_v^{\eta_v}(b;\alpha_v)\}\{\prod_{v\in \Sigma_\infty} J_v^{\eta_v}(l_v;b)\}\,\{\prod_{v\in \Sigma_\fin-S} J_v^{\eta_v}(b)\}.
\end{align*} 
Here $J_v^{\eta_v}(b;\alpha_v)$ is given by Lemma~\ref{MS2}, $J_v^{\eta_v}(b)$ is given by Lemmas \ref{MS3}, \ref{MS4} and \ref{ramifiedJ-exact}, and $J_v^{\eta_v}(l_v;b)$ is given by Lemma~\ref{arcInt}. 
\end{thm}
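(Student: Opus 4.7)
The plan is to unwind the adelic Green function into its local factors and apply Fubini twice. Recall that by the definition in \S 5,
$$\Psi_l^{(0)}(\gn|\bs, g) = \prod_{v \in \Sigma_\infty} \Psi_v^{(0)}(l_v; g_v) \prod_{v \in S} \Psi_v^{(0)}(s_v; g_v) \prod_{v \in S(\gn)}\Phi_{\gn,v}^{(0)}(g_v) \prod_{v \in \Sigma_\fin - (S \cup S(\gn))} \Phi_{0,v}^{(0)}(g_v),$$
and $\eta(tx_\eta^{*}) = \prod_v \eta_v(t_v (x_\eta^*)_v)$, $d^\times t = \prod_v d^\times t_v$. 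Hence, for each fixed $b \in F^\times - \{-1\}$, the inner $t$-integral appearing in $\fK_S^\eta(l,\gn|\bs)$ factors formally into a product of local integrals, one for each place $v$: at $v \in \Sigma_\infty$ the factor is the $b$-orbital integral $J_v^{\eta_v}(l_v;b)$ of the holomorphic Shintani function (Lemma~\ref{arcInt}); at $v \in S$ it is a function $J_v^{\eta_v}(s_v;b)$ of $s_v$ depending on the Green function $\Psi_v^{(0)}(s_v;-)$; at $v \in \Sigma_\fin - S$ it is the local zeta integral $J_v^{\eta_v}(b)$ of the explicit function $\Phi_{\gn,v}^{(0)}$ (or $\Phi_{0,v}^{(0)}$ if $v \notin S(\gn)$). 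The validity of this Fubini factorization is guaranteed by the absolute convergence estimate in Lemma~\ref{hyper(rev)3} specialized to $\rho=0$ under the assumption $l_v \ge 6$.

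Next, since $\a(\bs) = \prod_{v \in S}\a_v(s_v)$ and $d\mu_S(\bs) = \prod_{v \in S}d\mu_v(s_v)$, the multi-dimensional contour integral over $\LL_S(\bfc)$ splits into a product of one-dimensional contour integrals; absorbing each $\bs$-dependent factor $\Psi_v^{(0)}(s_v;-)$ into its integration against $\a_v(s_v)\,d\mu_v(s_v)$ produces the quantity
$$J_v^{\eta_v}(b;\a_v) = \frac{1}{2\pi i}\int_{L_{c_v}} J_v^{\eta_v}(s_v;b)\,\a_v(s_v)\,d\mu_v(s_v)$$
of Lemma~\ref{MS2}. The final step is to interchange the summation $\sum_{b \in F^\times - \{-1\}}$ with the contour integration along $\LL_S(\bfc)$; this is justified by the absolutely convergent majorant constructed in Lemma~\ref{JJ-hyperbolic} (whose hypotheses are met by an appropriate choice of $\bfc = \underline{c}$ with $c$ large, compatible with $l_v \ge 6$). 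Assembling the factors yields the stated identity.

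The only substantive obstacle is book-keeping the absolute convergence uniformly along $\LL_S(\bfc)$. Conceptually, none of the factorization steps are subtle once the correct estimates are in place; all the difficult analytic work has already been done in Section~7 (Lemmas~\ref{hyper(rev)2}--\ref{JJ-hyperbolic}). The genuine novelty will appear only when the individual local factors $J_v^{\eta_v}(b;\a_v)$, $J_v^{\eta_v}(l_v;b)$, and $J_v^{\eta_v}(b)$ are evaluated in closed form, which is postponed to \S 10 and \S 11.
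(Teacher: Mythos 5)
Your proposal is correct and is essentially the same argument the paper uses: the paper's entire justification is the phrase ``By changing the order of integrals'' at the top of \S 10, which you expand into the product-factorization of $\Psi_l^{(0)}$, $\eta(tx_\eta^*)$, $d^\times t$, $\alpha(\bfs)$ and $d\mu_S(\bfs)$, plus the Fubini interchange justified by the absolute-convergence bounds from \S 7 and \S 8. One small caveat on your citation: Lemma~\ref{hyper(rev)3} requires $\sigma\neq\pm\rho$, so it cannot be ``specialized to $\rho=0$'' directly; the correct route (as in the proof of Lemma~\ref{HYPERBOLIC-TERM(crude)}) is to apply it with $0<|\rho|<\epsilon$ and then deduce the $\rho=0$ case from $|t|_\AA^{\rho}+|t|_\AA^{-\rho}\geq 2$ — but this does not affect the validity of your argument.
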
 

\begin{thm}\label{UNIPOTENT(fine)}
For $\alpha=\otimes_{v\in S}\,\alpha_v$, we have 
\begin{align*}
\tilde \JJ_{\rm{u}}^\eta(l, \fn|\alpha)&=2(-1)^{\epsilon(\eta)}\cG(\eta)\,D_F^{1/2}(1+i^{\tilde l}\delta(\fn=\cO))\,
\\
&\quad\times \left\{\bC_F^\eta(l, \fn)\,\prod_{v\in S} U_v^{\eta_v}(\alpha_v)+R(\eta)\sum_{v\in S} U'_v(\alpha_v)\prod_{w\in S-\{v\}}U_w^{\eta_w}(\alpha_w)
\right\}. 
\end{align*}
Here $U_v^{\eta_v}(\alpha_v)$ and $U_v'(\alpha_v)$ are explicitly given in Proposition~\ref{MS5}. 
\end{thm}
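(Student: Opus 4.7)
\medskip

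\noindent
\textbf{Proof plan for Theorem~\ref{UNIPOTENT(fine)}.}
The idea is to exploit the product structure of the integrand defining $\tilde{\JJ}_{\rm u}^{\eta}(l,\gn|\alpha)$ in Theorem~\ref{RELATIVETRACEFORMULA}. Both the measure $d\mu_{S}(\bfs)=\prod_{v\in S}d\mu_{v}(s_{v})$ and the test function $\alpha(\bfs)=\prod_{v\in S}\alpha_{v}(s_{v})$ factor over $S$, while inside $\gU_{S}^{\eta}(l,\gn|\bfs)$ the product $\prod_{v\in S}(1-\eta_{v}(\varpi_{v})q_{v}^{-(s_{v}+1)/2})^{-1}(1-q_{v}^{(s_{v}+1)/2})^{-1}$ is already a tensor product over $S$, and the bracketed factor is the sum of a constant ${\bf C}_{F}^{\eta}(l,\gn)$ in $\bfs$ plus $R(\eta)$ times a sum over $v\in S$ of terms each depending on a single $s_{v}$. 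Consequently the $\#S$-fold contour integral splits as
\begin{equation*}
\Bigl(\tfrac{1}{2\pi i}\Bigr)^{\#S}\!\!\int_{\LL_{S}(\bfc)}\!\gU_{S}^{\eta}(l,\gn|\bfs)\,\alpha(\bfs)\,d\mu_{S}(\bfs)={\bf C}_{F}^{\eta}(l,\gn)\prod_{v\in S}U_{v}^{\eta_{v}}(\alpha_{v})+R(\eta)\sum_{v\in S}U_{v}'(\alpha_{v})\!\!\prod_{w\in S-\{v\}}\!\!U_{w}^{\eta_{w}}(\alpha_{w}),
\end{equation*}
where $U_{v}^{\eta_{v}}(\alpha_{v})$ is the one-dimensional integral of $(1-\eta_{v}(\varpi_{v})q_{v}^{-(s_{v}+1)/2})^{-1}(1-q_{v}^{(s_{v}+1)/2})^{-1}\alpha_{v}(s_{v})$ against $d\mu_{v}(s_{v})$, and $U_{v}'(\alpha_{v})$ is the same integral weighted by the extra factor $\frac{\log q_{v}}{1-q_{v}^{(s_{v}+1)/2}}$. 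Multiplying by the global prefactor $2(-1)^{\epsilon(\eta)}\Gcal(\eta)D_{F}^{1/2}(1+i^{\tilde{l}}\delta(\gn=\go))$ then yields exactly the shape claimed in Theorem~\ref{UNIPOTENT(fine)}.

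The remaining task is to give closed-form evaluations of the local contour integrals $U_{v}^{\eta_{v}}(\alpha_{v})$ and $U_{v}'(\alpha_{v})$ when $\alpha_{v}=\alpha_{v}^{(m_{v})}$ is a basic Hecke element \eqref{HeckeFunctions}; these formulas will be collected into Proposition~\ref{MS5}. The natural tool is the substitution $u_{v}=q_{v}^{s_{v}/2}$, under which $s_{v}\in\CC/\tfrac{4\pi i}{\log q_{v}}\ZZ$ corresponds to $u_{v}$ on $\CC^{\times}/\{\pm 1\}$, the test function becomes $u_{v}^{m_{v}}+u_{v}^{-m_{v}}$, the measure $d\mu_{v}(s_{v})$ becomes $q_{v}^{1/2}(u_{v}-u_{v}^{-1})\,du_{v}/u_{v}$, and the denominators collapse to rational functions in $u_{v}$ with poles at $u_{v}=\pm q_{v}^{-1/2}$ (from the $(1-q_{v}^{1/2}u_{v})^{-1}$ factor after sign adjustment) and $u_{v}=\eta_{v}(\varpi_{v})q_{v}^{1/2}$ (from the $\eta_{v}$-factor). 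With the contour taken at $\Re(s_{v})=c_{v}$ large, the integrals become contour integrals over $|u_{v}|=q_{v}^{c_{v}/2}$, which one evaluates by Cauchy's residue theorem after shifting the contour inward; the finite Laurent polynomial nature of $\alpha_{v}$ ensures there are no boundary contributions.

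The main obstacle is purely combinatorial bookkeeping. One must keep careful track of (i) the normalizing factor $\tfrac{1}{2}\log q_{v}$ in $d\mu_{v}$, (ii) whether $\eta_{v}(\varpi_{v})=\pm 1$, since the poles of the integrand coalesce or split accordingly and this changes the residue structure in $U_{v}^{\eta_{v}}(\alpha_{v})$, and (iii) the extra pole arising in $U_{v}'(\alpha_{v})$ from the additional factor $\log q_{v}/(1-q_{v}^{(s_{v}+1)/2})$, which creates a double pole at $u_{v}=q_{v}^{-1/2}$ and thus introduces a derivative term in the evaluation. Once these cases are separated, each integral is a sum of residues that depend polynomially (or through simple geometric-series identities) on $q_{v}^{\pm m_{v}/2}$ and on $\eta_{v}(\varpi_{v})$. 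Collecting the resulting expressions into $U_{v}^{\eta_{v}}(\alpha_{v})$ and $U_{v}'(\alpha_{v})$ as displayed in Proposition~\ref{MS5} completes the proof.
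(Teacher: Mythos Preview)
Your proposal is correct and follows essentially the same approach as the paper. The paper does not spell out the Fubini step explicitly, but simply records the one-dimensional integrals \eqref{UNIPOTENT(fine)-1} and \eqref{UNIPOTENT(fine)-2} as the definitions of $U_v^{\eta_v}(\alpha_v)$ and $U_v'(\alpha_v)$, and then evaluates them in Proposition~\ref{MS5} by the same change of variables $z=q_v^{s_v/2}$ and residue computation that you describe; your slight misstatement of the pole locations (the $\eta_v$-factor pole is at $z=\eta_v(\varpi_v)q_v^{-1/2}$, not $q_v^{1/2}$, and one must also pick up the residue at $z=0$ coming from the Laurent part of $\alpha_v$) is harmless and corrected automatically once the computation is carried out.
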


%%%%%%%%%%%%%%%%%%%%%%%%%%%%%%%%%%%%%%%%%
\hspace{-4mm}{\bf The proof of Theorem~\ref{ED-THM}.}
%%%%%%%%%%%%%%%%%%%%%%%%%%%%%%%%%%%%%%%%%
By the same procedure done in \cite[\S7.1]{Sugiyama2}, the estimation is reduced to that for a similar average over $\Pi_{\rm{cus}}(l,\gn)$ (in place of $\Pi_{\rm{cus}}^*(l,\fn)$).
From \cite[Lemma 13.15]{Tsud}, we have 
{\small
$$\tilde{\JJ}_{\rm u}^\eta(l, \gn|\a)=2(-1)^{\e(\eta)+\# S}(1+i^{\tilde l}\delta(\fn=\cO))
D_F^{1/2}\Gcal(\eta) L_\fin(1,\eta) \int_{\fX_S^0} \a(\bs)\,\d\mu_S^\eta(\bs).$$
}By Theorem~\ref{RELATIVETRACEFORMULA}, it suffices to show $\JJ_{\rm hyp}^\eta(l, \gn|\a)={\mathcal O}_{\e, l, \eta, \a}(\nr(\gn)^{-\inf_{v \in \Sigma_{\infty}}l_{v}/2 +1+\e})$ for any sufficiently small $\e>0$. This follows from the proof of Lemma~\ref{HYPERBOLIC-TERM(crude)} and Lemmas~\ref{hyper(rev)4} and \ref{hyper(rev)6}
by taking $c \in \RR$ and $\rho\neq 0$ such that
$\inf_{v \in \Sigma_{\infty}}l_{v}/2-1
>(c+1)/4>(c+1)/4 -|\rho|>\inf_{v \in \Sigma_{\infty}}l_{v}/2-1-\e>1$
and $|\rho|$ is sufficiently small. When $\gn$ is square-free, by noting $w_\fn^\eta(\pi)=\delta(\gn = \gf_{\pi})$, we need no procedure as in \cite[\S7.1]{Sugiyama2}. Thus the exponent of the error term is not spoiled and remain $-\inf_{v \in \Sigma_{\infty}}l_{v}/2+1+\e$ in the final result.
\qed

\begin{cor}\label{stability}
 Suppose $\eta_v(-1)=-1$ for all $v\in \Sigma_\infty$. Let $\fa$ be an integral ideal. Then, $\JJ_{\rm{hyp}}^{\eta}(l,\fn|\alpha)=0$ for any ideal $\fn$ such that $\nr(\fn)> \nr(\ff\,\fa)$ and for any $\alpha\in \ccA_S$ of the form $\otimes_{v\in S} \alpha_v$ with $\alpha_v(s_v)$ being a linear combination of $\a_v^{(m)}(s_v)$ $(0\leq m\leq {\rm{ord}}_v\fa)$.
\end{cor}
\begin{proof} 
 From Theorem~\ref{HYPERBOLIC(fine)}, the condition on $\a$ implies that the hyperbolic term is a sum of certain terms indexed by $b\in F^\times-\{-1\}$ such that $b\in \fn\ff^{-1}\fa^{-1}$ (Lemmas~\ref{MS2.5}, \ref{MS3}, \ref{MS4} and \ref{ramifiedJ-exact}) and $0<|\nr(b)|<1$ (Lemma~\ref{arcInt}). Thus, the summation becomes empty if $\nr(\fn)>\nr(\ff\,\fa)$. 
\end{proof}

\medskip
\noindent
{\bf Remark} :
\begin{itemize}
\item[(1)] In our forthcoming paper \cite{SugiyamaTsuzuki}, we show that the error term in Theorem~\ref{ED-THM} is improved to $\Ocal(\nr(\gn)^{-1+\e})$. 
\item[(2)] The vanishing of the term $\JJ_{\rm{hyp}}^{\eta}(l,\fn|\alpha)$ for $(\fn,\a)$ with both $\nr(\fn)$ and ${\rm{deg}}(\a)$ large is called the {\it stability} and was already observed in \cite{MichelRamakrishinan}, \cite{FeigonWhitehouse} and \cite{Nelson} at least when $\fn$ is square-free. Actually, even when $\eta$ admits a place $v\in \Sigma_\infty$ such that $\eta_v(-1)=+1$, our relative trace formula (Theorems~\ref{RELATIVETRACEFORMULA}, \ref{HYPERBOLIC(fine)}, and \ref{UNIPOTENT(fine)} combined) gives an exact formula for the spectral average although the expression involves an infinite sum. 
\end{itemize}

%%%%%%%%%%%%%%%%%%%%%%%%%%%%%%%%%%%%%%%%%%%%%%%%%%%%%%%
\section{Explicit formula of the hyperbolic term}
%%%%%%%%%%%%%%%%%%%%%%%%%%%%%%%%%%%%%%%%%%%%%%%%%%%%%%%
In this section, we compute $\JJ_{\rm{hyp}}^\eta(l,\fn|\alpha)$ further for particular test functions $\a=\otimes_{v \in S} \a_{v}$. 
By changing the order of integrals, we have
\begin{align*}
\JJ_{\rm{hyp}}^{\eta}(l, \fn|\alpha)&=\sum_{b\in F^{\times}-\{-1 \}} \{\prod_{v\in S} J_v^{\eta_v}(b;\alpha_v)\}\,
\{\prod_{v\in \Sigma_\infty} J_v^{\eta_v}(l_v;b)\}\,\{\prod_{v\in \Sigma_\fin-S} J_v^{\eta_v}(b)\},
\end{align*}
where 
\begin{align*}
J_v^{\eta_v}(b;\alpha_v)&=\frac{1}{2\pi i } \int_{L_v(c)} \left\{ \int_{F_v^\times} \Psi_v^{(0)}\left(s_v;\,\delta_b\,\left[\begin{smallmatrix} t & 0 \\ 0 &1 \end{smallmatrix}\right]\right)\,\eta_v(t)\,\d^\times t\right\} \alpha_v(s_v)\,\d\mu_v(s_v)
\end{align*}
for $v\in S$ with $\Psi_v^{(0)}(s_v;-)$ being the Green function on $G_v$ (Lemma~\ref{v-Greenftn}), 
\begin{align*}
J_v^{\eta_v}(b)&=\int_{F_v^\times} \Phi_{v,\fn}^{(0)}\left(\delta_b\,\left[\begin{smallmatrix} t & 0 \\ 0 &1 \end{smallmatrix}\right]\,\left[\begin{smallmatrix} 1 & \varpi_v^{-f(\eta_v)} \\ 0 &1 \end{smallmatrix}\right] \right)\,\eta_{v}(t\varpi_v^{-f(\eta_v)})\,\d^\times t, \quad {\text{if $v\in \Sigma_{\fin}-S$}}, \\
J_v^{\eta_v}(l_v;b)&=\int_{\R^\times} \Psi_v^{(0)}\left(l_v;\delta_b\,\left[\begin{smallmatrix} t & 0 \\ 0 &1 \end{smallmatrix}\right]\right)\,\eta_v(t)\,\d^\times t, \quad {\text{if $v\in \Sigma_\infty$}}
\end{align*}
with $\Psi_v^{(0)}(l_v;-)$ being the Shintani function (Proposition~\ref{Sh-ftn}), and $L_v(c)$ denotes the vertical contour directed from $c-\frac{2\pi i}{\log q_v}$ to $c+\frac{2\pi i}{\log q_v}$.

\subsection{An evaluation of non-archimedean integrals (for unramified $\eta_v$)}
In this paragraph, we explicitly compute the integrals $J_v^{\eta_v}(b;\alpha_v^{(m)})$ at $v\in S$ and the integrals $J_{v}^{\eta_v}(b)$ at $v\in \Sigma_\fin-S\cup S(\ff)$. 

\begin{lem}
\label{MS1} 
Let $v\in S$. Let $\alpha_v^{(m)}(s_v)=q_{v}^{ms_{v}/2}+q_{v}^{-ms_{v}/2}$ with $m\in \N_{0}$. Set
\begin{align*}
\widehat{\Phi_v}_{m}(g_v)=\frac{1}{2\pi i } \int_{L_v(c)} \Psi_v^{(0)}(s_v;\,g_v)
\alpha_v^{(m)}(s_v)\,\d \mu_v(s_v).
\end{align*}
If $m>0$, then, for any $x\in F_v$ with $\sup(|x|_v,1)=q_v^{l}$ with $l\in \N_{0}$, we have 
\begin{align*}
\widehat{\Phi_{v}}_{m}\left(\left[\begin{smallmatrix} 1 & x \\ 0 &1 \end{smallmatrix}\right]\right)&=
\begin{cases}
 0 \qquad & (l\geq m+1), \\
 -q_v^{-m/2} \qquad &( l=m), \\
 (m-l-1)q_v^{1-m/2}-(m-l+1)q_v^{-m/2} \qquad &(0\leq l <m).
\end{cases}
\end{align*}
If $m=0$, then for any $x\in F_v$ with $\sup(|x|_v,1)=q_v^{l}$ with $l\in \N_{0}$, we have 
\begin{align*}
\widehat{\Phi_v}_{0}\left(\left[\begin{smallmatrix} 1 & x \\ 0 &1 \end{smallmatrix}\right]\right)&=-2\,\delta(l=0).
\end{align*}
\end{lem}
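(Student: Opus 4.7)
The plan is a direct residue calculation after converting the contour integral over $L_v(c)$ into one over a circle. By Lemma~\ref{v-Greenftn}, writing $\sup(1,|x|_v) = q_v^{l}$ with $l \in \NN_{0}$, the restriction of $\Psi_v^{(0)}(s_v;-)$ to $N_v$ is a rational function of $q_v^{s_v/2}$:
$$\Psi_v^{(0)}\left(s_v;\left[\begin{smallmatrix}1 & x \\ 0 & 1\end{smallmatrix}\right]\right) = -q_v^{-(l+1)/2}\,(1-q_v^{-(s_v+1)/2})^{-2}\,q_v^{-(l+1) s_v/2}.$$
Substituting $w = q_v^{s_v/2}$, the image of $L_v(c)$ becomes the positively oriented circle $|w|=q_v^{c/2}$; simultaneously $d\mu_v(s_v) = q_v^{1/2}(1-w^{-2})\,dw$ and $\alpha_v^{(m)}(s_v) = w^{m}+w^{-m}$. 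A short computation then gives
$$\widehat{\Phi_v}_{m}\left(\left[\begin{smallmatrix}1 & x \\ 0 & 1\end{smallmatrix}\right]\right) = -\frac{q_v^{-l/2}}{2\pi i}\oint_{|w|=q_v^{c/2}}\frac{(w^{m-l-1}+w^{-m-l-1})(1-w^{-2})}{(1-q_v^{-1/2}w^{-1})^{2}}\,dw.$$

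Next I would evaluate the contour integral by extracting the coefficient of $w^{-1}$ from the Laurent expansion
$$(1-q_v^{-1/2}w^{-1})^{-2} = \sum_{k\geq 0}(k+1)q_v^{-k/2}w^{-k},$$
valid on $|w|>q_v^{-1/2}$, multiplied against the Laurent polynomial
$(w^{m-l-1}+w^{-m-l-1})(1-w^{-2}) = w^{m-l-1}-w^{m-l-3}+w^{-m-l-1}-w^{-m-l-3}$. Convergence of the geometric series on the contour is guaranteed by the hypothesis $c > 1$ built into Lemma~\ref{v-Greenftn}.

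Finally, a small case analysis on $l$ versus $m$ closes the proof. For $m \geq 1$, only the first two monomials in the Laurent polynomial can contribute (the last two would force $k = -m-l$ or $k = -m-l-2$, both $<0$), giving coefficient
$(m-l+1)\,q_v^{-(m-l)/2}\,\delta(l \leq m)-(m-l-1)\,q_v^{-(m-l-2)/2}\,\delta(l \leq m-2)$.
Multiplying by $-q_v^{-l/2}$ reproduces the three cases $l \geq m+1$, $l=m$, and $0 \leq l < m$ of the statement, with the borderline case $l = m-1$ handled automatically by the vanishing factor $m-l-1$. For $m=0$, the polynomial reduces to $2(w^{-l-1}-w^{-l-3})$, of which only $w^{-l-1}$ can pair with a nonnegative index $k=-l$, forcing $l=0$ and producing $-2\,\delta(l=0)$. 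There is no genuine obstacle; the bookkeeping of which terms survive the residue extraction is the only point requiring care.
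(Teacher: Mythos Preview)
Your proof is correct and follows essentially the same approach as the paper: both substitute $w=q_v^{s_v/2}$ to convert the integral over $L_v(c)$ into a contour integral over a circle, then evaluate by residues. The only difference in execution is that the paper rewrites $-q_v^{-(s+1)/2}(1-q_v^{-(s+1)/2})^{-2}=(1-q_v^{-(s+1)/2})^{-1}(1-q_v^{(s+1)/2})^{-1}$ and computes the residues at $z=q_v^{-1/2}$ (a double pole) and at $z=0$ separately, whereas you expand $(1-q_v^{-1/2}w^{-1})^{-2}$ as a Laurent series on the contour and read off the $w^{-1}$-coefficient directly; your bookkeeping avoids the derivative formula for the double pole and is arguably a bit cleaner.
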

\begin{proof}
From Lemma~\ref{v-Greenftn} and the formula $\d\mu_v(s)=2^{-1}\log q_v\,(q_v^{(1+s)/2}-q_v^{(1-s)/2})\,\d s$, we have  
\begin{align*}
\widehat{\Phi_v}_{m}(\left[\begin{smallmatrix} 1 & x \\ 0 &1 \end{smallmatrix}\right])&=\frac{1}{2\pi i } \int_{L_v(c)} q_v^{-(s+1)l/2}\,(1-q_v^{-(s+1)/2})^{-1}(1-q_v^{(s+1)/2})^{-1}\\
&\qquad \times (q_v^{-ms/2}+q_v^{ms/2})\,2^{-1}\log q_v\,(q_v^{(1+s)/2}-q_v^{(1-s)/2})\,\d s.
\end{align*}
By the variable change $z=q_v^{s/2}$, this becomes
\begin{align*}
\frac{q_v^{(1-l)/2}}{2\pi i} \oint_{|z|=R} z^{-l}(1-q_v^{-1/2}z^{-1})^{-1}(1-q_v^{1/2}z)^{-1}(z^m+z^{-m})\,(z-z^{-1})\,\frac{\d z}{z}
\end{align*}
with $R=q_v^{c/2}\,(>1)$. Thus, by the residue theorem, we have the equality
\begin{align}
\widehat{\Phi_v}_{m}(\left[\begin{smallmatrix} 1 & x \\ 0 &1 \end{smallmatrix}\right])&
=q_v^{(1-l)/2}\,\left({\rm{Res}}_{z=q_{v}^{-1/2}} \phi(z)+{\rm{Res}}_{z=0}\phi(z)\right)
 \label{MS1-1}
\end{align}
with $\phi(z)=\frac{(1-z^2)(z^m+z^{-m})}{(1-q_v^{1/2}z)^2}\frac{q_v^{1/2}}{z^{1+l}}$. By a direct computation, we have
{\small \begin{align}
{\rm{Res}}_{z=q_{v}^{-1/2}} \phi(z)=&
-q_v^{(1+l)/2}\,\biggl(\{(m+l+1)(1-q_v^{-1})+2q_v^{-1}\}q_v^{m/2} 
%\\ &\quad
+\{(-m+l+1)(1-q_v^{-1})+2q_v^{-1}\}q_v^{-m/2}\biggr), 
\label{MS1-2}
\\
{\rm{Res}}_{z=0}\phi(z)=\, &
\delta(l\geq m+1)\,\{(l-m+1)q_v^{(l-m+1)/2}-(l-m-1)q_v^{(l-m-1)/2}\} 
 \label{MS1-3}
\\
&+\delta(l\geq 1-m)\,\{(l+m+1)q_v^{(l+m+1)/2}-(l+m-1)q_v^{(l+m-1)/2}\} 
 \notag
\\
&+\{\delta(m=l)+\delta(m=-l)\}\,q_v^{1/2}.
 \notag
\end{align}
}From \eqref{MS1-1}, \eqref{MS1-2} and \eqref{MS1-3}, we obtain the desired formula easily.
\end{proof}

\begin{lem} \label{MS2}
Let $v\in S$. Let $\alpha_v^{(m)}(s_v)=q_{v}^{ms_{v}/2}+q_{v}^{-ms_{v}/2}$ with $m\in \N_{0}$. Then, for any $b\in F_v^{\times}-\{-1\}$, 
$$
J_v^{\eta_v}(b;\alpha_v^{(m)})=I^{+}_v(m;\,b)+ \eta_{v}(\varpi_{v})I^{+}_v(m;\,\varpi_v^{-1}(b+1))
$$
with 
{\small \begin{align*}
I_v^{+}(m;\,b)&=\vol(\cO_v^\times)\,2^{\delta(m=0)}\,\biggl(
-q_v^{-m/2}\,\delta_m^{\eta_v}(b)
% \\ &\qquad
+\sum_{l=\sup(0,-{\rm{ord}}_v(b))}^{m-1}\{(m-l-1)q_v^{1-m/2}-(m-l+1)q_v^{-m/2}\}\delta^{\eta_v}_l(b)\biggr),
\end{align*}
}where for $n\in \N_0$, 
$$
\delta^{\eta_v}_n(b)=\delta(|b|_v\leq q_v^{n})\, \eta_{v}(\varpi_{v}^{n})
\begin{cases} ({\rm{ord}}_v(b)+1)^{\delta(n=0)}, \quad (\eta_v(\varpi_v)=1), \\  (2^{-1}(\eta_v(b)+1))^{\delta(n=0)}, \quad (\eta_v(\varpi_v)=-1).
\end{cases}
$$
\end{lem}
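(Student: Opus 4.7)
}
The plan is first to interchange the $s_v$- and $t$-integrals. This is justified because the $s_v$-contour $L_v(c)$ is compact modulo the period $\frac{4\pi i}{\log q_v}\mathbb{Z}$, the test function $\alpha_v^{(m)}$ is a Laurent polynomial in $q_v^{s_v/2}$, and $\Psi_v^{(0)}(s_v;\delta_b\,\mathrm{diag}(t,1))$ is controlled uniformly in $s_v$ by Lemma~\ref{v-Greenftn-est} so that the iterated integral is absolutely convergent. After this swap,
$$J_v^{\eta_v}(b;\alpha_v^{(m)}) \;=\; \int_{F_v^\times} \widehat{\Phi_v}_m\!\bigl(\delta_b\,\mathrm{diag}(t,1)\bigr)\,\eta_v(t)\,d^\times t,$$
with $\widehat{\Phi_v}_m$ the Fourier transform introduced in Lemma~\ref{MS1}. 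Because $z=0$ in the defining equivariance, $\chi_0=1$, so $\widehat{\Phi_v}_m$ is left $H_v$-invariant and right $\bK_v$-invariant; its value on any $g\in G_v$ depends only on the unipotent part $n(x)$ in an Iwasawa decomposition $g=h\,n(x)\,k$, and by Lemma~\ref{MS1} only on $\sup(|x|_v,1)$.

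Next I would carry out two explicit row/column reductions of $\delta_b\,\mathrm{diag}(t,1) = \bigl[\begin{smallmatrix} (1+b^{-1})t & 1 \\ t & 1\end{smallmatrix}\bigr]$. When $|t|_v\le 1$, right-multiplication by $\bigl[\begin{smallmatrix} 1 & 0 \\ -t & 1\end{smallmatrix}\bigr]\in\bK_v$ zeroes the $(2,1)$ entry, producing the Iwasawa decomposition with unipotent part $n(bt^{-1})$. When $|t|_v>1$, right-multiplication by $\bigl[\begin{smallmatrix} 1 & -t^{-1} \\ 0 & 1\end{smallmatrix}\bigr]$ and then by $\bigl[\begin{smallmatrix} 0 & 1 \\ 1 & 0\end{smallmatrix}\bigr]$, both in $\bK_v$, produces an Iwasawa decomposition with unipotent part $n(-(b+1)t)$. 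Accordingly, split $J_v^{\eta_v}(b;\alpha_v^{(m)}) = A+B$ into the integrals over $\{|t|_v\le 1\}$ and $\{|t|_v>1\}$, with integrands $\widehat{\Phi_v}_m(n(bt^{-1}))\eta_v(t)$ and $\widehat{\Phi_v}_m(n(-(b+1)t))\eta_v(t)$ respectively.

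For $A$, I would decompose $\{|t|_v\le 1\}$ into the shells $\{\ord_v(t)=n\}$, $n\ge 0$, each of $d^\times$-mass $\vol(\cO_v^\times)$ and carrying $\eta_v(t)=\eta_v(\varpi_v)^n$ (using that $\eta_v$ is unramified). On the $n$-th shell, $\sup(|bt^{-1}|_v,1)=q_v^{\sup(n-\beta,0)}$ with $\beta=\ord_v(b)$, so Lemma~\ref{MS1} inserts the coefficients $-q_v^{-m/2}$ (for the shell giving $l=m$) and $(m-l-1)q_v^{1-m/2}-(m-l+1)q_v^{-m/2}$ (for $0\le l<m$). Collecting shells by the value of $l$: for $l\ge 1$ exactly one shell $n=l+\beta$ survives (requiring $\beta\ge -l$, i.e.\ $|b|_v\le q_v^l$), giving $\eta_v(\varpi_v)^{l+\beta}$ which collapses with Lemma~\ref{MS1} to $\delta_l^{\eta_v}(b)$; for $l=0$ the shells $n=0,\dots,\beta$ (if $\beta\ge 0$) all contribute, producing $\sum_{n=0}^{\beta}\eta_v(\varpi_v)^n$, which equals $\ord_v(b)+1$ when $\eta_v(\varpi_v)=1$ and $(\eta_v(b)+1)/2$ when $\eta_v(\varpi_v)=-1$ (the latter using $\sum_{n=0}^{\beta}(-1)^n=\tfrac{1+(-1)^\beta}{2}$), which is exactly $\delta_0^{\eta_v}(b)$. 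The $m=0$ case is subsumed by the $2^{\delta(m=0)}$ prefactor, consistent with $\widehat{\Phi_v}_0(n(x))=-2\,\delta(|x|_v\le 1)$.

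For $B$, I would perform successive changes of variable. First substitute $y=-(b+1)t$: using $\eta_v(-1)=1$ (unramified) and $\eta_v^2=1$, one has $\eta_v(t)=\eta_v(y)\eta_v(b+1)$ and $d^\times t=d^\times y$; the region becomes $|y|_v>|b+1|_v$. Then set $b' := \varpi_v^{-1}(b+1)$ and substitute $y=b' s^{-1}$: the region transforms to $|s|_v\le 1$, and after combining character factors through $\eta_v(b')=\eta_v(\varpi_v)\eta_v(b+1)$ and $\eta_v^2=1$, the overall prefactor collapses to $\eta_v(\varpi_v)$. The remaining integral has exactly the same form as $A$ with $b$ replaced by $b'$, and therefore evaluates to $\eta_v(\varpi_v)\,I_v^+(m;\varpi_v^{-1}(b+1))$ by the argument of the previous paragraph (applied to $b'$). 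Adding gives the claimed identity.

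The main obstacle is the bookkeeping in the shell decomposition producing $I_v^+(m;b)$: one must track carefully how multiple shells can produce the same $l$ (only for $l=0$), how the support condition $|b|_v\le q_v^l$ arises from the non-negativity $n=l+\beta\ge 0$, and how the geometric character sums $\sum_{n}\eta_v(\varpi_v)^n$ collapse to the dichotomous formula for $\delta_0^{\eta_v}(b)$ in the ramified-Frobenius case. The computations in Case~2 then follow essentially automatically once the change of variables is set up correctly.
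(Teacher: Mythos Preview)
Your proposal is correct and follows essentially the same route as the paper's proof: both split the $t$-integral into $|t|_v\le 1$ and $|t|_v>1$, use the Iwasawa decomposition of $\delta_b\,\mathrm{diag}(t,1)$ to reduce $\widehat{\Phi_v}_m$ to its value on a unipotent, apply Lemma~\ref{MS1}, and carry out a shell decomposition; the paper cites \cite[Lemma~11.4]{Tsud} for the Iwasawa step and simply ``observes'' the relation $I_v^-(m;b)=\eta_v(\varpi_v)I_v^+(m;\varpi_v^{-1}(b+1))$, whereas you compute both explicitly via matrix reductions and a change of variable.
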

\begin{proof} Let $m>0$. By definition, $J_v^{\eta_v}(b;\alpha_v)=I_v^{+}(m;b)+I_{v}^{-}(m;b)$ with $I_{v}^{+}(m;b)$ and $I_v^{-}(m;b)$ being the integrals of $\widehat{\Phi_v}_{m}\left(\delta_b \left[\begin{smallmatrix} t & 0 \\ 0 &1 \end{smallmatrix}\right]\right)\eta_v(t)$ with respect to the measure $\d ^\times t$ over $|t|_v\leq 1$ and over $|t|_v>1$, respectively. From \cite[Lemma 11.4]{Tsud}, 
\begin{align}
\delta_b \left[\begin{smallmatrix} t & 0 \\ 0 &1 \end{smallmatrix}\right]
&\in H_v \left[\begin{smallmatrix} 1 & x \\ 0 &1 \end{smallmatrix}\right]\,\bK_v\quad \text{with} \quad  
|x|_v=\begin{cases} |t|_v^{-1}|b|_v\qquad& (|t|_v\leq 1), \\
 |t|_v|b+1|_v \qquad& (|t|_v>1).
\end{cases}
 \label{MS2-0}
\end{align}
Hence, by Lemma~\ref{MS1}, $I_v^{+}(m;b)$ becomes the sum of the integral 
\begin{align}
\int_{\substack{|t|_v\leq 1 \\ \sup(1,|t|_v^{-1}|b|_v)=q_v^{m}}}(-q_v^{-m/2})\,\eta_v(t)\,\d^\times t \label{MS2-1}
\end{align}
and 
\begin{align}
\sum_{l=0}^{m-1}\int_{\substack{|t|_v\leq 1 \\ \sup(1,|t|_v^{-1}|b|_v)=q_v^{l}}} \{(m-l-1)q_v^{1-m/2}-(m-l+1)q_v^{-m/2}\}\,\eta_v(t)\,\d^\times t. 
 \label{MS2-2}
\end{align}
The condition $|t|_v\leq 1$, $\sup(1,|t|_v^{-1}|b|_v)=q_v^{l}$ is equivalent to $|b|_v\leq q_v^{l}$, $|t|_v=q_v^{-l}|b|_v$ if $l>0$ and to $|b|\leq |t|\leq 1$ if $l=0$.  Hence, \eqref{MS2-1} is equal to $-q_v^{-m/2}\,\vol(\cO_v^\times)\,\delta_{m}^{\eta_v}(b)$, and \eqref{MS2-2} is equal to the following expression
\begin{align*}
\vol(\cO_v^\times)\,\sum_{l=\sup(0,-{\rm{ord}}_v(b))}^{m-1} \{(m-l-1)q_v^{1-m/2}-(m-l+1)q_v^{-m/2}\}\,\delta_l^{\eta_v}(b).
\end{align*}
This completes the evaluation of the integral $I_v^{+}(m;b)$. In the same way as above, the other integral $I_v^{-}(m;b)$ is calculated in a similar form; from the resulting expression, $I_v^{-}(m;b)= \eta_{v}(\varpi_{v})I_v^{+}(m;\varpi_v^{-1}(b+1))$ is observed. This settles our consideration when $m>0$. The other case $m=0$ is similar. 
\end{proof}

From Lemma~\ref{MS2}, we have a useful estimate for the function $J_v(b,\alpha_v)$ in $b$. 

\begin{lem}\label{MS2.5}
Let $\alpha_v^{(m)}(s_{v})=q_{v}^{ms_{v}/2}+q_{v}^{-ms_{v}/2}$ with $m\in \N_{0}$. If $m>0$, then
\begin{align*}
|J_v^{\eta_v}(b,\alpha_v^{(m)})|\ll (m+1)^{2}\{ \delta(|b|_v\leq q_v^{m-1})\,q_v^{1-m/2}+\delta(|b|_v=q^{m}_v)\,q_v^{-m/2}\}, \qquad b\in F_v^{\times}-\{-1\}
\end{align*}
with the implied constant independent of $v$ and $m$. If $m=0$, then,
$$
J_v^{\eta_v}(b,\alpha_v^{(0)})=-2\vol(\cO_v^\times)\,\Lambda_v^{\eta_v}(b),
$$
where $\Lambda_v^{\eta_v}$ is the function on $F_v^{\times}-\{-1\}$ defined by 
\begin{align}
\Lambda_v^{\eta_v}(b)= \delta(b \in \go_{v})\,\delta_{0}^{\eta_{v}}(b(b+1)).
 \label{MS2.5-0}
\end{align}
\end{lem}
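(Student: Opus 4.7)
The plan is to derive both parts directly from the closed-form evaluation of $J_v^{\eta_v}(b;\alpha_v^{(m)})$ in Lemma~\ref{MS2}, treating $m>0$ and $m=0$ separately. The key structural fact is that every summand in $I_v^+(m;b')$ carries an indicator of the form $\delta(|b'|_v\le q_v^l)$, and that $|\delta_l^{\eta_v}(b')|\le 1$ whenever $l\ge 1$; only the $l=0$ contribution can carry the potentially larger weight $\ord_v(b')+1$ (when $\eta_v(\varpi_v)=1$).

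For $m>0$, I would write $J_v^{\eta_v}(b;\alpha_v^{(m)})=I_v^+(m;b)+\eta_v(\varpi_v)I_v^+(m;\varpi_v^{-1}(b+1))$ and analyse each summand by cases on the $v$-adic size of its argument. If $|b|_v>q_v^m$ then $\delta_m^{\eta_v}(b)=0$ and the summation range $\{l\ge\sup(0,-\ord_v(b))\}\cap\{0,\dots,m-1\}$ is empty, so $I_v^+(m;b)=0$; a parallel verification, using $|\varpi_v^{-1}(b+1)|_v=q_v^{-1}\sup(|b|_v,1)$ when $|b|_v\neq 1$, kills the companion term. If $|b|_v=q_v^m$ only the leading term $-q_v^{-m/2}\delta_m^{\eta_v}(b)$ survives and supplies the $q_v^{-m/2}$-piece. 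If $|b|_v\le q_v^{m-1}$ the full sum is present; the coefficient $(m-l-1)q_v^{1-m/2}-(m-l+1)q_v^{-m/2}$ is majorised by $(m+1)q_v^{1-m/2}$, there are at most $m+1$ indices $l$, and $|\delta_l^{\eta_v}(b)|\le 1$ for $l\ge 1$. Coupled with $\vol(\cO_v^\times)=q_v^{-d_v/2}\le 1$, these inputs give the target bound $(m+1)^2 q_v^{1-m/2}$.

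For $m=0$, I would plug Lemma~\ref{MS1}'s formula $\widehat{\Phi_v}_0\bigl([\begin{smallmatrix}1 & x\\0 & 1\end{smallmatrix}]\bigr)=-2\,\delta(|x|_v\le 1)$ into the definition
\begin{equation*}
J_v^{\eta_v}(b,\alpha_v^{(0)})=\int_{F_v^\times}\widehat{\Phi_v}_0\Bigl(\delta_b\,[\begin{smallmatrix}t & 0\\0 & 1\end{smallmatrix}]\Bigr)\,\eta_v(t)\,d^\times t.
\end{equation*}
By the Iwasawa decomposition \eqref{MS2-0} this splits as $\int_{|t|_v\le 1}+\int_{|t|_v>1}$, with $|x|_v=|t|_v^{-1}|b|_v$ respectively $|x|_v=|t|_v|b+1|_v$ in the two regions. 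Imposing $|x|_v\le 1$ restricts to the annulus $|b|_v\le|t|_v\le 1$ in the first, and $1<|t|_v\le|b+1|_v^{-1}$ in the second. Since $\eta_v$ is unramified, each annular integral evaluates as a geometric sum of $\vol(\cO_v^\times)\,\eta_v(\varpi_v)^k$, and a case split according to (i) $b\notin\cO_v$, (ii) $|b|_v=|b+1|_v=1$, (iii) $|b|_v<1$, or (iv) $|b+1|_v<1$ reproduces exactly the three-piece definition of $\Lambda_v^{\eta_v}(b)$ with the prefactor $-2\vol(\cO_v^\times)$.

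The most delicate step is controlling the $l=0$ summand of $I_v^+(m;b)$ when $m>0$ and $b\in\cO_v$, since there $|\delta_0^{\eta_v}(b)|$ can grow linearly in $\ord_v(b)+1$ when $\eta_v(\varpi_v)=1$. The resolution will rely either on pairing this term with the companion summand $\eta_v(\varpi_v)I_v^+(m;\varpi_v^{-1}(b+1))$, whose argument has valuation $-1$ and carries no analogous enlargement, or on observing that in the subsequent summation over $b\in F^\times-\{-1\}$ the deep-$\varpi_v$ values of $b$ are effectively truncated so that the growth is absorbable into the implied constant of $\ll$.
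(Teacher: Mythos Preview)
Your approach mirrors the paper's: both the $m>0$ estimate and the $m=0$ formula are read off from the closed form in Lemma~\ref{MS2}. (Your direct computation of the $m=0$ case via Lemma~\ref{MS1} is equivalent to specializing Lemma~\ref{MS2} at $m=0$, which gives $I_v^+(0;b')=-2\vol(\cO_v^\times)\,\delta_0^{\eta_v}(b')$; the paper then simply records the identity $\Lambda_v^{\eta_v}(b)=\delta_0^{\eta_v}(b)+\eta_v(\varpi_v)\,\delta_0^{\eta_v}(\varpi_v^{-1}(b+1))$.) One minor slip: you wrote $|\varpi_v^{-1}(b+1)|_v=q_v^{-1}\sup(|b|_v,1)$, but the correct prefactor is $q_v$, not $q_v^{-1}$; your conclusion that the companion term vanishes for $|b|_v>q_v^m$ is nonetheless correct.

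The point you flag as delicate is genuine, and neither of your proposed resolutions works. When $\eta_v(\varpi_v)=+1$ and $|b|_v<1$, the $l=0$ summand of $I_v^+(m;b)$ equals $[(m-1)q_v^{1-m/2}-(m+1)q_v^{-m/2}]\,(\ord_v(b)+1)$, which is unbounded in $\ord_v(b)$. The companion argument $\varpi_v^{-1}(b+1)$ then has valuation $-1$, so its $I_v^+$ carries no $l=0$ term at all---there is nothing to pair against. And the lemma is a pointwise local statement for $b\in F_v^\times-\{-1\}$, so appealing to truncation in a later global $b$-sum is not admissible here. In short, for $\eta_v(\varpi_v)=+1$ the stated bound with implied constant independent of $b$ does not follow. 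The clean resolution is to observe that when $\eta_v(\varpi_v)=-1$ one has $|\delta_0^{\eta_v}(b')|=\tfrac12|\eta_v(b')+1|\le 1$, so $|\delta_l^{\eta_v}|\le 1$ for every $l\ge 0$ and your counting argument (at most $m+1$ terms, each coefficient $\ll (m+1)q_v^{1-m/2}$) goes through verbatim; this is the only sign used in the downstream applications (cf.\ \S12.1, where $S\subset\{v:\eta_v(\varpi_v)=-1\}$). The paper's own one-line proof glosses over the same issue.
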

\begin{proof}
To infer the estimate from Lemma~\ref{MS2} in the case when $m>0$, it suffices to note that $I^{+}_m(\varpi_v^{-1}(b+1))=0$ if $|b|_v \ge q_v^{m}$, or equivalently if $|\varpi_v^{-1}(b+1)|_v \ge q_{v}^{m+1}$. The formula of $J_{v}^{\eta_{v}}(b,\alpha_v^{(0)})$ is obtained by noting the relation $\Lambda_v^{\eta_v}(b)=\delta_0^{\eta_v}(b)+ \eta_{v}(\varpi_{v})\, \delta_0^{\eta_v}(\varpi_v^{-1}(b+1))$. 
\end{proof}

\begin{lem} \label{MS3}
Let $v\in \Sigma_\fin-S\cup S(\fn\ff)$. Then 
\begin{align*}
J_v^{\eta_v}(b)&=\vol(\cO_v^\times)\,\Lambda_v^{\eta_v}(b)
\end{align*}
with $\Lambda_v^{\eta_v}(b)$ being defined by \eqref{MS2.5-0}. 
\end{lem}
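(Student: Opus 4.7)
The plan is to evaluate the integral defining $J_v^{\eta_v}(b)$ directly, using the Iwasawa decomposition already employed in the proof of Lemma~\ref{MS2} together with the unramifiedness hypothesis on $\eta_v$.

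First I would unwind the definitions. Since $v\notin S(\gf)$, we have $f(\eta_v)=0$, so $x_{\eta,v}=\varpi_v^{-f(\eta_v)}=1$ and the translate $\bigl[\begin{smallmatrix}1&1\\0&1\end{smallmatrix}\bigr]$ lies in $\bfK_v$. Since $v\notin S(\gn)$, the local Green data simplify to $\Phi_{v,\gn}^{(0)}=\Phi_{0,v}^{(0)}$, which is right $\bfK_v$-invariant. The right translation by $\bigl[\begin{smallmatrix}1&1\\0&1\end{smallmatrix}\bigr]$ therefore drops out, and
\[
J_v^{\eta_v}(b)=\int_{F_v^\times}\Phi_{0,v}^{(0)}\!\left(\delta_b\left[\begin{smallmatrix}t&0\\0&1\end{smallmatrix}\right]\right)\eta_v(t)\,\d^\times t.
\]
Applying \eqref{MS2-0} (equivalently \cite[Lemma~11.4]{Tsud}), this integrand equals $\delta(|b|_v\le|t|_v)$ on the region $|t|_v\le 1$, and $\delta(|t|_v\le|b+1|_v^{-1})$ on the region $|t|_v>1$.

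Next I would split the computation into cases according to the valuations of $b$ and $b+1$. If $|b|_v>1$, both regions give vanishing integrals, so $J_v^{\eta_v}(b)=0$, matching $\Lambda_v^{\eta_v}(b)=0$. If $b\in\go_v$ with $|b|_v<1$, then $|b+1|_v=1$ kills the second region, while the first region contributes $\vol(\go_v^\times)\sum_{k=0}^{\ord_v(b)}\eta_v(\varpi_v)^k$; this evaluates to $\vol(\go_v^\times)(\ord_v(b)+1)$ if $\eta_v(\varpi_v)=1$ and to $\vol(\go_v^\times)\cdot\tfrac12(1+\eta_v(b))$ if $\eta_v(\varpi_v)=-1$, which is exactly $\vol(\go_v^\times)\delta_0^{\eta_v}(b)$. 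If $b\in\go_v$ with $|b+1|_v<1$ (so $|b|_v=1$), the first region contributes $\int_{\go_v^\times}\eta_v(t)\d^\times t=\vol(\go_v^\times)$ by unramifiedness, and the second region contributes $\vol(\go_v^\times)\sum_{n=1}^{\ord_v(b+1)}\eta_v(\varpi_v)^n$; summed, these reduce to $\vol(\go_v^\times)\delta_0^{\eta_v}(b+1)$ by the same case analysis as before. Finally, if $|b|_v=|b+1|_v=1$, only the region $|t|_v=1$ contributes, yielding $\vol(\go_v^\times)$.

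There is no real obstacle here: the argument is a direct bookkeeping exercise. The only minor care needed is the arithmetic check that in the case $|b+1|_v<1$, $\eta_v(\varpi_v)=-1$, the geometric-series identity $\sum_{n=0}^{m}(-1)^n=\tfrac12(1+(-1)^m)$ does give $\delta_0^{\eta_v}(b+1)=\tfrac12(1+\eta_v(b+1))$ with the correct sign (using $\eta_v$ real-valued, so $\eta_v^{-1}=\eta_v$). Assembling the four cases yields the formula $J_v^{\eta_v}(b)=\vol(\go_v^\times)\Lambda_v^{\eta_v}(b)$ as claimed.
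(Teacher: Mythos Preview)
Your proof is correct and follows essentially the same approach as the paper, which simply refers back to the $m=0$ case of Lemma~\ref{MS2.5} (itself computed via the Iwasawa decomposition \eqref{MS2-0} in Lemma~\ref{MS2}). Your explicit case analysis is exactly what that reference unpacks to.
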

\begin{proof}
This is proved in the same way as the case $m=0$ in Lemma \ref{MS2.5}.
\end{proof}

\begin{lem} \label{MS4}
Let $v\in S(\fn)-S(\ff)$. If $\eta_v(\varpi_v)=1$, then 
\begin{align*}
J_v^{\eta_v}(b)&=\vol(\cO_v^\times)\,\delta(b\in \fn\cO_v)\,
\{{\rm{ord}}_v(b)-{\rm{ord}}_v(\fn)+1\}.
\end{align*}
If $\eta_v(\varpi_v)=-1$, then
\begin{align*}
J_v^{\eta_v}(b)&=\vol(\cO_v^\times)\,\delta(b\in \fn\cO_v)\,2^{-1}(\eta_v(b)+(-1)^{{\rm{ord}}_v(\fn)}).
\end{align*}
\end{lem}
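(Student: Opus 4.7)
The plan is to explicitly compute the integrand $\Phi_{v,\fn}^{(0)}\bigl(\delta_\beta [\begin{smallmatrix}t&0\\0&1\end{smallmatrix}] [\begin{smallmatrix}1&1\\0&1\end{smallmatrix}]\bigr)$ by finding an Iwasawa decomposition of the matrix inside, and then to carry out the resulting $t$-integration. First, since $v\notin S(\ff)$, the character $\eta_v$ is unramified and $f(\eta_v)=0$, so $\varpi_v^{-f(\eta_v)}=1$ drops out and $\eta_v(\varpi_v^{-f(\eta_v)})=1$. A direct matrix multiplication gives
\[
M(t;\beta):=\delta_\beta\,[\begin{smallmatrix}t&0\\0&1\end{smallmatrix}]\,[\begin{smallmatrix}1&1\\0&1\end{smallmatrix}]
=\left[\begin{smallmatrix}(1+\beta^{-1})t & (1+\beta^{-1})t+1 \\ t & t+1\end{smallmatrix}\right],
\qquad \det M(t;\beta)=t/\beta.
\]

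Next I would write $M(t;\beta)=h\cdot[\begin{smallmatrix}1&x\\0&1\end{smallmatrix}]\cdot k$ and determine when $\Phi_{v,\fn}^{(0)}(M(t;\beta))=1$, i.e., when both $x\in\cO_v$ and $k\in\bK_0(\fn\cO_v)$. Examining the lower row $(t,t+1)$: if $\ord_v(t)<\ord_v(t+1)$ one is forced to use the Weyl element, whence the resulting $k$ has lower-left entry in $\cO_v^\times$, violating $k\in\bK_0(\fn\cO_v)$ because $\ord_v(\fn)\geq 1$; so this case contributes nothing. Otherwise one may take $k=[\begin{smallmatrix}1&0\\t/(t+1)&1\end{smallmatrix}]$, and the condition $k\in\bK_0(\fn\cO_v)$ becomes $t/(t+1)\in\fn\cO_v$, which (after a short case analysis) forces $t\in\fn\cO_v$ and in particular $t+1\in\cO_v^\times$. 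Under this condition, the unipotent entry $x$ simplifies algebraically to the compact form
\[
x=\frac{(t+1)\bigl(\beta(t+1)+t\bigr)}{t}.
\]
Since $t+1\in\cO_v^\times$ and $\ord_v(t)\geq 1$, the membership $x\in\cO_v$ reduces to $\ord_v\bigl(\beta(t+1)+t\bigr)\geq\ord_v(t)$, which in turn is equivalent to $\ord_v(\beta)\geq\ord_v(t)$. Altogether, $\Phi_{v,\fn}^{(0)}(M(t;\beta))=1$ if and only if $\ord_v(\fn)\leq\ord_v(t)\leq\ord_v(\beta)$; in particular the integrand is identically zero unless $\beta\in\fn\cO_v$.

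Finally, I would integrate over $t$, using the unramifiedness of $\eta_v$ (so $\int_{\cO_v^\times}\eta_v\,d^\times u=\vol(\cO_v^\times)$), to obtain
\[
J_v^{\eta_v}(\beta)=\vol(\cO_v^\times)\,\delta(\beta\in\fn\cO_v)\sum_{k=\ord_v(\fn)}^{\ord_v(\beta)}\eta_v(\varpi_v)^k.
\]
When $\eta_v(\varpi_v)=1$ the sum equals $\ord_v(\beta)-\ord_v(\fn)+1$, yielding the first formula of the lemma. When $\eta_v(\varpi_v)=-1$, the alternating geometric sum evaluates to $\tfrac{1}{2}\bigl((-1)^{\ord_v(\beta)}+(-1)^{\ord_v(\fn)}\bigr)$, and since $\eta_v(\beta)=(-1)^{\ord_v(\beta)}$ one arrives at the second formula. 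The main obstacle is the Iwasawa analysis of the second paragraph: one must rule out the Weyl-translated case and then spot the algebraic factorization $x=(t+1)(\beta(t+1)+t)/t$ so that the $\cO_v$-membership condition becomes a clean inequality on $\ord_v$; once that is in place, the $t$-integration and the geometric summation are routine.
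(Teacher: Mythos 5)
Your proposal is correct and follows the same method the paper uses — an Iwasawa decomposition of the argument of $\Phi_{v,\fn}^{(0)}$, reducing the integrand to the characteristic function of $\ord_v(\fn)\le\ord_v(t)\le\ord_v(b)$, and then carrying out the geometric $t$-sum — but you write it out explicitly where the paper simply defers to the proof of Lemma~\ref{MS3} and to the last sentence of \cite[Lemma 11.4]{Tsud}. One small simplification worth noting: since $\left[\begin{smallmatrix}1&1\\0&1\end{smallmatrix}\right]\in\bK_0(\fn\cO_v)$ and $\Phi_{v,\fn}^{(0)}$ is right $\bK_0(\fn\cO_v)$-invariant, you may drop that factor and work with $\delta_b\left[\begin{smallmatrix}t&0\\0&1\end{smallmatrix}\right]=\left[\begin{smallmatrix}(1+b^{-1})t&1\\t&1\end{smallmatrix}\right]$ directly, which gives the cleaner unipotent coordinate $x=b/t$ (consistent with the formula $|x|_v=|t|_v^{-1}|b|_v$ quoted in the proof of Lemma~\ref{MS2}) and makes the condition $x\in\cO_v$ manifestly equivalent to $\ord_v(b)\ge\ord_v(t)$, bypassing your two-step reduction via $(t+1)(b(t+1)+t)/t$.
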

\begin{proof}
This is proved in the same way as Lemma~\ref{MS3}. We only have to remark that the assertion in the last sentence of \cite[Lemma 11.4]{Tsud} is relevant here.   
\end{proof}

\subsection{An evaluation of non-archimedean integrals (for ramified $\eta_v$)}
We shall calculate the integral $J_v^{\eta_v}(b)$ at finitely many places $v\in S(\ff)$. In what follows in this paragraph, we fix $v\in S(\ff)$ and set $f=f(\eta_v)$; thus $f$ is a positive integer. For $l\in \Z$, consider the following subsets of $F_v^\times$ depending on $b\in F_v^{\times}-\{-1\}$. 
\begin{align*}
D_l(b)&=\{t\in F_v^\times|\,|t|_v=q_v^{-l}, \,|1+t\varpi_v^{-f}|_v\,|b+t\varpi_v^{-f}(b+1)|_v\leq q_v^{-l}\,\}, \quad (l\in\Z-\{f\}),\\
D_f(b)&=\{t\in F_v^\times|\,-t\in \varpi_v^{f}(\cO_v^\times-U_v(f)), \, |1+t\varpi_v^{-f}|_v\,|b+t\varpi_v^{-f}(b+1)|_v\leq q_v^{-f}\,\}, 
\end{align*}
where $U_v(m)=1+\fp_v^{m}$ for any positive integer $m$.

\begin{lem}\label{ramifiedJ-L1}
Let $l>f$. Then, $D_l(b)=\emp$ unless $l=f-{\rm{ord}}_v(b+1)+{\rm{ord}}_v(b)$, in which case, we have ${\rm{ord}}_v(b)>0$, ${\rm{ord}}_v(b+1)=0$ and 
\begin{align*}
\int_{t\in D_l(b)}\eta_v(t)\,\d^\times t=
\eta_v\left(-\varpi_v^{f}\,b(b+1)^{-1}\right)\,(1-q_v^{-1})^{-1}\,q_{v}^{-f-d_{v}/2}.
\end{align*}
\end{lem}
\begin{proof}
By the variable change $t=\varpi_v^{l}t'$, we have
$$\int_{t\in D_l(b)} \eta_v(t)\,\d^\times t=\eta_v(\varpi_v^{l})\,\int_{t'\in D'} \eta_{v}(t')\,\d^\times t'$$
with $D'=\{t'\in \cO_v^\times|\,|1+t'\varpi_v^{l-f}|_v\,|b+t'\varpi_v^{l-f}(b+1)|_v\leq q_v^{-l}\}$.
Let $t'\in \cO_v^\times$. Then, the condition 
$$|1+t'\varpi_v^{l-f}|_v\,|b+t'\varpi_v^{l-f}(b+1)|_v\leq q_v^{-l}$$
is equivalent to 
\begin{align}
t'\in \varpi_v^{f-l}\tfrac{-b}{b+1}(1+\varpi_v^{l}b^{-1}\cO_v). 
 \label{ramifiedJ-L1-1}
\end{align}
If $|\varpi_v^{l}b^{-1}|_v>1$, then $1+\varpi_v^{l}b^{-1}\cO_v=\varpi_v^{l}b^{-1}\cO_v$. Hence, from \eqref{ramifiedJ-L1-1}, 
$$
1=|t'|_v\leq \left|\varpi_v^{f-l}\tfrac{-b}{b+1}\cdot \varpi_v^{l}b^{-1}\right|_v=\left|\tfrac{\varpi_v^{f}}{b+1}\right|_v,
$$
and $b+1\in \fp_v^{f}$ follows. Since $f>0$, we obtain $|b|_v=1$, which, combined with $|\varpi_v^{l}b^{-1}|_v>1$, implies the inequality $|\varpi_v^{l}|_v>1$ contradicting to $l>f>0$. 

If $|\varpi_v^{l}b^{-1}|_v=1$, then $b\in \varpi_v^{l}\cO_v^\times$; thus, $|b+1|_v=1$ by $l>f>0$. Hence, from \eqref{ramifiedJ-L1-1}, we have the inequality 
\begin{align*}
1=|t'|_v\leq \left|\varpi_v^{f-l}\tfrac{-b}{b+1}\right|_v=|\varpi_v^{f}|_v=q_v^{-f},
\end{align*}
which is impossible due to $f>0$. From the considerations so far, we have the inequality $|\varpi_v^{l}b^{-1}|_v<1$, which yields $1+\varpi_v^{l}b^{-1}\cO_v\subset \cO_v^\times$. Hence, from \eqref{ramifiedJ-L1-1}, we have the second equality of  
$$
1=|t'|_v=\left|\varpi_v^{f-l}\tfrac{-b}{b+1}\right|_v,
$$
which implies $l=f-{\rm{ord}}_v(b+1)+{\rm{ord}}_v(b)$. From this and $l>f$, we have ${\rm{ord}}_v(b+1)<{\rm{ord}}_v(b)$, which holds if and only if ${\rm{ord}}_v(b)>0$ and ${\rm{ord}}_v(b+1)=0$.

If we set $t'=\varpi_v^{f-l}\frac{-b}{b+1}r$, then \eqref{ramifiedJ-L1-1} becomes $r\in 1+\varpi_v^{l}b^{-1}\cO_v=1+\varpi_v^{f}\cO_v$; thus
\begin{align*}
\int_{t\in D_l(b)}\eta_v(t)\,\d^\times t&=
\eta_v(\varpi_v^{l}) \,\eta_v\left(\varpi_v^{f-l}\tfrac{-b}{b+1}\right)
\int_{r\in 1+\varpi_v^{f}\cO_v}\eta_v(r)\,\d^\times r\\
&=\eta_v\left( \varpi_v^{f}\tfrac{-b}{b+1}\right)\,q_v^{-f-d_{v}/2}(1-q_v^{-1})^{-1}.
\end{align*} 
\end{proof}

\begin{lem}\label{ramifiedJ-L2}
Let $l<f$. Then, $D_l(b)=\emp$ unless $l=f-{\rm{ord}}_v(b+1)+{\rm{ord}}_{v}(b)$, in which case, we have ${\rm{ord}}_v(b+1)>0$, ${\rm{ord}}_v(b)=0$ and 
\begin{align*}
\int_{t\in D_l(b)}\eta_v(t)\,\d^\times t=
\eta_v\left(-\varpi_v^{f}\,b(b+1)^{-1}\right)\,(1-q_v^{-1})^{-1}q_v^{-f-d_{v}/2}. 
\end{align*}
\end{lem}
\begin{proof} 
 This is proved in the same way as the previous lemma. 
\end{proof}

\begin{lem} \label{ramifiedJ-L3}
The set $D_f(b)$ is empty unless ${\rm{ord}}_v(b)={\rm{ord}}_v(b+1)\leq 0$, in which case 
\begin{align*}
\int_{t\in D_f(b)}\eta_v(t) \,\d^\times t&=\delta\left(b(b+1)^{-1}\in \cO_v^\times\right) \, \eta_v\left(-\varpi_v^{f}\,b(b+1)^{-1}\right)\,(1-q_v^{-1})^{-1}\,q_v^{-f+\ord_{v}(b)-d_{v}/2}.
\end{align*}
\end{lem}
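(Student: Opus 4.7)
The plan is to mimic the substitution strategy used in Lemmas~\ref{ramifiedJ-L1} and \ref{ramifiedJ-L2}, but to perform a more delicate case analysis because $|t|_v = q_v^{-f}$ now makes the factor $|1+t\varpi_v^{-f}|_v$ genuinely sensitive to cancellations. First I would set $t = -\varpi_v^f u$, which turns the defining condition $-t\in \varpi_v^f(\cO_v^\times - U_v(f))$ into the requirement that $u \in \cO_v^\times$ with $|1-u|_v > q_v^{-f}$, and which transforms measures as $\d^\times t = \d^\times u$. Substituting gives $1+t\varpi_v^{-f} = 1-u$ and $b+t\varpi_v^{-f}(b+1) = b - u(b+1)$, so the product bound becomes $|1-u|_v\,|b-u(b+1)|_v \le q_v^{-f}$; combined with $|1-u|_v > q_v^{-f}$ this forces $|b-u(b+1)|_v < 1$.

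Since the polynomial $b - u(b+1)$ vanishes at $u_0 := b/(b+1)$, the next step is to write $u = u_0 + \xi$, so that $1-u = 1/(b+1) - \xi$, $b - u(b+1) = -\xi(b+1)$, and the product bound reads $|1/(b+1) - \xi|_v \cdot |\xi|_v \cdot |b+1|_v \le q_v^{-f}$. The requirement $u \in \cO_v^\times$ combined with the smallness of $\xi$ implied by the product bound forces $u_0 \in \cO_v^\times$; the alternative $\ord_v(b) \neq \ord_v(b+1)$, which places $u_0$ in $\fp_v$ or outside $\cO_v$, is incompatible with $u \in \cO_v^\times$ and shows $D_f(b) = \emp$. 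Note that $\ord_v(b) = \ord_v(b+1)$ is automatically $\le 0$, since $1 = (b+1) - b$ prevents both from lying in $\fp_v$.

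Assuming $u_0 \in \cO_v^\times$, I would split into cases on the relative size of $|\xi|_v$ and $|1/(b+1)|_v$ to evaluate $|1-u|_v$, and then combine with both the product bound and the restriction $|1-u|_v > q_v^{-f}$ to pin down the admissible $\xi$-region. This turns out to be a single coset of an appropriate power of $\fp_v$ (possibly an annular piece when $|b+1|_v > 1$), whose additive volume yields the power of $q_v$ appearing in the claimed formula. On the admissible region, using the triviality of $\eta_v$ on $U_v(f)$ together with the relation $u \in u_0\,U_v(f)$, one verifies that $\eta_v(t) = \eta_v(-\varpi_v^f u_0) = \eta_v(\varpi_v^f \cdot (-b)/(b+1))$ is constant; pulling this constant out of the integral yields the stated answer.

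The main obstacle will be the bookkeeping around the critical shell $|\xi|_v = |1/(b+1)|_v$, where cancellation between $1/(b+1)$ and $\xi$ can occur and must be reconciled with the exclusion $u \notin U_v(f)$. In this respect the present lemma is substantially more intricate than Lemmas~\ref{ramifiedJ-L1} and \ref{ramifiedJ-L2}, in which $|t|_v \neq q_v^{-f}$ made $|1+t\varpi_v^{-f}|_v$ automatic and the parameterization essentially trivial; it is exactly the possibility of cancellation in $1+t\varpi_v^{-f}$ that motivates the extra exclusion of $U_v(f)$ in the definition of $D_f(b)$ and creates the casework needed here.
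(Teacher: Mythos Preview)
Your approach is essentially the paper's: both substitute $t=-\varpi_v^{f}u$ (the paper writes $t'$ for your $u$), identify $u_0=b/(b+1)$ as the center of the solution set, deduce that $u_0\in\cO_v^\times$ (equivalently $\ord_v(b)=\ord_v(b+1)\le 0$) is forced, and use $u\in u_0\,U_v(f)$ for the constancy of $\eta_v$. The only difference is organizational---rather than splitting on $|\xi|_v$ versus $|1/(b+1)|_v$, the paper shows directly that the defining conditions are equivalent to $t'\in u_0(1+\varpi_v^{f}b^{-1}\cO_v)$ together with $u_0\in\cO_v^\times$ and $b\notin\fp_v$, so your anticipated ``annular piece'' never materializes.
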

\begin{proof}
By $t=-\varpi_v^{f}t'$, the set $D_f(b)$ is mapped bijectively onto the set of $t'$ such that 
\begin{align}
&t'\in \cO_v^{\times}-U_v(f), 
 \label{ramifiedJ-L3-1}
\\
&|1-t'|_v\,|b-t'(b+1)|_v\leq q_v^{-f}. 
 \label{ramifiedJ-L3-2}
\end{align}
We shall show that \eqref{ramifiedJ-L3-1} and \eqref{ramifiedJ-L3-2} are equivalent to the following conditions:
\begin{align}
&t'\in \tfrac{b}{b+1}(1+\varpi_v^{f}b^{-1}\cO_v), 
 \label{ramifiedJ-L3-3}
\\
&\tfrac{b}{b+1}\in \cO_v^{\times}, \qquad b\not\in \fp_v.
 \label{ramifiedJ-L3-4}
\end{align}
Noting that, under the condition \eqref{ramifiedJ-L3-4}, the sets $U_v(1)$ and $\frac{b}{b+1}(1+\varpi_v^{f}b^{-1}\cO_v)$ are disjoint, we see easily that \eqref{ramifiedJ-L3-3} and \eqref{ramifiedJ-L3-4} imply \eqref{ramifiedJ-L3-1} and \eqref{ramifiedJ-L3-2}. To have the converse, we first observe that \eqref{ramifiedJ-L3-1} is equivalent to $t'\in \cO_v^\times$ and $|t'-1|_v>q_v^{-f}$. Hence by \eqref{ramifiedJ-L3-2}, \begin{align*}
|b-t'(b+1)|_v\leq q_v^{-f}|t'-1|_v^{-1} <1,
\end{align*}
or equivalently 
\begin{align}
b-t'(b+1)\in \fp_v.
\label{ramifiedJ-L3-5}
\end{align}
If $b\in \fp_v$, then $b+1\in\cO_v^\times$. From these and \eqref{ramifiedJ-L3-5}, $t'\in \frac{b}{b+1}+\fp_v=\fp_v$; this contradicts $t'\in \cO_v^\times$. Thus $b\not\in \fp_v$ is obtained. From \eqref{ramifiedJ-L3-5}, we have $t'\frac{b+1}{b}\in 1+ \fp_v \subset \cO_v^\times$. Since $t'\in \cO_v^\times$ by \eqref{ramifiedJ-L3-1}, $\frac{b}{b+1}\in \cO_v^\times$ is obtained. From \eqref{ramifiedJ-L3-5}, 
\begin{align*}
t'\in \tfrac{b}{b+1}+\tfrac{1}{b+1}\fp_v=\tfrac{b}{b+1}(1+b^{-1}\fp_v).
\end{align*}
Since $b^{-1}\in \cO_v$, we have $t' \in \frac{b}{b+1}\,U_v(1)$, which yields $t'\in \cO_v^\times -U_v(1)$ because $\frac{b}{b+1}U_v(1)\cap U_v(1)=\emp$. Thus $|t'-1|_v=1$. Combining this with \eqref{ramifiedJ-L3-2}, we obtain \eqref{ramifiedJ-L3-3}. This settles the desired converse implication. 

Consequently, we have
\begin{align*}
\int_{t\in D_f(b)} \eta_v(t)\, \d^\times t&=
\eta_v(-\varpi_v^{f})\,\int \eta_v(t')\,\d^\times t'
\\
&=\delta\left(\tfrac{b}{b+1}\in \cO_v^\times, \,b\not\in \fp_v\right)\,
\eta_v(-\varpi_v^{f})\,\eta_v\left(\tfrac{b}{b+1}\right) 
\int_{r\in 1+\varpi_v^{f}b^{-1}\cO_v} \eta_v(r)\,\d^\times r
\\
&=\delta\left(\tfrac{b}{b+1}\in \cO_v^\times\right)\,
\eta_v\left(\varpi_v^{f}\tfrac{-b}{b+1}\right)\,\delta(b\in \cO_v^\times)\,q_v^{-f+\ord_{v}(b)-d_{v}/2}(1-q_v^{-1})^{-1}. 
\end{align*}
\end{proof}

\begin{lem} \label{ramifiedJ-exact}
Let $\eta_v$ be a character of $F_v^{\times}$ of order $2$ and of conductor $f>0$. Then, for $b\in F_v^{\times}-\{-1\}$, we have
\begin{align*}
J_v^{\eta_v}(b)
&=\delta(b\in \fp_v^{-f})\,\big\{\eta_v(-1)
+( \delta(b \in \go_{v})+\delta(b \notin \go_{v})q_{v}^{\ord_{v}(b)})
\eta_v(-b(b+1))\}\,q_v^{-f-d_{v}/2}(1-q_v^{-1})^{-1}. 
\end{align*}
\end{lem}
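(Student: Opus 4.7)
Since $v\in S(\ff)$ and $(\fn,\ff)=\go$, we have $v\notin S(\fn)$, hence $\Phi_{v,\fn}^{(0)}=\Phi_{0,v}^{(0)}$ is the characteristic function of $H_v\bfK_v$, and
\[
J_v^{\eta_v}(b)=\int_{F_v^\times}\delta(g(t)\in H_v\bfK_v)\,\eta_v(t\varpi_v^{-f})\,d^\times t,\qquad g(t):=\delta_b\Bigl[\begin{smallmatrix}t&0\\0&1\end{smallmatrix}\Bigr]\Bigl[\begin{smallmatrix}1&\varpi_v^{-f}\\0&1\end{smallmatrix}\Bigr].
\]
A direct multiplication gives the bottom row $(t,\,1+t\varpi_v^{-f})$ of $g(t)$ and $\det g(t)=b^{-1}t$. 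I plan to partition $F_v^\times=A\sqcup B$ with $B:=-\varpi_v^{f}U_v(f)$ (the locus where $|1+t\varpi_v^{-f}|_v\le q_v^{-f}$) and $A$ its complement, perform two Iwasawa decompositions, and reduce the contribution on $A$ to the three preceding lemmas and the contribution on $B$ to a direct computation.

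On $A$, the lower unipotent $\bigl[\begin{smallmatrix}1&0\\ t/(1+t\varpi_v^{-f})&1\end{smallmatrix}\bigr]$ lies in $\bfK_v$, giving $g(t)\in H_v\bigl[\begin{smallmatrix}1&x\\0&1\end{smallmatrix}\bigr]\bfK_v$ with $x=(1+t\varpi_v^{-f})(b+(b+1)t\varpi_v^{-f})/t$. The condition $x\in\go_v$ restricted to $\{t\in A:|t|_v=q_v^{-l}\}$ is precisely the defining condition of $D_l(b)$; at $l=f$, the exclusion $-t\notin\varpi_v^f U_v(f)$ built into $D_f(b)$ removes exactly the locus $B$. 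Multiplying the values of Lemmas~\ref{ramifiedJ-L1}--\ref{ramifiedJ-L3} by $\eta_v(\varpi_v^{-f})$ converts $\eta_v(t)\,d^\times t$ into $\eta_v(t\varpi_v^{-f})\,d^\times t$, and using $\eta_v^{2}=\mathbf{1}$ collapses $\eta_v(\varpi_v^f(-b/(b+1)))\,\eta_v(\varpi_v^{-f})$ to $\eta_v(-b(b+1))$. On $B$, an analogous Iwasawa computation using the Weyl element $w_{0}$ yields $g(t)\in H_v\bigl[\begin{smallmatrix}1 & (b+1)t \\ 0 & 1\end{smallmatrix}\bigr]\bfK_v$, so $g(t)\in H_v\bfK_v$ iff $(b+1)t\in\go_v$; since $|t|_v=q_v^{-f}$ on $B$, this is equivalent to $b\in\fp_v^{-f}$. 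Using that $\eta_v|_{U_v(f)}=\mathbf{1}$ by the definition of the conductor and $\vol(U_v(f))=q_v^{-f-d_v/2}/(1-q_v^{-1})$, the Case B contribution equals $\delta(b\in\fp_v^{-f})\,\eta_v(-1)\,q_v^{-f-d_v/2}/(1-q_v^{-1})$.

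The proof is then finished by a case analysis on $\ord_v(b)$ under the assumption $b\in\fp_v^{-f}$ (else everything vanishes). For $\ord_v(b)\ge 0$ exactly one of Lemmas~\ref{ramifiedJ-L1} (when $\ord_v(b)>0$), \ref{ramifiedJ-L2} (when $\ord_v(b+1)>0$) or \ref{ramifiedJ-L3} (when $b,b+1\in\go_v^\times$) is active and contributes the common value $\eta_v(-b(b+1))\,q_v^{-f-d_v/2}/(1-q_v^{-1})$, matching the $\delta(b\in\go_v)$ piece of the target; for $-f\le\ord_v(b)<0$ only Lemma~\ref{ramifiedJ-L3} contributes (with $\ord_v(b)=\ord_v(b+1)<0$), yielding $\eta_v(-b(b+1))\,q_v^{\ord_v(b)-f-d_v/2}/(1-q_v^{-1})$ and matching the $\delta(b\notin\go_v)\,q_v^{\ord_v(b)}$ piece. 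The main subtlety is choosing the Case B cutoff as $-\varpi_v^{f}U_v(f)$ rather than the strict Iwasawa boundary $|\gamma|_v>|\delta|_v$, so that the Case A support exhausts $\bigsqcup_l D_l(b)$ and the Case B integral patches in without double-counting; once this is set up, everything reduces to the preceding lemmas and elementary bookkeeping.
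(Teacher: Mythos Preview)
Your proof is essentially the paper's own argument, with the citation of \cite[Lemmas 11.4 and 11.5]{Tsud} replaced by an explicit Iwasawa decomposition; the paper likewise splits the integral into $J_{v,1}^{\eta_v}(b)$ (your Case~B on $-\varpi_v^{f}U_v(f)$) and $J_{v,2}^{\eta_v}(b)$ (your Case~A, decomposed as $\bigsqcup_l D_l(b)$), and then invokes Lemmas~\ref{ramifiedJ-L1}--\ref{ramifiedJ-L3} in the same way you do.

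One point deserves a sentence of justification: your parenthetical ``(else everything vanishes)'' for $b\notin\fp_v^{-f}$ is not an immediate consequence of the \emph{stated} formulas in Lemmas~\ref{ramifiedJ-L1}--\ref{ramifiedJ-L3}. Indeed, when $\ord_v(b)<-f$ the hypothesis $\ord_v(b)=\ord_v(b+1)\le 0$ of Lemma~\ref{ramifiedJ-L3} is satisfied and its displayed formula is nonzero, so one cannot read off vanishing from that statement alone. The paper avoids this by pulling the factor $\delta(b\in\fp_v^{-f})$ from \cite{Tsud} at the very start, so that whatever $J_{v,2}^{\eta_v}(b)$ evaluates to is irrelevant outside $\fp_v^{-f}$. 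In your setup you should either (i) invoke the same external input, or (ii) add the one-line observation that for $\ord_v(b)\le -f$ the set $D_f(b)$ is actually empty (because the required congruence forces $t'\in U_v(f)$, contradicting $t'\in\go_v^\times\setminus U_v(f)$), so Case~A genuinely contributes nothing. With that patched, your argument and the paper's coincide.
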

\begin{proof} From \cite[Lemmas 11.4 and 11.5]{Tsud}, 
\begin{align*}
J_v^{\eta_v}(b)&= \delta(b\in \fp_v^{-f})(J^{\eta_v}_{v,1}(b)+J^{\eta_v}_{v,2}(b))
\end{align*} 
with 
\begin{align*}
J^{\eta_v}_{v,1}(b)=\int_{\substack{-t\in \varpi_v^{f}\,U_v(f) \\ |t|_v|b+1|_v \leq 1}} \eta_v(-1)\,\d^\times t ,\qquad 
J^{\eta_v}_{v,2}(b)=\int_{\substack{ -t\in F_v^\times-\varpi_v^{f}\,U_v(f) \\ |1+t\varpi_v^{-f}|_v\,|b+t\varpi_v^{-f}(b+1)|_v \leq |t|_v}} \eta_v(t\varpi_v^{-f})\,\d^\times t. 
\end{align*}
If $b\in \fp_{v}^{-f}$, then $t\in -\varpi_v^{f}\,U_v(f)$ implies $|b+1|_v\leq q_v^{f}=|t|_v^{-1}$; thus, 
\begin{align*}
J_{v,1}^{\eta_v}(b)&=\eta_v(-1)\,{\rm{vol}}(-\varpi_v^{f}U_v(f);\d^\times t)=\eta_v(-1){\rm{vol}}(U_v(f);\d^\times t)=\eta_v(-1)\,q_{v}^{-f-d_{v}/2}(1-q_v^{-1})^{-1}.
\end{align*}
The integral domain of $J_{v,2}^{\eta_v}(b)$ is a disjoint union of the sets $D_l(b)\,(l\in \Z)$. From Lemmas~\ref{ramifiedJ-L1}, \ref{ramifiedJ-L2} and \ref{ramifiedJ-L3}, we have 
$$
J_{v,2}^{\eta_v}(b)=(\delta(b\in \go_{v})+\delta(b \notin \go_{v})q_{v}^{\ord_{v}(b)})\eta_v\left(\tfrac{-b}{b+1}\right)\,q_v^{-f-d_{v}/2}(1-q_v^{-1})^{-1}.
$$
\end{proof}

\begin{lem} \label{ramifiedJ}
Let $\eta$ be an idele class character of $F^\times$ with conductor $\ff$ such that $\eta^2=\1$. There exists a constant $C>1$ independent of $\eta$ such that
\begin{align*}
|J_v^{\eta_v}(b)|\leq C\,\delta(|b|_v\leq q_v^{f(\eta_v)})\,q_v^{-f(\eta_v)}
\end{align*}
for any $v\in S(\ff)$ and for any $b\in F_v^{\times}-\{-1\}$. 
\end{lem}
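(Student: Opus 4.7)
The plan is to read the estimate directly off the exact evaluation in Lemma~\ref{ramifiedJ-exact} by bounding each factor trivially. Fix $v\in S(\ff)$ and set $f=f(\eta_v)\geq 1$. Lemma~\ref{ramifiedJ-exact} expresses $J_v^{\eta_v}(b)$ as
\[
J_v^{\eta_v}(b) = \delta(b\in \fp_v^{-f})\,A_v(b)\,q_v^{-f-d_v/2}(1-q_v^{-1})^{-1},
\]
where
\[
A_v(b)=\eta_v(-1)+\bigl(\delta(b\in\go_v)+\delta(b\notin \go_v)\,q_v^{\ord_v(b)}\bigr)\,\eta_v(-b(b+1)),
\]
and the indicator $\delta(b\in \fp_v^{-f})$ is precisely $\delta(|b|_v\leq q_v^{f})$.

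I then bound the three factors independently. Since $\eta^2=\1$, the local character $\eta_v$ takes values in $\{\pm 1\}$, so $|\eta_v(\cdot)|\leq 1$ wherever it is defined (note $b(b+1)\neq 0$ because $b\in F_v^\times-\{-1\}$). If $b\in\go_v$, then $|A_v(b)|\leq 2$ directly; if $b\notin\go_v$, then $\ord_v(b)<0$ so $q_v^{\ord_v(b)}<1$ and again $|A_v(b)|\leq 2$. The uniform inequality $q_v\geq 2$ gives $(1-q_v^{-1})^{-1}\leq 2$, and $d_v\geq 0$ gives $q_v^{-d_v/2}\leq 1$. Multiplying these bounds yields
\[
|J_v^{\eta_v}(b)| \leq 4\,\delta(|b|_v\leq q_v^{f})\,q_v^{-f},
\]
which is exactly the claim with the absolute constant $C=4$. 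There is no substantive obstacle here: the analytic content has already been absorbed into Lemma~\ref{ramifiedJ-exact}, and the only thing to check is that the resulting constant is independent of both $\eta$ and $v$, which follows from the uniform lower bound $q_v\geq 2$ and the nonnegativity of $d_v$.
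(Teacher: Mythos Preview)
Your proof is correct and is essentially the same as the paper's; the paper merely states that the bound is obvious from Lemma~\ref{ramifiedJ-exact} with $C=4$, and you have spelled out exactly the trivial factor-by-factor estimates that make this so.
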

\begin{proof}
 This is obvious from the previous lemma. Indeed, $C=4$ is sufficient. 
\end{proof}

\begin{cor} \label{ramifiedJ-cor}
For any $\epsilon>0$, we have
\begin{align*}
|\prod_{v\in S(\ff)} J_v^{\eta_v}(b)|\ll_{\epsilon} \{\prod_{v\in S(\ff)}\delta(b\in \fp_v^{-f(\eta_v)})\}\,\nr(\ff)^{-1+\epsilon}, \quad b\in F^{\times}-\{-1\}
\end{align*}
with the implied constant independent of $\eta$ and $b\in F^{\times}-\{-1\}$. 
\end{cor}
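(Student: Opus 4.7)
The plan is to multiply the single-place estimates of Lemma~\ref{ramifiedJ} across all $v\in S(\ff)$ and then absorb the resulting $|S(\ff)|$-dependent factor into $\nr(\ff)^{\epsilon}$ by a standard divisor-function argument.

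Concretely, Lemma~\ref{ramifiedJ} furnishes a constant $C>1$, independent of $\eta$ and of the place $v$, such that for every $v\in S(\ff)$ and every $b\in F_v^{\times}-\{-1\}$,
$$
|J_v^{\eta_v}(b)|\le C\,\delta(|b|_v\le q_v^{f(\eta_v)})\,q_v^{-f(\eta_v)}.
$$
Since the condition $|b|_v\le q_v^{f(\eta_v)}$ is precisely $b\in\fp_v^{-f(\eta_v)}$, taking the product over $v\in S(\ff)$ and using $\prod_{v\in S(\ff)}q_v^{-f(\eta_v)}=\nr(\ff)^{-1}$ yields
$$
\Bigl|\prod_{v\in S(\ff)}J_v^{\eta_v}(b)\Bigr|
\le C^{|S(\ff)|}\,\Bigl\{\prod_{v\in S(\ff)}\delta(b\in\fp_v^{-f(\eta_v)})\Bigr\}\,\nr(\ff)^{-1}.
$$

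It remains to bound $C^{|S(\ff)|}$ by $\nr(\ff)^{\epsilon}$. Since $|S(\ff)|$ equals the number $\omega(\ff)$ of prime divisors of $\ff$, this is a standard fact: for any fixed $C>1$ and $\epsilon>0$ one has $C^{\omega(\fa)}\ll_{C,\epsilon}\nr(\fa)^{\epsilon}$ uniformly in integral ideals $\fa$, as a consequence of the corresponding estimate $d(\fa)\ll_{\epsilon}\nr(\fa)^{\epsilon}$ for the divisor function of $\cO$. Applying this with $\fa=\ff$ and combining with the displayed inequality gives the claimed bound, with an implied constant depending only on $\epsilon$ (through $C$, which is absolute).

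There is no essential obstacle here; the only point requiring a brief word is the uniformity of $C$ in the lemma applied and the uniformity of the divisor bound over all ideals of $\cO$, both of which are already in place.
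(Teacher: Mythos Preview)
Your proof is correct and follows essentially the same approach as the paper: multiply the pointwise bounds from Lemma~\ref{ramifiedJ} and then control $C^{|S(\ff)|}$ by $\nr(\ff)^{\epsilon}$. The only difference is cosmetic: you invoke the bound $C^{\omega(\ff)}\ll_{\epsilon}\nr(\ff)^{\epsilon}$ as a standard divisor-type fact, whereas the paper proves it in place by splitting $S(\ff)$ into the finite set $P(\epsilon)=\{v:q_v\le C^{1/\epsilon}\}$ and its complement.
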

\begin{proof}
Given $\epsilon>0$, let $P(\epsilon)$ be the set of $v\in \Sigma_\fin$ such that $q_v\leq C^{1/\epsilon}$, where $C>1$ is the constant in the previous lemma. Then, from the lemma, 
$$|J_v^{\eta_v}(b)|\leq C\,\delta(|b|_v\leq q_v^{f(\eta_v)})\,q_v^{-f(\eta_v)+\epsilon} \quad \text{if $v\in S(\ff)\cap P(\epsilon)$}
$$
and 
$$|J_v^{\eta_v}(b)|\leq \delta(|b|_v\leq q_v^{f(\eta_v)})\,q_v^{-f(\eta_v)+\epsilon} \quad \text{if $v\in S(\ff)-P(\epsilon)$.}
$$
Taking the product of these inequalities, we have 
{\small \begin{align*}
|\prod_{v\in S(\ff)} J_v^{\eta_v}(b)|
&=\{\prod_{v\in S(\ff)\cap P(\epsilon)} |J_v^{\eta_v}(b)|\}\,\{\prod_{v\in S(\ff)-P(\epsilon)} |J_v^{\eta_v}(b)|\}
\leq C^{\#S(\ff)}\,\{\prod_{v\in S(\ff)}\delta(b\in \fp_v^{-f(\eta_v)})\}\,\nr(\ff)^{-1+\epsilon}.
\end{align*}}
\end{proof}

\subsection{An evaluation of archimedean integrals}

In this subsection, we evaluate the integral 
\begin{align}
J^{\eta}(l;b)=\int_{\R^\times} \Psi^{(0)}\left(l;\delta_b\,\left[\begin{smallmatrix} t & 0 \\ 0 & 1 \end{smallmatrix}\right]\right)\,\eta(t)\,\d^\times t
, \quad b\in \R^{\times}-\{-1\}
 \label{JIntegral}
\end{align}
explicitly, where $\eta:\R^\times \rightarrow \{\pm 1\}$ is a character, and $\Psi^{(0)}(l;-)$ is the holomorphic Shintani function of weight $l \,(\geq 4)$.  

\begin{lem}\label{MMS6.1} We have 
$$
J^\eta(l;b)=\int_{\R^\times}(1-it)^{-l/2}(1+b+t^{-1}bi)^{-l/2}\,\eta(t)\,\d^\times t, \quad b\in \R^{\times}-\{-1\}.
$$
\end{lem}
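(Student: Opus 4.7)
The integrand in the definition of $J^\eta(l;b)$ is $\Psi^{(0)}(l;M)\eta(t)$ with $M=\delta_b[\begin{smallmatrix}t&0\\0&1\end{smallmatrix}]=[\begin{smallmatrix}(1+b^{-1})t&1\\t&1\end{smallmatrix}]$, so the lemma reduces to verifying the pointwise identity $\Psi^{(0)}(l;M)=(1-it)^{-l/2}(1+b+it^{-1}b)^{-l/2}$ for each $t\in\R^\times$ and $b\in\R^\times-\{-1\}$; integrating against $\eta(t)\,\d^\times t$ then gives the lemma.

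The plan is to establish this identity by an explicit Iwasawa decomposition of $M$. Because the second row of $M$ is $(t,1)$, matching entries forces the rotation factor to satisfy $\cos\theta=1/\sqrt{1+t^2}$, $\sin\theta=t/\sqrt{1+t^2}$; a short matrix multiplication then verifies
\[
M=\left[\begin{smallmatrix}t/(b\sqrt{1+t^2})&0\\ 0&\sqrt{1+t^2}\end{smallmatrix}\right]\left[\begin{smallmatrix}1&(b+1)t+b/t\\ 0&1\end{smallmatrix}\right]k_\theta,\qquad e^{i\theta}=\frac{1+it}{\sqrt{1+t^2}}.
\]
This factorization holds for every admissible $t,b$: the diagonal entry $t/(b\sqrt{1+t^2})$ may be negative, but property (S-i) of Proposition~\ref{Sh-ftn} allows arbitrary elements of the full torus $T$. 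Applying (S-i) at $z=0$ (so that the $|{\cdot}|^{0}$-factor kills the diagonal contribution) and Lemma~\ref{Shintani-val-unipotent} to evaluate the unipotent factor yields $\Psi^{(0)}(l;M)=\tau_l(k_\theta)\,(1+ix)^{-l/2}$ with $x=(b+1)t+b/t$.

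The final simplification rests on the algebraic identity
\[
(1+it)(1+b+ib/t)=1+i\bigl((b+1)t+b/t\bigr)=1+ix,
\]
verified by direct expansion. Since $l/2$ is a positive integer, no branch issues arise in the fractional powers. Combining this identity with $1+t^2=(1+it)(1-it)$ and $\tau_l(k_\theta)=(1+it)^l/(1+t^2)^{l/2}$, all factors of $(1+it)$ telescope to leave $(1-it)^{-l/2}(1+b+ib/t)^{-l/2}$, as asserted. The only real obstacle is carrying out the Iwasawa decomposition cleanly in a form valid for all signs of $t$ and $b$; the algebraic identity for $1+ix$ is elementary but is the crucial step that makes the final formula take the claimed factored form.
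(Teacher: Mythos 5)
Your proof is correct and follows essentially the same route as the paper's: both compute the Iwasawa decomposition of $\delta_b\left[\begin{smallmatrix}t&0\\0&1\end{smallmatrix}\right]$ to find $e^{i\theta}=(1+it)/\sqrt{1+t^2}$ and $x=(b+1)t+b/t$, apply (S-i) at $z=0$ together with Lemma~\ref{Shintani-val-unipotent}, and then use the factorization $(1+it)(1+b+ib/t)=1+ix$ to collapse $\tau_l(k_\theta)(1+ix)^{-l/2}$ to the stated form. You are merely more explicit than the paper about the diagonal factor (including its possibly negative entry, handled by $z=0$) and about the fact that $l/2\in\ZZ_{\ge 1}$ makes all the power manipulations branch-cut-free.
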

\begin{proof}
From Lemma~\ref{Shintani-val-unipotent}, 
\begin{align*}
\Psi^{(0)}\left(l ;\delta_b \left[\begin{smallmatrix} t & 0 \\ 0 & 1 \end{smallmatrix}\right]\right)=e^{il\theta}\,(1+ix)^{-l/2}\quad \text{if} \quad \delta_b \left[\begin{smallmatrix} t & 0 \\ 0 & 1 \end{smallmatrix}\right]\in T \left[\begin{smallmatrix} 1 & x \\ 0 & 1 \end{smallmatrix}\right]\,k_\theta.
\end{align*}
A direct computation yields $e^{i\theta}=\tfrac{1+it}{\sqrt{t^2+1}}$ and $x=bt^{-1}+t(b+1)$. Thus,
\begin{align*}
J^{\eta}(l;b)&=\int_{\R^\times} \left(\tfrac{1+it}{\sqrt{t^2+1}}\right)^{l}\,\{1+i(bt^{-1}+t(b+1))\}^{-l/2}\,\eta(t)\,\d^\times t
\\
&=\int_{\R^\times}(1-it)^{-l/2}(1+b+t^{-1}bi)^{-l/2}\,\eta(t)\,\d^\times t.
\end{align*}
\end{proof}

\begin{lem} \label{Jplus}
Define 
$$ J_{+}(l;b)=i^{l/2}\,(1+b)^{-l/2} \int_{0}^{\infty} (t+i)^{-l/2}\left(t+\tfrac{bi}{b+1}\right)^{-l/2}t^{l/2-1}\,\d t. 
$$
Then 
$$
J^{\1}(l;b)=J_{+}(l;b)+\overline{J_{+}(l;b)}, \qquad J^{{\rm{sgn}}}(l;b)=J_{+}(l;b)-\overline{J_{+}(l;b)}. 
$$
\end{lem}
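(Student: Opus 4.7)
The plan is to start from the explicit integral representation in Lemma~\ref{MMS6.1},
$$J^\eta(l;b)=\int_{\R^\times}(1-it)^{-l/2}(1+b+t^{-1}bi)^{-l/2}\,\eta(t)\,\d^\times t,$$
split the range $\R^\times$ into the positive and negative half-lines, and identify each piece with $J_+(l;b)$ or its complex conjugate. Because $l$ is an even integer the exponent $l/2$ is a positive integer, so all the powers appearing are rational functions; in particular there are no branch ambiguities and complex conjugation commutes with $(\cdot)^{-l/2}$.

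First I would treat $\int_{0}^{\infty}$. The key algebraic identity is $1-it=-i(t+i)$, which, since $l/2\in\mathbb{Z}$, immediately gives $(1-it)^{-l/2}=(-i)^{-l/2}(t+i)^{-l/2}=i^{l/2}(t+i)^{-l/2}$ using $(-i)^{-1}=i$. Factoring $(1+b)^{-l/2}$ out of $(1+b+t^{-1}bi)^{-l/2}$ and multiplying numerator and denominator by $t^{l/2}$ converts $(1+\tfrac{bi}{t(1+b)})^{-l/2}$ into $t^{l/2}(t+\tfrac{bi}{b+1})^{-l/2}$. Combining these two manipulations with $\d^\times t=t^{-1}\,\d t$ produces exactly the integrand defining $J_+(l;b)$, hence
$$\int_{0}^{\infty}(1-it)^{-l/2}(1+b+t^{-1}bi)^{-l/2}\,\d^\times t=J_+(l;b).$$

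Next I would handle $\int_{-\infty}^{0}$ by the substitution $t=-s$ with $s>0$, yielding
$$\int_{0}^{\infty}(1+is)^{-l/2}(1+b-s^{-1}bi)^{-l/2}\,\d^\times s.$$
Since $b\in\R$, the factors $(1+is)^{-l/2}$ and $(1+b-s^{-1}bi)^{-l/2}$ are the complex conjugates of $(1-is)^{-l/2}$ and $(1+b+s^{-1}bi)^{-l/2}$ respectively (again using that $l/2\in\mathbb{Z}$ so these are honest polynomials in $s$ with real coefficients, inverted). Pulling the conjugation out of the real integral and invoking the computation of the previous paragraph gives
$$\int_{-\infty}^{0}(1-it)^{-l/2}(1+b+t^{-1}bi)^{-l/2}\,\d^\times t=\overline{J_+(l;b)}.$$

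Finally I would combine the two pieces using $\1(t)=1$ for all $t\neq 0$ and $\sgn(t)=\pm 1$ on the two half-lines:
$$J^{\1}(l;b)=J_+(l;b)+\overline{J_+(l;b)},\qquad J^{\sgn}(l;b)=J_+(l;b)-\overline{J_+(l;b)},$$
which is the claim. There is no real obstacle — the only point requiring care is tracking the factor $(-i)^{-l/2}=i^{l/2}$, and verifying that conjugation passes through the integral, both of which are trivial once one observes that $l/2$ is an integer.
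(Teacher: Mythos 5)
Your proof is correct and follows exactly the route the paper intends: the paper's own proof is simply the one-line remark that one splits the integral in Lemma~\ref{MMS6.1} at $t=0$, and you have filled in the (straightforward) algebra — the factorization $1-it=-i(t+i)$ with $(-i)^{-l/2}=i^{l/2}$, the extraction of $(1+b)^{-l/2}$ and the factor $t^{l/2}$, and the recognition of the negative half as the complex conjugate via $t\mapsto -t$ and reality of $b$ — all of which are unambiguous since $l/2\in\mathbb{Z}$. No gaps.
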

\begin{proof}
By dividing the integral $J^\eta(l;b)$ to two parts according to $t>0$ and $t<0$, we obtain the assertions immediately. 
\end{proof}

\begin{lem} \label{EV-Jplus}
 Suppose $b(b+1)>0$. Then 
\begin{align*}
 J_{+}(l;b)&=(1+b)^{-l/2}
\int_{0}^{1} u^{l/2-1}(1-u)^{l/2-1}\left(\tfrac{-1}{b+1}u+1\right)^{-l/2}\,\d u
\\
&=(1+b)^{-l/2}\,{\Gamma(l/2)^2}{\Gamma(l)}^{-1}\,{}_2F_1\left(l/2,l/2;l;(b+1)^{-1}\right)=2\,Q_{l/2-1}(2b+1),
\end{align*}
where $Q_{n}(x)$ is the Legendre function of the 2nd kind. 
\end{lem}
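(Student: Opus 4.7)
The three claimed equalities will be established in turn: first the integral representation on $[0,1]$, then the ${}_{2}F_{1}$-expression via Euler's integral, and finally the identification with $Q_{l/2-1}$ via the standard integral formula for Legendre functions of the second kind.

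For the first equality, the point is to rotate the contour of the defining integral from the positive real axis to the positive imaginary axis. The integrand $(t+i)^{-l/2}\left(t+\tfrac{bi}{b+1}\right)^{-l/2}t^{l/2-1}$ has its two branch points (or poles, since $l/2 \in \NN$) on the negative imaginary axis, because the hypothesis $b(b+1)>0$ forces $c := b/(b+1)>0$; the first quadrant is therefore free of singularities. Decay of the integrand as $|t|\to\infty$ (which uses $l\ge 4$) lets one close the quarter-circle at infinity, so the substitution $t=iu$ gives
\[
J_{+}(l;b)=i^{l/2}(1+b)^{-l/2}\cdot i^{-l/2}\int_{0}^{\infty}u^{l/2-1}(1+u)^{-l/2}(c+u)^{-l/2}\,du,
\]
after collecting the factors of $i$. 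Now apply the change of variable $u=(1-v)/v$, under which $1+u=1/v$ and $c+u=(1-(1-c)v)/v$; since $1-c=1/(b+1)$, this converts the integral over $(0,\infty)$ to the asserted one over $(0,1)$.

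For the second equality, invoke Euler's integral representation
\[
{}_{2}F_{1}(a,b;c;z)=\frac{\Gamma(c)}{\Gamma(b)\Gamma(c-b)}\int_{0}^{1}v^{b-1}(1-v)^{c-b-1}(1-zv)^{-a}\,dv,
\]
valid for $\Re(c)>\Re(b)>0$, applied with $a=b=l/2$, $c=l$ and $z=(b+1)^{-1}$. The Beta-factor $\Gamma(l/2)^{2}/\Gamma(l)$ then appears exactly as claimed.

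For the third equality, use the integral representation
\[
Q_{\nu}(z)=\frac{1}{2^{\nu+1}}\int_{-1}^{1}(1-t^{2})^{\nu}(z-t)^{-\nu-1}\,dt,
\]
valid for $\Re(\nu)>-1$ and $z\notin[-1,1]$. Taking $\nu=l/2-1$ and $z=2b+1$, and substituting $t=1-2u$, one obtains $2Q_{l/2-1}(2b+1)=\int_{0}^{1}u^{l/2-1}(1-u)^{l/2-1}(b+u)^{-l/2}\,du$. A final substitution $u\mapsto 1-u$ matches this against the first expression, after noting that $(1+b)^{-l/2}(1-u/(b+1))^{-l/2}=(b+1-u)^{-l/2}$.

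The only real technical point is the contour rotation in the first step: one must check that the quarter-arc contribution vanishes and that the branch choice of $(t+i)^{-l/2}$ and $(t+ic)^{-l/2}$ is consistent with the principal branch convention fixed in the paper's basic notation. Both verifications are immediate because $l/2$ is a positive integer (so there are no branches, only poles) and $l\ge 4$ makes the integrand decay like $|t|^{-3}$ or better; the rest of the proof consists of routine changes of variable and an appeal to classical identities.
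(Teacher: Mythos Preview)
Your proof is correct and follows essentially the same route as the paper: the paper also rotates the contour from the positive real axis to the positive imaginary axis (using a rectangular contour in the first quadrant rather than a quarter-arc, a purely cosmetic difference), then makes the same substitution to reach the $[0,1]$-integral and invokes Euler's integral for ${}_2F_1$. The only minor divergence is in the last identification: the paper cites the ${}_2F_1$-to-$Q_n$ identity from \cite[p.~233]{MOS}, whereas you go directly through Neumann's integral representation $Q_\nu(z)=2^{-\nu-1}\int_{-1}^{1}(1-t^{2})^{\nu}(z-t)^{-\nu-1}\,dt$ and a linear substitution, which is arguably a shade more self-contained.
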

\begin{proof}
If we set $f(z)=i^{l/2}(1+b)^{-l/2}\,(z+i)^{-l/2}\{z+bi/(b+1)\}^{-l/2}z^{l/2-1}$, then $f(z)$ is a meromorphic function on $\C$ with poles only at $z=-i$ and $\frac{-bi}{1+b}$, both of which are in the lower half plane $\Im(z)<0$. For $R>0$, let $Q_R$ denote the rectangle $0\leq \Im(z)\leq R$, $0\leq \Re(z)\leq R$. Regarding $\partial Q_R$ as a contour with counterclockwise orientation, by Cauchy's theorem, we have $\int_{\partial Q_R} f(z)\,\d z=0.$ From this, 
\begin{align*} 
J_+(l;b)&=\int_{0i}^{i\infty} f(z)\,\d z-\lim_{R\rightarrow \infty} \int_{\partial Q_R-[0,R]\cup i[0,R]} f(z)\,\d z
=\int_{0i}^{i\infty} f(z)\,\d z\\
&=(1+b)^{-l/2}\int_{0}^{+\infty} (t+1)^{-l/2}\left(t+\tfrac{b}{b+1}\right)^{-l/2}t^{l/2-1}\,\d t.
\end{align*}
By the variable change $t+1=u^{-1}$, this becomes 
$$(1+b)^{-l/2}
\int_{0}^{1} u^{l/2-1}(1-u)^{l/2-1}\left(\tfrac{-1}{b+1}u+1\right)^{-l/2}\,\d u.$$
By using the integral representation of ${}_2F_1(a,b;c,z)$ in \cite[p.54]{MOS} here, we obtain
$$
J_+(l;b)=(1+b)^{-l/2}\,{\Gamma(l/2)^2}{\Gamma(l)}^{-1}\,{}_2F_1\left(l/2,l/2;l;(b+1)^{-1}\right). 
$$
If we further apply the formula
$$
{2^{-n}(2n+1)!}(n!)^{-2}\,Q_n(x)=(1+x)^{-(n+1)}{}_2F_1\left(n+1,n+1;2n+2;\tfrac{2}{x+1}\right)
$$
(\cite[p.233]{MOS}) with $n=l/2-1$ and $x=2b+1$, then $J_{+}(l;b)=2\,Q_{l/2-1}(2b+1)$ as desired. 
\end{proof}

\begin{lem}\label{arcInt} 
\begin{itemize}
\item[(1)] If $b(b+1)>0$, then 
$$J^{\1}(l;b)=(1+b)^{-l/2}\,{2\,\Gamma(l/2)^2}{\Gamma(l)}^{-1}\,{}_2F_1\left(l/2,l/2;l;(b+1)^{-1}\right),\qquad J^{{\rm{sgn}}}(l;b)=0.$$ 
\item[(2)] If $b(b+1)<0$, then 
\begin{align}
J^\1(l;b)&=2\log\left|\frac{b+1}{b}\right|\,P_{l/2-1}(2b+1)-\sum_{m=1}^{[l/4]}\frac{8(l-4m+1)}{(2m-1)(l-2m)}\,P_{l/2-2m}(2b+1), 
 \label{arcInt2}
\\
J^{{\rm{sgn}}}(l;b)&=2\pi i\, P_{l/2-1}(2b+1),
 \label{arcInt3}
\end{align}
where $P_n(z)$ denotes the Legendre polynomial of degree $n$. 
\end{itemize}
\end{lem}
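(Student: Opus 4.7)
The plan is to build on Lemma~\ref{Jplus}, which writes $J^{\1}(l;b)=J_{+}(l;b)+\overline{J_{+}(l;b)}$ and $J^{\rm sgn}(l;b)=J_{+}(l;b)-\overline{J_{+}(l;b)}$, and to identify $J_{+}(l;b)$ with the Legendre function $Q_{l/2-1}$ in both regimes. Part~(1) is immediate from Lemma~\ref{EV-Jplus}: when $b(b+1)>0$ the argument $2b+1$ lies outside $[-1,1]$, so $J_{+}(l;b)=2Q_{l/2-1}(2b+1)$ is real; hence $J^{\1}(l;b)=2J_{+}(l;b)$ reproduces the hypergeometric expression of Lemma~\ref{EV-Jplus} while $J^{\rm sgn}(l;b)=0$.

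For part~(2), set $a=-b/(b+1)>0$ and $x=2b+1\in(-1,1)$. Using $(1-it)(1+b+bi/t)=x-is$ with $s=(b+1)t-b/t$ (valid since $l/2\in\NN$), I first rewrite $J_{+}(l;b)=\int_{0}^{\infty}[x-is(t)]^{-l/2}\,dt/t$. The substitution $t=\sqrt{a}\,e^{\phi}$ maps $\RR_{>0}$ bijectively onto $\RR$ with $s=\sqrt{1-x^{2}}\cosh\phi$ and $dt/t=d\phi$, giving
$$
J_{+}(l;b)=2\int_{0}^{\infty}[x-i\sqrt{1-x^{2}}\cosh\phi]^{-l/2}\,d\phi.
$$
By analytic continuation of the classical Laplace integral $Q_{n}(z)=\int_{0}^{\infty}[z+\sqrt{z^{2}-1}\cosh\phi]^{-n-1}\,d\phi$ from $z>1$ down through the lower half-plane to $z=x-i0$ (where $\sqrt{z^{2}-1}\to -i\sqrt{1-x^{2}}$), this equals $2Q_{l/2-1}(x-i0)$. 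Invoking the standard jump relations $Q_{n}(x\mp i0)=\Qcal_{n}(x)\pm\tfrac{i\pi}{2}P_{n}(x)$, where $\Qcal_{n}(x)=\tfrac{1}{2}P_{n}(x)\log\tfrac{1+x}{1-x}-W_{n-1}(x)$ and $W_{n-1}(x)=\sum_{k=1}^{n}P_{k-1}(x)P_{n-k}(x)/k$, and using $\log\tfrac{1+x}{1-x}=\log|(b+1)/b|$, I immediately obtain $J^{\rm sgn}(l;b)=2\pi i\,P_{l/2-1}(2b+1)$, matching \eqref{arcInt3}, and
$$
J^{\1}(l;b)=2P_{l/2-1}(2b+1)\log\bigl|\tfrac{b+1}{b}\bigr|-4W_{l/2-2}(2b+1).
$$

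To match this with \eqref{arcInt2} it remains to establish the classical expansion
$$
W_{n-1}(x)=\sum_{m=1}^{[(n+1)/2]}\frac{2n+3-4m}{(2m-1)(n+1-m)}\,P_{n+1-2m}(x),\qquad n=l/2-1,
$$
which recasts the Neumann-type sum $W_{n-1}$ as a linear combination of single Legendre polynomials; substituting this into the formula above turns $-4W_{l/2-2}$ into precisely the finite sum appearing in \eqref{arcInt2}. I expect this combinatorial identity to be the principal obstacle; it can be proved by induction on $n$ from the three-term recurrence $(n+1)P_{n+1}(x)=(2n+1)xP_{n}(x)-nP_{n-1}(x)$ together with orthogonality on $[-1,1]$, or read off from the standard series for $\Qcal_{n}$ on the cut found in classical references.
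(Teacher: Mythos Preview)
Your argument is correct. Part~(1) is identical to the paper's. For part~(2) you take a genuinely different route, and it is worth contrasting the two.

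The paper already has $J_{+}(l;b)=2Q_{l/2-1}(2b+1)$ for $b(b+1)>0$ from Lemma~\ref{EV-Jplus}; it then quotes the formula in \cite[p.~234]{MOS} to write $Q_{l/2-1}$ explicitly as $P_{l/2-1}\log((b+1)/b)$ minus the finite sum of Legendre polynomials, and finally continues this identity analytically in the variable $b$ across to $(-1,0)$ (noting that $J_{+}$ is holomorphic on $\C$ minus the upper half of the circle $|b+1/2|=1/2$). Taking real and imaginary parts of the extended identity gives \eqref{arcInt2} and \eqref{arcInt3} in one stroke. Your approach avoids the continuation in $b$: you work directly at $b\in(-1,0)$, use the hyperbolic substitution $t=\sqrt{-b/(b+1)}\,e^{\phi}$ to recognise $J_{+}(l;b)$ as the Laplace integral for $2Q_{l/2-1}$ evaluated at the boundary value $(2b+1)-i0$, and then invoke the jump relation for $Q_{n}$ across the cut $[-1,1]$.

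Both methods ultimately rest on the same classical fact about $Q_{n}$ on the cut. What you call the ``principal obstacle'' --- the identity rewriting $W_{n-1}(x)=\sum_{k=1}^{n}P_{k-1}(x)P_{n-k}(x)/k$ as the single-index sum $\sum_{m}\tfrac{2n-4m+3}{(2m-1)(n+1-m)}P_{n+1-2m}(x)$ --- is exactly the content of the formula the paper cites from \cite[p.~234]{MOS}; so you do not need to rederive it by induction, only to cite it. With that reference in hand your proof is no longer than the paper's, and has the mild advantage of computing $J_{+}$ on $(-1,0)$ directly rather than via an analytic continuation in the auxiliary variable $b$ whose domain one has to describe carefully.
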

\begin{proof}
First suppose $b\in \R$ and $b(b+1)>0$. Then from the previous lemma, $J_{+}(l;b)$ is a real number. Thus, $J^{\1}(l;b)=2J^{+}(l;b)$ and $J^{{\rm{sgn}}}(l;b)=0$ by Lemma~\ref{Jplus}. 

From $J_+(l;b)=2Q_{l/2-1}(2b+1)$, applying the formula in \cite[p.234]{MOS}, we obtain  
\begin{align}
J_{+}(l;b)=\log\left(\frac{b+1}{b}\right)\,P_{l/2-1}(2b+1)-\sum_{m=1}^{[l/4]}\frac{4(l-4m+1)}{(2m-1)(l-2m)}\,P_{l/2-2m}(2b+1) 
\label{acrInt1}
\end{align}
for $b\in\R$ such that $b(b+1)>0$. From the defining formula of $J_{+}(l;b)$, the function $b\mapsto J_{+}(l;b)$ on $\R^{\times}-\{-1\}$ has a holomorphic continuation to the whole complex $b$-plane away from the set
$
S=\{b\in \C|\,\tfrac{bi}{b+1}\in (-\infty,0)\}\cup\{0,-1\},
$
which is the upper half of the circle centered at $-1/2$ of radius $1/2$ with the edge points included. Thus, if we choose the branch of $\log(\frac{1+b}{b})$ on the domain $\C-S$ so that it is real for $b>0$, then the formula \eqref{acrInt1} remains valid on $\C-S$ by analytic continuation. Let $b\in \R$ such that $b(b+1)<0$. Such $b$ is contained in $\C-S$. Hence, by taking the sum of \eqref{acrInt1} and its complex conjugate, we obtain the formula for $J^{\1}(l;b)$. As for $J^{{\rm{sgn}}}(l;b)$, we have
\begin{align*}
J^{{\rm{sgn}}}(l;b)&=J_{+}(l;b)-\overline{J_{+}(l;b)}
= \{ \, \log(\tfrac{b+1}{b}) - \overline{\log(\tfrac{b+1}{b})}\, \} P_{l/2-1}(2b+1)
= 2\pi i P_{l/2-1}(2b+1).
\end{align*}
\end{proof}

%%%%%%%%%%%%%%%%%%%%%%%%%%%%%%%%%%%%%%%%%%%%%%%%%%%
\section{Explicit formula of the unipotent term}
%%%%%%%%%%%%%%%%%%%%%%%%%%%%%%%%%%%%%%%%%%%%%%%%%%%

Let $v\in S$. The aim of this section is to evaluate the integrals 
\begin{align}
U_v^{\eta_v}(\alpha_v)&=\frac{1}{2\pi i } \int_{L_v(c)}
(1-\eta_{v}(\varpi_{v})q_v^{-(s+1)/2})^{-1}(1-q_v^{(s+1)/2})^{-1}\,\alpha_v(s)\,\d\mu_v(s),
 \label{UNIPOTENT(fine)-1}
 \\
U'_v(\alpha_v)&=\frac{\log q_v}{2\pi i } \int_{L_v(c)} (1-q_v^{(s+1)/2})^{-2}(1-q_v^{-(s+1)/2})^{-1}\,\alpha_v(s)\,\d\mu_v(s). 
 \label{UNIPOTENT(fine)-2}
\end{align}
for the test functions $\a$ given by \eqref{HeckeFunctions}, where $L_v(c)=c+i[\frac{-2\pi}{\log q_v}, \frac{2\pi}{\log q_v}]$.

\begin{prop} \label{MS5}
Let $\alpha_v(s)=q_{v}^{ms/2}+q_{v}^{-ms/2}$ with $m\in\N_{0}$. We have
\begin{align*}
U_v^{\eta_v}(\alpha_v)&=
\begin{cases}
\delta(m>0)\,q_v^{1-m/2}\,\{(m-1)-(m+1)\,q_v^{-1}\}-2\,\delta(m=0)
, &\quad (\eta_v(\varpi_v)=1),\\
\delta(m \in 2\NN )\ q_{v}^{1-m/2}(1-q_{v}^{-1}) -2\ \delta(m=0), &\quad (\eta_v(\varpi_v)=-1),
\end{cases}
\\
U'_v(\alpha_{v})&=-2^{-1}(\log q_v)\,q_v^{1-m/2}\,\delta(m>0)\,\{(m-1)(m-2)-m(m
+1)\,q_v^{-1}\}.
\end{align*}
\end{prop}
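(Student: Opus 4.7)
The plan is to convert both integrals to contour integrals on a circle in $\CC^\times$ via the substitution $z = q_v^{s/2}$, then evaluate by the residue theorem. Under this change of variables, the contour $L_v(c)$ becomes the positively oriented circle $|z| = R$ with $R := q_v^{c/2} > 1$; one computes $\alpha_v^{(m)}(s) = z^m + z^{-m}$, $\d s = 2\,\d z/((\log q_v) z)$, $\d\mu_v(s) = 2^{-1}(\log q_v)q_v^{1/2}(z - z^{-1})\,\d s$, and the partial fraction identities $(1 - q_v^{(s+1)/2})^{-1} = -q_v^{-1/2}(z - q_v^{-1/2})^{-1}$ and $(1 - \eta_v(\varpi_v)q_v^{-(s+1)/2})^{-1} = z(z - \eta_v(\varpi_v)q_v^{-1/2})^{-1}$. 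Combining these, the two integrals take the equivalent forms
\begin{align*}
U_v^{\eta_v}(\alpha_v^{(m)}) &= -\frac{1}{2\pi i}\oint_{|z|=R}\frac{(z^2-1)(z^m+z^{-m})}{z\,(z-\eta_v(\varpi_v)q_v^{-1/2})(z-q_v^{-1/2})}\,\d z, \\
U_v'(\alpha_v^{(m)}) &= \frac{(\log q_v)\,q_v^{-1/2}}{2\pi i}\oint_{|z|=R}\frac{(z^2-1)(z^m+z^{-m})}{z\,(z-q_v^{-1/2})^3}\,\d z.
\end{align*}

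For $U_v^{\eta_v}(\alpha_v^{(m)})$, I will separate cases by the value of $\eta_v(\varpi_v)$. When $\eta_v(\varpi_v) = -1$, the poles inside $|z| = R$ are all simple, located at $z = 0$ (only when $m \geq 1$) and $z = \pm q_v^{-1/2}$; a direct computation shows the residues at $z = \pm q_v^{-1/2}$ sum to $\tfrac{1}{2}(1+(-1)^m)(q_v-1)(q_v^{m/2}+q_v^{-m/2})$, vanishing for odd $m$, while the residue at $z=0$, obtained from the geometric expansion of $1/(z^2 - q_v^{-1})$, contributes $(1 - q_v)q_v^{m/2}$ when $m$ is even and $\geq 2$; summing gives $q_v^{1-m/2}(1 - q_v^{-1})$ as asserted. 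When $\eta_v(\varpi_v) = 1$, the two simple poles collide into a double pole at $z = q_v^{-1/2}$ whose residue is $h'(q_v^{-1/2})$ with $h(z) := -(z^2-1)(z^m+z^{-m})/z$; combined with the residue at $z = 0$ (derived from the expansion $1/(z - q_v^{-1/2})^2 = q_v\sum_{k\geq 0}(k+1)q_v^{k/2}z^k$) and simplified, this yields $q_v^{1-m/2}[(m-1) - (m+1)q_v^{-1}]$ for $m\geq 1$. The case $m = 0$ will be treated directly in both subcases.

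The computation of $U_v'(\alpha_v^{(m)})$ proceeds analogously: the integrand has a triple pole at $z = q_v^{-1/2}$ with residue $\tfrac{1}{2}g''(q_v^{-1/2})$ where $g(z) := (z^2-1)(z^m+z^{-m})/z$, together with a pole at $z = 0$ for $m \geq 1$ computed from the expansion $1/(z - q_v^{-1/2})^3 = -q_v^{3/2}\sum_{k\geq 0}\binom{k+2}{2}(q_v^{1/2}z)^k$. In the sum of these two residues, the terms at the intermediate exponents $q_v^{(m+1)/2}$ and $q_v^{(m+3)/2}$ cancel exactly, leaving $\tfrac{1}{2}q_v^{(1-m)/2}[m(m+1) - (m-1)(m-2)q_v]$; after multiplication by the prefactor $(\log q_v)q_v^{-1/2}$ this matches the stated formula. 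The main obstacle here is purely bookkeeping, as the calculations involve many terms with a variety of signs and exponents; particular care will be required to handle the degenerate small cases $m=0$ and $m=1$, where the generic residue formulas require $m\geq 2$ and must be replaced by direct evaluation.
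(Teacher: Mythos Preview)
Your approach is essentially identical to the paper's: the same substitution $z=q_v^{s/2}$ converts the contour integral over $L_v(c)$ to a circle integral, and the evaluation proceeds by the residue theorem at $z=0$ and $z=\pm q_v^{-1/2}$ (or the double/triple pole at $q_v^{-1/2}$). The paper only sketches the case $\eta_v(\varpi_v)=-1$ and leaves the rest as similar, while you give more detail across all cases, but the method and the resulting residue computations coincide.
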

\begin{proof} 
We give an indication of the proof for \eqref{UNIPOTENT(fine)-1} when $\eta_v(\varpi_v)=-1$; the remaining cases are similar. By a variable change, 
\begin{align*}
U_{v}^{\eta_v}(\a_{v})
%= \frac{1}{2\pi i}\int_{L_{v}(c)}(1-\eta_{v}(\varpi_{v})q_{v}^{-(s+1)/2})^{-1}(1-q_{v}^{(s+1)/2})^{-1}
%\a_{v}(s)\frac{\log q_{v}}{2} (q_{v}^{(1+s)/2}-q_{v}^{(1-s)/2})ds \\
= & \frac{1}{2\pi i}\oint_{|z|=q_{v}^{c/2}}(1+q_{v}^{-1/2}z^{-1})^{-1}(1-q_{v}^{1/2}z)^{-1}(z^{m}+z^{-m})
q_{v}^{1/2}(z-z^{-1})\frac{dz}{z} \\
= & \{\Res_{z=q_{v}^{-1/2}} + \Res_{z=-q_{v}^{-1/2}} +\Res_{z=0}\} \phi(z),
\end{align*}
where $\phi(z) = \frac{(z^{2}-1)(z^{m}+z^{-m})}{1-q_{v}z^{2}}\frac{q_{v}}{z}$. By evaluating the residues, we are done. 
\end{proof}

%%%%%%%%%%%%%%%%%%%%%%%%%%%%%%%%%%%%%%%%%%%%%%%%%%%%%%%%%%
\section{Subconvexity estimates in the weight aspect}
%%%%%%%%%%%%%%%%%%%%%%%%%%%%%%%%%%%%%%%%%%%%%%%%%%%%%%%%%%

In this section we prove Theorem~\ref{T4} by using the relative trace formula (Theorem~\ref{RELATIVETRACEFORMULA}); we take a particular test function $\alpha^\pi_S\in \ccA_S$ depending on a fixed cuspidal representation $\pi$ with varying $S$. To have a good control of the term $\JJ_{\rm{hyp}}^\eta(\fn|\alpha_S^\pi)$ explicating the dependence on $S$, our formula of local orbital integrals (Lemma~\ref{MS2.5}) is indispensable. In this section, $\theta\in [0,1]$ denotes a real number such that the spectral radius of the Satake parameter $A_v(\pi)$ of $\pi \in \Pi_{\rm{cus}}(l, \fn)$ at $v\in \Sigma_\fin-S(\ff_\pi)$ is no greater than $q_v^{\theta/2}$ for any $v\in \Sigma_\fin-S(\ff_\pi)$. Since the Ramanujan conjecture for the holomorphic Hilbert cusp forms is known (\cite{Blasius}), we can actually take $\theta=0$; however, we let $\theta$ unspecified until the very end to be able to keep track of the dependence on the Ramanujan exponent $\theta$ in various estimations.

In this section, we abuse the symbol $\fp_v$ to designate the global ideal $\gp_v\cap \cO$. 

%%%%%%%%%%%%%%%%%%%%%%%%%%%%%%%%%%%%%%%%%%%%%%%%%%%%%%%%%
\subsection{An auxiliary estimate of semilocal terms}
%%%%%%%%%%%%%%%%%%%%%%%%%%%%%%%%%%%%%%%%%%%%%%%%%%%%%%%%%
Let $S$ be a finite set of finite places $v$ such that $\eta_v(\varpi_v)=-1$.
For a decomposable function $\alpha_S(\bs)=\prod_{v\in S} \alpha_v(s_v)$ in $\ccA_S=\bigotimes_{v\in S} \ccA_v$, we set 
$$
J_S(b;\alpha_S)=\prod_{v\in S}J_v(b;\alpha_v), \quad b\in F^{\times}-\{-1\},
$$
where we simply write $J_v(b;\alpha_v)$ in place of $J_v^{\eta_v}(b;\alpha_v)$. Extending this linearly, we have a linear functional $\alpha_S\mapsto J_S(b;\alpha_S)$ on the space $\ccA_S$. Given $\pi\in \Pi_{\rm{cus}}(l, \fn)$, set
$$
\lambda_v(\pi)=\tr\,A_v(\pi), \qquad v\in \Sigma_\fin-S(\ff_\pi)
$$
with $A_v(\pi)\in {\rm{GL}}_2(\C)$ the Satake parameter of $\pi_v$. Then, we define an element of $\ccA_S$ depending on the automorphic representation $\pi$ as follows:$$
\alpha_S^{\pi} (\bs)=\biggl(\sum_{v\in S}\{\lambda_v(\pi)\,(z_v+z_v^{-1})-(z_v^2+z_v^{-2}+1)\} \biggr)^{2},
$$
where $z_v=q_v^{s_v/2}$ for each $v\in S$. We need an estimate of $J_S(b;\,\alpha_S^{\pi})$ with varying $b$. 
For an $\cO$-ideal $\fa$ such that $S(\fa)\subset S$, let us define a function $D_{S}(\fa;-)$ on $F^{\times}-\{-1\}$ by
\begin{align*}
D_{S}(\fa;\,b)=\{\prod_{w\in S-S(\fa)}\Lambda_w(b)\}\,\{\prod_{v\in S(\fa)}\delta(|b|_v\leq q_v^{{\rm{ord}}_v(\fa)})\}, 
\end{align*}
where $\Lambda_{w}(b) = \delta(b \in \go_{w})(\ord_{w}(b(b+1))+1)$.

\begin{prop} \label{MS6} 
Set $\Sd=\{(v_1,v_2)\in S^2|\,v_1\not=v_2\,\}$. We have the estimate
{\allowdisplaybreaks
\begin{align*}
|J_S(b;\alpha_S^\pi)|\ll&\sum_{v\in S}\biggl\{D_{S}(\cO;b)\,q_v^{(\theta+1)/2}
+D_{S}(\fp_v;b)\,q_v^{\theta} \\
&\qquad +D_{S}(\fp_v^2;b)\,q_v^{\theta-1}+D_{S}(\fp_v^3;b)\,q_v^{-1}+D_{S}
(\fp_v^4;b)\,q_v^{-2}\biggr\} \\
&+\sum_{(v_1,v_2)\in \Sd}\biggl\{
D_{S}(\cO;b)\,q_{v_1}^{(\beta+1)/2}\,q_{v_2}^{(\theta+1)/2}
+D_{S}(\fp_{v_1};b)\,q_{v_2}^{(\theta+1)/2}
 +D_{S}(\fp_{v_1}\fp_{v_2};b)\\
&\qquad 
+D_{S}(\fp_{v_1}^2\fp_{v_2};b)q_{v_1}^{-1}+
D_{S}(\fp_{v_1}^2;b)\,q_{v_1}^{-1}\,q_{v_2}^{(\theta+1)/2}
+D_{S}(\fp_{v_1}^{2}\fp_{v_2}^{2};b)q_{v_1}^{-1}q_{v_2}^{-1}
\biggr\}
\end{align*}
}
for $b\in F^{\times}-\{-1\}$, where the implied constant is absolute.
\end{prop}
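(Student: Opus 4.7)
The proof is a direct bookkeeping argument combining multiplicativity of $J_{S}(b;\cdot)$ on decomposable tensors with the pointwise bounds of Lemma~\ref{MS2.5}.

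First I would introduce $f_{v}(s_{v}):=\lambda_{v}(\pi)\alpha_{v}^{(1)}(s_{v})-\alpha_{v}^{(2)}(s_{v})-\tfrac{1}{2}\alpha_{v}^{(0)}(s_{v})\in \ccA_{v}$, so that, as a function on $\fX_{S}$,
\[
\alpha_{S}^{\pi}=\sum_{v\in S} f_{v}^{\,2}+\sum_{(v_{1},v_{2})\in \Sd} f_{v_{1}}\otimes f_{v_{2}}.
\]
The multiplication rule $\alpha_{v}^{(m)}\alpha_{v}^{(m')}=\alpha_{v}^{(m+m')}+\alpha_{v}^{(|m-m'|)}$ (with $\alpha_{v}^{(0)}=2$) then yields
\[
f_{v}^{\,2}=\alpha_{v}^{(4)}-2\lambda_{v}\alpha_{v}^{(3)}+(\lambda_{v}^{2}+2)\alpha_{v}^{(2)}-4\lambda_{v}\alpha_{v}^{(1)}+(\lambda_{v}^{2}+\tfrac{3}{2})\alpha_{v}^{(0)}.
\]
By multiplicativity of $J_{S}(b;\cdot)$ together with the identity $1=\tfrac12\alpha_{w}^{(0)}$ and the bound $|J_{w}(b;\alpha_{w}^{(0)})|\leq 2\vol(\cO_{w}^{\times})\Lambda_{w}(b)$ extracted from Lemma~\ref{MS2.5}, it suffices to control
\[
|J_{S}(b;\alpha_{S}^{\pi})|\ll \sum_{v\in S}|J_{v}(b;f_{v}^{\,2})|\prod_{w\in S\setminus\{v\}}\Lambda_{w}(b) + \sum_{(v_{1},v_{2})\in\Sd} |J_{v_{1}}(b;f_{v_{1}})|\,|J_{v_{2}}(b;f_{v_{2}})|\prod_{w\in S\setminus\{v_{1},v_{2}\}}\Lambda_{w}(b).
\]

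For each monomial $\alpha_{v}^{(m)}$ with $0\leq m\leq 4$ in $f_{v}^{\,2}$, and each $\alpha_{v_{i}}^{(m_{i})}$ with $0\leq m_{i}\leq 2$ in $f_{v_{i}}$, I would apply Lemma~\ref{MS2.5} and combine with the Ramanujan-type bound $|\lambda_{v}(\pi)|\leq 2q_{v}^{\theta/2}$ to rewrite its contribution to the relevant local factor as a finite sum of terms $\delta(|b|_{v}\leq q_{v}^{k})\cdot q_{v}^{e_{k,m}}$. Absorbing each such term into $\prod_{w\neq v}\Lambda_{w}(b)$ (respectively $\prod_{w\neq v_{1},v_{2}}\Lambda_{w}(b)$) converts it into $D_{S}(\fp_{v}^{k};b)\cdot q_{v}^{e_{k,m}}$ (resp. $D_{S}(\fp_{v_{1}}^{k_{1}}\fp_{v_{2}}^{k_{2}};b)\cdot q_{v_{1}}^{e_{1}}q_{v_{2}}^{e_{2}}$). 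Grouping by support level $k\in \{0,\dots,4\}$ and retaining the dominant $q_{v}$-exponent at each level produces the five summands of the first sum; the analogous procedure for the pair $(v_{1},v_{2})$, with each factor $q_{v_{i}}^{(\theta+1)/2}$ arising from the $\lambda_{v_{i}}\alpha_{v_{i}}^{(1)}$ piece of $f_{v_{i}}$, yields the six mixed summands of the second sum.

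The only real difficulty is combinatorial bookkeeping: tracking, for every monomial appearing in $f_{v}^{\,2}$ and $f_{v_{1}}f_{v_{2}}$, the $q_{v}$-power carried by its coefficient together with the support condition $\delta(|b|_{v}\leq q_{v}^{k})$ delivered by Lemma~\ref{MS2.5}, and then collating the finitely many resulting terms so that only the claimed leading expressions survive after subleading ones are absorbed. No deeper analytic input is required beyond Lemma~\ref{MS2.5} and the bound on $|\lambda_{v}(\pi)|$; in particular the Ramanujan theorem for holomorphic Hilbert cusp forms allows one to take $\theta=0$ a posteriori.
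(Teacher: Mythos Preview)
Your proposal is correct and follows essentially the same route as the paper's proof: the paper writes $Z_{v}$ for your $f_{v}$, expands $\alpha_{S}^{\pi}=\sum_{v}Z_{v}^{2}+\sum_{(v_{1},v_{2})\in \Sd}Z_{v_{1}}Z_{v_{2}}$, computes $Z_{v}^{2}$ in the $\alpha_{v}^{(m)}$-basis exactly as you do, applies Lemma~\ref{MS2.5} together with $|\lambda_{v}(\pi)|\leq 2q_{v}^{\theta/2}$ term by term, and then collapses the resulting support conditions into the $D_{S}(\cdot;b)$ quantities using the trivial bound $|\Lambda_{w}^{\eta_{w}}(b)|\leq \Lambda_{w}(b)$. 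The only additional remark in the paper is that the inequalities $(\theta-1)/2\leq \theta\leq (\theta+1)/2$ and $(\theta-3)/2\leq -1$ are used to absorb subleading exponents at each support level, which is precisely your ``retaining the dominant $q_{v}$-exponent'' step.
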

\begin{proof}
Set $Z_v=\lambda_v(\pi)\,(z_v+z_v^{-1})-(z_v^2+z_v^{-2}+1)$ for any $v\in S$. By expanding the square, we have $\alpha_S^{\pi}(\bs)=\sum_{v\in S} Z_v^2+\sum_{(v_1,v_2)\in \Sd} Z_{v_1}\,Z_{v_2},$ which, together with Lemma~\ref{MS2.5}, gives us
{\small \begin{align}
& J_S(b;\alpha_S^{\pi})\label{MS6-1}\\
&=\sum_{v\in S} \{\prod_{w\in S-\{v\}} J_{w}(b;1)\}\,J_v(b;Z_v^2)+\sum_{(v_1,v_2)\in \Sd} \{\prod_{w\in S-\{v_1,v_2\}}J_{w}(b;1)\}\,J_{v_1}(b;Z_{v_1})\,J_{v_2}(b;Z_{v_2}) 
\notag% \label{MS6-1}
\\
&=\sum_{v\in S} \{\prod_{w\in S-\{v\}} -\vol(\cO_w^\times)\,\Lambda_w(b)\}
\,J_v(b;Z_v^2) 
 \notag
%\\ &\qquad
+\sum_{(v_1,v_2)\in \Sd} \{\prod_{w\in S-\{v_1,v_2\}}-\vol(\cO_w^\times)\,\Lambda_w(b)\}\,J_{v_1}(b;Z_{v_1})\,J_{v_2}(b;Z_{v_2}).  \notag
\end{align}
}Let us estimate the integral $J_v(b;Z_v^2)$. By expanding the square, 
{\small \begin{align*}
Z_v^2&=\lambda_v(\pi)^2\,(z_v^2+z_v^{-2}+2)+(z_v^{4}+z_v^{-4}+2)+2(z_v^2+z_v^{-2})+1 -2\lambda_v(\pi)(z_v^3+z_v^{-3})-4\lambda_v(\pi)(z_v+z_v^{-1})
\\
&=\lambda_v(\pi)^2(\alpha_v^{(2)}+\alpha_v^{(0)})+\alpha_v^{(4)}+2\alpha_v^{(2)}+\frac{3}{2}\alpha_v^{(0)}-2\lambda_v(\pi)\alpha_v^{(3)}-4\lambda_v(\pi)\alpha_v^{(1)}. 
\end{align*}
}By this expression and by the estimates in Lemma~\ref{MS2.5}, we obtain
{\small \allowdisplaybreaks
\begin{align}
&|J_v(b;Z_v^2)|\vol(\cO_v^\times)^{-1}  
 \label{MS6-2}
\\
& \ll
\delta(|b|_v\leq 1)\,\{|\lambda_v(\pi)|q_v^{1/2}+\Lambda_v(b)(1+|\lambda_v(\pi)|^2)\}
+\delta(|b|_v\leq q_v)\,\{|\lambda_v(\pi)|q_v^{-1/2}+1+|\lambda_v(\pi)|^2\}
\notag
 \\
& \quad +\delta(|b|_v\leq q_v^2)\,\{|\lambda_v(\pi)|q_v^{-1/2}+q_v^{-1}+|\lambda_v(\pi)|^2q_v^{-1}\}
+\delta(|b|_v\leq q_v^{3})\,\{|\lambda_v(\pi)|q_v^{-3/2}+q_v^{-1}\}
 \notag
\\
&\quad +\delta(|b|_v\leq q_v^{4})\,q_v^{-2}
 \notag
\\
&\ll \delta(|b|_v\leq 1)\,\{q_v^{(\theta+1)/2}+\Lambda_v(b)\,q_v^{\theta} \}
 \notag
\\
&\quad 
+\delta(|b|_v\leq q_v)\,q_v^{\theta}+\delta(|b|_v\leq q_v^{2})\,q_v^{\theta-1}
+\delta(|b|_v\leq q_v^{3})\,q_v^{-1}+\delta(|b|_v\leq q_v^4)\,q_v^{-2},
 \notag
\end{align}
}where to show the second inequality we use the estimate $|\lambda_v(\pi)|\leq 2q_v^{\theta/2}$ as well as the inequalities $-1\leq (\theta-1)/2\leq \theta \leq (\theta+1)/2$, $(\theta-3)/2\leq -1$. For $J_v(b;Z_v)$, directly from Lemma~\ref{MS2.5}, we have
{\small \begin{align}
|J_v(b;Z_v)|\,\vol(\cO_v^\times)^{-1}\ll \delta(|b|_v\leq 1)\,\{q_v^{(\theta+1)/2}+\Lambda_v(b)\}+\delta(|b|_v\leq q_v)+\delta(|b|_v\leq q_v^{2})\,q_v^{-1}. 
 \label{MS6-3}
\end{align}
}From \eqref{MS6-1}, \eqref{MS6-2} and \eqref{MS6-3}, we have the desired estimate immediately.
\end{proof}

\subsection{A basic majorant for the hyperbolic term (odd case)}
For $b\in F^{\times}-\{-1\}$, viewing $b$ as a real number, say $b_v$, by the mapping $F\hookrightarrow F_v\cong \R$ for each $v\in \Sigma_\infty$, we define
$$
\fm_\infty(l;b)=\prod_{v\in \Sigma_\infty}|J^{{\rm{sgn}}}(l_v;b_v)|,
$$
where $J^{{\rm{sgn}}}(l_v;b_v)$ is the integral \eqref{JIntegral}. For relatively prime integral ideals $\fn$ and $\fa$ and for $l=(l_v)_{v\in \Sigma_\infty}\in (2\Z_{>2})^{\Sigma_\infty}$, we set
\begin{align*}
\fI(l,\fn,\fa):=\sum_{b\in \fn\cO(S)-\{0,-1\}} \{\prod_{v\in \Sigma_\fin-S}\Lambda_v(b)\}\,D_S(\fa;b)\,\fm_\infty(l;b),
\end{align*}
where $S$ is a finite set of places such that $S(\fa)\subset S \subset \Sigma_\fin-S(\fn)$, and $\cO(S)$ is the $S$-integer ring. We note that the right-hand side is indepdnent of $S$. Indeed, if we set
$$
\tau^{S(\fa)}(b)=\prod_{v\in \Sigma_\fin-S(\fa)}\Lambda_v(b)\prod_{v \in S(\ga)}\delta(|b|_{v}\le q_{v}^{\ord_{v}(\ga)}),$$ 
then 
\begin{align}
\fI(l,\fn,\fa)=\sum_{b \in \fn\fa^{-1}-\{0, -1\}} \tau^{S(\fa)}(b)\,\fm_\infty(l;b). 
\label{MS6.5-1}
\end{align}

\begin{lem} \label{MS6.5}
Let $\fa$ and $\fn$ be relatively prime ideals. Then, for any $\epsilon>0$, the estimate 
\begin{align*}
\fI(l,\fn,\fa) \ll_{\epsilon}  \{\prod_{v\in \Sigma_\infty}l_v\}^{-1/2}\, \nr(\fa)^{5/4
+ \epsilon} 
\end{align*}
holds with the implied constant depending on $\epsilon$ while independent of the data $(l,\fn, \fa)$. 
If $\fa$ is trivial, for any $\fn$ and $l$, we have $\fI(l,\fn,\cO)=0.$
\end{lem}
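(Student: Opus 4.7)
The plan is to estimate the archimedean weight via Bernstein's inequality for Legendre polynomials, dominate the non-archimedean weight by a divisor function, and conclude by a lattice-point count in the fractional ideal $\fn\fa^{-1}$.

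First, by Lemma~\ref{arcInt}(2), $J^{{\rm sgn}}(l_v;b_v) = 2\pi i\,P_{l_v/2-1}(2b_v+1)$ when $-1 < b_v < 0$ and vanishes otherwise. Bernstein's uniform bound $|P_n(\cos\theta)| \ll (n\sin\theta)^{-1/2}$, applied with $\cos\theta = 2b_v+1$ and $\sin\theta = 2\sqrt{-b_v(1+b_v)}$, gives $|J^{{\rm sgn}}(l_v;b_v)| \ll l_v^{-1/2}(-b_v(1+b_v))^{-1/4}$, whence
\begin{equation*}
\fm_\infty(l;b) \ll \Big(\prod_{v\in\Sigma_\infty}l_v\Big)^{-1/2}\prod_{v\in\Sigma_\infty}(-b_v(1+b_v))^{-1/4},
\end{equation*}
supported on those $b\in F$ whose archimedean components all lie in $(-1,0)$.

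Second, I would handle the finite-place weights. The indicator constraints inside $\prod_{v\in\Sigma_\fin-S}\Lambda_v(b)\cdot D_S(\fa;b)$ force $b$ to lie in the fractional ideal $\fn\fa^{-1}$ (integral off $S(\fa)$, a multiple of $\fn$ at $S(\fn)$, with denominator at $S(\fa)$ bounded by $\fa^{-1}$). Each $\Lambda_v(b)$ counts the number of $\cO_v$-ideal divisors of $\gp_v^{\ord_v(b(1+b))}$, so $\prod_{v\notin S(\fa)}\Lambda_v(b)$ equals the global divisor function $d_F(\fn_0)$, where $b(1+b)\cO=\fn_0\fd_0^{-1}$ with $\fn_0,\fd_0$ coprime integral and $\fd_0\mid\fa^2$. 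The inequalities $|b_v|,|1+b_v|<1$ combined with the product formula give $\nr(\fn_0)\leq\nr(\fd_0)\leq\nr(\fa)^2$, and the classical divisor bound $d_F(\fn_0)\ll_\epsilon\nr(\fn_0)^\epsilon$ supplies a uniform factor $\ll_\epsilon\nr(\fa)^\epsilon$.

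Combined with step one, this reduces the lemma to the lattice-sum estimate
\begin{equation*}
\Sigma(\fn,\fa) := \sum_{\substack{b\in\fn\fa^{-1}\setminus\{0,-1\}\\ b_v\in(-1,0)\,\forall v\in\Sigma_\infty}} \prod_{v\in\Sigma_\infty}(-b_v(1+b_v))^{-1/4} \ll_\epsilon \nr(\fa)^{1+\epsilon}.
\end{equation*}
The fractional ideal $\fn\fa^{-1}$ embeds as a lattice in $F\otimes_\QQ\RR\cong\RR^{d_F}$ of covolume $|D_F|^{1/2}\nr(\fn)/\nr(\fa)$, and the archimedean weight is integrable on $(-1,0)^{d_F}$ (each coordinate singularity has integrable order $1/4$). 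An integral-versus-sum comparison, handled by a dyadic decomposition of $(-1,0)^{d_F}$ along the singular facets $\{x_v=0\}\cup\{x_v=-1\}$, gives $\Sigma(\fn,\fa)\ll\max(1,\nr(\fa)/\nr(\fn))\leq\nr(\fa)$. Assembling the three pieces yields $\fI(l,\fn,\fa)\ll_\epsilon(\prod_v l_v)^{-1/2}\nr(\fa)^{1+2\epsilon}$, which is stronger than (and implies) the stated bound with exponent $5/4+\epsilon$. The technical core is the dyadic step: in a slab of width $\delta$ adjacent to a singular facet the lattice-point count is $\ll\max(1,\delta\cdot\nr(\fa)/\nr(\fn))$ while the marginal weight from the singular coordinate is $\ll\delta^{-1/4}$, so the geometric sum in $\delta$ (with net exponent $3/4>0$) converges uniformly in $\fn$ and $\fa$, and the bound on $\Sigma(\fn,\fa)$ follows; this is the main obstacle, as without that uniform control the singular blow-up of the archimedean weight would spoil the integral approximation.
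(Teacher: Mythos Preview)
Your first two steps are correct and coincide with the paper's argument: Bernstein's inequality for $P_{l_v/2-1}$ gives the same archimedean factor $\{\prod_v l_v\}^{-1/2}|\nr(b(b+1))|^{-1/4}$ the paper reaches in \eqref{Separation of l}, and your divisor bound on $\tau^{S(\fa)}(b)$ is exactly Lemma~\ref{MS6.5-L1}.

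The gap is in the third step. Your slab estimate ``the lattice-point count is $\ll\max(1,\delta\cdot\nr(\fa)/\nr(\fn))$'' is \emph{not} a consequence of the covolume alone: for an arbitrary lattice in $\R^{d_F}$ with $d_F\geq 2$, a thin slab can contain arbitrarily many points if there is a short vector nearly parallel to the singular facet, so no volume comparison will work by itself. What makes the bound true for the ideal lattice $\fn\fa^{-1}$ is arithmetic, not geometry: any nonzero $b$ in that slab has $c:=\nr((b)\fa\fn^{-1})\in\ZZ_{\geq 1}$ with $c\leq 2\delta\,\nr(\fa)/\nr(\fn)$, and once $c$ is fixed, the generators of the principal ideal $(b)$ differ by units, whose number under the remaining box constraints is controlled by a Dirichlet-unit count. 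These two inputs---bounding the number of integral ideals of a given norm and bounding units in a bounded region---are precisely Lemmas~\ref{IdealEst} and \ref{UnitEstimate} of the paper, which you have implicitly assumed rather than proved. Without them your ``integral-versus-sum'' passage is unjustified for $d_F>1$.

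The paper organises the same ingredients differently: it slices the sum by the integer $c=\nr((b)\fa)$ and then bounds $|\nr(b+1)|^{-1/4}$ crudely by $\nr(\fa)^{1/4}$, which is the source of the exponent $5/4$. If you insert Lemmas~\ref{IdealEst}--\ref{IntegerEST} into your dyadic framework (and handle the $2d_F$ facets and their corners simultaneously, not one at a time), your scheme does go through and indeed yields the sharper exponent $1+\epsilon$; but as written the key slab count is the missing step.
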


\subsection{The proof of Lemma~\ref{MS6.5}}

\begin{lem} \label{MS6.5-L1}
For any $\epsilon>0$, we have
\begin{align*}
\tau^{S(\fa)}(b)\ll_{\epsilon}(\nr(\fa)^2\,|\nr(b(b+1))|)^{\epsilon}, \qquad b\in \fa^{-1}-\{0, -1\}
\end{align*}
with the implied constant independent of $b$. 
\end{lem}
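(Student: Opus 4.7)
The plan is to dispose of all the indicator factors in the definition of $\tau^{S(\fa)}(b)$ for free, then reduce the remaining product to a divisor-function bound on a single integral ideal.

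First, I would observe that for any $b \in \fa^{-1}$ and any $v \in \Sigma_{\fin}$, one has $\ord_{v}(b) \ge -\ord_{v}(\fa)$, equivalently $|b|_{v} \le q_{v}^{\ord_{v}(\fa)}$. In particular, the indicators $\delta(|b|_{v}\le q_{v}^{\ord_{v}(\fa)})$ for $v \in S(\fa)$ are automatically equal to $1$, and for $v \in \Sigma_{\fin}-S(\fa)$ (where $\ord_{v}(\fa)=0$) one gets $b \in \go_{v}$ and hence $b+1 \in \go_{v}$, so the factors $\delta(b \in \go_{v})$ in $\Lambda_{v}(b)$ are also $1$. Thus $\tau^{S(\fa)}(b) = \prod_{v \in \Sigma_{\fin}-S(\fa)} (\ord_{v}(b(b+1))+1)$.

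Next, I would introduce the integral ideal $\fm := \fa^{2}\cdot (b(b+1))\go$, which is integral because $b(b+1) \in \fa^{-2}$. Its absolute norm satisfies $\nr(\fm) = \nr(\fa)^{2}|\nr(b(b+1))|$, where $\nr$ on the right-hand side is the absolute value of the field norm. Since $\ord_{v}(\fm) = \ord_{v}(b(b+1))$ for $v \notin S(\fa)$, one obtains the termwise majoration
$$\tau^{S(\fa)}(b) \le \prod_{v \in \Sigma_{\fin}}(\ord_{v}(\fm)+1) = d(\fm),$$
with $d(\fm)$ the divisor function of the integral ideal $\fm$.

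Finally, I would invoke the standard bound $d(\fm) \ll_{\epsilon} \nr(\fm)^{\epsilon}$, valid for any integral ideal of the number field $F$ (this is the routine ideal-theoretic analogue of the divisor bound over $\ZZ$, proved multiplicatively from the local estimate $k+1 \ll_{\epsilon} q_{v}^{k\epsilon}$ once $q_{v}$ is large, combined with a uniform bound on small primes). Combining yields $\tau^{S(\fa)}(b) \ll_{\epsilon} (\nr(\fa)^{2}|\nr(b(b+1))|)^{\epsilon}$ as required; no step is a substantial obstacle since the argument is essentially bookkeeping plus the classical divisor bound.
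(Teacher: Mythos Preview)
Your proof is correct and follows essentially the same approach as the paper: form the integral ideal $\fa^{2}(b(b+1))$, bound $\tau^{S(\fa)}(b)$ by its divisor function, and invoke the standard divisor bound $d\ll_\epsilon \nr^{\epsilon}$. The only minor variation is that you work directly with the ideal divisor function $d(\fm)=\prod_v(\ord_v(\fm)+1)$, whereas the paper passes to the rational integer $\nr(\fa)^2|\nr(b(b+1))|$ and its ordinary divisor function; your version is slightly cleaner since it sidesteps the bookkeeping of several prime ideals lying over the same rational prime.
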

\begin{proof}
Let $b\in \fa^{-1}-\{0\}$; then 
$(b(b+1))\fa^{2}=\fb\,\prod_{j=1}^{r}\fp_j^{e_j}$
for some $e_j \in \NN$, where $\fp_j$ are prime ideals of $\cO$ relatively prime to $\fa$ and $\fb$ is an ideal of $\cO$ dividing $\fa$. For each $j$, there exist a prime number $p_j$ and $d_j\in \N$ such that $\nr(\fp_j)=p_j^{d_j}$. By taking norms, we have 
\begin{align*}
\nr(\fa)^2\,|\nr(b(b+1))|=\nr(\fb)\prod_{j=1}^{r}\nr(\fp_j)^{e_j} =\nr(\fb)\,\prod_{j=1}^{r}p_j^{d_j e_j}.
\end{align*}
Hence
$$
d(\nr(\fa)^2 |\nr(b(b+1))|)=d(\nr(\fb))\,\prod_{j=1}^{r}(e_jd_j+1)\geq \prod_{j=1}^{r}(e_j+1)=\tau^{S(\fa)}(b),
$$
where, for a natural number $m$, $d(m)$ denotes the number of positive divisors of $m$. Invoking the well known bound $d(m)\ll_{\epsilon} m^{\epsilon}$, we obtain the desired estimate. 
\end{proof}
From Lemmas~\ref{MS6.5-L1} and \ref{arcInt}, 
\begin{align*}
\fI(l,\fn,\fa)\ll_{\epsilon}\nr(\fa)^{2\epsilon}\,\sum_{b\in \fn\fa^{-1}\cap \cQ_\infty} |\nr(b(b+1))|^{\epsilon} \prod_{v\in \Sigma_\infty}|2\pi\,P_{l_v/2-1}(2b_v+1)|,
\end{align*}
where $\cQ_\infty$ denotes the cube $(-1,0)^{\Sigma_\infty}$ in $\prod_{v\in \Sigma_\infty}\R$. Invoking the inequality $|P_{n}(x)|\leq (1-x^2)^{-1/2}\,n^{-1/2}$ for $|x|< 1,\, n\in \N$ (\cite[p.237]{MOS}), we have
\begin{align}
\fI(l,\fn,\fa)\ll_{\epsilon} \pi^{d_F}\,\nr(\fa)^{2\epsilon}\,\sum_{b\in \fn\fa^{-1}\cap \cQ_\infty} |\nr(b(b+1))|^{-1/4+\epsilon} \prod_{v\in \Sigma_\infty}(l_v/2-1)^{-1/2}.
 \label{Separation of l}
\end{align}
To estimate the sum $\sum_{b\in \fn\fa^{-1}\cap \cQ_\infty}|\nr(b(b+1))|^{-1/4+\epsilon}$, we need several lemmas.  

\begin{lem} \label{IdealEst}
For a positive integer $c$, let $\nu(c)$ be the number of $\cO$-ideals $\fc$ such that ${\rm N}(\fc)=c$. Then, for any $\epsilon>0$, $\nu(c) \ll_{\epsilon} c^{\epsilon}
$ with the implied constant independent of $c$. 
\end{lem}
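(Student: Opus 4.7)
The plan is to exploit the multiplicativity of $\nu$ and reduce the claim to the well-known divisor bound $d(m)\ll_{\epsilon} m^{\epsilon}$ for rational integers. First I would observe that since every integral ideal of $\cO$ factors uniquely as a product of prime ideals and the norm is completely multiplicative on ideals, the function $\nu$ is multiplicative on $\NN$: namely, for coprime $c_1,c_2\in \NN$ one has $\nu(c_1c_2)=\nu(c_1)\nu(c_2)$, because any ideal of norm $c_1c_2$ decomposes uniquely into the product of its $p$-primary parts, and the $p$-primary part only involves primes above $p$. Thus it suffices to bound $\nu(p^k)$ for rational primes $p$ and integers $k\ge 0$.

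Next, for a fixed rational prime $p$, write the factorization $p\cO=\prod_{i=1}^{g}\gp_i^{e_i}$ with residue degrees $f_i$, so that $\nr(\gp_i)=p^{f_i}$ and $\sum_i e_if_i=d_F$. Any ideal of $\cO$ whose norm is a power of $p$ must be of the form $\prod_i \gp_i^{a_i}$ with $a_i\ge 0$, and has norm $p^{\sum_i a_if_i}$. Hence
\[
\nu(p^k)=\#\{(a_1,\dots,a_g)\in \NN_0^{g}\mid \textstyle\sum_i a_if_i=k\}\le (k+1)^{g}\le (k+1)^{d_F}.
\]
By multiplicativity, for $c=\prod_p p^{v_p(c)}$,
\[
\nu(c)\le \prod_{p\mid c}(v_p(c)+1)^{d_F}=d(c)^{d_F},
\]
where $d(c)$ is the usual number-of-divisors function.

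Finally I would invoke the classical bound $d(c)\ll_{\delta} c^{\delta}$ for any $\delta>0$, which yields $\nu(c)\ll_{\delta} c^{d_F\delta}$; choosing $\delta=\epsilon/d_F$ gives the desired estimate $\nu(c)\ll_{\epsilon}c^{\epsilon}$. No step presents a real obstacle: the only input beyond the definitions is the standard divisor bound, and the decomposition argument above is elementary, depending only on the prime factorization in $\cO$ and the bound $g\le d_F$.
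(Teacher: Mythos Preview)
Your proof is correct. Both your argument and the paper's proceed via multiplicativity of $\nu$, reduce to bounding $\nu$ on prime powers, and finish with the classical divisor bound $d(m)\ll_\epsilon m^\epsilon$. The execution on prime powers differs: you count tuples $(a_1,\dots,a_g)\in\NN_0^g$ with $\sum_i a_i f_i=k$ and bound this crudely by $(k+1)^{d_F}$, which yields the clean global inequality $\nu(c)\le d(c)^{d_F}$ and hence the result by taking $\delta=\epsilon/d_F$. The paper instead argues that an ideal of norm $p^t$ is a power $\fp^e$ of a single prime above $p$, leading to the bound $\nu(p^t)\le d_F\log_2 p^t$, and then splits $c$ into small and large prime-power factors to absorb the constants. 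Your route is more robust: it avoids the paper's claim that such an ideal must be a pure prime power (which fails when $p$ has several primes above it), and it packages everything into a single inequality rather than a two-range argument.
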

\begin{proof} Suppose $c$ is a prime power $p^{t}$. Then an ideal $\fc$ such that $\nr(\fc)=p^{t}$ must be a power of a prime ideal $\fp$ lying above $p$. The number of choices for such $\fp$ is at most $d_F=[F:\Q]$. If $\fc=\fp^{e}$, then $\nr(\fc)=p^{t}$ is equivalent to $p^{me}=p^{t}$, where $\nr(\fp)=p^{m}$. Hence $e=\tfrac{t}{m}\leq t \leq t {\log_2 p} \leq {\log_2 p^{t}}. $ From this, we have the inequality $\nu(p^{t})\leq d_F\,\log_2 p^{t}$. Given $\epsilon>0$, let $x(\epsilon)>1$ be a number such that $d_F\,\log_2 x\leq x^{\epsilon}$ for any $x\geq x(\epsilon)$. Let $Q(\epsilon)$ be the set of prime powers $p^{t}$ such that $p^{t} \leq x(\epsilon)$. Noting that $Q(\epsilon)$ is a finite set, we set $C(\epsilon)=\prod_{q\in Q(\epsilon)}\nu(q)$, which is a constant depending only on $\epsilon$. Let $c'$ (resp. $c''$) be the product of the prime powers $p_i^{t_i}$ such that $p_i^{t_i}\in Q(\epsilon)$ (resp. $p_{i}^{t_i}\not\in Q(\epsilon)$) in the prime factorization $c=\prod_{i}p_i^{t_i}$ of $c$. Since $\nu$ is multiplicative, we have
{\small$$\nu(c)=\nu(c')\,\nu(c'')\leq \prod_{q\in Q(\epsilon)} \nu(q) \,\prod_{i;\,p_i|c''} d_F\,\log_2 p_i^{t_i} \leq C(\epsilon)\,\prod_{i;\,p_i|c''} p_i^{t_i\epsilon} \leq C(\epsilon)\,(\prod_{i} p_i^{t_i}) ^\epsilon\leq C(\epsilon)\,c^{\epsilon}. $$
}This completes the proof.
\end{proof}

\begin{lem} \label{UnitEstimate}
Let $C=\{C_v\}_{v\in \Sigma_\infty}$ be a family of positive real numbers. For any $\epsilon>0$, we have
$$
\# \{u\in \cO^\times|\,|u_v|<C_v\,(\forall\,v\in \Sigma_\infty)\,\} \ll_{\epsilon} \biggl(\prod_{v\in \Sigma_\infty}C_v\biggr)^{\epsilon} 
$$
with the implied constant independent of $C$. 
\end{lem}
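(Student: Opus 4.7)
The plan is to reduce the estimate to a lattice-point count in a bounded convex region via Dirichlet's unit theorem. If the set is empty there is nothing to prove; otherwise the existence of one such unit forces $1=\prod_{v}|u_v|<\prod_{v}C_v=T$, so we may assume $\log T>0$. Setting $x_v=\log|u_v|$, the norm-one condition $|\mathrm{N}_{F/\QQ}(u)|=1$ translates to $\sum_{v}x_v=0$, placing the image in the hyperplane $H=\{y\in\RR^{\Sigma_\infty}:\sum_{v}y_v=0\}$, while the constraint $|u_v|<C_v$ becomes $x_v<c_v:=\log C_v$.

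Since $F$ is totally real ($r_1=d_F$, $r_2=0$), Dirichlet's unit theorem identifies the image $L$ of $\cO^\times$ under the logarithmic embedding $u\mapsto(x_v)_v$ with a full-rank lattice in $H$ of rank $d_F-1$, with kernel the torsion subgroup $\{\pm1\}$. It thus suffices to bound $\#(L\cap R)$ where $R=\{y\in H:y_v<c_v\ \forall v\}$. The region $R$ is convex (an intersection of open half-spaces with $H$) and bounded, since any coordinate of $y\in R$ satisfies both $y_v<c_v$ and $y_v=-\sum_{w\neq v}y_w>-\sum_{w\neq v}c_w$.

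The affine change of variables $z_v=c_v-y_v\geq 0$ identifies $R$ with the truncated simplex $\{z\in\RR_{\geq 0}^{\Sigma_\infty}:\sum_{v}z_v=\log T\}$, whose $(d_F-1)$-dimensional Euclidean volume is $O((\log T)^{d_F-1})$. A standard counting estimate (e.g., cover $R$ by translates of a fixed fundamental domain for $L$) then yields $\#(L\cap R)\ll 1+(\log T)^{d_F-1}$. Multiplying by $|\{\pm 1\}|=2$ to account for torsion and invoking the elementary bound $(\log T)^{d_F-1}\ll_\epsilon T^\epsilon$ for $T>1$ completes the argument.

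The only point requiring care is the uniformity of the lattice-counting constant in the family $C$. This is automatic: all regions $R$ arising for varying $C$ are affinely equivalent, via $z_v=c_v-y_v$, to dilates of the fixed standard simplex $\{z\geq 0,\ \sum_v z_v=1\}$, so the implicit constant depends only on the covolume of $L$ (the regulator of $F$) and hence only on $F$.
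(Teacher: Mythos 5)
Your proof is correct and follows essentially the same route as the paper: both reduce, via Dirichlet's unit theorem and logarithmic coordinates, to counting points of a fixed rank-$(d_F-1)$ lattice in a translated dilated simplex of volume $O((\log\prod_v C_v)^{d_F-1})$, and then conclude from $(\log T)^{d_F-1}\ll_\epsilon T^\epsilon$. The only cosmetic difference is that the paper parametrizes units by integer exponent vectors $(n_j)$ so the lattice becomes $\ZZ^{d_F-1}$, whereas you work intrinsically with the Dirichlet lattice in the trace-zero hyperplane; your version is a touch more careful in explicitly handling the $\log T\le 0$ case and the uniformity of the lattice-count constant.
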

\begin{proof} For simplicity, we set $d=d_F$. By the Dirichlet unit theorem, there exist fundamental units $\varepsilon_j\,(1\leq j\leq d-1)$ such that any $\gamma\in \cO^\times$ is written uniquely in the form $\gamma=\pm \varepsilon_1^{n_1}\cdots \varepsilon_{d-1}^{n_{d-1}}$ with integers $n_j\in \Z$. By this, the inequality $|\gamma_v|<C_v$ is written as
\begin{align}
\sum_{j=1}^{d-1} n_j\,\log|(\varepsilon_j)_{v}|<\log C_v, \qquad (v\in \Sigma_\infty). 
 \label{Unitineq}
\end{align}
Let $\fU(C)$ be the set of $u\in \cO^\times$ such that $|u_v|<C_v$ for all $v\in \Sigma_\infty$. Thus, the number $\# \fU(C)$ is bounded from above by the number of integer points $(n_j)\in \Z^{d-1}$ lying on the Euclidean domain $D(C)$ in $\R^{d-1}$ defined by the system of linear inequalities \eqref{Unitineq}. Fix an enumeration $\Sigma_\infty=\{v_1,\dots,v_{d}\}$ and let $E_i=(\log|(\varepsilon_j)_{v_i}|)_{1\leq j\leq d-1}\in \R^{d-1}$ for $1\leq i\leq d$. First $d-1$ vectors $E_i\,(1\leq i\leq d-1)$ form a basis of $\R^{d-1}$; let $E_{j}^{*}$ $(1\leq j\leq d-1)$ be its dual basis. From the relation $|\nr(\varepsilon_j)|=1$, we have $\sum_{i=1}^{d}E_i=0$. Hence, if we write a general point $y\in \R^{d-1}$ by $y=\sum_{i=1}^{d-1}(\log C_{v_i}-y_i)\,E_i^{*}$, then $y\in D(C)$ if and only if
$$ y_i>0 \qquad (1\leq i\leq d-1), \quad \sum_{i=1}^{d-1} y_i<\sum_{j=1}^{d}\log C_{v_j}.
$$
The volume of this region in the $y$-space with respect to the Euclidean measure is $\frac{1}{r_{F}(d-1)!}\,(\sum_{j=1}^{d}\log C_{v_j})^{d}$, where $r_{F}$ is the regulator of $F$. Thus $\vol(D(C))\ll (\log \prod_{v}C_v)^{d}\ll_{\epsilon} (\prod_{v}C_v)^{\epsilon}$, and we are done.  
\end{proof}

\begin{lem} \label{IntegerEST}
Let $\fa$ be an integral ideal and $c$ a positive integer. 
For any $\epsilon, \e'>0$, 
$$
\#\{b \in \fa^{-1}\cap \cQ_\infty|\,\nr((b)\fa)=c\,\}\ll_{\epsilon, \e'} c^{\e' -\e}\,\nr(\fa)^{\epsilon}
$$with the implied constant independent of $\fa$ and $c$. 
\end{lem}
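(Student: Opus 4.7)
The plan is to stratify the count by the integral ideal $\fc := (b)\fa$ and reduce matters to the two preceding lemmas. Since any $b \in \fa^{-1}-\{0\}$ satisfies $\nr((b)\fa) = |\nr(b)|\nr(\fa)$, the condition $\nr((b)\fa)=c$ forces $|\nr(b)|=c/\nr(\fa)$; but $b \in \cQ_\infty = (-1,0)^{\Sigma_\infty}$ also implies $|\nr(b)|=\prod_{v\in \Sigma_\infty}|b_v|<1$, so the counted set is empty unless $c<\nr(\fa)$, in which case the stated bound is anyway non-trivial. The idea is now to fix $\fc$ and count $b$'s with $(b)\fa=\fc$ by the action of $\cO^\times$.

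Fix an ideal $\fc \subset \cO$ with $\nr(\fc)=c$ and suppose some $b_0 \in \fa^{-1} \cap \cQ_\infty$ with $(b_0)\fa=\fc$ exists (otherwise this stratum is empty). Then every other $b$ with $(b)\fa=\fc$ has the form $b=b_0 u$ for a unique $u\in \cO^\times$. The constraint $b_0 u \in \cQ_\infty$ combined with $b_{0,v}\in(-1,0)$ forces $u_v>0$ and $u_v<|b_{0,v}|^{-1}$ for every $v \in \Sigma_\infty$. Applying Lemma~\ref{UnitEstimate} with $C_v:=|b_{0,v}|^{-1}$, the number of such units is
$$\ll_{\epsilon} \biggl(\prod_{v\in \Sigma_\infty}|b_{0,v}|^{-1}\biggr)^{\epsilon} = |\nr(b_0)|^{-\epsilon} = (\nr(\fa)/c)^{\epsilon}.$$

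Finally, Lemma~\ref{IdealEst} bounds the number of integral ideals $\fc$ with $\nr(\fc)=c$ by $\ll_{\epsilon'} c^{\epsilon'}$. Multiplying the two estimates yields
$$\#\{b \in \fa^{-1}\cap \cQ_\infty \mid \nr((b)\fa)=c\} \ll_{\epsilon,\epsilon'} c^{\epsilon'}\cdot (\nr(\fa)/c)^{\epsilon} = c^{\epsilon'-\epsilon}\,\nr(\fa)^{\epsilon},$$
as claimed. The only real point requiring care is the sign-and-size analysis that converts the constraint $b_0 u\in \cQ_\infty$ into a box condition on $u$ to which Lemma~\ref{UnitEstimate} applies; once that bookkeeping is done, the proof is a direct concatenation of the two preceding counting lemmas.
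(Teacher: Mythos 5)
Your proof is correct and takes essentially the same approach as the paper: stratify by the integral ideal $\fc=(b)\fa$, bound the number of such $\fc$ with Lemma~\ref{IdealEst}, and bound each stratum via Lemma~\ref{UnitEstimate} applied to the $\cO^\times$-orbit of a generator, yielding $(c^{-1}\nr(\fa))^{\epsilon}$ per stratum. The only minor difference is that you take the reference generator $b_0$ from the counted set itself and spell out the sign constraint $u_v>0$, whereas the paper works with an arbitrary generator $\xi$ of $\fc\fa^{-1}$ and suppresses the sign bookkeeping; both give the same bound.
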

\begin{proof}
Let $\fc$ be an integral ideal such that $\nr(\fc)=c$. From Lemma~\ref{IdealEst}, the number of such $\fc$ is bounded by $c^{\epsilon'}$ for any $\epsilon'>0$. If $\fc\fa^{-1}$ is a principal ideal, say $(\xi)$, then, using Lemma~\ref{UnitEstimate}, we have
\begin{align*}
\#\{b \in \fa^{-1}\cap \cQ_\infty|\,\fc=(b)\fa\,\}& =\#\{u \in \cO^\times|\,|u_v|<|\xi_v|^{-1}\,(\forall\,v\in \Sigma_\infty)\,\}
 \\
&\ll_{\epsilon} (\prod_{v\in \Sigma_\infty}|\xi_v|^{-1})^{\epsilon}=(|\nr(\xi)|^{-1})^{\epsilon}=(c^{-1}\,\nr(\fa))^{\epsilon}.  
\end{align*} 
\end{proof}

\subsubsection{The completion of the proof of Lemma \ref{MS6.5}}
 From \eqref{Separation of l}, we have 
\begin{align}
\fI(l,\fn, \ga)\ll_{\epsilon} \nr(\fa)^{2\epsilon}\,\{\prod_{v\in \Sigma_\infty}l_v\}^{-1/2}\,\sum_{b\in \fa^{-1} \cap \cQ_\infty}|\nr(b(b+1))|^{-1/4+\e}
 \label{MS6.5-1}
\end{align}
with the implied constant independent of $(l,\fn,\fa)$. Setting $\nr((b)\fa)=c$, we rewrite the last summation in the following way. 
\begin{align*}
\sum_{b\in \fa^{-1} \cap \cQ_\infty}|\nr(b(b+1))|^{-1/4+\e}
&=\nr(\fa)^{1/4-\e}\,\sum_{c=1}^{\infty} c^{-1/4+\e}\,\sum_{\substack{b \in \fa^{-1} \cap \cQ_\infty \\ |\nr((b)\fa)|=c}}|\nr(b+1)|^{-1/4+\e}.
\end{align*}
The range of $c$ is reduced to $1\leq c\leq \nr(\fa)$ by the condition $b\in \cQ_\infty$. Since $(0)\not=(b+1)\fa\subset \cO$, we have $\nr((b+1)\fa) \geq 1$, by which the last summation in $b$ is trivially bounded by $\nr(\fa)^{1/4-\e}\,\#\{b\in \fa^{-1}\cap \cQ_\infty|\,|\nr((b)\fa)|=c\,\}$ for any $\e \in (0, 1/4)$. Combining these considerations and by Lemma~\ref{IntegerEST}, we obtain the bound
\begin{align}
\sum_{b\in \fa^{-1}\cap \cQ_\infty}|\nr(b(b+1))|^{-1/4+\e}
&\ll_{\e, \delta, \delta'}
\nr(\fa)^{1/2-2\e} \sum_{c=1}^{\nr(\fa)}c^{-1/4+\e}c^{\delta'- \delta} \,\nr(\fa)^{\delta}
 \notag
\\
&\ll_{\e, \delta, \delta'} \nr(\fa)^{1/2-2\e} \nr(\fa)^{\delta}\times \nr(\fa)^{3/4+\e+\delta'-\delta} \log \nr(\ga) \notag \\
& = \nr(\fa)^{5/4 -\e +\delta'} \log \nr(\ga)
 \label{MS6.5-2}
\end{align}
for any sufficiently small $\delta, \delta'>0$. Consequently, we have the desired estimate from \eqref{MS6.5-1} and \eqref{MS6.5-2}. It remains to show the second assertion of Lemma~\ref{MS6.5}. To argue, suppose $b\in \gn \cap \cQ_\infty$. The integrality of $b$ yields $\nr(b)\in \Z$. From the condition $b\in \cQ_\infty$, we have $0<|b_v|<1$ for all $v\in \Sigma_\infty$, from which $0<|\nr(b)|<1$ is obtained. Thus, if $\fa=\cO$, then the summation in the right-hand side of \eqref{Separation of l} is empty. This completes the proof.

\subsection{An estimate of the hyperbolic term }
 Let $\eta$ be a quadratic idele class character of $F^\times$ with conductor $\ff$ such that $\eta_v(-1)=-1$ for all $v\in \Sigma_\infty$. 
 Given an integral ideal $\fn$, for a large number $K\ge 2$, let $S=S_K^{\fn,\eta}=\{v\in \Sigma_\fin-S(\fn\ff)|\,\eta_v(\varpi_v)=-1,\,K\leq q_v\leq 2K\,\}$, and consider the test function $\alpha^{\pi}_{S}(\bs)$ depending on a cuspidal representation $\pi \in \Pi_{\rm{cus}}(l,\fn)$. 

\begin{lem} \label{Principalidealdensity}
There exists a constant $C>1$ independent of $\fn$ and $\eta$ such that \\
$C^{-1}{K}({\log K})^{-1}<\# S <C\,K(\log K)^{-1}$ for all $K\ge 2$.   
\end{lem}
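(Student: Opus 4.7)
The plan is to translate the condition $\eta_v(\varpi_v)=-1$ into a splitting condition in a quadratic extension and then apply the prime ideal theorem. Since $\eta$ is a non-trivial quadratic idele class character of $F^\times$ (the non-triviality is needed, for otherwise $S=\emp$ and the lower bound fails), class field theory attaches to $\eta$ a unique quadratic extension $E/F$ whose relative discriminant divides $\ff$, and the places $v\in \Sigma_\fin-S(\ff)$ with $\eta_v(\varpi_v)=-1$ are precisely those whose attached prime $\fp_v$ remains inert in $\cO_E$. Thus $S=S_K^{\fn,\eta}$ is the set of those inert primes outside $S(\fn\ff)$ whose residue characteristic norm lies in the dyadic window $[K,2K]$.

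For the upper bound I would just majorize $\#S$ by the full count of prime ideals of $\cO$ with norm in $[K,2K]$; the prime ideal theorem for $F$ (depending only on $F$) immediately gives $\#S\le CK(\log K)^{-1}$ with $C$ a constant that does not see either $\fn$ or $\eta$.

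For the lower bound, the starting point is the factorization $\zeta_E(s)=\zeta_F(s)L(s,\eta)$. Applying the prime ideal theorem for $E$ in combination with the splitting law (split primes of $F$ yielding pairs of primes of $E$ of the same norm, inert primes yielding one prime of $E$ of squared norm, and the boundedly many ramified ones being negligible) one obtains, for $K$ larger than a threshold depending only on the zero-free region of $L(s,\eta)$,
\[
\#\{v\in \Sigma_\fin\mid q_v\in [K,2K],\ \eta_v(\varpi_v)=-1\}=\tfrac{1}{2}\,\#\{v\in \Sigma_\fin\mid q_v\in [K,2K]\}+O(K(\log K)^{-2}).
\]
The bad primes $S(\fn\ff)$ that must be excluded form a finite set of cardinality $O(\log \nr(\fn\ff))$, which is dominated by $K(\log K)^{-1}$ as soon as $K$ is comparable to or larger than any polylogarithm of $\nr(\fn\ff)$; this plus the main term gives the lower bound $\#S\ge C^{-1}K(\log K)^{-1}$.

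The main obstacle is the genuine uniformity in $\eta$ claimed by the lemma. Standard unconditional zero-free regions for $L(s,\eta)$ depend on the conductor $\ff$, so to get a constant $C$ truly independent of $\eta$ I would invoke a Siegel--Walfisz-type bound (at the cost of ineffectivity) to handle the potential exceptional real zero. For the remaining range of small $K$, finitely many possibilities can be absorbed into $C$; in the application of Section 12 the relevant $K$ is always taken large as a function of the weight $l$, so this ineffectivity is harmless.
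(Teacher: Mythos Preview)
Your approach is the same as the paper's, which simply says ``This follows from an analogue of Dirichlet's theorem on arithmetic progression for number fields.'' You have supplied the details that the paper omits, and your translation via the quadratic extension $E/F$ attached to $\eta$ together with the factorization $\zeta_E=\zeta_F\,L(\cdot,\eta)$ is exactly the right mechanism.

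You are also right to flag the uniformity issue: the lemma as literally stated (for \emph{all} $K>1$ with $C$ independent of $\eta$ and $\fn$) is too strong. For any fixed $K$ one can choose $\eta$ so that every prime with $q_v\in[K,2K]$ lies in $S(\ff)$ or satisfies $\eta_v(\varpi_v)=+1$, making $S$ empty. The paper's one-line proof does not address this, and in practice one should read the lemma as holding for $K$ sufficiently large (with an ineffective threshold if one wants $C$ genuinely independent of $\eta$, via Siegel--Walfisz as you suggest). Since in the application $K\asymp L^{1/(8-2\theta)}$ with $L\to\infty$ in the weight aspect, this suffices.

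One small point: your sentence ``For the remaining range of small $K$, finitely many possibilities can be absorbed into $C$'' does not work as written, because the range of small $K$ depends on $\eta$ and there are infinitely many characters. You cannot absorb all of these into a single constant. What you \emph{can} say is that the lower bound holds for $K$ exceeding an ineffective constant times a fixed power of $\log\nr(\fn\ff)$, and that this is all the subsequent argument requires; your final sentence already makes this point correctly.
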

\begin{proof} This follows from an analogue of Dirichlet's theorem on arithmetic progression for number fields.
\end{proof}

For $S = S_{K}^{\gn, \eta}$ and for a given $\pi \in \Pi_{\rm{cus}}(l,\fn)$, let $\alpha_S^{\pi}(\bs)$ be the function defined in \S 12.1.

\begin{prop} \label{MS10}
For any $\epsilon>0$, we have
\begin{align*}
|\JJ_{\rm{hyp}}^{\eta}(l,\fn|\alpha_{S}^{\pi})|\ll _{\epsilon} \{\prod_{v\in \Sigma_\infty}l_v\}^{-1/2}\,\nr(\ff)^{1/4+\epsilon}\,K^{5+\epsilon}\end{align*}
with the implied constant independent of $l$, $\fn$, $\pi$, $\eta$ and $K$. 
\end{prop}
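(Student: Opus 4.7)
The plan is to unwind $\JJ^{\eta}_{\rm hyp}(l,\fn|\alpha_S^\pi)$ via Theorem~\ref{HYPERBOLIC(fine)}, bound the local factors place by place, and apply the main counting estimate of Lemma~\ref{MS6.5} suitably adapted so that the conductor $\ff$ is absorbed into the test ideal. By Theorem~\ref{HYPERBOLIC(fine)},
\[
|\JJ^{\eta}_{\rm hyp}(l,\fn|\alpha_S^\pi)| \leq \sum_{b\in F^\times - \{-1\}}|J_S(b;\alpha_S^\pi)|\,\fm_\infty(l;b)\prod_{v\in\Sigma_\fin - S}|J_v^{\eta_v}(b)|.
\]
The non-$S$ non-archimedean factors are bounded by Lemmas~\ref{MS3} and~\ref{MS4} (at places outside $S(\ff)$, forcing $b\in\fn$ integrality and producing only a divisor-type weight) and by Corollary~\ref{ramifiedJ-cor} at $v\in S(\ff)$, forcing $b\in\ff^{-1}$ and yielding the factor $\nr(\ff)^{-1+\epsilon}$. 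Since $\eta$ is totally odd, Lemma~\ref{arcInt} ensures the sum is supported on $b \in \cQ_\infty$.

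For the $S$-factor, I would apply Proposition~\ref{MS6} to decompose $|J_S(b;\alpha_S^\pi)|$ into a finite sum indexed by ideals $\fa\in\{\cO\} \cup \{\fp_v^k : v\in S,\,1\leq k\leq 4\} \cup \{\fp_{v_1}^j\fp_{v_2}^k : (v_1,v_2)\in\Sd,\,1\leq j,k\leq 2\}$, each summand of the form (monomial in $\{q_v\}_{v\in S}$) $\cdot\, D_S(\fa;b)$. For each such $\fa$, the inner sum over $b\in\fn\ff^{-1}\fa^{-1}\cap\cQ_\infty$ reduces to an $\fI$-type sum in which the role of the test ideal in Lemma~\ref{MS6.5} is played by $\fa\ff$ (which is coprime to $\fn$ since $\fn$ is coprime to both $\fa$ and $\ff$). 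The proof of Lemma~\ref{MS6.5} uses only coprimality of the test ideal with $\fn$ and the standard divisor bound, so it extends verbatim to yield
\[
\sum_{b \in \fn\ff^{-1}\fa^{-1}\cap \cQ_\infty} \tau^{S(\fa)}(b)\fm_\infty(l;b) \ll_\epsilon \Bigl\{\prod_{v\in\Sigma_\infty}l_v\Bigr\}^{-1/2}\nr(\fa)^{5/4+\epsilon}\nr(\ff)^{5/4+\epsilon}.
\]
Combined with $\nr(\ff)^{-1+\epsilon}$ from the ramified places, the $\ff$-factor becomes $\nr(\ff)^{1/4+\epsilon}$, matching the target exponent.

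Finally, I would track the $K$-dependence. Using Lemma~\ref{Principalidealdensity}, the single and double sums in Proposition~\ref{MS6} have $O(K^{1+\epsilon})$ and $O(K^{2+\epsilon})$ terms respectively, while for $\fa = \fp_{v_1}^j\fp_{v_2}^k$ one has $\nr(\fa)^{5/4} \ll K^{5(j+k)/4+\epsilon}$. A direct case-by-case inspection shows every summand is at most $K^{5+\epsilon}\nr(\ff)^{1/4+\epsilon}\{\prod l_v\}^{-1/2}$, with the dominant case being the double-sum term with $\fa=\fp_{v_1}^2\fp_{v_2}^2$ and coefficient $q_{v_1}^{-1}q_{v_2}^{-1}$, yielding $K^{2+\epsilon}\cdot K^{-2}\cdot K^{5+\epsilon}=K^{5+\epsilon}$; the Ramanujan exponent $\theta\in[0,1]$ only affects subdominant terms.

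The main technical point to verify is that the $\fa=\cO$ contributions in Proposition~\ref{MS6}, for which the vanishing of Lemma~\ref{MS6.6} is no longer available once the support of $b$ is shifted from $\fn$ to $\fn\ff^{-1}$ (since then $\nr(b)$ need not be integral), still contribute only $O(K^{3+\epsilon}\nr(\ff)^{1/4+\epsilon}\{\prod l_v\}^{-1/2})$, safely below the final bound; this follows by applying the extended Lemma~\ref{MS6.5} with $\fa=\ff$ to those terms. Modulo this routine check, the four bounds combine to give precisely the asserted estimate.
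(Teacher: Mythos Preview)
Your proposal is correct and follows essentially the same approach as the paper's proof: unfold via Theorem~\ref{HYPERBOLIC(fine)}, bound the off-$S$ finite places by Lemmas~\ref{MS3}, \ref{MS4} and Corollary~\ref{ramifiedJ-cor}, apply Proposition~\ref{MS6} at $S$, and feed each resulting term into Lemma~\ref{MS6.5} with test ideal $\fa\ff$. One minor slip: the $\fa=\cO$ contribution from the double sum in Proposition~\ref{MS6} carries coefficient $\sum_{(v_1,v_2)\in\Sd}q_{v_1}^{(\theta+1)/2}q_{v_2}^{(\theta+1)/2}\ll K^{3+\theta}$, not $K^{3+\epsilon}$; this is still safely below $K^{5}$, so nothing changes.
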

\begin{proof} Set $\Sd=\{(v_1,v_2)\in S^2|\,v_1\not=v_2\,\}$. From Lemmas~\ref{MS3} and \ref{MS4}, we have the bound
\begin{align*}
|\JJ_{\rm{hyp}}^{\eta}(l,\fn|\alpha_{S}^{\pi})|\ll \sum_{b\in F^{\times}-\{-1\}} 
|J_{S}(b;\alpha_{S}^\pi)|\,\{\prod_{v\in \Sigma_\fin-S\cup S(\ff)}\Lambda_v(b)\}\,\{\prod_{v\in S(\ff)} |J_v^{\eta_v}(b)|\}\,\fm_\infty(l;b). 
\end{align*}
Combining this with Corollary~\ref{ramifiedJ-cor} and Proposition~\ref{MS6}, we have that this is majorized by the $\nr(\ff)^{-1+\epsilon}$ times the following expression
{\small \allowdisplaybreaks
\begin{align}
&\{\sum_{v\in S} q_v^{(\theta+1)/2}\}\, \fI(l,\fn,\ff)
+\sum_{v\in S} q_v^{\theta}\,\fI(l,\fn,\fp_v\ff)
+\sum_{v\in S} q_v^{\theta-1}\,\fI(l,\fn,\fp_v^2\ff)
 \notag
%\\&\qquad
+\sum_{v\in S} q_v^{-1}\,\fI(l,\fn,\fp_v^3\ff)
+\sum_{v\in S} q_v^{-2}\,\fI(l,\fn,\fp_v^4\ff)
 \notag \\
& +\{\sum_{(v_1,v_2)\in \Sd} q_{v_1}^{(\theta+1)/2}\,q_{v_2}^{(\theta+1)/2}\}\,\fI(l,\fn,\ff) 
\notag% \label{MS10-1}
%\\ &\qquad
+\sum_{(v_1,v_2)\in \Sd}q_{v_1}^{(\theta+1)/2}\,\fI(l,\fn,\fp_{v_2}\ff)
+\sum_{(v_1,v_2)\in \Sd} \fI(l,\fn,\fp_{v_1}\fp_{v_2}\ff)
\notag %\label{MS10-2}
\\
& +\sum_{(v_1,v_2)\in \Sd} q_{v_1}^{-1}\,\fI(l,\fn,\fp_{v_1}^2\fp_{v_2}\ff)
+\sum_{(v_1,v_2)\in \Sd} q_{v_1}^{-1}\,q_{v_2}^{(\theta+1)/2}\,\fI(l,\fn,\fp_{v_1}^{2}\ff) \notag
%\\ & \qquad
+\sum_{(v_1,v_2)\in \Sd} q_{v_1}^{-1}q_{v_2}^{-1}\,\fI(l,\fn,\fp_{v_1}^2\fp_{v_2}^2\ff).
\notag
\end{align}
}Invoking the bound $\sharp S \prec K$ obtained from Lemma \ref{Principalidealdensity} and applying Lemma~\ref{MS6.5}, we estimate each term occurring above. Thus, after a power saving, we obtain \\ $|\JJ_{\rm{hyp}}^{\eta}(l, \fn|\alpha_{S}^{\pi})|\ll_{\epsilon} \nr(\ff)^{-1+\epsilon}\,\varphi(l,K)$, where $L=\prod_{v \in \Sigma_{\infty}}l_v$ and $\varphi(l,K)$ is
\begin{align*}
&\nr(\ff)^{5/4+\epsilon}\,L^{-1/2}\,(K^{(\theta+3)/2}+K^{\theta+9/4+\e}+K^{\theta+5/2+2\e}+K^{15/4+3\e}+K^{4+4\e} +K^{3+\theta} +K^{(2\theta+15)/4+\e} \\
&+K^{9/2+2\e} +K^{19/4+3\e} +K^{(\theta+8)/2+2\e}+K^{5+4\e}). 
\end{align*}
Since $\theta \in [0,1]$, this is bounded by $\nr(\ff)^{1/4+2\epsilon}\,L^{-1/2}\,K^{5+4\epsilon}$. This completes the proof. 
\end{proof}

\subsection{An estimate of the unipotent term}
Set $S=S_{K}^{\gn, \eta}$ with $K\ge 2$.

\begin{prop} \label{MS11}
Let $\pi \in \Pi_{\rm{cus}}(l,\fn)$. For any $\e>0$, we have
\begin{align*}
|\tilde{\JJ}_{\rm{u}}^{\eta}(l, \fn|\alpha_{S}^{\pi})|\ll_{\e}
|\Gcal(\eta)|\nr(\gf)^{\epsilon}\,K^{1+\theta},
\end{align*}
with the implied constant independent of $l$, $\fn$, $\pi$, $\eta$ and $K$. 
\end{prop}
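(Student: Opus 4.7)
\textbf{Proof plan for Proposition \ref{MS11}.} The strategy is to exploit the algebraic structure of $\alpha_S^\pi$ and reduce to Theorem \ref{UNIPOTENT(fine)} by linearity. Setting $Z_v(s_v):=\lambda_v(\pi)\alpha_v^{(1)}(s_v)-\alpha_v^{(2)}(s_v)-1 \in \ccA_v$, the definition of $\alpha_S^\pi$ yields the decomposition
\[
\alpha_S^\pi(\bs)=\sum_{v\in S}Z_v(s_v)^2+\sum_{(v_1,v_2)\in \Sd}Z_{v_1}(s_{v_1})\,Z_{v_2}(s_{v_2}),
\]
expressing $\alpha_S^\pi$ as a sum of decomposable tensors in $\ccA_S$, each of the form $f_v\otimes \bigotimes_{w\ne v}1$ or $f_{v_1}\otimes f_{v_2}\otimes \bigotimes_{w\ne v_1,v_2}1$. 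The hypothesis that $\eta_v(-1)=-1$ at every archimedean place forces $\eta$ to be non-trivial, so $L(s,\eta)$ is entire and hence $R(\eta)=0$; moreover $\bC_F^\eta(l,\gn)=L_\fin(1,\eta)$ by the remark after Theorem \ref{RELATIVETRACEFORMULA}. Consequently the $R(\eta)$-sum in Theorem \ref{UNIPOTENT(fine)} disappears, and for each decomposable test function $\alpha=\otimes_{v\in S}\alpha_v$ we are reduced to the clean bound $|\tilde\JJ_{\rm u}^\eta(l,\gn|\alpha)|\ll |\Gcal(\eta)|\,L_\fin(1,\eta)\,\prod_{v\in S}|U_v^{\eta_v}(\alpha_v)|$.

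Next I would compute the relevant values of $U_v^{\eta_v}$ directly from Proposition \ref{MS5} in the case $\eta_v(\varpi_v)=-1$. Using $1=\tfrac{1}{2}\alpha_v^{(0)}$ together with the product relation $\alpha_v^{(m)}\alpha_v^{(n)}=\alpha_v^{(m+n)}+\alpha_v^{(|m-n|)}$, elementary manipulation yields
\[
U_v^{\eta_v}(1)=-1,\qquad U_v^{\eta_v}(Z_v)=q_v^{-1},\qquad U_v^{\eta_v}(Z_v^2)=-\lambda_v(\pi)^2(1+q_v^{-1})-1-q_v^{-1}-q_v^{-2}.
\]
The critical input is the identity $U_v^{\eta_v}(Z_v)=q_v^{-1}$, in which the $O(1)$ contributions of $U_v^{\eta_v}(\alpha_v^{(2)})$ and of $\tfrac{1}{2}U_v^{\eta_v}(\alpha_v^{(0)})$ cancel exactly (the term $U_v^{\eta_v}(\alpha_v^{(1)})$ vanishes since $\eta_v$ is ramified in the sign character sense here). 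Combining with the Ramanujan bound $|\lambda_v(\pi)|\le 2q_v^{\theta/2}$ (known unconditionally by Blasius but kept with $\theta$ explicit) gives $|U_v^{\eta_v}(Z_v^2)|\ll q_v^\theta$.

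Finally I would assemble the pieces using Lemma \ref{Principalidealdensity} ($|S|\ll K/\log K$, $q_v\asymp K$ for $v\in S$). The $|S|$ diagonal contributions satisfy
\[
\Bigl|\sum_{v\in S}\tilde\JJ_{\rm u}^\eta\bigl(l,\gn\big|Z_v^2\otimes\textstyle\bigotimes_{w\ne v}1\bigr)\Bigr|\ll |\Gcal(\eta)|\,L_\fin(1,\eta)\sum_{v\in S}q_v^\theta\ll |\Gcal(\eta)|\,L_\fin(1,\eta)\,\frac{K^{1+\theta}}{\log K},
\]
while the $|S|(|S|-1)$ off-diagonal contributions are controlled by
\[
\Bigl|\sum_{(v_1,v_2)\in \Sd}\tilde\JJ_{\rm u}^\eta\bigl(l,\gn\big|Z_{v_1}\otimes Z_{v_2}\otimes\textstyle\bigotimes_{w} 1\bigr)\Bigr|\ll |\Gcal(\eta)|\,L_\fin(1,\eta)\sum_{(v_1,v_2)\in \Sd}q_{v_1}^{-1}q_{v_2}^{-1}\ll \frac{|\Gcal(\eta)|\,L_\fin(1,\eta)}{(\log K)^2},
\]
which is strictly smaller. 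Applying the classical bound $L_\fin(1,\eta)\ll\log\nr(\ff)\ll_\epsilon \nr(\ff)^\epsilon$ yields the advertised estimate. The technical point requiring care is the identity $U_v^{\eta_v}(Z_v)=q_v^{-1}$: without this $q_v^{-1}$ saving the cross sum would scale like $|S|^2\asymp K^2/(\log K)^2$, which would overwhelm the claimed bound $K^{1+\theta}$.
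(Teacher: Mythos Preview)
Your proof is correct and follows essentially the same route as the paper: expand $\alpha_S^\pi$ into diagonal and off-diagonal pieces, apply Theorem~\ref{UNIPOTENT(fine)} and Proposition~\ref{MS5} (with $\eta_v(\varpi_v)=-1$) to each piece, and assemble using Lemma~\ref{Principalidealdensity}. Your explicit observation that $R(\eta)=0$ and $\bC_F^\eta(l,\gn)=L_\fin(1,\eta)$ is exactly what the paper uses implicitly when it drops the $U_v'$ terms and bounds $\bC_F^\eta$ by $L_\fin(1,\eta)$; the key identities $U_v^{\eta_v}(1)=-1$, $U_v^{\eta_v}(Z_v)=q_v^{-1}$, and $|U_v^{\eta_v}(Z_v^2)|\ll q_v^\theta$ match the paper's computations.

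One small remark: the hypothesis $\eta_v(-1)=-1$ at all archimedean places is not stated in Proposition~\ref{MS11} itself but comes from the ambient context of \S12 (the proof of Theorem~\ref{T4}); alternatively, the very definition of $S_K^{\gn,\eta}$ together with Lemma~\ref{Principalidealdensity} already forces $\eta$ to be non-trivial, which is all you need for $R(\eta)=0$.
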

\begin{proof} 
We use the same notation as in the proof of Proposition~\ref{MS10}. By substituting the expression $\alpha_{S}^{\pi}(\bs)=\sum_{v\in S} Z_v^2+\sum_{(v_1,v_2)\in \Sd}Z_{v_1}Z_{v_2}$, we obtain
{\small {\allowdisplaybreaks
\begin{align*}
&|\tilde{\JJ}_{\rm{u}}^{\eta} (l,\fn|\alpha_{S}^{\pi})|
 \\
\ll &
\bC_F^{\eta}(l, \fn )\, 
\bigg( \sum_{v\in S}\{\prod_{w\in S-\{v\}}|U_{w}^{\eta_{w}}(1)|\}\,|U_{v}^{\eta_{v}}(Z_v^2)| +\sum_{(v_1,v_2)\in \Sd}\{\prod_{w\in S-\{v_1,v_2\}}|U_w^{\eta_{w}}(1)|\}\,|U_{v_1}^{\eta_{v_{1}}}(Z_{v_1})||U_{v_2}^{\eta_{v_{2}}}(Z_{v_2})|\bigg)
\\
\ll & \, L_{\fin}(1, \eta)
\biggl(\sum_{v\in S} |U_{v}^{\eta_{v}}(Z_v^2)|+\sum_{(v_1,v_2)\in \Sd}|U_{v_1}^{\eta_{v_{1}}}(Z_{v_1})||U_{v_2}^{\eta_{v_{2}}}(Z_{v_2})|\biggr),
\end{align*}
}}where to simplify the terms, we use $U_w(1)=-1$ from Proposition~\ref{MS5}.
As in the proof of Proposition~\ref{MS6}, using Proposition~\ref{MS5}, we compute each term and estimate it as follows. 
\begin{align*}
U_{v}^{\eta_{v}}(Z_v^2)&=
\lambda_v(\pi)^2\,\{U_{v}^{\eta_{v}}(\alpha_v^{(2)})+U_{v}^{\eta_{v}}(\alpha_v^{(0)})\}
+U_{v}^{\eta_{v}}(\alpha_v^{(4)})+2U_{v}^{\eta_{v}}(\alpha_v^{(2)}) +\frac{3}{2}U_{v}^{\eta_{v}}(\alpha_v^{(0)})
\\
&=\lambda_v(\pi)^2\{(1-q_v^{-1})-2\}+q_v^{-1}(1-q_v^{-1})+2(1-q_v^{-1})-3.
\end{align*}
By $|\lambda_v(\pi)|\ll q_v^{\theta/2}$ with $\theta\in [0,1]$, from this,
\begin{align*}
|U_v^{\eta_{v}}(Z_v^2)|\ll q_v^{\theta}(1+q_v^{-1})+q_{v}^{-2}+q_{v}^{-1}+1
\ll q_{v}^{\theta}.
\end{align*} 
In a similar way, $U_{v}^{\eta_{v}}(Z_v) =q_{v}^{-1}$. Applying these, we continue the estimate of $\tilde{\JJ}_{\rm{u}}^{\eta}(l,\fn|\alpha_{S}^{\pi})$ as follows.
{\allowdisplaybreaks
\begin{align*}
L_{\fin}(1, \eta) \,\big(\sum_{v\in S} q_{v}^\theta+\sum_{(v_1,v_2)\in \Sd
}q_{v}^{-1}q_{v}^{-1} \big)
\ll_{\e}  \,\nr(\gf)^{\e}\, \left\{ \frac{K} {\log K}\, K^{\theta}+\left(\frac{K}{\log K}\right)^{2}K^{-2} \right\}
\ll \, \nr(\gf)^{\e} \,K^{\theta+1}.
\end{align*}
}We remark that $L_\fin(1,\eta)\ll_\epsilon \nr(\ff)^{\epsilon}$ (\cite[Theorem 2]{Carletti}). This completes the proof.
\end{proof}

\subsection{A subconvexity bound (odd case)} 

Let $\fn$ be an ideal of $\cO$. For a family of positive even integers $l=(l_v)_{v\in \Sigma_\infty}$, let $\Pi_{\rm{cus}}^*(l,\fn)$ denote the set of all cuspidal automorphic representations $\pi\cong \bigotimes_{v}\pi_{v}$ of $\PGL_2(\A)$ such that $\gf_{\pi}=\fn$ and such that $\pi_{v}$ is isomorphic to the discrete series representation $D_{l_v}$ of minimal $\bK_{v}^0$-type $l_v$ for each $v\in \Sigma_\infty$.

\begin{thm} \label{Subconvex-thm} 
Let $\eta$ be a quadratic idele class character of $F^\times$ with conductor $\ff$ such that $\eta_v(-1)=-1$ for all $v\in \Sigma_\infty$. Let $\fn$ be an integral ideal relatively prime to $\ff$.
Assume that $l_{v} \ge 6$ for all $v \in \Sigma_{\infty}$.
Then, for any $\epsilon>0$
\begin{align*}
|L_{\fin}(1/2,\pi)\,L_\fin(1/2,\pi\otimes\eta) |\ll_{\epsilon} (\nr(\fn\ff)KL)^{\epsilon}\,\nr(\fn)\,\,(LK^{\theta-1}+\nr(\ff)^{3/4}L^{1/2}K^{3}),
\end{align*}
where $L=\prod_{v\in \Sigma_\infty}l_v$ and with the implied constant independent of $l$, $\fn$, $\eta$, $K \ge 2$ and $\pi\in \Pi_{\rm{cus}}^{*}(l,\fn)$. 
\end{thm}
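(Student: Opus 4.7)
The plan is to invoke the relative trace formula (Theorem~\ref{RELATIVETRACEFORMULA}) with the amplifying test function $\alpha=\alpha_{S}^{\pi}\in \ccA_S$ defined in 12.1, where $S=S_{K}^{\fn,\eta}=\{v\in \Sigma_\fin-S(\fn\ff) : \eta_v(\varpi_v)=-1,\ K\leq q_v\leq 2K\}$. The amplifier is designed so that $\alpha_S^\pi(\nu_S(\pi))=|S|^2$ is as large as possible, while positivity on the spectral side will let us drop all other contributions and isolate the target representation.

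After multiplying the identity \eqref{RELATIVE-TF} by $(-1)^{\#S+\epsilon(\eta)}/\cG(\eta)$, Lemma~\ref{value of PP} (which rests on the Jacquet--Chen non-negativity of $L(1/2,\pi')L(1/2,\pi'\otimes\eta)$ and $w_\fn^\eta(\pi')\geq 0$) ensures that every term of the spectral sum becomes non-negative, and $\alpha_S^\pi(\nu_S(\pi'))\geq 0$ because it is a real square (the coefficients $\lambda_v(\pi)$ are real and each $\nu_v(\pi')$ lies on $i\RR$ or in the complementary strip). Retaining only the $\pi'=\pi$ contribution and using $\pi\in \Pi_{\rm cus}^*(l,\fn)$ (so that $\ff_\pi=\fn$ and $w_\fn^\eta(\pi)=1$), the left-hand side becomes a positive multiple $c_\pi\cdot |S|^2\cdot L_\fin(1/2,\pi)L_\fin(1/2,\pi\otimes\eta)$. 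The factor $c_\pi$ combines the archimedean normalization $\prod_v 2\pi\Gamma(l_v-1)/\Gamma(l_v/2)^2$ in $C(l,\fn,S)$, the completed-$L$ contributions $\prod_v\Gamma_\CC(l_v/2)^2$, and the inverse of the archimedean adjoint factors $L(1,\pi_v;{\rm Ad})=\Gamma_\RR(2)\Gamma_\CC(l_v)$; applying the duplication formula, all exponential-in-$l_v$ pieces collapse to $\prod_v(4\pi^2/(l_v-1))\asymp L^{-1}$. Together with the global factor $1/(\nr(\fn)\,L_\fin^{S_\pi}(1,\pi;{\rm Ad}))$ and the standard bound $L_\fin(1,\pi;{\rm Ad})^{\pm1}\ll (\nr(\fn)L)^{\epsilon}$, this yields
\begin{align*}
\frac{|S|^2\,L_\fin(1/2,\pi)L_\fin(1/2,\pi\otimes\eta)}{\nr(\fn)\,L}\ll \frac{(\nr(\fn)L)^\epsilon}{|\cG(\eta)|}\bigl(|\tilde\JJ_{\rm u}^{\eta}(l,\fn|\alpha_S^\pi)|+|\JJ_{\rm hyp}^{\eta}(l,\fn|\alpha_S^\pi)|\bigr).
\end{align*}
Now Propositions~\ref{MS11} and~\ref{MS10}, the Gauss-sum normalization $|\cG(\eta)|\asymp\nr(\ff)^{-1/2}$, and the lower bound $\#S\gg K(\log K)^{-1}$ from Lemma~\ref{Principalidealdensity} give $|\tilde\JJ_{\rm u}^{\eta}|/|\cG(\eta)|\ll\nr(\ff)^\epsilon K^{1+\theta}$ and $|\JJ_{\rm hyp}^{\eta}|/|\cG(\eta)|\ll L^{-1/2}\nr(\ff)^{3/4+\epsilon}K^{5+\epsilon}$. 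Substituting, absorbing logs into $(\nr(\fn\ff)KL)^\epsilon$, and rearranging produces the stated bound $L_\fin(1/2,\pi)L_\fin(1/2,\pi\otimes\eta)\ll (\nr(\fn\ff)KL)^\epsilon\,\nr(\fn)\bigl(LK^{\theta-1}+\nr(\ff)^{3/4}L^{1/2}K^3\bigr)$.

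\textbf{Main obstacle.} The serious technical work is already encapsulated in Propositions~\ref{MS10} and~\ref{MS11}, which depend on the semilocal orbital integral expansion of Proposition~\ref{MS6}, the Legendre decay $|P_{l_v/2-1}|\ll l_v^{-1/2}$ of Lemma~\ref{arcInt}, the vanishing in Lemma~\ref{MS6.6}, and the counting estimates of Lemmas~\ref{MS6.5}--\ref{IntegerEST}. The remaining subtlety in the proof of Theorem~\ref{Subconvex-thm} itself is the precise bookkeeping of the archimedean gamma factors: a naive estimate would suggest the spectral coefficient of $L_\fin^2$ is exponentially large in $l_v$, but the duplication formula induces a clean cancellation between $\Gamma(l_v-1)$ appearing in $C(l,\fn,S)$ and $\Gamma_\CC(l_v)$ in the archimedean adjoint factor, leaving only the polynomial factor $1/(l_v-1)$. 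Securing this cancellation is what guarantees the polynomial (rather than exponential) dependence on the weight, and thus the genuine subconvex character of the final bound.
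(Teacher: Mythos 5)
Your proposal is correct and follows essentially the same route as the paper: the amplifier $\alpha_S^\pi$ over $S=S_K^{\gn,\eta}$, dropping all but the $\pi$-term via Jacquet--Chen positivity (Lemma~\ref{value of PP}), the $\prod_v(l_v-1)^{-1}$ gamma-factor cancellation from the duplication formula, and then Propositions~\ref{MS10}--\ref{MS11} together with the bounds $|\cG(\eta)|\ge D_F^{-1/2}\nr(\ff)^{-1/2}$ and $L_\fin(1,\pi;{\rm Ad})\ll(\nr(\fn)L)^\epsilon$. The only small imprecision is writing $|\cG(\eta)|\asymp\nr(\ff)^{-1/2}$ where only the lower bound $|\cG(\eta)|\ge D_F^{-1/2}\nr(\ff)^{-1/2}$ is available (and is what the argument actually uses), but this does not affect the conclusion.
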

\begin{proof} Let $\pi\in \Pi_{\rm{cus}}^{*}(l, \fn)$ and let $S=S_{K}^{\gn, \eta}$. By applying Theorem~\ref{RELATIVETRACEFORMULA} for the test function $\alpha_{S}^{\pi}(\bs)$, we have 
\begin{align*}
|C(l,\fn,S)|\,|\sum_{\pi'\in \Pi_{\rm{cus}}(l, \fn)}\II_{\rm cus}^\eta(\pi';l, \fn)\, \alpha_{S}^{\pi}(\nu_{S}(\pi'))|\leq |\tilde{\JJ}_{\rm{u}}^\eta(l, \fn|\alpha_{S}^{\pi} )|+|\JJ_{\rm{hyp}}^\eta(l,\fn|\alpha_{S}^{\pi})|.
\end{align*}
with $C(l,\fn,S)=(-1)^{\# S} 2^{-1}D_F^{-1}\,[\bK_\fin:\bK_0(\fn)]^{-1}\prod_{v\in \Sigma_\infty}2 \pi \Gamma(l_v-1)/\Gamma(l_v/2)^{2}$. From Proposition~\ref{Reg-per} and the non-negativity of $\II_{\rm cus}^\eta(\pi'; l, \fn)/(-1)^{\epsilon(\eta)}\cG(\eta)$ by Lemma \ref{value of PP}, the left-hand side becomes
\begin{align*}
&|C(l,\fn,S)|\,|\cG(\eta)|\,\sum_{\pi'\in \Pi_{\rm{cus}}(l, \fn)} \frac{[\bK_\fin:\bK_0(\ff_{\pi'})]}{\nr(\ff_{\pi'})}
w_{\fn}^{\eta}(\pi')\,\frac{L(1/2,\pi')\,L(1/2,\pi'\otimes \eta)}{L^{S_{\pi'}}(1,\pi';{\rm{Ad}})}\, \alpha_{S}^{\pi}(\nu_{S}(\pi')),
\end{align*} 
which is greater than the summand corresponding to $\pi$ by the non-negativity again. Let us examine the $\pi$-term closely. First, from the explicit formula, $w_{\fn}^\eta(\pi)=1$ for $\ff_\pi=\fn$. Let $A_v(\pi)={\rm{diag}}(z_{v},z_v^{-1})$ be the Satake parameter of our $\pi$. Then, using Lemma
\ref{Principalidealdensity}, we obtain 
\begin{align*}
\alpha_{S}^{\pi}(\nu_{S}(\pi))
=\biggl(\sum_{v\in S} \{(z_v+z_v^{-1})^2-(z_v^{2}+z_{v}^{-2}+1)\}\biggr)^2=(\# S)^2 \gg_{\epsilon} K^{2-\epsilon}. 
\end{align*}
Separating the gamma factors from the $L$-functions, we have
\begin{align*}
|C(l,\fn,S)|\, \frac{L_\infty(1/2,\pi)\,L_\infty(1/2,\pi\otimes \eta)}{L_\infty(1,\pi;{\rm{Ad}})}
\asymp & [\bfK_{\fin} : \bfK_{0}(\gn)]^{-1}\prod_{v \in \Sigma_{\infty}}\frac{2 \pi\, \Gamma(l_{v}-1)}{\Gamma(l_{v}/2)^{2}}
%\,\prod_{v\in \Sigma_\infty}\frac{2^{l_v}\,\Gamma((l_v-1)/2)}{\Gamma(l_v/2)}
\, \prod_{v\in \Sigma_\infty}\frac{\Gamma_{\C}(l_v/2)^2}{\Gamma_{\C}(l_v)}
\\
%\asymp & [\bfK_{\fin} : \bfK_{0}(\gn)]^{-1}\,
%\prod_{v\in \Sigma_\infty}\,\frac{\Gamma(l_v-1)}{\Gamma(l_v)} =
\asymp & [\bfK_{\fin} : \bfK_{0}(\gn)]^{-1}\,\prod_{v\in \Sigma_\infty}(l_v-1)^{-1}, 
\end{align*}
where all the implied constants are only dependent on $F$. The remaining factors in the $\pi$-term are easily seen to be bounded from below by a constant independent of $(l,\gn,\pi,\eta)$. Combining the considerations so far, we obtain the estimate
\begin{align}
|\Gcal(\eta)| K^{2-\epsilon}\,\nr(\fn)^{-1}\,L^{-1}\,
%\{\prod_{v\in \Sigma_\infty}l_{v}^{-1}\}\,
\frac{L_{\fin}(1/2,\pi)\,L_\fin(1/2,\pi\otimes \eta)}{L_{\fin}^{S_{\pi}}(1,\pi;{\rm{Ad}})}
\ll_{\epsilon} |\tilde{\JJ}_{\rm{u}}^\eta(l, \fn|\alpha_{S}^{\pi} )|+|\JJ_{\rm{hyp}}^\eta(l,\fn|\alpha_{S}^{\pi})|.
 \label{Subconvex-thm-1}
\end{align}
From Propositions~\ref{MS10} and \ref{MS11}, the right-hand side is estimated by  \begin{align*}
&\ll_{\epsilon} |\Gcal(\eta)| \nr(\ff)^\epsilon\,K^{1+\theta} +\nr(\ff)^{1/4+\epsilon}\,L^{-1/2}K^{5 +\epsilon}.
\end{align*}
To complete the proof, we invoke the bound 
$L_{\fin}^{S_{\pi}}(1,\pi;{\rm{Ad}})\ll_{\epsilon} (\nr(\fn)L)^{\epsilon}$
which is known to hold for a general class of $L$-series (\cite[Theorem 2]{Carletti}).
We remark that $|\cG(\eta)|=D_{F}^{-1/2}\nr(\ff)^{-1/2} \prod_{v\in S(\gf)}(1-q_{v}^{-1})^{-1} \ge D_{F}^{-1/2}\nr(\gf)^{-1/2}$. 
\end{proof}

\begin{thm} \label{Subconvex-Thm}
Let $\eta$ be a quadratic idele class character of $F^\times$ such that $\eta_v(-1)=-1$ for all $v\in \Sigma_\infty$. Let $\fn$ be an integral ideal relatively prime to $\ff$. Assume that $l_{v} \ge 6$ for all $v \in \Sigma_{\infty}$.
Then, for any $\e>0$, 
\begin{align*}
|L_{\fin}(1/2,\pi)\,L_\fin(1/2,\pi\otimes \eta)|\ll_{\epsilon} \nr(\ff)^{3/4+
\epsilon}\,\nr(\fn)^{1+\epsilon}\,\{\prod_{v\in \Sigma_\infty}l_v\}^{(7-\theta)/(8-2\theta)+\epsilon}
\end{align*}
with the implied constant independent of $l$, $\fn$, $\eta$ and $\pi\in \Pi_{\rm{cus}}^*(l,\fn)$.
\end{thm}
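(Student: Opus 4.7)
\textbf{Proof proposal for Theorem~\ref{Subconvex-Thm}.}
The plan is to derive Theorem~\ref{Subconvex-Thm} from Theorem~\ref{Subconvex-thm} by optimizing the free parameter $K \geq 2$. Writing $L = \prod_{v \in \Sigma_{\infty}} l_{v}$ and ignoring the harmless factors $\nr(\fn)(\nr(\fn\ff)KL)^{\epsilon}$, Theorem~\ref{Subconvex-thm} produces for each admissible $K$ the estimate
\[
|L_{\fin}(1/2,\pi)\,L_{\fin}(1/2,\pi\otimes\eta)| \ll_{\epsilon} \nr(\fn)(\nr(\fn\ff)KL)^{\epsilon}\bigl(L\,K^{\theta-1} + \nr(\ff)^{3/4}L^{1/2}K^{3}\bigr).
\]
The two terms on the right are respectively decreasing and increasing in $K$ (using $\theta \in [0,1]$), so their sum is minimized by balancing them. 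Equating $L K^{\theta-1} = \nr(\ff)^{3/4}L^{1/2}K^{3}$ gives the optimal value
\[
K_{0} = L^{\frac{1}{2(4-\theta)}} \nr(\ff)^{-\frac{3}{4(4-\theta)}},
\]
which after substitution yields the common value
\[
L^{\frac{7-\theta}{2(4-\theta)}} \nr(\ff)^{\frac{3(1-\theta)}{4(4-\theta)}} \;=\; L^{\frac{7-\theta}{8-2\theta}}\nr(\ff)^{\frac{3(1-\theta)}{16-4\theta}}.
\]

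\emph{Case 1: $K_{0} \geq 2$.} We simply take $K = K_{0}$. Since $\theta \in [0,1]$, the exponent $\frac{3(1-\theta)}{16-4\theta}$ lies in $[0,3/16]$, hence in particular is $\leq 3/4$, so $\nr(\ff)^{3(1-\theta)/(16-4\theta)} \leq \nr(\ff)^{3/4}$. This gives the bound claimed in Theorem~\ref{Subconvex-Thm}.

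\emph{Case 2: $K_{0} < 2$.} Then $L^{1/2} < 2^{4-\theta}\nr(\ff)^{3/4}$, so in particular $L \leq C\nr(\ff)^{3/2}$ for an absolute constant $C$. We take $K = 2$; then
\[
L K^{\theta-1} + \nr(\ff)^{3/4}L^{1/2}K^{3} \ll L + \nr(\ff)^{3/4}L^{1/2} \ll \nr(\ff)^{3/4}L^{1/2},
\]
where the last inequality uses $L = L^{1/2} \cdot L^{1/2} \leq L^{1/2} \cdot 2^{4-\theta}\nr(\ff)^{3/4}$. Finally, since $(7-\theta)/(8-2\theta) \geq 1/2$ for $\theta \in [0,1]$ and $L \geq 1$, we have $L^{1/2} \leq L^{(7-\theta)/(8-2\theta)}$, again yielding the bound of Theorem~\ref{Subconvex-Thm}.

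The argument is a pure optimization exercise once Theorem~\ref{Subconvex-thm} is in hand; essentially no obstacle remains. The only point requiring small care is Case 2, where the degenerate choice $K = 2$ has to be reconciled with the claimed exponents, which is done by trading the extra factor of $L^{1/2}$ against $\nr(\ff)^{3/4}$ using the inequality $L \leq C\nr(\ff)^{3/2}$ forced by $K_{0} < 2$. Combining both cases and reinstating the $(\nr(\fn\ff)KL)^{\epsilon}$ factor (harmless since $K \leq K_{0} \ll L^{1/(2(4-\theta))}$ or $K = 2$) completes the proof.
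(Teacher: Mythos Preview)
Your proof is correct and follows the same strategy as the paper: optimize the free parameter $K$ in Theorem~\ref{Subconvex-thm}. The paper's own proof is a one-liner, taking $K \asymp L^{1/(8-2\theta)}$ (ignoring $\nr(\ff)$ in the balancing), which makes both terms $\ll \nr(\ff)^{3/4} L^{(7-\theta)/(8-2\theta)}$ and renders the constraint $K \geq 2$ automatic since $L \geq 6^{d_F}$. Your version instead folds $\nr(\ff)$ into the optimization, which forces you to treat the degenerate case $K_0 < 2$ separately; this extra care is not needed for the stated theorem but does show that the conductor exponent could in fact be improved to $3(1-\theta)/(16-4\theta)$ in the non-degenerate regime.
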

\begin{proof}
We apply the estimate in Theorem~\ref{Subconvex-thm} with taking $K$ so that $
LK^{\theta-1}\asymp L^{1/2}K^{3}$, or equivalently $K\asymp L^{1/(8-2\theta)}$. Then, we obtain the desired estimate. 
\end{proof}
If $\theta\in [0,1)$, the estimate in Theorem~\ref{Subconvex-Thm} breaks the convex bound $L_\fin(1/2,\pi)\,L_\fin(1/2,\pi\otimes \eta) \ll_{\epsilon} \{C(\pi)\,C(\pi\otimes \eta)\}^{1/4+\epsilon} \ll (\prod_{v\in \Sigma_\infty}l_v)^{1+\epsilon}$ in the weight aspect with a fixed level $\fn$ and a fixed character $\eta$. To have Theorem~\ref{T4}, we only have to invoke the Ramanujan bound $\theta=0$ (\cite{Blasius}) in Theorem~\ref{Subconvex-Thm}. 

%%%%%%%%%%%%%%%%%%%%%%%%%%%%
\section*{Acknowledgements}
%%%%%%%%%%%%%%%%%%%%%%%%%%%%
The first author was supported by Grant-in-Aid for JSPS Fellows (25$\cdot$668).
The second author was supported by Grant-in-Aid for Scientific Research (C) 22540033. 

%%%%%%%%%%%%%%%%%%%%%%%%%%%%%%%%%%%%%%%%%%%%%%%%%%%%%%%%%%%%%%%%%%%%%%%%%%%%%%%%

\medskip
\noindent
{Shingo SUGIYAMA\\
Institute of Mathematics for Industry, Kyushu University, 744, Motooka, Nishi-ku, Fukuoka 819-0395, Japan} \\
	{\it E-mail} : {\tt s-sugiyama@imi.kyushu-u.ac.jp}

\medskip
\noindent
{Masao TSUZUKI \\ Department of Science and Technology, Sophia University, Kioi-cho 7-1 Chiyoda-ku Tokyo, 102-8554, Japan} \\
{\it E-mail} : {\tt m-tsuduk@sophia.ac.jp}

\end{document}